\numberwithin{equation}{section}
\renewcommand{\theo}{%
\oldtheo\hypertarget{\***@currentHref\endcsname}{}}}
 \def \leq {\leqslant}
\def \geq {\geqslant}
\def\ind#1{\lower5pt\hbox{$\scriptstyle #1$}}
\def \ge {\geqslant}
\def \d {\mathrm{d}}
\def\al{\alpha}
\def\e{\varepsilon}
\def \m {\bm{\varpi}}
\def \R{\mathbb R}
\def\S{\mathbb S}
\def\N{\mathbb N}
\def\C{\mathcal C}
\def\A{\mathcal A}
\def\D{\mathscr D}
\def\P{\mathcal P}
\def\T{\mathcal T}
\def\M{\mathcal M}
\def\E{\mathcal{E}}
\def \B{\mathcal{B}}
\def\s{\sigma}
\def\a{\alpha}
\def \ep {\bm{\varepsilon}}
\def \vb {v_\ast}
\def\k{\kappa}
\def\d{\mathrm{d}}
\def\W {\mathbb{W}}
\def \cS {\mathcal{S}}
\def\Q{\mathcal{Q}}
\newtheorem{theo}{Theorem}[section]
\newtheorem{prop}[theo]{Proposition}
\newtheorem{cor}[theo]{Corollary}
\newtheorem{lem}[theo]{Lemma}
\newtheorem{defi}[theo]{Definition}
\newtheorem{nb}[theo]{Remark}
\def \leq {\leqslant}
\def \geq {\geqslant}
\def \Ss {\mathcal{S}}
\newcommand{\verti}[1]{{\left\vert\kern-0.25ex\left\vert\kern-0.25ex\left\vert #1 
    \right\vert\kern-0.25ex\right\vert\kern-0.25ex\right\vert}}
\def \LL {\mathscr{L}_{\alpha}}
\def \ds {\displaystyle}
\def \X {\mathbb{X}}
\def \IR {\int_{\R^d}}
\numberwithin{equation}{section}
\begin{document}

\title{Convergence to self-similarity for ballistic annihilation dynamics}

\author{Ricardo J. {\sc Alonso}}

\address{Departamento de Matem\'{a}tica, PUC-Rio, Rua Marqu\^{e}s de S\~ao Vicente 225, Rio de Janeiro, CEP 22451-900, Brazil.} \email{ralonso@mat.puc-rio.br}

 \author{V\'{e}ronique {\sc Bagland}}

 \address{Universit\'{e} Clermont Auvergne, LMBP, UMR 6620 - CNRS,  Campus des C\'ezeaux, 3, place Vasarely, TSA 60026, CS 60026, F-63178 Aubi\`ere Cedex,
 France.}\email{Veronique.Bagland@math.univ-bpclermont.fr}

 \author{Bertrand {\sc Lods}}

 \address{Universit\`{a} degli
Studi di Torino \& Collegio Carlo Alberto, Department of Economics and Statistics, Corso Unione Sovietica, 218/bis, 10134 Torino, Italy.}\email{bertrand.lods@unito.it}

\maketitle
 \begin{abstract} We consider the spatially homogeneous Boltzmann equation for ballistic annihilation in dimension $d\geq2$. Such model describes a system of ballistic hard spheres that, at the moment of interaction, either annihilate with probability $\alpha \in (0,1)$ or collide elastically with probability $1-\alpha$.  Such equation is highly dissipative in the sense that all observables, hence solutions, vanish as time progresses.  Following a contribution, by two of the authors, considering well posedness of the steady self-similar profile in the regime of small annihilation rate $\alpha\ll1$, we prove here that such self-similar profile is the intermediate asymptotic attractor to the annihilation dynamics with explicit universal algebraic rate.  This settles the issue about universality of the annihilation rate for this model brought in the applied literature.  

\smallskip
\noindent \textbf{Keywords.} Ballistic annihilation, reacting particles, self-similarity, long-time asymptotic, annihilation rate.

\end{abstract}

\tableofcontents

\section{Introduction}

\subsection{Physical motivation and setting of the problem}\label{sec:phys}

In recent years, the physics community proposed several kinetic models in order to test the relevance of non-equilibrium statistical mechanics in systems of \textit{reacting particles}.  Such systems have important applications in different branches of physics and engineering such as surface growth (semiconductors) \cite{spohn} and coarsening processes (dynamics of traffic).  A common feature of these models is that the dissipative nature of the interactions results in the loss of collision invariants and leads to tremendous difficulties for the derivation of suitable hydrodynamic models.

A paradigmatic example of such dissipative models is the one of \emph{granular gas dynamics} which corresponds to a system of $d$-dimensional hard-spheres undergoing inelastic collisions. For such a model, the number of particles and the momentum are conserved, but the kinetic energy is dissipated at each collision. At the kinetic level, the long-time behavior of granular gases is relatively well-understood, at least, in a spatially homogeneous setting: in absence of external forcing, the kinetic energy is continuously decreasing and the solution converges to a singular state described by a Dirac mass, that is, to a complete rest.  Two main questions then arise:

\begin{itemize} 
\item[--]  First, what it the rate of the convergence to zero of the kinetic energy, i.e. \textit{\textbf{how fast a granular gas is cooling down ?}} The precise rate of decay of the kinetic energy is known as \emph{Haff's law} and it has been rigorously proven in \cite{mmjsp} for inelastic hard-spheres with constant inelasticity and, more generally, in \cite{AloLo1} for the case of viscoelastic particles.
\item[--] Second, can we make a more precise description of the long-time behavior of the gas as it goes towards the singular limit ?  More precisely, due to the diffusive nature of collisions, one expects some type of intermediate self-similarity, \emph{i.e. a non Gaussian homogeneous cooling state}. The existence and uniqueness of such self-similar state has been rigorously obtained in \cite{mmjsp,MiMo3}, where it has been proven that in the quasi-elastic regime it is the attractor of any properly rescaled solution, see \cite{MiMo3}.  The case of viscoelastic particles is intrinsically different to that of constant restitution and always produces Gaussian intermediate asymptotic states, see \cite{AloLo1}. 
\end{itemize}
\smallskip

The present contribution aims to answer similar questions for another example of dissipative systems, known as \textbf{\textit{probabilistic ballistic annihilation}}. Such model has been introduced in the 90's by \cite{Ben-Naim,coppex04, coppex05, Kaprivsky, Piasecki,Trizac} and describes a system of $d$-dimensional elastic hard spheres that interact in the following way: particles move freely (ballistically) between collisions and, whenever two particles meet they either annihilate with probability $\alpha \in [0,1]$ (both interacting particles vanish), or they undergo an elastic collision with probability $1 - \alpha$.  Interestingly, as the annihilation probability $\alpha$ ranges from zero to one, the probabilistic ballistic annihilation model will move from describing the dynamic of elastic hard spheres to describing the dynamic of pure annihilation, which are substantially different. Ballistic annihilation is considered to be a very accurate model in the whole range $\alpha\in[0,1]$ (including the pure annihilation case $\alpha=1$) in dimension other than one.  This conclusion has been reached through extensive numerical simulations in the aforementioned references.  In dimension one, the kinetic approach has been shown to mistakenly predict the correct dynamic relaxation for the pure annihilation regime in the case of finite number of point masses (discrete velocities) for initial data due to strong cumulative correlations. We will therefore in the sequel always consider the case of $d$-dimensional hard spheres with $d \geq 2.$ 

\medskip

Contrary to granular gases, ballistic annihilation dissipates the density, thus, it does not have natural collision invariants.  As a consequence, the solution to the associated kinetic equation converges to $0$ as time goes to infinity.  We aim to answer the two questions raised before:
\begin{enumerate}[(Q1)]
\item What is the precise rate of decay towards zero of the macroscopic quantities as density and kinetic energy ? 
\item  Is the long-time behavior of the solution described by some suitable self-similar profile which would attract any solution to the associated equation after proper rescaling ? 
\end{enumerate}

We will focus on these questions in the regime when $\alpha$ is relatively small, but still order one.  This regime is interpreted as a system of elastic particles colliding many times before annihilating, that is, particles undergoing significant diffusion due to collisions before annihilating.  This is precisely the natural regime to search for self-similarity.  We prove that the model possesses an universal attractor related to the self-similarity equation, which leads to universal algebraic relaxation rates that can be explicitly computed.  Exact rates are quite expensive to compute as they demand the knowledge of the attractor, which requires solving the highly nonlinear integro-differential equation \eqref{steady}.  For this reason, the rate found in the limit $\alpha\rightarrow0$ is of key relevance.  In reference \cite{Trizac} was conjectured that, in particular, the mass of the solution $f(t,\cdot)$ to the kinetic equation  behaved in the long run as 
$$\int_{\R^{d}}f(t,v)\d v\sim t^{-4d/(4d+1)}, \qquad t \to \infty$$ and later in \cite{Cop04} numerical evidence was given supporting this fact.  A particular application of the analysis performed in this work is precisely the rigorous proof of such statement (see Theorem \ref{main:no-scaled} and comments below).  Interestingly, the pure annihilation case $\alpha=1$ does not enjoy attractors, and long time relaxation rates depend on the initial configuration as proven in the aforementioned references (for both continuous and discrete velocity initial data).  Furthermore, it is unclear what happens with the system's dynamics in the regime where $\alpha$ is relatively large, however, reference \cite{Cop04} shows numerical evidence that seems to indicate existence of attractors as long as $\alpha<1$.
\medskip

Before discussing in details the results and answers to the above queries, let us precisely describe the model we are dealing with.

\subsection{The equation at stake}

In a spatially homogeneous framework, the density of particles  $f(t,v)$   with velocity $v \in \R^d$ $(d \geq 2)$ at time $t \geq 0,$  satisfies the following
\begin{equation}\label{BE}\begin{cases}
\partial_t f(t,v)&=\mathbb{B}_\alpha(f,f)(t,v):=(1-\alpha)\Q(f,f)(t,v) -\alpha \Q_-(f,f)(t,v) \qquad t > 0\\
f(0,v)&=f_0(v) \end{cases}\end{equation}
where $\Q$ is the quadratic Boltzmann collision operator defined by
 \begin{equation*}\label{bilin}
 \Q(g,f)(v) = \int _{\R^d \times \S^{d-1}} \left|v-v_*\right|
         \left( g' f_* ' - g f_* \right) \,\d v_* \, \d\sigma,
 \end{equation*}
where we have used the shorthands $g=g(v)$, $g'=g(v')$, $f_*=f(v_*)$ and
$f'_*=f(v'_*)$ with post-collisional velocities $v'$ and $v'_*$  parametrized by
 \begin{equation}\label{eq:rel:vit}
 v'  =  \frac{v+v_*}{2} + \frac{|v-v_*|}{2}\;\s,   \qquad
v'_* =  \frac{v+v_*}{2} - \frac{|v-v_*|}{2}\;\s,   \qquad \s  \in  {\S}^{d-1}.
 \end{equation}
 Here above, $\d\sigma$ denotes the \emph{normalized} Lebesgue measure over $\S^{d-1}$, i.e 
 $\int_{\S^{d-1}}\d\sigma=1.$\medskip

The above collision operator $\Q(f,f)$ splits as $\Q(f,f)=\Q_+(f,f)-\Q_-(f,f)$ where the gain part $\Q_+$ is given by
$$ \Q_+(f,f)(v) = \int _{\R^d \times {\S}^{d-1}}|v-v_*| f'_* f'\, \d v_* \,  \d\sigma ,$$
while the loss part $\Q_-$ is defined as
\begin{equation}\label{Q-}
\Q_-(f,f)(v)=f(v)\Sigma_{f}(v), \qquad \text{ with } \qquad \Sigma_{f}(v)=\int_{\R^d}|v-v_*|f_*\d v_*.\end{equation}
 
The Cauchy theory for the above equation has been investigated in a previous contribution \cite{jde}, and we refer to the \textit{op. cit.} for related questions.\\

In all the present paper, we shall assume that $f_0 \in L^1_{3}(\R^d)$ is a nonnegative initial datum and that $f(t,\cdot) \in L^1_3(\R^d)$ is the associated solution to \eqref{BE} for a given parameter $\alpha \in (0,1)$. As explained, such solution $f(t,\cdot)$ is expected to converge to zero  as $t \to \infty$  and, before reaching such degenerate state, the solution is expected to become close to a self-similar solution of the form
\begin{equation}\label{autosim}
f_H(t,v)=\lambda(t)\,\psi_\alpha(\beta(t)v),
\end{equation}
for some scaling functions $\lambda(t)$ and $\beta(t)$ and for a given \emph{self-similar profile} $\psi_\alpha$ (depending clearly on the choice of the parameter $\alpha$). One can then show, see \cite{Piasecki,Trizac,jde}, that such a self-similar profile is a solution to the following stationary Boltzmann equation
\begin{equation}\label{steady}
\mathbf{A}_\alpha \psi_\alpha(\xi)+\mathbf{B}_\alpha\, \xi \cdot \nabla_\xi \psi_\alpha(\xi)
=(1-\alpha)\Q(\psi_\alpha,\psi_\alpha)(\xi) -\alpha \Q_-(\psi_\alpha,\psi_\alpha)(\xi),
\end{equation}
where
\begin{equation}\label{Aalpha}
\mathbf{A}_\alpha=-\frac{\alpha}{2} \int_{\R^d}
\left(\frac{d+2}{\int_{\R^d} \psi_\alpha(\xi_*)\, \d\xi_*}
-\frac{d\,|\xi|^2}{\int_{\R^d} \psi_\alpha(\xi_*)\, |\xi_*|^2\, \d\xi_*}\right)
\Q_-(\psi_\alpha ,\psi_\alpha )(\xi)\d \xi,
\end{equation}
and
\begin{equation}\label{Balpha}
\mathbf{B}_\alpha =-\frac{\alpha}{2}\int_{\R^d}
\left(\frac{1}{\int_{\R^d} \psi_\alpha(\xi_*)\, \d\xi_*}
-\frac{|\xi|^2}{\int_{\R^d} \psi_\alpha(\xi_*)\, |\xi_*|^2\, \d\xi_*}\right)
\Q_-(\psi_\alpha ,\psi_\alpha )(\xi)\d \xi.
\end{equation}
Existence of solutions to \eqref{steady} has been proven in \cite{jde} for any $\alpha$ smaller than some explicit threshold value. Moreover, borrowing techniques already used for similar questions in the study of granular gases \cite{MiMo3}, uniqueness of the self-similar profile has been established in \cite{jde2} for a smaller range of parameters $\alpha$. Namely, 
\begin{theo}{\textit{\textbf{(Existence and uniqueness of the self-similar profile}} \cite{jde,jde2})}\phantomsection\label{theo:exist-unique}
There is some explicit $0 < \alpha_{0} < 1$
such that for any $\alpha\in(0,\alpha_{0})$, for any given $\varrho>0$ and  
$E>0$, there exists a unique  solution $\psi_\alpha$ to \eqref{steady} with mass $\varrho$, energy $E$ and zero momentum, i.e.
\begin{equation*}\int_{\R^d} \psi_\alpha(\xi)\left(\begin{array}{c}
                                                           1 \\
                                                           \xi \\
                                                           |\xi|^2
                                                         \end{array}\right)
\,\d\xi=\left(\begin{array}{c}
          \varrho \\
          0 \\
          E \end{array}\right)
.
\end{equation*}
Moreover, $\psi_{\alpha}$ is smooth and radially symmetric.
\end{theo}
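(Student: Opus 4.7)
Before seeing the proof in \cite{jde,jde2}, the natural strategy is to approach \eqref{steady} perturbatively around $\alpha = 0$, where the equation degenerates to the classical elastic Boltzmann equation $\Q(\psi,\psi) = 0$. Indeed, when $\alpha = 0$ one has $\mathbf{A}_0 = \mathbf{B}_0 = 0$, and the only solutions with prescribed mass $\varrho$, zero momentum, and energy $E$ are the Maxwellians $\mathcal{M}_{\varrho,E}$, which are smooth and radially symmetric. This supplies both a canonical starting point and the uniqueness at $\alpha = 0$ from which a continuation argument can be launched.

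For existence, I would first use the scaling invariance of \eqref{steady} to reduce to the case $\varrho = 1$, $E = d/2$, so that the target Maxwellian is the standard one. The equation can then be recast as a fixed-point problem $\psi_\alpha = \Phi_\alpha(\psi_\alpha)$, where $\Phi_\alpha$ inverts the loss and drift terms on the left-hand side against the gain $\Q_+(\psi,\psi)$ on the right-hand side; when the mass and energy of $\psi_\alpha$ are constrained to $(\varrho,E)$, the constants $\mathbf{A}_\alpha$ and $\mathbf{B}_\alpha$ are explicitly of size $O(\alpha)$, and $\Phi_\alpha$ becomes a small perturbation, on a weighted $L^1$-ball around $\mathcal{M}_{\varrho,E}$, of the classical Boltzmann fixed-point map. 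A Schauder (or Banach) fixed-point argument, combined with a priori moment bounds and the regularizing action of $\Q_+$, then yields a solution on an explicit interval $\alpha \in (0,\alpha_0)$.

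For uniqueness, the idea is to linearize \eqref{steady} at the Maxwellian $\mathcal{M}_{\varrho,E}$ and exploit the spectral gap of the classical linearized Boltzmann operator $\mathcal{L}_0$ on the orthogonal complement of the collision invariants. Since any two profiles $\psi_\alpha,\widetilde{\psi}_\alpha$ with the hypothesized moments share the same triple $(\varrho,0,E)$, their difference lies in this orthogonal subspace; the linearization $\mathcal{L}_\alpha$ of the full equation inherits a spectral gap for $\alpha$ small, quantitatively controlled by $\alpha_0$. A weighted-norm estimate on $\psi_\alpha - \widetilde{\psi}_\alpha$, absorbing the perturbative drift and annihilation contributions into the spectral gap of $\mathcal{L}_0$, then forces the difference to vanish, following the strategy developed for inelastic hard spheres in \cite{MiMo3}.

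The main obstacle I anticipate is the implicit, nonlinear self-consistent dependence of the constants $\mathbf{A}_\alpha,\mathbf{B}_\alpha$ on $\psi_\alpha$ via the nonlocal integral involving $\Q_-(\psi_\alpha,\psi_\alpha)$: one must show that small perturbations of the profile produce controlled perturbations of these constants, and reciprocally, in order both to close the fixed-point map and to obtain the uniform spectral gap for $\mathcal{L}_\alpha$. Once these quantitative perturbation estimates are in place, smoothness follows from a Duhamel bootstrap exploiting the regularizing property of $\Q_+$, and radial symmetry follows from uniqueness: under the zero-momentum constraint the equation is rotationally invariant, so any rotation of a solution remains a solution with the same moments, hence coincides with the original.
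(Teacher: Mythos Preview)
The paper does not actually prove this theorem: it is quoted from the earlier works \cite{jde,jde2}, so there is no in-paper proof to compare against. Your sketch is nonetheless consistent with what the paper says about those references. In particular, the paper explicitly states that uniqueness in \cite{jde2} was obtained ``borrowing techniques already used for similar questions in the study of granular gases \cite{MiMo3}'', which is precisely the spectral-gap perturbation argument you outline; your observation that radial symmetry follows from uniqueness plus rotational invariance is also the standard route. For existence, the paper only says that \cite{jde} establishes it for $\alpha$ below an explicit threshold, without describing the method; your Schauder/Banach fixed-point scheme around the Maxwellian is a plausible reconstruction, though the actual argument in \cite{jde} may proceed via a dynamical approach (a priori estimates on the rescaled evolution plus compactness) rather than a direct contraction on the stationary problem. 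Either way, since the present paper contains no proof of this statement, your proposal cannot be said to diverge from it.
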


By a simple scaling argument, there is no loss of generality in considering the special case in which $\varrho=1$ and $E=d/2$ and, from now on, we will denote by $\psi_\alpha$ the \emph{unique 
solution to \eqref{steady}} that satisfies 
\begin{equation}\label{init}\int_{\R^d} \psi_\alpha(\xi)\left(\begin{array}{c}
                                                           1 \\
                                                           \xi \\
                                                           |\xi|^2
                                                         \end{array}\right)
\,\d\xi=\left(\begin{array}{c}
          1 \\
          0 \\
          \dfrac{d}{2}
        \end{array}\right)
.
\end{equation}
We denote by $\M(\xi)$ the Maxwellian distribution with same first moments as $\psi_{\alpha}$, i.e.
\begin{equation}\label{M}
\M(\xi)=\pi^{-d/2}\, \exp(-|\xi|^2), \qquad \forall \xi \in\R^{d}.\end{equation}

\subsection{{Self similar variable}}

Let us consider a solution $f=f(t,v)$ to \eqref{BE} for some nonnegative initial datum $f_{0} \in L^{1}_{3}(\R^{d}).$ Let us introduce the following $\psi(\tau,\xi)$ through
\begin{equation}\label{scalingPsi}
f(t,v)=n_{f}(t){(2T_{f}(t))^{-d/2}}\psi\left(\tau(t),\frac{v-\bm{u}_{f}(t)}{\sqrt{2T_{f}(t)}}\right)
\end{equation} 
for some suitable scaling function $\tau\::\:\R^{+}\to\R^{+}$ and with
\begin{align}\label{eq:momf}
\begin{split}
n_{f}(t)=\int_{\R^{d}}f(t,v)\d v, &\qquad n_{f}(t)\bm{u}_{f}(t)=\int_{\R^{d}}f(t,v)v\d v, \\
&d\,n_{f}(t)T_{f}(t)=\int_{\R^{d}}f(t,v)|v-\bm{u}_{f}(t)|^{2}\d v, \qquad t \geq0.\end{split}\end{align}
Notice that the choice of the scaling enforces the following
\begin{equation}\label{eq:conserve}
\int_{\R^{d}}\psi(\tau,\xi)\left(\begin{array}{c}1\\\xi \\ {|\xi|^{2}}\end{array}\right)\d\xi=\left(\begin{array}{c}1 \\0 \\\frac{d}{2}\end{array}\right) \qquad \forall \tau \geq0.\end{equation}
which ensures the self-similar function $\psi(\tau)$ to share the same mass, momentum and energy of the steady profile $\psi_{\alpha}.$ With such a scaling, straightforward computations, see Section \ref{sec:scaling} for details, combined with the uniqueness of the solutions to both Cauchy problems \eqref{BE} and \eqref{rescaBE} yield to the following proposition.
\begin{prop}\phantomsection\label{prop:cauc} Let $f_{0} \in L^{1}_{3}(\R^{d})$ be a nonnegative initial datum with positive mass and temperature
$$n_{f_{0}}=\int_{\R^{d}}f_{0}(v)\d v >0, \quad \qquad T_{f_{0}}=\frac{1}{dn_{f_{0}}}\int_{\R^{d}}f_{0}(v)|v-\bm{u}_{f_{0}}|^{2}\d v> 0,$$
where $\bm{u}_{f_{0}}=\frac{1}{n_{f_{0}}}\int_{\R^{d}}f_{0}(v)\,v\d v \in \R^{d}.$ Let $f(t,v)$ denote the unique solution to \eqref{BE} associated to the initial datum $f_{0}.$ Then, $\min\left(n_{f}(t),T_{f}(t)\right) >0$ for all $t \geq 0$.  In addition, introducing the scaling function 
\begin{equation}\label{eq:tau}
\tau(t)=\sqrt{2}\int_{0}^{t} n_{f}(s)\sqrt{T_{f}(s)}\d s, \qquad \forall t \geq 0,\end{equation}
and defining $\psi(\tau,\xi)$ by \eqref{scalingPsi}, it holds that $\psi(\tau,\xi)$ 
is the unique solution to 
\begin{align}\label{rescaBE}
\begin{split}
\partial_{\tau}\psi(\tau,\xi) + \big(\mathbf{A}_{\psi}(\tau) - &d \mathbf{B}_{\psi}(\tau)\big)\,\psi(\tau,\xi) + \mathbf{B}_{\psi}(\tau)\mathrm{div}_{\xi}\big(\left({\xi}-\bm{v}_{\psi}(\tau)\right)\psi(\tau,\xi))\\
&= (1-\alpha)\Q(\psi,\psi)(\tau,\xi)-\alpha\Q_{-}(\psi,\psi)(\tau,\xi)\end{split}\end{align}
with initial datum $\psi(0,\xi)=(2T_{f_{0}})^{d/2}n_{f_{0}}^{-1}\;f_{0}\left(\sqrt{2T_{f_{0}}}\,\xi+\bm{u}_{f_{0}}\right)$ and 
where $\mathbf{A}_{\psi}(\cdot),\mathbf{B}_{\psi}(\cdot)$ and $\bm{v}_{\psi}(\cdot)$ are defined by 
\begin{equation}\label{eq:BAV}\begin{cases}
\mathbf{A}_\psi(\tau)&=\displaystyle -\frac{\alpha}{2}  \int_{\mathbb{R}^d}
\left( d+2 
-2|\xi|^2\right) \Q_-(\psi ,\psi )(t,\xi)\text{d} \xi\\
\\
\mathbf{B}_\psi(\tau) &=-\displaystyle \frac{\alpha}{2}\int_{\mathbb{R}^d}
\left(1-\frac{2}{d}|\xi|^2\right)\Q_-(\psi ,\psi )(\tau,\xi)\text{d} \xi\\
\\
\mathbf{B}_{\psi}(\tau)\bm{v}_{\psi}(\tau)&=-\displaystyle  \alpha\int_{\R^{d}}\xi\,\Q_{-}(\psi,\psi)(\tau,\xi)\d\xi \in \R^{d}, \qquad \forall \tau \geq 0.
\end{cases}\end{equation}
\end{prop}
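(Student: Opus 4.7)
The plan is to split the proof into three tasks: strict positivity of the macroscopic moments $n_f$ and $T_f$, derivation of \eqref{rescaBE} from \eqref{BE} under the scaling \eqref{scalingPsi}, and uniqueness of $\psi$ via uniqueness for \eqref{BE}.

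First, I would obtain strict positivity of $n_f(t)$ by rewriting the collision operator as $\mathbb{B}_\alpha(f,f)=(1-\alpha)\Q_+(f,f)-f\,\Sigma_f$ and applying Duhamel's formula: since $\Q_+\geq 0$ and $f_0\geq 0$ with positive mass, one has pointwise $f(t,v)\geq f_0(v)\exp\!\big(-\int_0^t\Sigma_f(s,v)\,\d s\big)$, and integration gives $n_f(t)>0$ for every $t\geq 0$. For the temperature I would observe that $f(t,\cdot)\in L^1(\R^d)$ with positive mass is absolutely continuous with respect to Lebesgue measure, hence is not a Dirac mass concentrated at $\bm{u}_f(t)$, which forces $T_f(t)=(dn_f(t))^{-1}\int f(t,v)|v-\bm{u}_f(t)|^2\,\d v>0$.

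For the rescaled equation, I would first test \eqref{BE} against $1,v,|v|^2$, exploiting the classical weak conservation of mass, momentum and energy by the elastic operator $\Q$, to get
\begin{equation*}
\dot n_f=-\alpha\!\!\int_{\R^d}\!\!\Q_-(f,f)\,\d v,\qquad \tfrac{\d}{\d t}(n_f\bm{u}_f)=-\alpha\!\!\int_{\R^d}\!\!v\,\Q_-(f,f)\,\d v,\qquad \tfrac{\d}{\d t}\!\!\int_{\R^d}\!\!f|v|^2\,\d v=-\alpha\!\!\int_{\R^d}\!\!|v|^2\Q_-(f,f)\,\d v.
\end{equation*}
Then, setting $\lambda(t)=n_f(2T_f)^{-d/2}$, $\beta(t)=(2T_f)^{-1/2}$ and $\xi=\beta(v-\bm{u}_f)$, the translation/dilation invariance of the collision operators yields $\Q(f,f)(v)=\lambda^2\beta^{-d-1}\Q(\psi,\psi)(\xi)$ and the analogous relation for $\Q_-$; this dictates the time change $\tau'(t)=\lambda\beta^{-d-1}=\sqrt{2}\,n_f\sqrt{T_f}$, producing \eqref{eq:tau}. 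Plugging the ansatz into $\partial_t f$ and dividing through by $\lambda\dot\tau$ then reduces \eqref{BE} to \eqref{rescaBE} provided that
\begin{equation*}
\frac{\dot\lambda}{\lambda\dot\tau}=\mathbf{A}_\psi,\qquad \frac{\dot\beta}{\beta\dot\tau}=\mathbf{B}_\psi,\qquad \frac{\beta\dot{\bm{u}}_f}{\dot\tau}=\mathbf{B}_\psi\bm{v}_\psi,
\end{equation*}
after expanding $\mathbf{B}_\psi\,\mathrm{div}_\xi((\xi-\bm{v}_\psi)\psi)=d\mathbf{B}_\psi\psi+\mathbf{B}_\psi\,\xi\cdot\nabla_\xi\psi-\mathbf{B}_\psi\bm{v}_\psi\cdot\nabla_\xi\psi$. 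I would verify these three identifications by inserting the moment evolutions above, expressing both sides via $\mathcal{I}_k=\int|\xi|^k\Q_-(\psi,\psi)\,\d\xi$ for $k=0,1,2$, and using the identity $\lambda^2\beta^{-2d-1}=\dot\tau\,n_f$. The conservation identity \eqref{eq:conserve} is immediate from the choice of $\lambda,\beta,\bm{u}_f$.

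Finally, for uniqueness I would invert the scaling: any second solution $\widetilde\psi$ of \eqref{rescaBE} with the prescribed initial datum produces, via $\widetilde f(t,v)=\widetilde\lambda(t)\widetilde\psi(\tau(t),\widetilde\beta(v-\widetilde{\bm{u}}_f))$ with $\widetilde\lambda,\widetilde\beta,\widetilde{\bm{u}}_f$ reconstructed from the first three moments of $\widetilde\psi$, another solution of \eqref{BE} with datum $f_0$, and Cauchy uniqueness in $L^1_3(\R^d)$ from \cite{jde} then forces $\widetilde\psi=\psi$. The main obstacle is the algebraic matching carried out in the middle step: the $\bm{u}_f$-dependent contributions to $\int v\Q_-(f,f)\,\d v$ and $\int|v|^2\Q_-(f,f)\,\d v$ produce drift-like terms that must cancel against $\dot n_f\bm{u}_f+n_f\dot{\bm{u}}_f$ and $\dot n_f|\bm{u}_f|^2+2n_f\bm{u}_f\cdot\dot{\bm{u}}_f$ in order to leave only the $\xi$-moments $\mathcal{I}_0,\mathcal{I}_1,\mathcal{I}_2$ of $\Q_-(\psi,\psi)$, and only after this cancellation do the compact formulas \eqref{eq:BAV} for $\mathbf{A}_\psi,\mathbf{B}_\psi,\bm{v}_\psi$ emerge.
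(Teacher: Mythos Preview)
Your proposal is correct and follows essentially the same route as the paper: chain-rule differentiation of the ansatz \eqref{scalingPsi}, the scaling identity $\Q_{\pm}(f,f)=\lambda^{2}\beta^{-d-1}\Q_{\pm}(\psi,\psi)$, the choice $\dot\tau=n_f\sqrt{2T_f}$, and then recovery of the formulas \eqref{eq:BAV} from the moment constraints \eqref{eq:conserve}. The only noticeable difference is in the uniqueness step: the paper invokes \cite[Theorem~1.10]{jde} directly for the rescaled Cauchy problem \eqref{rescaBE}, whereas you propose to invert the scaling and transport uniqueness from \eqref{BE}; both are valid, but your route requires the extra (easy) check that the constraints \eqref{eq:conserve} are propagated by any solution of \eqref{rescaBE} so that the reconstructed $\widetilde f$ really lands back in \eqref{BE} with datum $f_0$.
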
\phantomsection

From the previous results, one sees that $\psi_{\alpha}$ is a steady solution to \eqref{rescaBE} -- independent of the time variable $\tau$ -- and for which
$$\mathbf{A}_{\alpha}:=\mathbf{A}_{\psi_{\alpha}}, \qquad \mathbf{B}_{\alpha}:=\mathbf{B}_{\psi_{\alpha}}, \qquad \text{ and } \quad \bm{v}_{\psi_{\alpha}}=0$$
since, $\psi_{\alpha}$ being radially symmetric so is $\Q_{-}(\psi_{\alpha},\psi_{\alpha}).$

\subsection{Notations} For all $r >0$, we denote by $\mathbb{D}(r)$ the open disc of $\mathbb{C}$ with radius $r$, i.e. $\mathbb{D}(r)=\{z \in \mathbb{C}\;;\;|z| < r\}.$ Given two Banach spaces $X$ and $Y$, we denote by $\mathscr{B}(X,Y)$
the set of linear bounded operators from $X$ to $Y$ and by
$\|\cdot\|_{\mathscr{B}(X,Y)}$ the associated operator norm. If $X=Y$,
we simply denote $\mathscr{B}(X):=\mathscr{B}(X,X)$. We denote then by
$\mathscr{C}(X)$ the set of closed, densely defined linear operators
on $X$ and by $\mathscr{K}(X)$ the set of all compact operators in
$X$. For $A \in \mathscr{C}(X)$, we write $\D(A) \subset X$ for the
domain of $A$, $\mathscr{N}(A)$ for the null space of $A$ and
$\mathrm{Range}(A) \subset X$ for the range of $A$. The spectrum of
$A$ is then denoted by $\mathfrak{S}(A)$ and the resolvent set is
$\varrho(A)$. For $\lambda \in \rho(A)$, 
$\mathcal{R}(\lambda,A)=(\lambda\mathbf{Id} -A)^{-1}$ denotes the resolvent of $A$.

Let us introduce some useful notations for function spaces. For any nonnegative weight function $m\::\:\R^{d}\to \R^{+}$,  we define, for all $p \geq1$ and $q \geq0$ the space $L^{p}_{q}(m)$ through the norm
$$\|f\|_{L^{p}_{q}(m)}:=\left(\int_{\R^{d}}|f(\xi)|^{p}\langle \xi\rangle^{pq}m(\xi)\d\xi\right)^{1/p},$$
i.e. $L^{p}_{q}(m)=\{f\::\R^{d} \to \R\;;\,\|f\|_{L^{p}_{q}(m)} < \infty\}.$ We also define,  for $k \in \N$,
$$\W^{k,p}_{q}(m)=\left\{f \in L^{p}_{q}(m)\;;\;\partial_{\xi}^{\beta}f \in L^{p}_{q}(m) \:\forall |\beta| \leq k\right\}$$
with the usual norm,
$$\|f\|_{\W^{k,p}_{q}(m)}^{p}=\sum_{|\beta| \leq k}\|\partial_{\xi}^{\beta}f\|_{L^{p}_{q}(m)}^{p}.$$
For $m \equiv 1$, we simply denote the associated spaces by $L^{p}_{q}$ and $\W^{k,p}_{q}$.

\subsection{Main results} Let us recall that $\mathbf{A}_{\alpha}:=\mathbf{A}_{\psi_{\alpha}},$ $\mathbf{B}_{\alpha}:=\mathbf{B}_{\psi_{\alpha}}$. We introduce the nonnegative quantities 
\begin{equation}\label{abalpha}
\mathbf{a}_{\alpha}:=\frac{d \mathbf{B}_{\alpha}- \mathbf{A}_{\alpha}}{\alpha}, \qquad \mathbf{b}_{\alpha}:=\frac{(d+2) \mathbf{B}_{\alpha}- \mathbf{A}_{\alpha}}{\alpha}, \qquad \alpha \in (0,\alpha_{0}).
\end{equation}
The following is the main result of the paper.
\begin{theo}\phantomsection\label{main:no-scaled} 
Assume that $f_0$ is nonnegative, with positive mass and temperature and such that {
$$f_{0} \in  H^{\frac{(5-d)^{+}}{2}}_{\eta}(\R^{d}) \cap L^{1}_\kappa(\R^d) $$}
for some $\eta > 4+d/2$ and some $\kappa>\max\{ 4+d/2, d(d-2)/(d-1)\}$, $(d \geq 3)$. We also assume that $f_{0}$ has finite entropy and Fisher information, i.e.
$$\int_{\R^{d}}f_{0}(v)\log f_{0}(v)\d v < \infty \qquad \text{ and } \qquad \int_{\R^{d}}\big|\nabla \sqrt{f_{0}(v)}\big|^{2}\d v < \infty.$$
Let $f(t,v)$ be the unique solution to \eqref{BE} associated to the initial datum $f_{0}$. Then, there exists some $\overline{t}>0$ and some explicit $A>0$ such that for any $a\in(0,A/2)$, for any $\varepsilon >0$  there exist some explicit  $\alpha_{c} \in (0,\alpha_{0})$ such that, for all $\alpha \in (0,\alpha_{c})$, there is some $C=C_{\alpha,\varepsilon,f_{0}} >0$ depending on $f_{0}$ through $n_{f_{0}},$ $\bm{u}_{f_{0}}$ and $T_{f_{0}}$ and such that
$$\int_{\R^{d}}\left|f(t,v)- \bm{f}_{\alpha}(t,v)\right|\exp\left(a\,\frac{|v-\bm{u}_{f}(t)|}{\sqrt{2T_{f}(t)}}\right)
\d v \leq  C\left(1+t\right)^{-\vartheta} \qquad \forall t \geq \overline{t}$$
where $\vartheta:=\frac{2}{\alpha(\mathbf{a}_{\alpha}+\mathbf{b}_{\alpha})}\left(\alpha\mathbf{a}_{\alpha}+ \mu_{\star}-\varepsilon\right)$, $\mu_{\star}$ denotes the spectral gap of the linearized operator $\mathscr{L}_{0}$ associated to the elastic Boltzmann equation in $L^{2}(\M^{-1})$, 
$$\bm{f}_{\alpha}(t,v)=n_{f}(t)(2T_{f}(t))^{-d/2}\psi_{\alpha}\left(\frac{v-\bm{u}_{f}(t)}{\sqrt{2T_{f}(t)}}\right), \qquad t \geq 0$$
with the moments $n_{f}(t),$ $T_{f}(t)$ and $\bm{u}_{f}(t)$ satisfying
\begin{equation}
\label{eq:rate}
\log n_{f}(t)  \simeq -2\frac{ \mathbf{a}_{\alpha}}{\mathbf{a}_{\alpha}+\mathbf{b}_{\alpha}}\log t, \qquad  \qquad \log T_{f}(t) \simeq -\frac{4\mathbf{B}_{\alpha}}{\alpha(\mathbf{a}_{\alpha}+\mathbf{b}_{\alpha})}\log t\qquad \text{ for } t \to \infty\end{equation}
and 
{$$\lim_{t \to \infty}\bm{u}_{f}(t)=\bm{u}_{f_{0}}+ \sqrt{2T_{f_0}}\int_{0}^{\infty}\mathbf{B}_{\psi}(s)\bm{v}_{\psi}(s)\exp\left(-\int_{0}^{s}\mathbf{B}_{\psi}(r)\d r\right)\d s.$$}
Moreover, for fixed $\alpha$, the aforementioned rates for $n_{f}$ and $T_f$ are universal.\end{theo}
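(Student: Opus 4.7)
The plan is to exploit Proposition \ref{prop:cauc} and work in the self-similar variable $\psi(\tau,\xi)$: since $\psi_\alpha$ is a stationary solution of \eqref{rescaBE}, it suffices to establish exponential decay of $h(\tau,\xi) := \psi(\tau,\xi)-\psi_\alpha(\xi)$ in a suitable weighted norm as $\tau \to \infty$, and then translate this to an algebraic rate in the physical time $t$ through the scaling $\tau(t)$ of \eqref{eq:tau}. The normalizations \eqref{eq:conserve}--\eqref{init} ensure that $h$ has zero mass, momentum and energy, which is the correct orthogonality for the spectral gap of the linearized operator to be usable.

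\textbf{Linearization and spectral gap.} Subtracting the stationary version of \eqref{rescaBE} from \eqref{rescaBE} yields
\begin{equation*}
\partial_\tau h \,=\, \mathscr{L}_\alpha h + \mathcal{N}(h,h),
\end{equation*}
where $\mathscr{L}_\alpha$ combines (i) the linearized collision operator $2(1-\alpha)\Q(\psi_\alpha,\cdot)-2\alpha\Q_-(\psi_\alpha,\cdot)$, (ii) the drift term $\mathbf{B}_\alpha \,\mathrm{div}_\xi(\xi\,\cdot)$ together with the multiplier $(\mathbf{A}_\alpha-d\mathbf{B}_\alpha)\,\cdot$, and (iii) a finite-rank correction obtained by differentiating the functionals $\mathbf{A}_\psi, \mathbf{B}_\psi, \bm{v}_\psi$ at $\psi_\alpha$. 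The heart of the argument is to prove that, on the codimension-$(d+2)$ subspace annihilating mass, momentum and energy, $\mathscr{L}_\alpha$ admits a spectral gap of size at least $\alpha\mathbf{a}_\alpha+\mu_\star-\varepsilon$ in a weighted $L^1$-type space. The elastic gap $\mu_\star$ of $\mathscr{L}_0$ in $L^2(\M^{-1})$ is first transferred to $L^1(\exp(a|\xi|))$ by an enlargement-of-the-functional-space argument in the spirit of Gualdani--Mischler--Mouhot, and then preserved under the perturbation $0 \rightsquigarrow \alpha$ thanks to $\|\psi_\alpha-\M\| = O(\alpha)$ (established in \cite{jde2}) together with $\mathbf{A}_\alpha, \mathbf{B}_\alpha = O(\alpha)$.

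\textbf{Uniform a priori estimates and nonlinear closure.} To absorb the quadratic remainder $\mathcal{N}(h,h)$ we need uniform-in-$\tau$ control of $\psi(\tau,\cdot)$ in strong norms: polynomial moments via Povzner-type estimates, Sobolev regularity via the smoothing effect of $\Q_+$ and the assumed bounds on entropy and Fisher information, and an exponential tail in $\xi$ via Bobylev--Gamba-type arguments. Under the hypotheses on $f_0$ these propagate to $\psi(\tau,\cdot)$ uniformly in $\tau$. Combined with the spectral gap they yield, for some $\overline{\tau}>0$,
\begin{equation*}
\|\psi(\tau,\cdot)-\psi_\alpha\|_{L^1(\exp(a|\xi|))} \,\leq\, C\,e^{-(\alpha\mathbf{a}_\alpha+\mu_\star-\varepsilon)\tau}, \qquad \tau \geq \overline{\tau}.
\end{equation*}

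\textbf{Unscaling.} The asymptotics \eqref{eq:rate} come from the ODEs for $n_f(t)$ and $T_f(t)$, whose right-hand sides are expressed through $\mathbf{A}_\psi(\tau), \mathbf{B}_\psi(\tau)$: as $\psi(\tau,\cdot)\to \psi_\alpha$ these coefficients tend to $\mathbf{A}_\alpha, \mathbf{B}_\alpha$, integration gives the announced power laws, and in particular $\tau(t) \sim \frac{2}{\alpha(\mathbf{a}_\alpha+\mathbf{b}_\alpha)} \log t$. Writing $|f(t,v)-\bm{f}_\alpha(t,v)| = n_f(t)(2T_f(t))^{-d/2}|h(\tau(t),\xi)|$ and changing variables back, the exponential-in-$\tau$ estimate together with the $n_f(t) \sim t^{-2\mathbf{a}_\alpha/(\mathbf{a}_\alpha+\mathbf{b}_\alpha)}$ prefactor yields precisely the exponent $\vartheta$. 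The main obstacle will be the spectral-gap step: producing it in a functional space compatible with the quadratic nonlinear estimates, and controlling uniformly in $\alpha$ the non-self-adjoint, finite-rank corrections issued from the $\psi$-dependence of $\mathbf{A}_\psi, \mathbf{B}_\psi$ and $\bm{v}_\psi$.
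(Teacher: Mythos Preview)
Your overall architecture (pass to self-similar variables, linearize around $\psi_\alpha$, exploit a spectral gap, close the nonlinearity, unscale) matches the paper's, but two substantive pieces are off.

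\textbf{The decay rate in $\tau$ and a double count.} You assert that $\mathscr{L}_\alpha$ has spectral gap $\alpha\mathbf{a}_\alpha+\mu_\star-\varepsilon$ on the zero-moments subspace and hence $\|h(\tau)\|\le C\,e^{-(\alpha\mathbf{a}_\alpha+\mu_\star-\varepsilon)\tau}$. The paper only proves (Theorem~\ref{theo:main-rescaled}) the rate $\mu_\star-\varepsilon$; the extra $\alpha\mathbf{a}_\alpha$ in $\vartheta$ arises \emph{solely} from the prefactor $n_f(t)\sim e^{-\alpha\mathbf{a}_\alpha\tau(t)}$ in the unscaling. With your claimed rate, multiplying by $n_f(t)$ would yield $e^{-(2\alpha\mathbf{a}_\alpha+\mu_\star-\varepsilon)\tau(t)}$, not $\vartheta$. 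More seriously, no mechanism in your argument produces the additional $\alpha\mathbf{a}_\alpha$: perturbing $\mathscr{L}_0$ by $O(\alpha)$ terms keeps the gap near $\mu_\star$, nothing more. In fact the paper shows (Theorem~\ref{theo:decayX0}) that the $d{+}2$ ``small'' eigenvalues $\mu_\alpha^1,\dots,\mu_\alpha^{d+2}$ of $\mathscr{L}_\alpha$ have \emph{unknown sign}; a direct spectral-gap statement on the mass/momentum/energy orthogonal is therefore not available. The paper bypasses this via the identity $\mathbf{P}_0 h(\tau)=0$ (conservation laws in the rescaled equation) together with $\|\mathbf{P}_\alpha-\mathbf{P}_0\|=O(\alpha)$, which gives $\|h\|\le C\|(\mathbf{Id}-\mathbf{P}_\alpha)h\|$ (Lemma~\ref{lem:I-P}); only then can the semigroup decay on $\mathrm{Range}(\mathbf{Id}-\mathbf{P}_\alpha)$ be invoked. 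Your proposal does not contain this step. Relatedly, the paper does \emph{not} fold the linearization of $\mathbf{A}_\psi,\mathbf{B}_\psi,\bm{v}_\psi$ into $\mathscr{L}_\alpha$; those contributions are kept in the remainder $\mathbf{G}_\alpha$ so that $\mathscr{L}_\alpha$ remains a clean $O(\alpha)$-perturbation of $\mathscr{L}_0$.

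\textbf{Global closure of the nonlinearity.} To absorb $\mathbf{G}_\alpha$ in the Duhamel formula one needs $\|h(\tau)\|$ to be \emph{small} for $\tau$ large, not merely bounded; uniform moment/Sobolev/tail bounds alone do not give this. The paper obtains the smallness by an entropy--entropy-production argument (Section~\ref{sec:entropy}, culminating in Corollary~\ref{cor:Lal}): although \eqref{rescaBE} does not dissipate entropy, the elastic production term dominates the $O(\alpha)$ correction and forces $\psi(\tau)$ close to $\M$, hence to $\psi_\alpha$, uniformly for large $\tau$. This is what turns the estimate $\|\mathbf{G}_\alpha(s)\|\le C(\alpha+\ell(\alpha)^{1/2}+(1+s)^{-1/8})\|h(s)\|$ into something Gronwall can swallow. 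Without this ingredient your nonlinear closure is only local, and the argument as written does not reach the global conclusion.
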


The above Theorem provides a satisfying answer to the queries (Q1) and (Q2) above:
\begin{itemize}
\item[--]   The precise rate of convergence of the density and temperature is described by \eqref{eq:rate} for any $\alpha \in (0,\alpha_{c})$. Notice that this rate is sharp in the regime of small annihilation since one has (see Remark \ref{imporem}) for $\alpha \to 0$
$$\frac{2 \mathbf{a}_{\alpha}}{\mathbf{a}_{\alpha}+\mathbf{b}_{\alpha}} \simeq \frac{4d}{4d+1} , \qquad \frac{4\mathbf{B}_{\alpha}}{\alpha(\mathbf{a}_{\alpha}+\mathbf{b}_{\alpha})}=\frac{2(\mathbf{b}_{\alpha}-\mathbf{a}_{\alpha})}{\mathbf{a}_{\alpha}+\mathbf{b}_{\alpha}} \simeq \frac{2}{4d+1},$$
which results, for small values of $\alpha$, in   
$$n_{f}(t) \simeq t^{-\frac{4d}{4d+1}}, \qquad T_{f}(t) \simeq t^{-\frac{2}{4d+1}} \qquad \text{ as } t \to \infty\,.$$
These results match the rate of convergence conjectured  by physicists in \cite{Trizac,Cop04} described in Section \ref{sec:phys}. 
\item[--]   For an initial datum with little regularity requirement, any solution to \eqref{BE} is asymptotically close to the self-similar profile $\bm{f}_{\alpha}(t,\cdot)$.  Notice that our statement is quantitative in the sense that explicit rate of convergence toward zero for the difference $f(t,\cdot)-\bm{f}_{\alpha}(t,\cdot)$ is provided.  {Such rate is algebraic and, interestingly, is related to the mass, momentum and energy of the profile $\psi_\alpha$ as well as to the spectral gap of the classical (elastic) Boltzmann linearized operator.}  Observe also that the convergence is established in $L^{1}$-space with exponential weight, but such strong tail is not demanded for the initial datum.  This improvement in weight from polynomial to exponential is obtained by exploiting the instantaneous appearance of exponential moments for Boltzmann-like equation associated to hard potentials.
\item[--]   Notice that the answers to both queries (Q1) and (Q2) are related.  Indeed, we are not able to obtain \emph{in a direct way} the behavior of the moments $n_{f}(t),\bm{u}_{f}(t)$ and $T_{f}(t)$ by inspecting just the moments equations associated to \eqref{BE}. Surprisingly, the inspection of these moments equations just allows us to get the decay of the product $n_{f}(t)\sqrt{T_{f}(t)}$
but not the decay of each term.  We are able to determine the long-time behavior of such moments after exploiting the convergence of the whole solution $f(t,v)$, see Section \ref{sec:original}.\end{itemize}

\subsection{Strategy of the proof and novelty of the current approach}
It appears convenient along the proof of Theorem \ref{main:no-scaled} to rather investigate the solution of the rescaled equation \eqref{rescaBE} because it is conservative.  For such rescaled equation, the main result can be formulated as follows.
\begin{theo}\phantomsection\label{theo:main-rescaled} 
Under the Assumptions of Theorem \ref{main:no-scaled} on the initial datum $f_{0}$, let $f(t,v)$ be the unique solution to \eqref{BE} associated to the initial datum $f_{0}$ and let $\psi(\tau,\xi)$ be the associated rescaled function given by \eqref{scalingPsi}. Then, there exists some explicit $A>0$ such that for any $a \in (0,A/2)$, for any $\varepsilon >0$  there exist some explicit  $\alpha_{c} \in (0,\alpha_{0})$ such that, for all $\alpha \in (0,\alpha_{c})$
$$\int_{\R^{d}}\left|\psi(\tau,\xi)-\psi_{\alpha}(\xi)\right|\exp(a|\xi|) \d\xi  \leq C_{\varepsilon}(\alpha)\exp(-(\mu_{\star}-\varepsilon)\tau) \qquad \forall \tau \geq 1, $$
where $\mu_{\star}$ denotes the spectral gap of the linearized operator $\mathscr{L}_{0}$ associated to the elastic Boltzmann equation in $L^{2}(\M^{-1})$ and $C_{\varepsilon}(\alpha)$ is a positive explicit constant depending on $\alpha$ and $\varepsilon.$
\end{theo}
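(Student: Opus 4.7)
\medskip

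\noindent\textbf{Proof proposal for Theorem \ref{theo:main-rescaled}.}

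The plan is to linearize the rescaled equation \eqref{rescaBE} around the steady profile $\psi_{\alpha}$ and establish exponential relaxation by combining uniform-in-time \emph{a priori} estimates on $\psi(\tau,\cdot)$ with a careful spectral analysis of the linearization. Setting $h(\tau,\xi)=\psi(\tau,\xi)-\psi_{\alpha}(\xi)$ and using \eqref{eq:conserve} together with \eqref{init}, one sees that $h$ lies in the codimension-$(d+2)$ closed subspace of functions with zero mass, zero momentum and zero kinetic energy. The equation for $h$ can be written schematically as $\partial_{\tau}h=\mathscr{L}_{\alpha}h+\mathcal{N}_{\alpha}(h)$, where $\mathscr{L}_{\alpha}$ is the linearization of the right-hand side of \eqref{rescaBE} at $\psi_{\alpha}$ and $\mathcal{N}_{\alpha}(h)$ collects the quadratic remainder (including the contributions coming from the dependence of $\mathbf{A}_{\psi},\mathbf{B}_{\psi},\bm{v}_{\psi}$ on $\psi$, which are themselves quadratic in $h$ modulo a bounded linear correction absorbed into $\mathscr{L}_{\alpha}$).

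First, I would establish uniform-in-$\tau$ controls on $\psi(\tau,\cdot)$: propagation and appearance of polynomial and exponential moments (here the hard-sphere structure of $\mathbb{B}_{\alpha}$ and the coercivity of the collision frequency $\Sigma_{\psi}$ are crucial and explain the appearance of the exponential weight $\exp(a|\xi|)$ in the statement), propagation of Sobolev regularity in $H^{(5-d)^{+}/2}_{\eta}$, uniform positive lower bounds via a Mischler--Mouhot-type argument using the finite entropy and Fisher information hypothesis, and stability of the moments $n_{f},T_{f},\bm{u}_{f}$ from below so that the change of variable \eqref{scalingPsi} is non-degenerate. These estimates give relative compactness of the trajectory $\{\psi(\tau,\cdot)\}_{\tau\geq1}$ in $L^{1}_{q}$ for any $q$, and combined with a standard $\omega$-limit argument and the uniqueness part of Theorem \ref{theo:exist-unique}, yield $\psi(\tau,\cdot)\to\psi_{\alpha}$ in $L^{1}_{q}$ \emph{without} rate. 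This qualitative convergence places $h(\tau)$ in a small neighborhood of zero for $\tau$ large.

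The decisive step, and the main obstacle, is the spectral analysis of $\mathscr{L}_{\alpha}$ in the $L^{1}(\exp(a|\cdot|))$ setting. Here I would proceed in the spirit of Gualdani--Mischler--Mouhot. At $\alpha=0$ the operator reduces to $\mathscr{L}_{0}$, the standard linearized elastic Boltzmann operator, which has spectral gap $\mu_{\star}>0$ in $L^{2}(\M^{-1})$ with kernel spanned by the $d+2$ collisional invariants. One then writes $\mathscr{L}_{\alpha}=\mathcal{A}+\mathcal{B}_{\alpha}$, where $\mathcal{B}_{\alpha}$ is a dissipative multiplier part controlled by $-\Sigma_{\psi_{\alpha}}$ plus the drift $\mathbf{B}_{\alpha}\xi\cdot\nabla_{\xi}$ and $\mathcal{A}$ is a compact regularizing gain-type term. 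Using the factorization identity for the resolvent and the enlargement theorem, the spectral gap is transferred from $L^{2}(\M^{-1})$ to $L^{1}$ with exponential weight, giving a semigroup estimate $\|\exp(\tau\mathscr{L}_{0})h\|\lesssim\exp(-(\mu_{\star}-\varepsilon/2)\tau)\|h\|$ on the orthogonal complement of the invariants. A perturbative argument, based on the smallness of $\psi_{\alpha}-\M$, $\mathbf{A}_{\alpha}$ and $\mathbf{B}_{\alpha}$ as $\alpha\to0$ (quantified in \cite{jde2}), then shows that for $\alpha$ below some explicit threshold $\alpha_{c}$ the same decay holds for the semigroup generated by $\mathscr{L}_{\alpha}$ with rate $\mu_{\star}-\varepsilon$ on the constraint subspace where $h$ lives.

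Finally, I would close the argument by Duhamel's formula applied to $\partial_{\tau}h=\mathscr{L}_{\alpha}h+\mathcal{N}_{\alpha}(h)$: the linear part contributes $\exp(-(\mu_{\star}-\varepsilon)\tau)$ decay, and the nonlinear remainder $\mathcal{N}_{\alpha}(h)$ is bounded by $\|h\|^{2}$ in the relevant norm, thanks to the uniform moment and regularity bounds established in the first step. Using the qualitative convergence from the compactness step to enter, at some finite time $\tau_{0}$, a ball in which the linear decay dominates the quadratic source, a standard bootstrap yields the exponential bound stated in Theorem \ref{theo:main-rescaled}. The hardest ingredients are thus the enlargement/perturbation procedure producing a quantitative spectral gap for $\mathscr{L}_{\alpha}$ in $L^{1}(\exp(a|\cdot|))$ uniformly for small $\alpha$, and the fact that the nonlinearity is genuinely quadratic in $h$ after suitable cancellations in $\mathbf{A}_{\psi}-\mathbf{A}_{\alpha}$, $\mathbf{B}_{\psi}-\mathbf{B}_{\alpha}$ and $\bm{v}_{\psi}$, which requires exploiting the conservation identities \eqref{eq:conserve}.
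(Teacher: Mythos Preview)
Your overall architecture---linearize around $\psi_{\alpha}$, establish semigroup decay by perturbation from $\mathscr{L}_{0}$ via enlargement/factorization, and close with Duhamel---matches the paper's. Two steps, however, diverge from what actually works, and the first is a genuine gap.

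The compactness/$\omega$-limit step does \emph{not} yield $\psi(\tau)\to\psi_{\alpha}$. The rescaled equation \eqref{rescaBE} admits no Lyapunov functional: the relative entropy $\mathcal{H}(\psi(\tau)|\M)$ is not monotone, since the right-hand side of the identity in Lemma \ref{lem:dht} is $\mathcal{O}(\alpha)$ but nonzero. Without a LaSalle principle you cannot conclude that the $\omega$-limit set consists of equilibria, so uniqueness of the steady state (Theorem \ref{theo:exist-unique}) is of no help. What the paper actually establishes (Theorem \ref{prop:decayHt} and Corollary \ref{cor:Lal}) is only the \emph{approximate} bound $\|\psi(\tau)-\psi_{\alpha}\|_{L^{1}(\m)}\leq \ell(\alpha)+C(1+\tau)^{-1/4}$ with $\ell(\alpha)\to 0$, coming from the quantitative entropy--entropy production estimate. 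This non-vanishing bound is precisely what is inserted into the source term: Lemma \ref{eq:Ga} gives $\|\mathbf{G}_{\alpha}(s)\|_{\X_{0}}\leq C\|h(s)\|_{\X_{0}}\big(\alpha+\ell(\alpha)^{1/2}+(1+s)^{-1/8}\big)$, and Gronwall is applied directly, globally in time, without ever entering a small ball first. The paper stresses that this direct global route (as opposed to your local-to-global bootstrap) is what delivers the sharp rate $\mu_{\star}-\varepsilon$.

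Second, your assertion that the linearized semigroup decays on the constraint subspace $\ker\mathbf{P}_{0}$ skips a real difficulty. With the linearization of Definition \ref{defi:linear}, $\ker\mathbf{P}_{0}$ is \emph{not} invariant under $\mathcal{S}_{\alpha}(t)$, and the sign of the $d+2$ perturbed eigenvalues $\mu_{\alpha}^{1},\ldots,\mu_{\alpha}^{d+2}$ is unknown; Theorem \ref{theo:decayX0} gives decay only on $\mathrm{Range}(\mathbf{Id}-\mathbf{P}_{\alpha})$. The paper bridges $\mathbf{P}_{0}$ and $\mathbf{P}_{\alpha}$ through Lemma \ref{lem:I-P}: since $\mathbf{P}_{0}h(\tau)=0$ and $\|(\mathbf{P}_{\alpha}-\mathbf{P}_{0})^{2}\|_{\mathscr{B}(\X_{0})}$ is small (Lemma \ref{lem:projnorm}), one has $\|h\|_{\X_{0}}\leq C_{0}\|(\mathbf{Id}-\mathbf{P}_{\alpha})h\|_{\X_{0}}$, and the Gronwall is run on $g=(\mathbf{Id}-\mathbf{P}_{\alpha})h$. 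If instead you absorb the moment corrections from $\mathbf{A}_{\psi}-\mathbf{A}_{\alpha}$, $\mathbf{B}_{\psi}-\mathbf{B}_{\alpha}$, $\bm{v}_{\psi}$ into the generator so as to make $\ker\mathbf{P}_{0}$ invariant, you get a different operator whose spectral analysis---still carrying the unbounded drift $\mathbf{B}_{\alpha}\xi\cdot\nabla_{\xi}$---has to be redone from scratch.
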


\begin{nb}\label{rem:rate}
There are two noticeable facts in Theorem \ref{theo:main-rescaled}:
\begin{enumerate}[(1)]
\item the rate of convergence is nearly optimal, being as close as desired to the rate of convergence to equilibrium for the elastic Boltzmann equation  $\mathcal{O}(e^{-\mu_{\star}t}).$ This is an important contrast with respect to the results obtained so far in the context of granular gases \cite{MiMo3,Tr} for which the rate of convergence to self-similarity is not continuous with respect to the elastic limit. To be more precise, in \cite{MiMo3,Tr}, if $\eta \in (0,1)$ denotes the inelasticity parameter, then the decay to the self-similar profile is  $\mathcal{O}(e^{-c\,(1-\eta)\,t})$ for some explicit $c >0$.  As a consequence, the elastic limit $\eta \to 1$ yields no relaxation at all,  whereas it is well-known that the solution to the elastic Boltzmann equation converges exponentially fast to equilibrium. In Theorem \ref{theo:main-rescaled}, in the elastic limit $\alpha \to 0$ one exactly recovers the optimal rate of convergence to equilibrium of the elastic Boltzmann equation. 
\item The rate of convergence of the self-similar problem is independent of $\alpha$.  That is, all corresponding relaxation related to annihilation is hidden in the rescaling.  Thus, the self-similarity rescaling becomes a tool that decouples the annihilation dynamics from the elastic collision dynamics.  This is a powerful tool for analysis.\end{enumerate} \end{nb}

The strategy we adopt to prove the results combines the spectral analysis of the linearized operator and the entropy-entropy production method.  The introduction of the linearized operator in the rescaled equation may seem, at first sight, as a bad idea since the rescaled problem \eqref{rescaBE} is \textit{\textbf{non-autonomous}}.   However, it reveals to be very efficient because, essentially, the rescaled equation is \textit{\textbf{conservative}}. 

Let us try to describe more precisely our approach.  In the weighted space
$$\X_{0}=L^{1}(\m), \qquad \m(\xi)=\exp(a|\xi|)$$
where $a>0$ is some suitable number, we can introduce the linearized operator around the profile $\psi_{\alpha}$ as follows.
\begin{defi}\phantomsection \label{defi:linear}
For any  {$\alpha \in (0,\alpha_{0})$}, introduce the linear operator $\LL\::\:\D(\LL) \subset \X_{0} \to \X_{0}$ by
\begin{align*}
\LL h(\xi)=(1-\alpha)\big[\Q(h,\psi_\alpha)(\xi)+&\Q(\psi_{\alpha},h)(\xi)\big] - \alpha \big[\Q_-(\psi_\alpha,h)(\xi)+\Q_{-}(h,\psi_{\alpha})(\xi)\big]\\
& - \mathbf{A}_\alpha h(\xi)-\mathbf{B}_\alpha\, \xi \cdot \nabla_\xi h(\xi), \qquad \forall h \in \D(\LL)\end{align*}
with domain $\D(\LL)$ given by $\D(\LL)=\W^{1,1}_{1}(\m)$. We also denote by $\mathscr{L}_{0}$ the \emph{elastic} linearized operator $\mathscr{L}_{0}\::\:\D(\mathscr{L}_{0}) \subset \X_{0} \to \X_{0}$ given by
$$\mathscr{L}_{0}h=\Q(h,\M)+\Q(\M,h), \qquad \forall h \in \D(\mathscr{L}_{0})$$
with $\D(\mathscr{L}_{0})=L^{1}_{1}(\m)$ and where $\M$ is the unique Maxwellian with same mass, momentum and energy as $\psi_{\alpha}$ given by \eqref{M}.
\end{defi}

Then, one can prove that, for $\alpha$ small enough, $(\LL,\D(\LL))$ generates a $C_{0}$-semigroup $\{\mathcal{S}_{\alpha}(t)\,;\,t \geq 0\}$ in $\X_{0}$ (see Theorem \ref{theo:gen} for a precise statement) with the following spectral properties and decay.
\begin{theo}\phantomsection\label{theo:decayX0} Let us fix ${\nu}_{*}'\in (0,\mu_\star)$. There exists $\alpha^{\star} \in (0,\alpha_{0})$ such that, for any $\alpha \in (0,\alpha^{\star})$ the operator $\LL\::\:\D(\LL) \subset \X_{0} \to \X_{0}$ satisfies:
\begin{enumerate}[1)]
\item The spectrum $\mathfrak{S}(\LL)$ is such that
$$\mathfrak{S}(\LL) \cap \{z \in \mathbb{C}\;;\,\mathrm{Re}z\geq -{\nu}_{*}'\}=\{\mu_{\alpha}^{1},\ldots,\mu_{\alpha}^{d+2}\}$$
where $\mu_{\alpha}^{1},\ldots,\mu_{\alpha}^{d+2}$ are eigenvalues of $\LL$ (not necessarily distinct) of finite algebraic multiplicity.  
\item For all $\mu \in (0,{\nu}_{*}') $, there is $C_{\mu} >0$ such that
$$\left\|\mathcal{S}_{\alpha}(t)\big(\mathbf{Id}-\mathbf{P}_{\alpha}\big)\right\|_{\mathscr{B}(\X_{0})} \leq C_{\mu}\exp(-\mu\,t) \qquad \forall t \geq 0$$
where $\mathbf{P}_{\alpha}$ denotes the spectral projection associated to $\{\mu_{\alpha}^{1},\ldots,\mu_{\alpha}^{d+2}\}$ in $\X_{0}$.
\end{enumerate}
\end{theo}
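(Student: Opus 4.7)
The natural strategy, adapting the enlargement-of-functional-space framework of Gualdani--Mischler--Mouhot, is to treat $\LL$ as an $\mathcal{O}(\alpha)$-perturbation of the elastic linearized operator $\mathscr{L}_0$ (whose spectral picture in $L^2(\M^{-1})$ is classical: kernel spanned by the $d+2$ collision invariants $1,\xi,|\xi|^2$, and the rest of spectrum contained in $\{\mathrm{Re}\,z\leq-\mu_{\star}\}$), and then to transfer this information to the much larger space $\X_0=L^1(\m)$.

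The first step is a splitting $\LL=\mathcal{A}+\mathcal{B}_\alpha$ adapted to the enlargement method. I would take $\mathcal{A}$ to be a smooth truncation of the gain part $(1-\alpha)[\Q_+(\cdot,\psi_\alpha)+\Q_+(\psi_\alpha,\cdot)]$, restricted to bounded velocity and relative velocity, so that $\mathcal{A}$ is bounded from $\X_0$ into a much smaller space of higher Sobolev regularity and stronger weight. The complementary part $\mathcal{B}_\alpha$ then contains the remainder of the gain, the multiplicative loss $-\Sigma_{\psi_\alpha}$, and the drift $-\mathbf{A}_\alpha-\mathbf{B}_\alpha\,\xi\cdot\nabla_\xi$. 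Integrating $\mathrm{sign}(h)\,\mathcal{B}_\alpha h$ against the weight $\m(\xi)=\exp(a|\xi|)$, integrating by parts in the drift, and using that $\Sigma_{\psi_\alpha}(\xi)\gtrsim\langle\xi\rangle$ at infinity while $\mathbf{A}_\alpha,\mathbf{B}_\alpha=\mathcal{O}(\alpha)$, one obtains the hypodissipativity bound
\[
\int_{\R^d}\mathrm{sign}(h)(\mathcal{B}_\alpha h)(\xi)\,\m(\xi)\,\d\xi\leq -\nu'_\star\int_{\R^d}|h(\xi)|\,\m(\xi)\,\d\xi
\]
for any prescribed $\nu'_\star<\mu_\star$, provided the truncation radius is large enough and $\alpha$ is small enough. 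In particular $\mathfrak{S}(\mathcal{B}_\alpha)\subset\{\mathrm{Re}\,z\leq-\nu'_\star\}$ in $\X_0$ and the associated semigroup decays as $e^{-\nu'_\star t}$.

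The second step is a perturbative spectral analysis in the small space $L^2(\M^{-1})$. Because $\psi_\alpha\to\M$ and $\mathbf{A}_\alpha,\mathbf{B}_\alpha\to0$ as $\alpha\to0$, the operator difference $\LL-\mathscr{L}_0$ is $\mathscr{L}_0$-small in the sense of Kato, and his analytic perturbation theory then guarantees that the $(d+2)$-dimensional kernel of $\mathscr{L}_0$ perturbs to a cluster of exactly $d+2$ eigenvalues $\mu_\alpha^{1},\dots,\mu_\alpha^{d+2}$ of $\LL$ (counted with algebraic multiplicity) near $0$, while the remainder of $\mathfrak{S}(\LL)\cap\{\mathrm{Re}\,z\geq-\nu'_\star\}$ is empty for $\alpha$ small; this is assertion (1). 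The Gualdani--Mischler--Mouhot Duhamel factorisation
\[
\cS_\alpha(t)=\cS_{\mathcal{B}_\alpha}(t)+\sum_{k=1}^{N-1}\bigl(\cS_{\mathcal{B}_\alpha}\ast(\mathcal{A}\,\cS_{\mathcal{B}_\alpha})^{\ast k}\bigr)(t)+\bigl((\mathcal{A}\,\cS_{\mathcal{B}_\alpha})^{\ast N}\ast\cS_\alpha\bigr)(t),
\]
with $N$ chosen so that $(\mathcal{A}\,\cS_{\mathcal{B}_\alpha})^{\ast N}$ takes values in $L^2(\M^{-1})$, then transfers the spectral gap from the small to the large space: the projector $\mathbf{P}_\alpha$ is defined as the Dunford integral around the cluster $\{\mu_\alpha^{1},\dots,\mu_\alpha^{d+2}\}$, and one concludes $\|\cS_\alpha(t)(\mathbf{Id}-\mathbf{P}_\alpha)\|_{\mathscr{B}(\X_0)}\lesssim e^{-\mu t}$ for every $\mu<\nu'_\star$, which is assertion (2).

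The main obstacle I anticipate is the uniform resolvent and hypodissipativity estimate for $\mathcal{B}_\alpha$ in $\X_0$: controlling the drift $-\mathbf{B}_\alpha\,\xi\cdot\nabla_\xi$ against $\m(\xi)=\exp(a|\xi|)$ produces, after integration by parts, a term of order $a\mathbf{B}_\alpha|\xi|\m(\xi)$ which must be absorbed by the collision frequency $(1-\alpha)\Sigma_{\psi_\alpha}(\xi)$ uniformly in $\xi$. This forces a careful simultaneous calibration of $a$, of the truncation radius defining $\mathcal{A}$, and of the smallness threshold $\alpha^{\star}$. A secondary delicate point is to produce a quantitative version of $\psi_\alpha\to\M$ in a sufficiently strong weighted norm, so that Kato's spectral stability theorem can be genuinely applied to the difference $\LL-\mathscr{L}_0$.
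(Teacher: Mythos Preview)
Your overall architecture (splitting $\LL=\mathcal{A}+\mathcal{B}_\alpha$, hypodissipativity of $\mathcal{B}_\alpha$ in $\X_0$, perturbative spectral localisation near zero, then enlargement) is the right one and matches the paper's. There is, however, a genuine gap at the step where you invoke Kato's analytic perturbation theory to conclude that the $(d{+}2)$-dimensional kernel of $\mathscr{L}_0$ perturbs to $d{+}2$ eigenvalues of $\LL$.

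The difficulty is that $\LL-\mathscr{L}_0$ is \emph{not} $\mathscr{L}_0$-small in the Kato sense, in any of the natural spaces (including $L^2(\M^{-1})$). The culprit is the drift $-\mathbf{B}_\alpha\,\xi\cdot\nabla_\xi$: the domain of $\mathscr{L}_0$ is characterised by a weight condition only, whereas the drift requires one derivative. Hence $\D(\LL)\subsetneq\D(\mathscr{L}_0)$ and $\LL-\mathscr{L}_0$ fails to be relatively $\mathscr{L}_0$-bounded, however small $\mathbf{B}_\alpha$ may be. The paper is explicit about this, remarking that ``the elastic limit $\alpha\to0$ is strongly ill-behaved because of the loss of domain induced by the drift term'' and that ``it appears difficult to apply directly the classical spectral perturbation theory developed in [Kato]''. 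You correctly identify the drift as a delicate point for the hypodissipativity of $\mathcal{B}_\alpha$, but miss that it is the actual obstruction in the perturbation step.

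The paper circumvents this by working on a scale of \emph{three} $L^1$-based spaces $\X_2\subset\X_1\subset\X_0$ with $\X_j=\W^{j,1}_j(\m)$ (so the ``small'' space for the enlargement to $\X_0$ is $\X_1=\W^{1,1}_1(\m)$, not $L^2(\M^{-1})$), and by replacing the direct Kato argument with a factorised resolvent identity (Lemma~\ref{lem:inverse}):
\[
\mathcal{R}(\lambda,\LL)=\Gamma_{\alpha,k}(\lambda)\bigl(\mathbf{Id}-\mathcal{J}_{\alpha,k}(\lambda)\bigr)^{-1},
\qquad
\mathcal{J}_{\alpha,k}(\lambda)=(\LL-\mathscr{L}_0)\,\mathcal{R}(\lambda,\mathscr{L}_0)\bigl[\mathcal{A}\,\mathcal{R}(\lambda,\mathcal{B}_\alpha)\bigr]^k.
\]
Each factor $\mathcal{A}\,\mathcal{R}(\lambda,\mathcal{B}_\alpha)$ maps $\X_i\to\X_{i+1}$ (the regularising $\mathcal{A}$ supplies the missing derivative and weight), and this compensates the one-derivative loss in $(\LL-\mathscr{L}_0):\X_2\to\X_1$; one then obtains $\|\mathcal{J}_{\alpha,k}(\lambda)\|_{\mathscr{B}(\X_1)}\to0$ as $\alpha\to0$ uniformly on $\{\mathrm{Re}\,\lambda\ge-\nu_*',\ |\lambda|\ge r_k(\alpha)\}$. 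From this, $\|\mathbb{P}_\alpha-\mathbf{P}_0\|_{\mathscr{B}(\X_1)}\to0$ (Lemma~\ref{lem:PaP0}), hence $\mathrm{rank}\,\mathbb{P}_\alpha=d{+}2$. The semigroup decay in $\X_1$ is then obtained via the Mischler--Scher quantitative spectral mapping theorem (Theorem~\ref{theo:SMT}), and only after that is the GMM enlargement (Theorem~\ref{theo:GMM}) applied with $E=\X_1$ and $\mathcal{E}=\X_0$. The iterated regularisation $[\mathcal{A}\,\mathcal{R}(\lambda,\mathcal{B}_\alpha)]^k$ is precisely the missing mechanism in your proposal.
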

\begin{nb} The approach to prove the above result is reminiscent to the recent contributions \cite{GMM,MiMo3,Tr} and consists in a perturbation argument around the elastic limit combined with some abstract enlargement and factorization arguments as developed in \cite{GMM}. 
\end{nb}
\begin{nb}
As will be seen later on, the sign of the eigenvalues $\mu_{\alpha}^{i}$ $(i=1,\ldots,d+2)$ \emph{do not play any role in our subsequent analysis} which is an important contrast with respect to the analysis performed in \cite{MiMo3} and \cite{Tr}. On this point, it is an interesting open question to determine the sign of the eigenvalues $\mu_{\alpha}^{i}$. It seems to be a non trivial problem and the fine asymptotics of $\mu_{\alpha}^{i}$ for $\alpha \simeq 0$  would provide an interesting complement of the above result.\end{nb} 
  
Considering the fluctuations around the equilibrium 
$$h(t,\xi)=\psi(t,\xi)-\psi_{\alpha}(\xi), \qquad t \geq 0,$$
it can be shown that $h$ satisfies the following quasi-linear equation in mild form, see Section \ref{sec:stabi7} for details,
\begin{equation}\label{eq:duham1}
    h(t) =
    \mathcal{S}_\alpha(t)h(t_{0})
    + \int_{t_{0}}^t \mathcal{S}_\alpha({t-s})
    \mathbf{G}_\alpha(s) \,\d s, \qquad \forall t   \geq t_{0} \geq 0,
  \end{equation}
where, roughly speaking,
$$\mathbf{G}_{\alpha}(s)=\mathbb{B}_{\alpha}(h(s),h(s)) + \mathcal{O}(\alpha).$$
This is where the entropy-entropy production approach enters the game.  It is well-known that for elastic interactions the dissipation of entropy forces, by some kind of La Salle's principle, the solution of the Boltzmann equation to become close to equilibrium.  An important breakthrough in the study of the Boltzmann equation has been to make this idea quantitative by using some version of the so-called Cercignani's conjecture \cite{vill}.   This results in explicit estimates on the time needed for any solution to the Boltzmann equation to fall into a vicinity of the equilibrium.\smallskip

Even though the above equations \eqref{BE} and \eqref{rescaBE} \emph{do not exhibit} any dissipation of (relative) entropy properties,   we expect the persistence of the above behavior in the elastic limit. This idea is made rigorous in Section \ref{sec:stabi7} using in a crucial way the fact that the rescaled equation is conservative.  We are led to an estimate of the type: there exists some $\alpha^{\ddagger}$ small enough such that, for $\alpha \in (0,\alpha^{\ddagger})$ it holds 
$$\left\|\psi(t)-\psi_{\alpha}\right\|_{\X_{0}} \leq \ell(\alpha) \qquad \forall t > T(\alpha)$$ 
for some explicit time $T(\alpha) >0$ large enough  and some function $\ell(\alpha)$ with $\lim_{\alpha\to0}\ell(\alpha)=0.$ This allows to sharpen our estimate on $\mathbf{G}_{\alpha}(s)$ yielding
$$\|\mathbf{G}_{\alpha}(s)\|_{\X_{0}} \leq \varepsilon(\alpha)\,\|h(s)\|_{\X_{0}}, \qquad \forall s \geq t_{0} \geq T(\alpha),$$
where $\varepsilon(\alpha) \to 0$ as $\alpha\to0.$ Unfortunately, this is not enough to obtain the convergence of $h(t)$ to $0$ in the Duhamel representation \eqref{eq:duham1} since the semigroup $\{\mathcal{S}_{\alpha}(t)\,;\,t \geq 0\}$ \emph{does not decay to zero in full generality} (recall we do not know the sign of the eigenvalues $\mu_{\alpha}^{i}$). From Theorem \ref{theo:decayX0}, the decay happens only when acting on the range of $\mathbf{Id-P}_{\alpha}$. Because of the highly dissipative behavior of $\LL$, the precise expression of the projection $\mathbf{P}_{\alpha}$ seems difficult to obtain. At this point, a crucial role is played by the fact that the scaling we choose is \emph{exactly} the one which makes \eqref{rescaBE} \emph{conservative}. Because of this additional property, the fluctuation $h(t,\xi)$ have zero mass, momentum and kinetic energy and, as such, satisfies
$$\mathbf{P}_{0}h(t)=0 \qquad \forall t \geq 0,$$
where $\mathbf{P}_{0}$ is the spectral projection on the kernel of the elastic operator $\mathscr{L}_{0}$. This obvious but fundamental property together with the fact that, in some sense,
$$\mathbf{P}_{\alpha}-\mathbf{P}_{0}=\mathcal{O}(\alpha)$$
allows us to prove that, for $\alpha$ small enough, 
\begin{equation}\label{eq:PP}
\|h(t)\|_{\X_{0}} \leq C\,\|(\mathbf{Id-P}_{\alpha})h(t)\|_{\X_{0}} \qquad \forall t \geq 0.\end{equation}
In other words, \textit{\textbf{it suffices to study the dynamic of Eq. \eqref{rescaBE} in the ``orthogonal'' space $\mathrm{Range}(\mathbf{Id-P}_{\alpha})$}}. However, it is important to emphasize the contrast here with the classical elastic Boltzmann equation: for such a problem, as well-documented, the \emph{nonlinear dynamic} occurs  exclusively on the ``orthogonal'' $\mathrm{Range}(\mathbf{Id}-\mathbf{P}_{0})$. Here, this is not the case, some part of the nonlinear dynamic still occurs on the space $\mathrm{Range}(\mathbf{P}_{\alpha})$ but according to the estimate \eqref{eq:PP}, such a dynamic is \emph{controlled} by the one occuring in $\mathrm{Range}(\mathbf{Id-P}_{\alpha})$. \medskip

The combination of these two approaches -- spectral analysis and entropy method -- is reminiscent of the work \cite{MiMo3} on granular gases and strongly relies on the understanding of the elastic problem corresponding to $\alpha=0$. However, the approach we follow is novel in different aspects:\medskip

\indent 1. Our approach is \emph{global in essence}. This contrasts with the approach of \cite{MiMo3} (see also \cite{CaLo}) where local stability estimates (in which exponential convergence is proven for small perturbations of the equilibrium) are first established and then suitable entropy estimates are used as a tool to pass from local to global stability. Here, even if we fully exploit the spectral properties of the linearized operator and the decay of the associated semigroup, our approach does not rely \emph{at all} on the study of close-to-equilibrium solutions to \eqref{rescaBE}. We directly prove the \emph{global} stability without proving first the \emph{local} one. We insist here in particular on the fact that the sign of the eigenvalues $\mu_{\alpha}^{1},\ldots,\mu_{\alpha}^{d+2}$ in Theorem \ref{theo:decayX0} do not play any role in our analysis (it is not completely clear actually whether these eigenvalues are nonnegative or not).\medskip

\indent 2. Related to this first point, our study of the global stability exploits in a crucial way the fact that the rescaled equation is fully \emph{conservative}. In the granular gases case studied in \cite{MiMo3}, the equation in rescaled variable does not preserve  energy. This is not the case here where \eqref{rescaBE} preserves mass, momentum and kinetic energy. The price to pay for obtaining a fully conservative equation is that this latter is \emph{non-autonomous}. As such, the linearization around steady solution is not completely natural but imposed. However, as explained previously, dealing with a conservative equation allows us to exploit -- in a crucial way -- the fact that the dynamic in the space $\mathrm{Range}(\mathbf{P}_{\alpha})$ is completely controlled by the dynamic in $\mathrm{Range}(\mathbf{Id-P}_{\alpha})$.\medskip

\indent 3. By virtue of the point 2, the rate of convergence to equilibrium for the rescaled equation is \emph{sharp} in the sense that it allows to recover, in the limit $\alpha \to 0$, the decay to equilibrium for the Boltzmann equation in $\mathcal{O}(e^{-\mu_{\star}t}).$ We already commented on this point in Remark \ref{rem:rate} explaining the contrast with the analysis in \cite{MiMo3,Tr}. Let us emphasize at this point that recovering the sharp decay rate is made possible again thanks to the conservative form of the rescaled equation and the method described in point 2 and in the previous paragraph.  Such novel approach is the main contribution of our paper which allows to understand in a better way the role of the linearized operator in the rescaled equation. Let us also mention that this method is robust enough and applies to the models of granular gases described earlier (at the price of performing the scaling which exactly preserves the energy).\medskip 

\indent 4. For the entropy-entropy production method, we follow a time-dependent approach initiated in \cite{AloLocmp} in the context of granular gases. With respect to this approach, one can see that the regularity assumptions made on the initial datum are minimal. This comes from an improvement of a well-known functional inequality obtained by C. Villani that relates the entropy production functional associated to $\Q(f,f)$ to the relative entropy. In \cite{vill}, an almost linear inequality is derived under some strong (high order) regularity on $f_{0}$. Here, we used a version of such an inequality -- obtained recently in \cite{alogam} -- where the functional inequality is far from being linear but for which the regularity on $f_{0}$ is drastically relaxed. Namely, we will resort on the following proposition.
\begin{prop}\phantomsection\label{theo:Vill} For a given function $f \in L^{1}_{2}(\R^{d}) \cap L^{2}(\R^{d})$, let $\M_f$ denote the Maxwellian
function with the same mass, momentum and energy as $f$. Assume that there exist $K_0 > 0$, $A_0 > 0$ and $q_0 \geq 2$ such that
\begin{equation}\label{villpoint}
f(v) \geq  K_0\,\exp\left(-A_0\,|v|^{q_0}\right) \qquad \forall v \in \R^{d}.
\end{equation}
Then, for all $\delta \in (0,1)$, there exists a constant $\lambda_{\delta}(f)$, depending on  $\delta$ and on $f$ only through its mass and energy and upper bounds on $A_0,$ $1/K_0$, $\|f\|_{2}$ and $\|f\|_{L^1_{s}}$, where $s = s(q_0) > 0$ such that
\begin{equation*}
\D(f) \geq \lambda_{\delta}(f) \left(\IR f(v)\log \left(\frac{f(v)}{\M_f(v)}\right)\d v \right)^{(1+\delta)(1+2/d)}
\end{equation*}
where $\D(f)$ is the  entropy dissipation functional associated to the elastic Boltzmann operator 
\begin{equation*}
\D(f)=-\IR \Q\big(f,f\big)(v)\log\left(\frac{f(v)}{\M_f(v)}\right)\d v.
\end{equation*}
\end{prop}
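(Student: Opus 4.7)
The plan is to follow the strategy of \cite{alogam}, which substantially relaxes Villani's classical inequality \cite{vill} by combining a square-root reformulation of the entropy dissipation with the regularizing properties of the Boltzmann gain operator $\Q^+$ and the pointwise Maxwellian lower bound \eqref{villpoint}. The first step is the classical square-root reduction: applying the elementary pointwise inequality $(X-Y)\log(X/Y) \geq 4(\sqrt{X}-\sqrt{Y})^{2}$ inside $\D(f)$ yields
$$\D(f) \geq 4\int_{\R^d\times\R^d\times\S^{d-1}} |v-v_*|\left(\sqrt{f'f'_*}-\sqrt{ff_*}\right)^{2}\d\sigma\d v\d v_*.$$
Setting $g=\sqrt{f}$ (which, thanks to \eqref{villpoint}, itself admits a Gaussian pointwise lower bound $g \geq \sqrt{K_0}\,\exp(-\tfrac{A_0}{2}|v|^{q_0})$, ruling out vacuum regions), expanding the square and using the pre-/post-collisional symmetries of the Boltzmann kernel, the right-hand side controls, in a suitable weighted $L^{2}$ sense, the discrepancy between the symmetric gain quantity $\Q^+(g,g)$ and the product $g\cdot \Sigma_{g}$.

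Second, this weighted $L^{2}$ discrepancy vanishes only when $g$ is Maxwellian, and quantitatively the Lions-type regularization of $\Q^+$, combined with the a priori bounds $\|f\|_{2}$ and $\|f\|_{L^1_{s}}$, allows one to prove, through a compactness-by-contradiction argument, an intermediate \emph{dissipation gap} of the form
$$\D(f) \geq c_{1}(f)\,\|f-\M_f\|_{L^{1}}^{\theta}$$
for some $\theta \geq 2$, with $c_{1}(f)$ depending on $f$ only through its mass, energy and the quantities listed in the statement. The next step inverts the usual Csisz\'ar--Kullback--Pinsker direction: the Gaussian lower bound \eqref{villpoint}, together with the moment bound $\|f\|_{L^{1}_{s}}$ and the $L^{2}$ bound, allows one to control the relative entropy $H(f|\M_f)=\IR f\log(f/\M_f)\d v$ by a power of $\|f-\M_f\|_{L^{1}}$ via H\"older interpolation against $|v|^{q_{0}}$-tails. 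Chaining the two inequalities yields
$$\D(f) \geq \lambda_{\delta}(f)\,H(f|\M_f)^{(1+\delta)(1+2/d)},$$
the factor $(1+2/d)$ coming from the sharp interpolation in Villani's original inequality and the free parameter $\delta\in(0,1)$ absorbing the loss introduced in the compactness step. Bookkeeping of the constants shows that $\lambda_{\delta}(f)$ depends on $f$ only through its mass, energy, $A_{0}$, $1/K_{0}$, $\|f\|_{2}$ and $\|f\|_{L^{1}_{s}}$, as claimed.

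The main obstacle is the intermediate dissipation-gap inequality, which is the genuine novelty of \cite{alogam} compared to \cite{vill}. Indeed, without any higher Sobolev regularity on $f$, Villani's original integration-by-parts strategy is unavailable; the substitute is a Carleman-type representation of $\Q^+$ that trades smoothness requirements for pointwise decay requirements, the latter being supplied precisely by the Gaussian-type lower bound \eqref{villpoint}. Implementing this \emph{quantitatively}, with an explicit dependence of every constant on only the few quantities appearing in the statement, is the technical heart of the argument; it relies on a careful use of both the lower bound \eqref{villpoint} (which prevents vacuum) and the upper bounds on $\|f\|_{2}$ and $\|f\|_{L^{1}_{s}}$ (which provide the compactness needed to upgrade a soft non-vanishing statement into a quantitative power law).
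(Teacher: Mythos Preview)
The paper does not contain a proof of this proposition. It is quoted from \cite{alogam} (see the paragraph immediately preceding the statement in the introduction: ``we used a version of such an inequality --- obtained recently in \cite{alogam} --- \ldots\ Namely, we will resort on the following proposition'') and is used as a black box in Section~\ref{sec:entropy}. There is therefore no ``paper's own proof'' to compare your attempt against.

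As to your sketch itself: it is not the route taken in \cite{alogam}, and the two key steps are too soft to yield the stated conclusion. The actual argument descends from Villani's strategy \cite{vill}: one first bounds the Boltzmann entropy production from below by (a suitably weighted version of) the \emph{Landau} entropy production, which has an explicit Fisher-information structure; a logarithmic-Sobolev-type inequality then converts this into a power of the relative entropy, and it is this step that produces the specific exponent $(1+2/d)$. The point of \cite{alogam} is to carry out the comparison with the Landau dissipation using only the pointwise lower bound \eqref{villpoint}, $\|f\|_{2}$, and finitely many moments, in place of the high-order Sobolev regularity required in \cite{vill}. By contrast, your intermediate ``dissipation gap'' $\D(f)\geq c_{1}(f)\|f-\M_f\|_{L^{1}}^{\theta}$ obtained by a compactness-by-contradiction argument is inherently non-constructive: it cannot deliver either the explicit dependence of $\lambda_{\delta}(f)$ on the listed quantities or the precise exponent $(1+\delta)(1+2/d)$. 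Likewise, the step you call ``inverting Csisz\'ar--Kullback'' --- bounding $H(f|\M_f)$ above by a power of $\|f-\M_f\|_{L^{1}}$ --- is possible under your hypotheses, but chaining it with the previous soft bound gives no control on the resulting exponent; the Landau/log-Sobolev machinery is what pins it down.
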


Notice that, in order to be able to apply the above Proposition to the solution $\psi(t)$ to \eqref{rescaBE}, we need first to prove the appearance of gaussian-like pointwise lower bound for such solutions, see Theorem \ref{a-l3}. \medskip

5. Finally, a novelty of our approach also lies in the control of the Fisher information ${I}(\psi(t))$ of the solution to \eqref{rescaBE}. Recall that, for a given nonnegative function $f$, the Fisher information of $f$ is defined as
\begin{equation}\label{eq:fisher}
{I}\big(f\big) =\frac{1}{4}\int_{\mathbb{R}^{d}}\frac{|\nabla f(\xi)|^{2}}{f(\xi)}\d\xi = \int_{\mathbb{R}^{d}}\big|\nabla\sqrt{f(\xi)}\big|^{2}\d\xi\,.
\end{equation}
It is very easy to see that, to obtain a uniform control of the solution $\psi(t)$ in spaces like $\W^{1,1}_{1}(\m)$, it is enough to prove that 
$$\sup_{t\geq 0}{I}(\psi(t)) < \infty.$$
We prove that such an estimate is true in Theorem \ref{theo:fisher} under minimal regularity on the initial datum $\psi_{0}$, which in dimension say $d=3$ is assumed to have finite Fisher information and to lie in $H^{1}(\R^{d})$ (with some algebraic moments). The uniform control of Fisher information for solutions to Boltzmann like equation seems to be completely new. We mention here the seminal work \cite{villa} dealing with the Boltzmann equation for Maxwell-like collision kernels and for which an algebraic growth of the Fisher information is obtained. Our approach relies in a heavy way on the appearance of gaussian-like pointwise lower bounds (Theorem \ref{a-l3}) and on the precise control on the way the various parameter in these lower bounds depend on time. We refer to Section \ref{sec:stabi} for more details on these new estimates. Again, the method we propose here seems robust enough to apply to a larger variety of kinetic models exhibiting the appearance of such pointwise lower bounds.

\subsection{Organization of the paper} 
The paper is organized as follows.  We describe in Section \ref{sec:moments} the evolution of the moments for the nonlinear equation in original variable \eqref{BE}. We are able, at this stage, to obtain only partial results yielding just the decay of the product $n_{f}(t)^2\,T_{f}(t)$.  However, this will turn of paramount importance since such a decay is actually governing the long-time behavior of the time scaling function $\tau(t)$ (see Eq. \ref{eq:tau}). The rest of Section \ref{sec:moments} makes rigorous the scaling performed earlier and provides the proof of Proposition \ref{prop:cauc}.

After these two Sections, the paper is divided into three parts:  Part \ref{part1} of the paper is devoted to the thorough study of the linearized operator $\LL$ and culminates with  the proof of Theorem \ref{theo:decayX0}. Our approach to Theorem \ref{theo:decayX0} is inspired by the one introduced in \cite{MiMo3} and revisited in \cite{GMM,Tr}.  It consists, roughly speaking, in a perturbation argument which exploits the spectral analysis of the linearized elastic Boltzmann operator $\mathscr{L}_{0}$. In a more precise way, we first use the fact that the spectrum of $\mathscr{L}_{0}$ is well localized, meaning that it admits a spectral gap in a large class of Sobolev spaces; second, we show that, for $\alpha$ small enough
$\LL-\mathscr{L}_{0}$ is of order $\mathcal{O}(\alpha)$ for some suitable norm; finally, to deduce the decay of the semigroup from the spectral structure of the generator, we need to use some abstract spectral mapping theorem established in \cite{MiSc}. The decay in $\X_{0}$ is then deduced from that in $\X_{1}$ thanks to an abstract enlargement and factorization argument as developed in \cite{GMM}. 

Part \ref{part2} of the paper is devoted to the stability analysis. In Section \ref{sec:entropy}, we develop the time-dependent entropy-entropy production method. In Section \ref{sec:stabi}, we first obtain uniform bounds on the solution $\psi(t,\xi)$ to \eqref{rescaBE} -- in particular obtaining the important estimate on the Fisher information $\mathcal{I}(\psi(t))$ and then prove Theorem \ref{theo:main-rescaled}. Finally, in Section \ref{sec:original}, we turn back to the original variable and prove Theorem \ref{main:no-scaled}. 

The final part of the paper is made of four Appendices which collect several technical results used in the main core of the paper. In particular, Appendix \ref{app:prooflemma} gives the proof of two technical results used in Part \ref{part1}. Appendix \ref{app:point} collect the main properties of the solutions to the rescaled equation \eqref{rescaBE} and, in particular, the appearance of pointwise lower bounds which is fundamental for the use of the above Proposition \ref{theo:Vill}. Recall here that, for such lower bound, it is important to get a control of the various constant with respect to time in order to perform our analysis of the Fisher information. In Appendix \ref{app:gener}, we prove that the linearized operator $(\LL,\D(\LL))$ is the generator of a $C_{0}$-semigroup in $\X_{0}$ exploiting well-known abstract generation results in $L^{1}$-spaces. 

\section{Evolution of the moments for the nonlinear equation}\label{sec:moments}

We consider here the evolution of macroscopic physically relevant quantities associated to the fully nonlinear Boltzmann equation that we recall here for convenience
\begin{equation}\label{BE1}\begin{cases}
\partial_t f(t,v)&=(1-\alpha)\Q(f,f)(t,v) -\alpha \Q_-(f,f)(t,v) \qquad t > 0\\
f(0,v)&=f_0(v)\,. \end{cases}\end{equation}

This kinetic equation has no conserved macroscopic quantities and density is decreasing to zero. To be more precise, let us {recall that}, for any $t \geq 0$, the density
$$n_{f}(t)=\int_{\R^d} f(t,v)\d v,$$
the momentum $\bm{u}_{f}(t) \in \R^{d}$ and the temperature $T_{f}(t) \geq 0$ {are} defined respectively by
$$n_{f}(t)\bm{u}_{f}(t)=\int_{\R^d} f(t,v)v \,\d v \in \R^d\,\qquad \text{
and } \qquad d\,n_{f}(t)T_{f}(t) = \int_{\R^d}f(t,v)|v-\bm{u}_{f}(t)|^2\d v.$$

\subsection{Evolution of first moments}

 We aim here to deduce the precise rate of convergence to zero of the quantity
 $${E}_{f}(t)=dn_{f}(t)^{2}T_{f}(t), \qquad \forall t \geq 0,$$  and our main result is the following
\begin{theo}\phantomsection\phantomsection\label{main} There exists some explicit $\alpha_\star \in (0,1)$ such that, for any $\alpha \in (0,\alpha_\star)$ and any nonnegative $f_{0} \in L^{1}_{3}(\R^{d})$, the associated solution $f(t,v)$  to \eqref{BE1}  satisfies the following:
$$ \left(c_0+ 2\,t\right)^{-2} \leq d\,n_{f}(t)^{2}  \,T_{f}(t) \leq  \,\left(c_1+ \frac{\alpha}{2}\,t\right)^{-2} \qquad \forall t \geq 0$$
for positive constants $c_0,c_1 > 0$ depending only on the initial distribution $f_0$ and not on $\alpha$, 
$$c_{0}:=\int_{\R^{d}}f_{0}(v)|v-\bm{u}_{f}(0)|\d v, \qquad c_{1}:=\frac{1}{\sqrt{E_{f}(0)}}.$$
\end{theo}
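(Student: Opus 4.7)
The strategy is to derive a closed ordinary differential inequality for $E_f(t):=d n_f(t)^2 T_f(t)$ and then integrate. Differentiating $E_f = n_f\int|v-\bm u_f|^2 f\,dv$ in time using \eqref{BE1}, invoking the conservation of mass, momentum and energy by the elastic operator $\Q$, and noting that $\int(v-\bm u_f)f\,dv\equiv 0$ eliminates the contribution of $\dot{\bm u}_f$, one arrives at the explicit identity
\begin{equation*}
-\dot E_f \;=\; \alpha\Bigl(\iint|v-v_*|f f_*\,dv dv_*\Bigr)\,d n_f T_f \;+\; \frac{\alpha n_f}{2}\iint|v-v_*|\bigl(|v-\bm u_f|^2+|v_*-\bm u_f|^2\bigr)f f_*\,dv dv_*.
\end{equation*}
A crucial tool used throughout is the elementary identity $\iint|v-v_*|^2 f f_*\,dv dv_* = 2E_f$, obtained by expanding the square and using $\int(v-\bm u_f)f\,dv=0$.

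For the \textbf{upper bound}, set $X:=\iint|v-v_*|f f_*$ and $Y:=\iint|v-v_*|^3 f f_*$. Cauchy--Schwarz on the measure $f f_*\,dv dv_*$ with the factorization $|v-v_*|^2=|v-v_*|^{1/2}|v-v_*|^{3/2}$ yields $X Y \geq (2E_f)^2$. The pointwise bound $|v-\bm u_f|^2+|v_*-\bm u_f|^2\geq \tfrac12|v-v_*|^2$ applied in the second summand produces $-\dot E_f\geq \alpha E_f X/n_f + \alpha n_f Y/4$, and AM--GM then gives $-\dot E_f \geq \alpha\sqrt{E_f\,XY} \geq 2\alpha E_f^{3/2}$. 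Integrating $(E_f^{-1/2})' \geq \alpha$ from the initial datum $E_f(0)^{-1/2} = c_1$ yields the upper bound; notably this half requires no smallness of $\alpha$ and even gives a constant sharper than the stated $\alpha/2$.

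For the \textbf{lower bound}, rather than working directly with $E_f$, I plan to track the first absolute centered moment
\begin{equation*}
J(t)\,:=\,\int_{\R^d}f(t,v)\,|v-\bm u_f(t)|\,dv,
\end{equation*}
which by Cauchy--Schwarz obeys $J(t)^2\leq E_f(t)$, so any lower bound on $J$ transfers to one on $E_f$. The target is a differential inequality of the form $\dot J\geq -2J^2$, whose integration via $(1/J)'\leq 2$ delivers $J(t)\geq (1/J_0+2t)^{-1}$ and hence the announced lower bound on $E_f$. Decomposing $\dot J$, one finds an elastic contribution $(1-\alpha)\int\Q(f,f)|v-\bm u_f|\,dv$, an annihilation loss $-\alpha\int\Q_-(f,f)|v-\bm u_f|\,dv$ bounded below by $-2\alpha E_f$ via the triangle inequality $|v-v_*|\leq|v-\bm u_f|+|v_*-\bm u_f|$, and a drift correction $-\dot{\bm u}_f\cdot\int\tfrac{v-\bm u_f}{|v-\bm u_f|}f\,dv$ controlled via the explicit expression $\dot{\bm u}_f = -\tfrac{\alpha}{n_f}\iint|v-v_*|(v-\bm u_f)f f_*$.

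\textbf{The main obstacle} is the elastic contribution $\int\Q(f,f)|v-\bm u_f|\,dv$: since $|v-\bm u_f|$ is not a collision invariant for $\Q$, this quantity has no definite sign a priori, and a naive estimate via the weak formulation yields only a bound of the form $\geq -c E_f$, which is too loose compared with the target $-2 J^2$ (recall that $J^2$ can be strictly smaller than $E_f$). I would exploit the pair-conservation identities $v'+v'_*=v+v_*$ and $|v'-\bm u_f|^2+|v'_*-\bm u_f|^2 = |v-\bm u_f|^2+|v_*-\bm u_f|^2$, combined with the concavity of $r\mapsto\sqrt r$ via Jensen's inequality averaged over the scattering sphere $\S^{d-1}$, to extract a sharp lower bound of the form $\geq -c\,J^2$ with a geometric constant $c$ independent of $\alpha$. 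The smallness assumption $\alpha<\alpha_\star$ is then essential to absorb the residual annihilation and drift contributions of order $\alpha E_f$ into the leading $-2J^2$ term, relying on the fact that $\dot E_f\leq 0$ ensures $E_f(t)\leq E_f(0)$ uniformly along the evolution.
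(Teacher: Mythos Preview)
Your upper-bound argument is correct and even yields a sharper constant, giving $E_f(t)\leq (c_1+\alpha t)^{-2}$. The paper proceeds differently there: it bounds $\dot M_0$ and $\dot M_2$ separately via Jensen's inequality $\int f_*|v-v_*|\,dv_*\geq n_f|v-\bm u_f|$ (Lemma~\ref{lem1}) and closes with H\"older's inequality $M_3\geq M_0^{-1/2}M_2^{3/2}$, arriving only at $-\dot E_f\geq \alpha E_f^{3/2}$.

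Your lower-bound plan, however, has a genuine gap. Tracking $J:=M_1$ and aiming for $\dot J\geq -2J^2$ is exactly the paper's route (Lemma~\ref{lem:M1/2}), but your proposed treatment of the elastic contribution is insufficient. Even granting $(1-\alpha)\int\Q(f,f)|v-\bm u_f|\,dv\geq -c\,J^2$, the remaining terms of order $-\alpha E_f$ from annihilation and the drift correction cannot be absorbed into a $-J^2$ term: Cauchy--Schwarz only says $E_f\geq J^2$, which is the wrong direction, and the ratio $E_f/J^2$ is not controlled along the flow. Falling back on $E_f(t)\leq E_f(0)$ produces $\dot J\geq -c\,J^2 - C\alpha E_f(0)$, an inequality whose integral curves vanish in finite time, so no bound of the form $J(t)\geq (c_0+2t)^{-1}$ can follow.

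What actually closes the argument is a sharper structural estimate. The Povzner-type Lemma~\ref{Mk<1} applied with $k=\tfrac12$ yields
\[
\int_{\R^d}\mathbb{B}_\alpha(f,f)\,|v-\bm u_f|\,dv \;\geq\; \bigl(\beta_{1/2}(\alpha)-1\bigr)\,M_0M_2 \;-\; \bigl(\beta_{1/2}(\alpha)+1\bigr)\,M_1^2,
\]
with $\beta_{1/2}(\alpha)=(1-\alpha)\varrho_{1/2}$ and the purely angular constant $\varrho_{1/2}>1$. The essential point is that the coefficient in front of $E_f=M_0M_2$ is \emph{positive}, so this gain dominates the $-\alpha E_f$ contributions from annihilation and drift precisely when $\alpha<\alpha_\star:=(\varrho_{1/2}-1)/(\varrho_{1/2}+1)$. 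Only after that cancellation does $M_1^2\leq M_0M_2$ collapse the inequality to one in $J$ alone, namely $\dot J\geq -2J^2$. Your Jensen/concavity intuition is in fact what underlies $\varrho_{1/2}>1$, but the payoff it delivers is this \emph{positive} $E_f$ gain from the elastic collisions, not merely a $-cJ^2$ loss; that is the missing idea.
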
\phantomsection
 Let us introduce the moments
$$ \qquad M_{k}(t)=\int_{\R^d} f(t,v)|v-\bm{u}_{f}(t)|^{k}\d v \qquad t \geq 0, \qquad (k \geq 0).$$
With such notations, $n_{f}(t)=M_0(t)$ and $dn_{f}(t)T_{f}(t) =M_2(t)$ and $E_{f}(t)=M_{0}(t)M_{2}(t).$

In all the sequel, we consider a nonnegative initial datum $f_{0} \in L^{1}_{3}(\R^{d})$ and denote by $f(t)$, $t \geq 0$ the associated solution to \eqref{BE1}.  One has the following
\begin{lem}\phantomsection\phantomsection\label{lem1}  One has
\begin{equation}\label{M0M2}
\dfrac{\d}{\d t}M_{0}(t) \leq -\alpha \,M_{0}(t)\,M_{1}(t) \qquad \text{ and } \qquad \dfrac{\d}{\d t}M_{2}(t) \leq -\alpha\,M_{0}(t)\,M_{3}(t) \quad \forall t\geq 0.\end{equation}
As a consequence
$$E_{f}(t) \leq \left(\frac{1}{\sqrt{E_{f}(0)}}+\frac{\alpha}{2}t\right)^{-2}, \qquad \forall t \geq0.$$
\end{lem}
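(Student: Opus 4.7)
The plan is to derive both differential inequalities by multiplying \eqref{BE1} by $1$ and by $|v-\bm{u}_f(t)|^2$, integrating in $v$, and then invoking Jensen's inequality on the loss term. The integral of $\Q(f,f)$ against the collision invariants $1,v,|v|^2$ vanishes by the standard symmetries of the elastic operator, so those moments are driven entirely by $-\alpha\int \Q_-(f,f)\varphi\,\d v$ with $\varphi=1$ or $\varphi=|v|^2$. For $M_0$ one gets directly
$$\frac{\d}{\d t}M_0(t) = -\alpha\iint_{\R^d\times\R^d}|v-v_*|f(t,v)f(t,v_*)\,\d v\,\d v_*,$$
and since $v_*\mapsto |v-v_*|$ is convex, Jensen's inequality applied with the probability measure $f(v_*)\d v_*/n_f$ yields $\int|v-v_*|f(t,v_*)\d v_*\ge n_f(t)|v-\bm{u}_f(t)|$, which gives the first inequality in \eqref{M0M2}.

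For $M_2$, the time-dependence of $\bm{u}_f(t)$ forces a short bookkeeping step: writing $M_2(t)=\int f|v|^2\d v - n_f|\bm{u}_f|^2$ and using the moment ODEs $\dot n_f$, $\frac{\d}{\d t}(n_f\bm{u}_f)$ and $\frac{\d}{\d t}\!\int f|v|^2\d v$ (each produced by testing \eqref{BE1} against $1, v, |v|^2$ and using the elastic invariance to eliminate $\Q(f,f)$), the cross terms rearrange cleanly into
$$\frac{\d}{\d t}M_2(t) = -\alpha\iint_{\R^d\times\R^d}|v-v_*|\,|v-\bm{u}_f(t)|^2 f(t,v)f(t,v_*)\,\d v\,\d v_*.$$
The very same Jensen bound then gives $\int|v-v_*|f(t,v_*)\d v_*\ge n_f(t)|v-\bm{u}_f(t)|$, and integrating the resulting $|v-\bm{u}_f(t)|^3$ against $f(t,v)\d v$ reproduces $M_3(t)$, yielding the second inequality in \eqref{M0M2}.

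For the consequence on $E_f=M_0 M_2$, I would simply combine the two estimates:
$$\frac{\d}{\d t}E_f(t) = \dot M_0(t)M_2(t) + M_0(t)\dot M_2(t) \le -\alpha M_0(t)^2 M_3(t),$$
dropping the (nonpositive) $-\alpha M_0 M_1 M_2$ term. The log-convexity of moments gives $M_3\ge M_2^{3/2}/M_0^{1/2}$ (or equivalently Hölder's inequality $M_2\le M_0^{1/3}M_3^{2/3}$), hence $M_0^2 M_3\ge (M_0 M_2)^{3/2}=E_f^{3/2}$, so $\dot E_f\le -\alpha E_f^{3/2}$. Dividing by $E_f^{3/2}$ and integrating converts this into $\frac{\d}{\d t}E_f^{-1/2}\ge \alpha/2$, and hence $E_f(t)^{-1/2}\ge E_f(0)^{-1/2}+\alpha t/2$, which is precisely the claimed upper bound on $E_f(t)$.

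The only mildly delicate step is the computation of $\dot M_2$ with $\bm{u}_f(t)$ time-dependent; the rest is Jensen plus Hölder. I do not expect a genuine obstacle — the main care is simply to verify that the cross terms arising from $\frac{\d}{\d t}(n_f|\bm{u}_f|^2)=2\bm{u}_f\cdot\frac{\d}{\d t}(n_f\bm{u}_f)-|\bm{u}_f|^2\dot n_f$ combine with $\frac{\d}{\d t}\!\int f|v|^2\d v$ into the clean kernel $|v-\bm{u}_f(t)|^2$, which is exactly what makes the Jensen step produce the needed factor $|v-\bm{u}_f|^3$.
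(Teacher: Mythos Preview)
Your proof is correct and follows essentially the same route as the paper: Jensen's inequality on the loss term for both moment inequalities, then the product rule and H\"older's inequality $M_3\ge M_0^{-1/2}M_2^{3/2}$ to close the differential inequality $\dot E_f\le -\alpha E_f^{3/2}$. The one place you work harder than necessary is the computation of $\dot M_2$: since $\int f(t,v)(v-\bm{u}_f(t))\,\d v=0$ by definition of $\bm{u}_f$, the $\dot{\bm{u}}_f$ contribution vanishes directly when you differentiate $\int f|v-\bm{u}_f|^2\d v$, so the bookkeeping via $M_2=\int f|v|^2\d v - n_f|\bm{u}_f|^2$ can be bypassed.
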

\begin{proof} The proof follows from integration of \eqref{BE1} and the fact that both density and kinetic energy are conserved by the Boltzmann operator $\Q$. Therefore, we get
\begin{equation}\label{eq:nfEf1}
\dfrac{\d}{\d t} n_{f}(t)=-\alpha \int_{\R^d}\Q_-(f,f)(t,v)\d v,\qquad \dfrac{\d}{\d t}M_{2}(t)=-\alpha \int_{\R^d}\Q_-(f,f)(t,v)|v-\bm{u}_{f}(t)|^2\d v,
\end{equation}
so that
\begin{equation*}\begin{split}
\dfrac{\d}{\d t}n_{f}(t)&=-\alpha \int_{\R^{2d}} f(t,v)f(t,\vb)|v-\vb|\d \vb\,\d v \\
\dfrac{\d}{\d t}M_{2}(t)&=-\alpha\int_{\R^{2d}} f(t,v)f(t,\vb)\,|v-\vb|\,|v-\bm{u}_{f}(t)|^2\d v\d\vb.\end{split}\end{equation*}
According to Jensen's inequality  one has
$$\int_{\R^d} f(t,\vb)|v-\vb|\d\vb \geq n_{f}(t) |v-\bm{u}_{f}(t)|  \qquad \forall t \geq 0.$$
Therefore
{\small $$\dfrac{\d}{\d t} n_{f}(t) \leq -\alpha\,n_{f}(t)\int_{\R^d} f(t,v)|v-\bm{u}_{f}(t)|\d v \quad \text{and} \quad\dfrac{\d}{\d t}M_{2}(t) \leq -\alpha\,n_{f}(t) \int_{\R^d}f(t,v)|v-\bm{u}_{f}(t)|^3\d v$$}
from which \eqref{M0M2} follows. To deduce from this the decay of $E_{f}(t)$, we simply notice that, thanks to \eqref{M0M2},
$$\dfrac{\d}{\d t}E_{f}(t)=\,M_{2}(t)\,\dfrac{\d}{\d t}M_{0}(t)\,+M_{0}(t)\frac{\d}{\d t}M_{2}(t) \leq -\alpha\,M_{0}(t)^{2}M_{3}(t) \leq -\alpha\,E_{f}(t)^{3/2}$$
where we used that $M_{3}(t) \geq M_{0}^{-1/2}(t)M_{2}(t)^{3/2}$ thanks to H\"older's inequality. The result follows.
\end{proof}
One sees therefore that, to capture the asymptotic behavior of both $n_{f}(t)$ and $T_{f}(t)$, it will be necessary to understand the behavior of larger order moments (typically $M_{3}(t)$). One begins with recalling the Povzner's estimates obtained in \cite{jde}. For low order moments,  one has the following which comes from a combination of \cite[Lemma 3.1]{jde} and \cite[Lemma 3.7]{jde}.
\begin{lem}\phantomsection\label{Mk<1} For any $k \in (0,1)$ and any nonnegative mapping $\Psi\::\:\R^{d}\to \R$ one has
\begin{multline*}
-\int_{\R^d}\mathbb{B}_\alpha(\Psi,\Psi)(v)\,|v|^{2k}\d v \leq -\dfrac{\beta_k(\a)}{2}\int_{\R^{2d}}\Psi(v)\,\Psi(v_*)|v-\vb| \,\left(|v|^2+|\vb|^2\right)^k\d v\d\vb\\
+\dfrac{1}{2}\int_{\R^{2d}}\Psi(v)\Psi(\vb)|v-\vb| \,\left(|v|^{2k}+|\vb|^{2k}\right)\d v\d\vb,\end{multline*}
where $\beta_k(\a)=(1-\a)\varrho_k$ with
\begin{equation}\label{varrhoK}
\varrho_k =\int_{\S^{d-1}}\left[ \left(\dfrac{1+ \hat{U} \cdot \sigma}{2}\right)^k+\left(\dfrac{1- \hat{U} \cdot \sigma}{2}\right)^k\right] \d\sigma=\dfrac{|\S^{d-2}|}{|\S^{d-1}|}2^{1-k} \,\int_{-1}^1 (1+t)^k\,\left( {1-t^2}\right)^{\tfrac{d-3}{2}}\d t.
\end{equation}
\end{lem}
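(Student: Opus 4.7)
My plan is a classical Povzner-type estimate. I rewrite
\begin{equation*}
-\int_{\R^{d}}\mathbb{B}_\alpha(\Psi,\Psi)(v)\,|v|^{2k}\,\d v = -(1-\alpha)\int_{\R^{d}}\Q(\Psi,\Psi)(v)\,|v|^{2k}\,\d v + \alpha\int_{\R^{d}}\Q_-(\Psi,\Psi)(v)\,|v|^{2k}\,\d v,
\end{equation*}
and handle the two pieces separately via the weak (symmetric) form of the Boltzmann operator.

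For the loss contribution, using \eqref{Q-} and a straightforward symmetrization in $(v,\vb)$ one gets at once
\begin{equation*}
\alpha\int_{\R^{d}}\Q_-(\Psi,\Psi)(v)\,|v|^{2k}\,\d v = \frac{\alpha}{2}\int_{\R^{2d}}\Psi(v)\,\Psi(\vb)\,|v-\vb|\,\bigl(|v|^{2k}+|\vb|^{2k}\bigr)\d v\,\d\vb.
\end{equation*}
For the gain contribution, the usual symmetric form with test function $\varphi(v)=|v|^{2k}$ reads
\begin{equation*}
\int_{\R^{d}}\Q(\Psi,\Psi)(v)\,|v|^{2k}\d v = \frac{1}{2}\int_{\R^{2d}\times \S^{d-1}}\Psi(v)\Psi(\vb)\,|v-\vb|\,\bigl[|\vprim|^{2k}+|\vprimet|^{2k}-|v|^{2k}-|\vb|^{2k}\bigr]\d v\,\d\vb\,\d\sigma.
\end{equation*}

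The crucial input is then the Povzner lower bound on the sphere, valid for every $k\in(0,1)$:
\begin{equation*}
\int_{\S^{d-1}}\bigl(|\vprim|^{2k}+|\vprimet|^{2k}\bigr)\d\sigma \;\geq\; \varrho_k\,\bigl(|v|^2+|\vb|^2\bigr)^k,
\end{equation*}
with $\varrho_k$ given by \eqref{varrhoK}. To prove it, set $E=|v|^2+|\vb|^2$, $\hat U=(v+\vb)/|v+\vb|$ and $\gamma = |v-\vb||v+\vb|/E$; the identity $|v-\vb|^2|v+\vb|^2=E^2-(2v\cdot\vb)^2\leq E^2$ gives $\gamma\in[0,1]$. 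Plugging \eqref{eq:rel:vit} into $|\vprim|^2$ and $|\vprimet|^2$ yields $|\vprim|^2=\tfrac{E}{2}(1+\gamma\,\hat U\cdot\sigma)$ and $|\vprimet|^2=\tfrac{E}{2}(1-\gamma\,\hat U\cdot\sigma)$. For any fixed $s=\hat U\cdot\sigma\in[-1,1]$ the function $h(\gamma)=(1+\gamma s)^k+(1-\gamma s)^k$ satisfies
\begin{equation*}
h'(\gamma)=k\,s\,\bigl[(1+\gamma s)^{k-1}-(1-\gamma s)^{k-1}\bigr]\leq 0 \qquad \text{on } [0,1],
\end{equation*}
since $k-1<0$ makes $x\mapsto x^{k-1}$ strictly decreasing; hence $h(\gamma)\geq h(1)=(1+s)^k+(1-s)^k$. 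Multiplying by $(E/2)^k$ and integrating in $\sigma\in\S^{d-1}$ produces exactly the constant $\varrho_k$ of \eqref{varrhoK}.

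Reassembling the gain and loss contributions and reversing the sign delivers the stated inequality with $\beta_k(\alpha)=(1-\alpha)\varrho_k$. The only nontrivial ingredient is the monotonicity of $h$ in $\gamma$, which is elementary, so I do not anticipate any substantial obstacle; as signalled in the statement itself, this is precisely the combination of Lemmas~3.1 and~3.7 of \cite{jde} that the authors invoke.
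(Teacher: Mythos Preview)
Your proof is correct and follows exactly the Povzner-type computation that underlies the paper's citation of \cite[Lemmas 3.1 and 3.7]{jde}; the paper itself does not spell out the argument but simply invokes that reference, and your splitting of $\mathbb{B}_\alpha$, the symmetric weak form of $\Q$, and the monotonicity of $\gamma\mapsto(1+\gamma s)^k+(1-\gamma s)^k$ for $k\in(0,1)$ are precisely the ingredients of that proof.
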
\phantomsection

To prove Theorem \ref{main}, we first need to compute the evolution of $M_{1}(t)$:
\begin{lem}\phantomsection\phantomsection \label{lem:M1/2} 
There exists $\alpha_\star=\frac{\varrho_{1/2}-1}{\varrho_{1/2}+1} \in (0,1)$ such that, for all $\alpha \in (0,\alpha_{*})$ the unique solution to \eqref{BE} satisfies
$$\left(\frac{1}{M_{1}(0)}+2t\right)^{-1} \leq M_{1}(t) \qquad \forall t \geq 0.$$
\end{lem}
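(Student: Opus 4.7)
The target $M_1(t) \geq (1/M_1(0)+2t)^{-1}$ is equivalent, via $\tfrac{d}{dt}(1/M_1) = -\dot M_1/M_1^2$, to the differential inequality $\dot M_1(t) \geq -2M_1(t)^2$, which I plan to derive from the kinetic equation and then integrate. Since $M_1(t) = \int f(t,v)|v-\bm{u}_f(t)|\d v$ is evaluated with respect to the moving center of mass, I shift to $g(t,w) = f(t, w + \bm{u}_f(t))$, which has zero mean. By Galilean invariance of $\mathbb B_\alpha$, $g$ satisfies $\partial_t g + \dot{\bm{u}}_f\cdot\nabla_w g = \mathbb B_\alpha(g,g)$; multiplying by $|w|$ and integrating yields
\[\dot M_1(t) = \int \mathbb B_\alpha(g,g)(w)|w|\d w - \dot{\bm{u}}_f\cdot\int g(t,w)\tfrac{w}{|w|}\d w.\]
Using that only annihilation contributes to the evolution of $n_f$ and $n_f\bm{u}_f$, one derives $n_f\dot{\bm{u}}_f = -\alpha\int gg_*|w-w_*|\,w\,\d w\d w_*$; since $\bigl|\int g\tfrac{w}{|w|}\d w\bigr|\leq n_f$, this gives $|\dot{\bm{u}}_f|n_f \leq \alpha Y$ with $Y := \int gg_*|w-w_*||w|\d w\d w_*$, whence $\dot M_1(t) \geq \int \mathbb B_\alpha(g,g)(w)|w|\d w - \alpha Y$.

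Applying Lemma \ref{Mk<1} with $k=1/2$ gives $\int \mathbb B_\alpha(g,g)(w)|w|\d w \geq \tfrac{\beta_{1/2}(\alpha)}{2}X - Y$ where $X := \int gg_*|w-w_*|\sqrt{|w|^2+|w_*|^2}\d w\d w_*$, hence $\dot M_1 \geq \tfrac{\beta_{1/2}(\alpha)}{2}X - (1+\alpha)Y$. The crucial step, and the point I expect to be most delicate, is the choice of pointwise lower bound on $X$: using $\sqrt{|w|^2+|w_*|^2} \geq \max(|w|,|w_*|) = \tfrac{1}{2}\bigl(|w|+|w_*|+\bigl||w|-|w_*|\bigr|\bigr)$ one obtains $X \geq Y + Z$, with $Z := \tfrac{1}{2}\int gg_*|w-w_*|\bigl||w|-|w_*|\bigr|\d w\d w_*$. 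Jensen's inequality in the CM frame (namely $\int g_*|w-w_*|\d w_* \geq n_f|w|$, which uses the zero-mean property of $g$) yields $Y \geq n_f M_2$, while the pointwise bound $|w-w_*|\geq \bigl||w|-|w_*|\bigr|$, together with expansion and the CM-frame identity $(\int g|w|\d w)^2 = M_1^2$, gives $Z \geq n_f M_2 - M_1^2$. Combining $X \geq 2 n_f M_2 - M_1^2$ with the elementary triangle bound $Y \leq n_f M_2 + M_1^2$ yields
\[\dot M_1(t) \geq \bigl(\beta_{1/2}(\alpha)-(1+\alpha)\bigr)\,n_f M_2 \;-\; \Bigl(\tfrac{\beta_{1/2}(\alpha)}{2}+(1+\alpha)\Bigr)M_1^2.\]

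The threshold $\alpha_\star=(\varrho_{1/2}-1)/(\varrho_{1/2}+1)$ is characterized precisely by $\beta_{1/2}(\alpha_\star)=1+\alpha_\star$, so for $\alpha\in(0,\alpha_\star)$ the coefficient of $n_f M_2$ is nonnegative; moreover, since $\varrho_{1/2}<\sqrt 2<2$ one has $\alpha_\star<1/3$, and the map $\alpha\mapsto\tfrac{\beta_{1/2}(\alpha)}{2}+(1+\alpha)$ is increasing on $[0,\alpha_\star]$ with maximum $\tfrac{3(1+\alpha_\star)}{2}<2$. Consequently $\dot M_1(t)\geq -2M_1(t)^2$, and integrating this ODE yields the claimed lower bound. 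The only nontrivial step is the identification of the specific pointwise bound $\sqrt{|w|^2+|w_*|^2}\geq\max(|w|,|w_*|)$, which together with Jensen in the CM frame matches exactly the threshold $\alpha_\star$; weaker bounds such as $\sqrt{|w|^2+|w_*|^2}\geq(|w|+|w_*|)/\sqrt 2$ (though sharp on individual pairs) do not produce a useful differential inequality for $M_1$.
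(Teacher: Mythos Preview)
Your proof is correct and follows the same overall scheme as the paper's: pass to the centre-of-mass frame, bound the drift term coming from $\dot{\bm u}_f$, apply the Povzner estimate (Lemma~\ref{Mk<1}) with $k=1/2$, and close via the differential inequality $\dot M_1\geq -2M_1^2$. The only differences are in the algebra after Povzner. The paper uses the pointwise bound $|w-w_*|\sqrt{|w|^2+|w_*|^2}\geq (|w|-|w_*|)^2$ together with $|w-w_*|(|w|+|w_*|)\leq(|w|+|w_*|)^2$, arriving at an inequality of the form $\dot M_1\geq c_1 M_0M_2-c_2 M_1^2$ and then closing by Cauchy--Schwarz $M_1^2\leq M_0M_2$, so that the two coefficients combine to exactly $-2$. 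Your route, via $\sqrt{|w|^2+|w_*|^2}\geq\max(|w|,|w_*|)$ followed by Jensen in the CM frame, gives a sharper lower bound on $X$ (namely $2M_0M_2-M_1^2$ against the paper's $2M_0M_2-2M_1^2$), but you then simply discard the positive $M_0M_2$ term and verify the residual coefficient $\tfrac{\beta_{1/2}}{2}+1+\alpha<2$ using $\varrho_{1/2}<\sqrt 2$. Both closures work; the paper's use of $M_1^2\leq M_0M_2$ produces the constant $2$ by exact cancellation, whereas yours obtains it with room to spare.
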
\phantomsection
\begin{proof} The proof resumes some of the arguments of \cite[Lemma 3.7]{jde}. Precisely, multiplying Eq. \eqref{BE} by $|v-\bm{u}_{f}(t)|$ and integrating over $\R^d$ one has
\begin{equation}\label{eq:M1t}
\begin{split}
\dfrac{\d}{\d t}M_{1}(t)&=\int_{\R^d} \mathbb{B}_\alpha(f,f)(t,v)\,|v-\bm{u}_{f}(t)|\d v + \int_{\R^{d}}f(t,v)\partial_{t}|v-\bm{u}_{f}(t)|\d v\\
&=\int_{\R^{d}}\mathbb{B}_\alpha(f,f)(t,v)\,|v-\bm{u}_{f}(t)|\d v -\dot{\bm{u}}_{f}(t) \cdot \int_{\R^{d}}f(t,v)\frac{v-\bm{u}_{f}(t)}{|v-\bm{u}_{f}(t)|}\d v.\end{split}\end{equation}
Using now that 
$$\dfrac{\d}{\d t}(n_{f}(t)\bm{u}_{f}(t))=-\alpha\int_{\R^{d}}\Q_{-}(f,f)(t,v)\,v\,\d v=\dot{n}_{f}(t)\bm{u}_{f}(t)+n_{f}(t)\dot{\bm{u}}_{f}(t)$$
together with \eqref{eq:nfEf1} we easily get that
$$\dot{\bm{u}}_{f}(t)=-\frac{\alpha}{n_{f}(t)}\int_{\R^{d}}\Q_{-}(f,f)(t,v)(v-\bm{u}_{f}(t))\d v.$$
Consequently, 
\begin{equation*}
\left|\dot{\bm{u}}_{f}(t) \cdot \int_{\R^{d}}f(t,v)\frac{v-\bm{u}_{f}(t)}{|v-\bm{u}_{f}(t)|}\d v \right|\leq |\dot{\bm{u}}_{f}(t)| n_{f}(t) \leq \alpha\int_{\R^{d}}\Q_{-}(f,f)(t,v)|v-\bm{u}_{f}(t)|\d v.\end{equation*}
Using that
\begin{multline*}
\int_{\R^{d}}\Q_{-}(f,f)(t,v)|v-\bm{u}_{f}(t)|\d v=\int_{\R^{2d}}f(t,v)f(t,\vb)|v-\vb|\,|v-\bm{u}_{f}(t)|\d v\d\vb \\
\leq \int_{\R^{2d}}f(t,v)f(t,\vb)\left(|v-\bm{u}_{f}(t)|+|\vb-\bm{u}_{f}(t)|\right)|v-\bm{u}_{f}(t)|\d v\d\vb
=M_{0}(t)M_{2}(t)+M_{1}(t)^{2}\end{multline*}
we get  from \eqref{eq:M1t},
\begin{equation}\label{eqM1}
\dfrac{\d}{\d t}M_{1}(t) \geq \int_{\R^{d}}\mathbb{B}_\alpha(f,f)(t,v)\,|v-\bm{u}_{f}(t)|\d v - \alpha\left(M_{0}(t)M_{2}(t)+M_{1}(t)^{2}\right).\end{equation}
Using Lemma \ref{Mk<1} with $k=1/2$ and $\Psi(v)=f(t,v+\bm{u}_{f}(t))$, we obtain that
\begin{align*}
\int_{\R^d} \mathbb{B}_\alpha(f,f)(t,v)&\,|v-\bm{u}_{f}(t)|\d v=\int_{\R^{d}} \mathbb{B}_{\alpha}(f(t,\cdot+\bm{u}_{f}(t)),f(t,\cdot+\bm{u}_{f}(t)))(v)|v|^{2k}\d v\\
& \geq  \frac{1}{2}\int_{\R^{2d}}f(t,v+\bm{u}_{f}(t))f(t,\vb+\bm{u}_{f}(t)) \mathcal{J}(v,\vb)\d v \d\vb
\end{align*}
where
$$\mathcal{J} (v,\vb)=\beta_{1/2}(\a)|v-\vb| \,\left(|v|^2+|\vb|^2\right)^{1/2} -|v-\vb| \,\left(|v| +|\vb| \right).$$
Since $\left|\,|v|-|\vb|\,\right| \leq |v-\vb|\leq |v|+|\vb|$ and $\left(|v|^2+|\vb|^2\right)^{1/2} \geq \left|\,|v|-|\vb|\,\right|$ one gets that
\begin{align*}
\mathcal{J}(v,\vb) \geq \beta_{1/2}(\a)\,&\left|\,|v|-|\vb|\,\right|^2 -\left(|v| +|\vb| \right)^2\\
&=\left(\beta_{1/2}(\a)-1\right)\,\left(|v|^2+|\vb|^2\right) -2\left(\beta_{1/2}(\a)+1\right)\,|v|\,|\vb|\,.
\end{align*}
Since
\begin{align*}
\int_{\R^{2d}}f(t,v+\bm{u}_{f}(t))f(t,\vb&+\bm{u}_{f}(t))\left(|v|^{2}+|\vb|^{2}\right)\d v\d \vb\\
& = 2\int_{\R^{2d}}f(t,v)f(t,\vb)|v-\bm{u}_{f}(t)|^{2}\d v\d\vb=2M_{0}(t)M_{2}(t)\end{align*}
while
$$\int_{\R^{2d}}f(t,v+\bm{u}_{f}(t))f(t,\vb+\bm{u}_{f}(t))|v|\,|\vb|\d v\d\vb =\left(\int_{\R^{d}}f(t,v)|v-\bm{u}_{f}(t)|\d v\right)^{2}=M_{1}(t)^{2},$$
we get
\begin{equation}\label{eq:BaM1}
\int_{\R^d} \mathbb{B}_\alpha(f,f)(t,v)\,|v-\bm{u}_{f}(t)|\d v \geq \left(\beta_{1/2}(\a)-1\right)M_{0}(t)M_{2}(t)
-\left(\beta_{1/2}(\a)+1\right)M_{1}(t)^{2}.\end{equation}
Combining this with \eqref{eqM1} we finally obtain
\begin{equation}\label{eqM1-1}\dfrac{\d}{\d t}M_{1}(t) \geq \left(\beta_{1/2}(\a)-\alpha-1\right) M_{0}(t)M_{2}(t) -\left(\beta_{1/2}(\a)+1-\alpha\right)\,M_{1}(t)^2.\end{equation}
Now, setting $\alpha_\star=\frac{\varrho_{1/2}-1}{\varrho_{1/2}+1},$
one sees that, for any $0 < \alpha < \alpha_\star$, $\beta_{1/2}(\a) > 1+\alpha$. Moreover, Cauchy-Schwarz inequality ensures that $M_{1}(t)^2 \leq M_{0}(t)M_{2}(t)$ so that \eqref{eqM1-1} reads
\begin{equation*}
\dfrac{\d}{\d t}M_{1}(t) \geq \left(\beta_{1/2}(\a)-\alpha-1-(\beta_{1/2}(\a)+1-\alpha)\right)M_{1}(t)^2=-2M_{1}(t)^2 \qquad \forall t \geq 0.\end{equation*}
Integrating this differential inequality gives the result.
\end{proof}

The above inequality yields the optimal rate of convergence.
\begin{proof}[Proof of Theorem \ref{main}] Let $\alpha \in (0,\alpha_\star)$ be fixed. Using again that $M_{0}(t)M_{2}(t) \geq M_{1}^{2}(t)$, we deduce from Lemma \ref{lem:M1/2} the lower bound
$$M_{0}(t)M_{2}(t)\geq \left(\frac{1}{M_{1}(0)}+2t\right)^{-2}\qquad \forall t \geq 0 $$
which gives the conclusion thanks to Lemma \ref{lem1}.
\end{proof}

A direct consequence of  Theorem \ref{main} and Lemma \ref{lem:M1/2} is that
$$M_{1}(t) \propto (1+t)^{-1} \qquad \text{ as } t\to \infty.$$
More precisely, one has the following result.
\begin{cor}\phantomsection\phantomsection\label{corM12} There exists some explicit $\alpha_\star \in (0,1)$ such that, for any $\alpha \in (0,\alpha_\star)$, any nonnegative solution $f(t,v)$  to \eqref{BE1} associated to a nonnegative initial datum $f_{0} \in L^{1}_{3}(\R^{d})$ satisfies the following:
$$ \left(c_0+ 2\,t\right)^{-1} \leq M_{1}(t) \leq  \,\left(c_1+ \frac{\alpha}{2}\,t\right)^{-1} \qquad \forall t \geq 0$$
for positive constants $c_0,c_1 > 0$ depending on  the initial distribution $f_0$.
\end{cor}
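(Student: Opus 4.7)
The plan is to combine the lower bound for $M_{1}(t)$ already established in Lemma \ref{lem:M1/2} with the decay of the product $M_{0}(t)M_{2}(t)$ provided by Theorem \ref{main}. The lower bound is immediate: Lemma \ref{lem:M1/2} gives, for any $\alpha \in (0,\alpha_{\star})$,
$$M_{1}(t) \geq \left(\frac{1}{M_{1}(0)}+2t\right)^{-1}, \qquad \forall t \geq 0,$$
so it suffices to set $c_{0}:=1/M_{1}(0) > 0$, which depends only on $f_{0}$ (and is positive thanks to the assumption that $f_{0}$ has positive mass and temperature, ensuring $M_{1}(0)>0$).

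For the upper bound, the plan is to use Cauchy--Schwarz inequality, which applied to the nonnegative measure $f(t,v)\,\mathrm{d}v$ yields
$$M_{1}(t)^{2}=\left(\int_{\R^{d}}f(t,v)\,|v-\bm{u}_{f}(t)|\,\mathrm{d}v\right)^{2}\leq M_{0}(t)\,M_{2}(t)\,.$$
Since $M_{0}(t)\,M_{2}(t)=n_{f}(t)\cdot d\,n_{f}(t)\,T_{f}(t)=E_{f}(t)$, Theorem \ref{main} directly provides
$$M_{1}(t)^{2}\leq E_{f}(t)\leq\left(c_{1}+\frac{\alpha}{2}t\right)^{-2}, \qquad \forall t\geq 0,$$
with $c_{1}=1/\sqrt{E_{f}(0)}>0$. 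Taking square roots yields the desired upper bound.

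There is no real obstacle in this argument: both of its ingredients are already contained in the preceding two results, and the only observation needed is that $M_{1}$ lies between the elementary Cauchy--Schwarz bound $\sqrt{M_{0}M_{2}}$ and the Jensen-type lower bound exploited in the derivation of Lemma \ref{lem:M1/2}. The asymptotic $M_{1}(t)\propto(1+t)^{-1}$ as $t\to\infty$ follows at once by matching the two algebraic rates, both of order $t^{-1}$.
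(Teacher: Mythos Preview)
Your proof is correct and follows exactly the same approach as the paper: the lower bound is taken directly from Lemma~\ref{lem:M1/2}, and the upper bound follows from Cauchy--Schwarz $M_{1}(t)\leq\sqrt{M_{0}(t)M_{2}(t)}$ combined with the upper bound on $E_{f}(t)=M_{0}(t)M_{2}(t)$ in Theorem~\ref{main}.
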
\phantomsection
\begin{proof} The lower bound comes from Lemma \ref{lem:M1/2} while the upper bound comes from the corresponding upper bound for $M_{0}(t)M_{2}(t)$ in Theorem \ref{main} together with the fact that $M_{1}(t) \leq \sqrt{M_{0}(t)M_{2}(t)}.$
\end{proof}

\subsection{Scaling and self-similarity}\label{sec:scaling}

Let us recall that we introduced in \eqref{scalingPsi} the following rescaled function $\psi(\tau,\xi)$ through
\begin{equation*}\label{scalingf}
f(t,v)=n_{f}(t)(2T_{f}(t))^{-d/2}\psi\left(\tau(t),\frac{v-\bm{u}_{f}(t)}{\sqrt{2T_{f}(t)}}\right), \qquad \forall t \geq 0.
\end{equation*}
where $n_{f}(t),T_{f}(t)$ and $\bm{u}_{f}(t)$ denote the first moments of $f(t,\cdot)$ defined by \eqref{eq:momf}. We give briefly here the proof of Proposition \ref{prop:cauc}  which asserts that, under such scaling, $\psi(\tau,\xi)$ is the unique solution to \eqref{rescaBE}. 

Using \eqref{scalingPsi}, one gets that, for $\tau=\tau(t)$ and $\xi=\frac{v-\bm{u}_{f}(t)}{\sqrt{2T_{f}(t)}}$,
\begin{multline*}
\partial_{t}f(t,v)=n_{f}(t)(2T_{f}(t))^{-d/2}\dot{\tau}(t)\partial_{\tau}\psi(\tau,\xi)\\
+\left(\dot{n}_{f}(t)(2T_{f}(t))^{-d/2}-d\dot{T}_{f}(t)n_{f}(t)(2T_{f}(t))^{-1-d/2}\right)\psi(\tau,\xi)\\
-n_{f}(t)(2T_{f}(t))^{-\frac{d+2}{2}}\dot{T}_{f}(t) \xi \cdot \nabla_{\xi} \psi(\tau,\xi)- n_{f}(t)(2T_{f}(t))^{-\frac{1+d}{2}} \dot{\bm{u}}_{f}(t)\cdot \nabla_{\xi}\psi(\tau,\xi),\end{multline*}
where the dot symbol denotes derivative with respect to $t$. Moreover, using the scaling properties of $\Q_{\pm}(f,f)$, one has
$$\Q_{\pm}(f,f)(t,v)=n_{f}(t)^{2}(2T_{f}(t))^{\frac{1-d}{2}}\Q_{\pm}(\psi,\psi)(\tau,\xi),$$
so that $\psi(\tau,\xi)$ satisfies the following equation 
\begin{multline*}
\mathbb{B}_{\alpha}(\psi,\psi)(\tau,\xi)=n_{f}(t)^{-1}(2T_{f}(t))^{-\frac{1}{2}}\dot{\tau}(t)\partial_{\tau}\psi(\tau,\xi)\\
+\left(\dot{n}_{f}(t)n_{f}(t)^{-2}(2T_{f}(t))^{-1/2}-d\dot{T}_{f}(t)n_{f}^{-1}(t)(2T_{f}(t))^{-3/2}\right)\psi(\tau,\xi)\\
-\dot{T}_{f}(t)n_{f}^{-1}(t)(2T_{f}(t))^{-3/2}\xi \cdot \nabla_{\xi}\psi(\tau,\xi) - n_{f}(t)^{-1}(2T_{f}(t))^{-1}\dot{\bm{u}}_{f}(t) \cdot \nabla_{\xi}\psi(\tau,\xi)\end{multline*}
for $\tau=\tau(t),$ $\xi=\frac{v-\bm{u}_{f}(t)}{\sqrt{2T_{f}(t)}}$.  
One sees then that choosing  the time scaling function $\tau$ in such a way that
\begin{equation*}\label{eq:scalingtau}
\dot{\tau}(t)=n_{f}(t)\sqrt{2T_{f}(t)}, \qquad \qquad \forall t \geq 0,\end{equation*}
we obtain, finally
$$
\partial_{\tau}\psi(\tau,\xi)  + \mathbf{A}_{\psi}(\tau)\,\psi(\tau,\xi) + \mathbf{B}_{\psi}(\tau)\xi\cdot\nabla_{\xi}\psi(\tau,\xi) + \mathbf{V}_{\psi}(\tau) \cdot \nabla_{\xi}\psi(\tau,\xi)
=
\mathbb{B}_{\alpha}(\psi,\psi)(\tau,\xi)$$
with
\begin{equation}\begin{cases}\label{eq:ABsca}
\mathbf{A}_{\psi}(\tau(t))&=\left(\dot{n}_{f}(t)n_{f}(t)^{-2}(2T_{f}(t))^{-1/2}-d\dot{T}_{f}(t)n_{f}^{-1}(t)(2T_{f}(t))^{-3/2}\right) \in \R\\
\\
\mathbf{B}_{\psi}(\tau(t))&=-\dot{T}_{f}(t)n_{f}^{-1}(t)(2T_{f}(t))^{-3/2}=n_{f}^{-1}(t)\dfrac{\d}{\d t}(2T_{f}(t))^{-1/2} \in \R\\
\\
\mathbf{V}_{\psi}(\tau(t))& =-n_{f}(t)^{-1}(2T_{f}(t))^{-1}\dot{\bm{u}}_{f}(t) \in \R^{d}, \qquad \forall t \geq 0.\end{cases}\end{equation} 
Introducing 
$$\alpha{\mathbf{a}}_{\psi}(\tau)=d\mathbf{B}_{\psi}(\tau)-\mathbf{A}_{\psi}(\tau), \qquad \bm{v}_{\psi}(\tau)=-\frac{1}{\mathbf{B}_{\psi}(\tau)}\mathbf{V}_{\psi}(\tau) \in \R^{d}$$ allows to write the above equation satisfied by $\psi(\tau,\xi)$ in divergence form
$$\partial_{\tau}\psi(\tau,\xi)  - \alpha\mathbf{a}_{\psi}(\tau)\,\psi(\tau,\xi) + \mathbf{B}_{\psi}(\tau)\mathrm{div}_{\xi}\big(\left({\xi}-\bm{v}_{\psi}(\tau)\right)\psi(\tau,\xi)\big) = (1-\alpha)\Q(\psi,\psi)(\tau,\xi)-\alpha\Q_{-}(\psi,\psi)(\tau,\xi).$$
Also, conservation of mass implies that
$${\mathbf{a}}_{\psi}(\tau)=\int_{\R^{d}}\Q_{-}(\psi,\psi)(\tau,\xi)\d\xi=\int_{\R^{d}\times\R^{d}}|\xi-\xi_{*}|\psi(\tau,\xi)\psi(\tau,\xi_{*})\d\xi\d\xi_{*} \geq 0.$$
The zero momentum assumption on $\psi(\tau,\xi)$ reads 
$$\mathbf{B}_{\psi}(\tau)\bm{v}_{\psi}(\tau)=-\alpha\int_{\R^{d}}\xi\,\Q_{-}(\psi,\psi)(\tau,\xi)\d\xi \in \R^{d}, \qquad \forall \tau \geq 0,$$
while conservation of kinetic energy yields
$$\alpha\mathbf{a}_{\psi}(\tau)+2\mathbf{B}_{\psi}(\tau)=\big((d+2)\mathbf{B}_{\psi}(\tau)-\mathbf{A}_{\psi}(\tau)\big)=\frac{2\alpha}{d}\int_{\R^{d}}|\xi|^{2}\Q_{-}(\psi,\psi)(\tau,\xi)\d\xi.$$
One sees easily then that this yields the expressions for $\mathbf{B}_{\psi},\mathbf{A}_{\psi}$ and $\bm{v}_{\psi}$ given by \eqref{eq:BAV} and the mapping $\psi(\tau,\xi)$ is a solution to \eqref{rescaBE}. Notice that a variant of Eq. \eqref{rescaBE} has been introduced and studied in \cite{jde} and  we  can deduce from  \cite[Theorem  1.10]{jde} that $\psi(\tau,\xi)$ is the \emph{unique nonnegative solution}, belonging to $\C([0,\infty),L^1_2(\R^d))\cap L^1_{\mathrm{loc}}((0,\infty), L^1_3(\R^d))$ to \eqref{rescaBE} with initial condition $\psi_0$. 

\begin{nb}\phantomsection\label{nb:1.3}
Notice that the coefficients $\mathbf{A}_{\psi}(\tau),$ and $\mathbf{B}_{\psi}(\tau)$ do not have definite sign. Furthermore, for the case of steady solution $\psi_{\alpha}$ for which we recall that 
$$\mathbf{A}_{\psi_{\alpha}}=:\mathbf{A}_{\alpha}, \qquad \mathbf{B}_{\psi_{\alpha}}=:\mathbf{B}_{\alpha},$$
it is not clear whether $\mathbf{A}_{\alpha}$ and $\mathbf{B}_{\alpha}$ have a sign. However
\begin{eqnarray*}
\mathbf{a}_{\psi}(\tau) & := & \frac{d \mathbf{B}_{\psi}(\tau)- \mathbf{A}_{\psi}(\tau)}{\alpha} 
\; = \; \int_{\R^d} \Q_-(\psi,\psi)(\tau,\xi) \, \d\xi,  \\
  \mathbf{b}_{\psi}(\tau)& := &\frac{ (d+2)\mathbf{B}_{\psi}(\tau)- \mathbf{A}_{\psi}(\tau)}{\alpha}
\;= \;\frac{2}{d} \int_{\R^d} \Q_-(\psi,\psi)(\tau,\xi) \, |\xi|^2 \d\xi
\end{eqnarray*}
are both nonnegative for any $\tau \geq 0.$ Again, we use the shorthand notations $\mathbf{a}_{\alpha}=\mathbf{a}_{\psi_{\alpha}},$ $\mathbf{b}_{\alpha}=\mathbf{b}_{\psi_{\alpha}}$ for the steady solution $\psi_{\alpha}.$
\end{nb}

\begin{nb}\phantomsection\label{imporem} As far as steady solution $\psi_{\alpha}$ is concerned,  we recall that, according to \cite[Theorem 3.1]{jde2}, $\psi_\alpha$ converges to $\M$ defined by \eqref{M}
as  $\alpha \to 0$. In particular, using the notations $\mathbf{A}_{\alpha}$ instead of $\mathbf{A}_{\psi_{\alpha}}$ and similar notations $\mathbf{B}_{\alpha},\mathbf{a}_{\alpha}$ and $\mathbf{b}_{\alpha}$ we see that\begin{equation*}\begin{split}
\lim_{\alpha\to 0} \mathbf{a}_\alpha & =\mathbf{a}_{0}:= 
\int_{\R^d} \Q_-(\M,\M)(\xi) \, \d\xi 
\; = \; \sqrt{2\pi}\; \frac{|\S^{d-1}|}{|\S^{d}|} \\
\lim_{\alpha\to 0} \mathbf{b}_\alpha & =\mathbf{b}_{0}:= 
\frac{2}{d}\int_{\R^d} \Q_-(\M,\M)(\xi) \, |\xi|^2\, \d\xi 
\;= \;\frac{2d+1}{2d} \;\sqrt{2\pi} \;\frac{|\S^{d-1}|}{|\S^{d}|}\,.
\end{split}\end{equation*}
Hence, 
$$ \lim_{\alpha\to0}\; \frac{2\mathbf{a}_\alpha}{\mathbf{a}_\alpha+\mathbf{b}_\alpha} = \frac{4d}{4d+1}, \qquad \mbox{ and } \qquad  \lim_{\alpha\to0}\;\frac{2\mathbf{b}_\alpha}{\mathbf{a}_\alpha+\mathbf{b}_\alpha} = \frac{4d+2}{4d+1}.$$
Introducing also
{\small $$A_{0}:=-\frac{1}{2}\int_{\R^{d}}\left(d+2-2|\xi|^{2}\right)\Q_{-}(\M,\M)(\xi)\d\xi, \quad
 B_{0}=-\frac{1}{2}\int_{\R^{d}}\left(1-\frac{2}{d}|\xi|^{2}\right)\Q_{-}(\M,\M)(\xi)\d\xi\,,$$}
we see that $$\lim_{\alpha\to0^{+}}\frac{\mathbf{A}_{\alpha}-\alpha\,A_{0}}{\alpha}=0 \qquad \text{ and } \qquad \lim_{\alpha\to0^{+}}\frac{\mathbf{B}_{\alpha}-\alpha\,B_{0}}{\alpha}=0.$$
In particular, $\mathbf{a}_{0}=dB_{0}-A_{0}$ and $\mathbf{b}_{0}=(d+2)B_{0}-A_{0}.$ Notice also that, since $b_{0} \geq a_{0}$, we get $B_{0} > 0$ and $\mathbf{B}_{\alpha} >0$ for $\alpha$ small enough. We will also use repeatedly in the sequel the fact that there exist $C>0$ such that
$$\left|\mathbf{A}_{\alpha}\right| + \left|\mathbf{B}_{\alpha}\right| \leq C\alpha, \qquad \qquad \forall \alpha \in (0,\alpha_{0}),$$
which can be  easily deduced from the fact that $\sup_{\alpha\in (0,\alpha_{0})}\|\psi_{\alpha}\|_{L^{1}_{3}(\R^{d})} < \infty$.\end{nb}

Notice that, by virtue of Theorem \ref{main}, $\tau(t)$ behaves for large time like $\log(1+t)$. 
Of course, the main interest of the above result is that, in order to deduce the rate of convergence to $\bm{f}_{\alpha}$ for the solution $f(t,v)$, it ``suffices'' to prove the rate of convergence to $\psi_{\alpha}$ of the solution $\psi(t,\xi)$. Since Equation \eqref{rescaBE} conserved both mass and kinetic energy, it will be possible to exploit \emph{entropy-entropy production} methods. 

Let us now explicit the first order moments of $f(t,v)$ in terms of quantities involving $\psi(\tau,\xi)$.

\begin{lem}\phantomsection\label{lem:nEtau} Under the assumptions and notations of Proposition \ref{prop:cauc}, it holds
\begin{equation*}
n_{f}(t)=n_{f_{0}}\exp\left(-\alpha\,\int_{0}^{\tau(t)}\mathbf{a}_{\psi}(s)\d s\right),\qquad
T_{f}(t)=T_{f_{0}}\exp\left(-2\int_{0}^{\tau(t)}\mathbf{B}_{\psi}(s)\d s\right),
\qquad \forall t\geq 0.\end{equation*}
In particular, the time scaling $\tau(\cdot)\::\:\R^{+}\to \R^{+}$ is the unique solution with $\tau(0)=0$ to the following differential equation
$$\dfrac{\d}{\d t}\tau(t)=n_{f_{0}}\sqrt{2T_{f_{0}}}\exp\left(-\frac{\alpha}{2}\int_{0}^{\tau(t)}\left(\mathbf{a}_{\psi}(s)+\mathbf{b}_{\psi}(s)\right)\d s\right),\qquad t \geq 0,$$ 
where we recall that $\alpha\,\mathbf{a}_{\psi}(s)=d\mathbf{B}_{\psi}(s)-\mathbf{A}_{\psi}(s)$ while  $\alpha\,\mathbf{b}_{\psi}(s)=(d+2)\mathbf{B}_{\psi}(s)-\mathbf{A}_{\psi}(s)\geq 0$ for any $s \geq 0.$ 
Finally, one has
\begin{align*}
\frac{1}{\sqrt{2T_{f}(t)}}\bm{u}_{f}(t)=&\frac{1}{\sqrt{2T_{f_{0}}}}\exp\left(\int_{0}^{\tau(t)}\mathbf{B}_{\psi}(s)\d s\right)\bm{u}_{f_{0}}\\
&+\int_{0}^{\tau(t)}\mathbf{B}_{\psi}(s)\bm{v}_{\psi}(s)\exp\left(\int_{s}^{\tau(t)}\mathbf{B}_{\psi}(r)\d r\right)\d s, \qquad \forall t \geq 0.\end{align*}
\end{lem}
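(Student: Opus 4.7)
The plan is to derive three ODEs, one for each of $n_f(t)$, $T_f(t)$ and $\bm u_f(t)$, by reading off the relations in \eqref{eq:ABsca}, then to integrate each of them using the change of variable $t \mapsto \tau(t)$ supplied by $\dot\tau(t)=n_f(t)\sqrt{2T_f(t)}$.

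First I would treat $T_f$. From the middle line of \eqref{eq:ABsca},
$\mathbf{B}_\psi(\tau(t)) = -\dot T_f(t)\,n_f(t)^{-1}(2T_f(t))^{-3/2} = -\dot T_f(t)/(2T_f(t)\dot\tau(t))$, so $\frac{\d}{\d t}\log T_f(t) = -2\mathbf{B}_\psi(\tau(t))\dot\tau(t)$. Integrating and using the change of variable $s = \tau(t')$ yields the claimed formula for $T_f(t)$. For $n_f$, I would use $\alpha\mathbf{a}_\psi(\tau)=d\mathbf{B}_\psi(\tau)-\mathbf{A}_\psi(\tau)$ together with the first two lines of \eqref{eq:ABsca} to get $-\alpha\mathbf{a}_\psi(\tau(t))=\dot n_f(t)\,n_f(t)^{-2}(2T_f(t))^{-1/2}=\dot n_f(t)/(n_f(t)\dot\tau(t))$, hence $\frac{\d}{\d t}\log n_f(t)=-\alpha\mathbf{a}_\psi(\tau(t))\dot\tau(t)$, and integrate as above.

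The differential equation for $\tau$ is then immediate: multiplying the two exponentials gives $n_f(t)\sqrt{2T_{f}(t)} = n_{f_0}\sqrt{2T_{f_0}}\exp\bigl(-\int_0^{\tau(t)}(\alpha\mathbf{a}_\psi(s)+\mathbf{B}_\psi(s))\d s\bigr)$, and then the identity $\alpha\mathbf{a}_\psi + \mathbf{B}_\psi = \tfrac{\alpha}{2}(\mathbf{a}_\psi+\mathbf{b}_\psi)$, which follows by subtracting the definitions $\alpha\mathbf{a}_\psi=d\mathbf{B}_\psi-\mathbf{A}_\psi$ and $\alpha\mathbf{b}_\psi=(d+2)\mathbf{B}_\psi-\mathbf{A}_\psi$, finishes the claim. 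Uniqueness is clear by Cauchy--Lipschitz since $\mathbf{a}_\psi,\mathbf{b}_\psi\geq 0$ and $\psi$ is already fixed.

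The only step with real bookkeeping is the formula for $\bm u_f$. The proof of Lemma \ref{lem:M1/2} has already provided
$\dot{\bm u}_f(t) = -\frac{\alpha}{n_f(t)}\int_{\R^d}\Q_-(f,f)(t,v)(v-\bm u_f(t))\d v$, and the scaling identity $\Q_-(f,f)(t,v)=n_f(t)^2(2T_f(t))^{(1-d)/2}\Q_-(\psi,\psi)(\tau(t),\xi)$ with $\xi=(v-\bm u_f(t))/\sqrt{2T_f(t)}$ reduces this, after the substitution $v\mapsto\xi$ and the definition of $\mathbf{B}_\psi\bm v_\psi$ in \eqref{eq:BAV}, to
\[
\dot{\bm u}_f(t) = n_f(t)\,(2T_f(t))\,\mathbf{B}_\psi(\tau(t))\,\bm v_\psi(\tau(t)).
\]
I would then introduce $\widetilde{\bm u}(\tau) := \bm u_f(t)/\sqrt{2T_f(t)}$ where $\tau=\tau(t)$, differentiate in $t$, and divide by $\dot\tau(t) = n_f(t)\sqrt{2T_f(t)}$; the contribution from $\dot T_f$ combines with the contribution from $\dot{\bm u}_f$ to give the linear non-autonomous ODE
\[
\frac{\d\widetilde{\bm u}}{\d\tau}(\tau) - \mathbf{B}_\psi(\tau)\,\widetilde{\bm u}(\tau) = \mathbf{B}_\psi(\tau)\,\bm v_\psi(\tau).
\]
Solving with the integrating factor $\exp(-\int_0^\tau\mathbf{B}_\psi(r)\d r)$ and recalling $\widetilde{\bm u}(0)=\bm u_{f_0}/\sqrt{2T_{f_0}}$ delivers the announced expression for $\bm u_f(t)/\sqrt{2T_f(t)}$.

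The main obstacle is the bookkeeping in this last step: one has to combine the $\dot T_f$ and $\dot{\bm u}_f$ contributions correctly under the change of variable $t\mapsto\tau$, and to identify the nonlinear expression $\int_{\R^d}\xi\,\Q_-(\psi,\psi)\,\d\xi$ with $-\alpha^{-1}\mathbf{B}_\psi\bm v_\psi$ using the defining relation \eqref{eq:BAV}. Once this is done, the remaining pieces are straightforward manipulations of scalar ODEs.
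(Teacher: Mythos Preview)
Your proof is correct and follows essentially the same route as the paper: derive scalar ODEs for $\log n_f$ and $\log T_f$ from \eqref{eq:ABsca}, integrate via the change of variable $t\mapsto\tau(t)$, and for $\bm u_f/\sqrt{2T_f}$ obtain a linear non-autonomous ODE in $\tau$ solved by an integrating factor. The only cosmetic difference is that for $\dot{\bm u}_f$ the paper reads the relation $\beta(t)\dot{\bm u}_f(t)=-\dot\tau(t)\mathbf V_\psi(\tau(t))$ directly off the third line of \eqref{eq:ABsca} (recalling $\bm v_\psi=-\mathbf V_\psi/\mathbf B_\psi$), whereas you rederive it from the moment identity for $\dot{\bm u}_f$ and the scaling of $\Q_-$; both lead to the same equation $\dot{\bm u}_f(t)=n_f(t)(2T_f(t))\mathbf B_\psi(\tau(t))\bm v_\psi(\tau(t))$.
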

\begin{proof} The proof resorts on the equation \eqref{eq:ABsca} where the evolution of the moments $n_{f}(t),T_{f}(t)$ and $\bm{u}_{f}(t)$ is related to the definition of $\mathbf{A}_{\psi}(\tau(t)), \mathbf{B}_{\psi}(\tau(t))$ and $\bm{v}_{\psi}(\tau(t))$. Namely, setting for simplicity $\beta(t)=\tfrac{1}{\sqrt{2T_{f}(t)}}$, the first and second identity in \eqref{eq:ABsca} imply that
$$\mathbf{A}_{\psi}(\tau(t))=\frac{\dot{n}_{f}(t)}{n_{f}(t)^{2}}\beta(t) + \frac{d}{n_{f}(t)}\dot{\beta}(t), \qquad \mathbf{B}_{\psi}(\tau(t))=\frac{1}{n_{f}(t)}\dot{\beta}(t).$$
From this, $\frac{\dot{n}_{f}(t)}{n_{f}(t)^{2}}\beta(t)=-\alpha\mathbf{a}_{\psi}(\tau(t))$, and since $\frac{1}{n_{f}(t)}\beta(t)=\frac{1}{n_{f}(t)\sqrt{2T_{f}(t)}}=\frac{1}{\dot{\tau}(t)},$ we get that
$$\log \frac{n_{f}(t)}{n_{f_{0}}}=-\alpha\,\int_{0}^{t}\mathbf{a}_{\psi}(\tau(s))\dot{\tau}(s)\d s=-\alpha\,\int_{0}^{\tau(t)}\mathbf{a}_{\psi}(s)\d s,$$
which gives the desired expression for $n_{f}(t)$.  Similarly, since 
$$\mathbf{B}_{\psi}(\tau(t))=-\dot{T}_{f}(t)n_{f}^{-1}(t)(2T_{f}(t))^{-3/2}, \qquad t \geq 0,$$ 
we easily obtain that
$2\dot{\tau}(t)\mathbf{B}_{\psi}(\tau(t))=-\frac{\dot{T_{f}}(t)}{T_{f}(t)}$, 
which gives the expression of $T_{f}(t).$ Finally, using again that $\dot{\tau}(t)=n_{f}(t)\sqrt{2T_{f}(t)}$ we get the desired differential equation for the time scaling. Introduce now $\bm{z}(t)=\frac{1}{\sqrt{2T_{f}(t)}}\bm{u}_{f}(t)=\beta(t)\bm{u}_{f}(t)$. According to the third identity in \eqref{eq:ABsca},
$$\beta(t)\dot{\bm{u}}_{f}(t)=-\dot{\tau}(t)\mathbf{V}_{\psi}(\tau(t))$$
so that
$$\dot{\bm{z}}(t)=\dot{\beta}(t)\bm{u}_{f}(t)+\beta(t)\dot{\bm{u}}_{f}(t)=\frac{\dot{\beta}(t)}{\beta(t)}\bm{z}(t)-\dot{\tau}(t)\mathbf{V}_{\psi}(\tau(t))=\dot{\tau}(t)\mathbf{B}_{\psi}(\tau(t))\bm{z}(t)-\dot{\tau}(t)\mathbf{V}_{\psi}(\tau(t)),$$
where we used that $\frac{\dot{\beta}(t)}{\beta(t)}=\frac{n_{f}(t)}{\beta(t)}\mathbf{B}_{\psi}(\tau(t))=\dot{\tau}(t)\mathbf{B}_{\psi}(\tau(t))$. Thus,
$$\dfrac{\d}{\d t}\left[\exp\left(-\int_{0}^{\tau(t)}\mathbf{B}_{\psi}(s)\d s\right)\bm{z}(t)\right]=-\dot{\tau}(t)\mathbf{V}_{\psi}(\tau(t))\exp\left(-\int_{0}^{\tau(t)}\mathbf{B}_{\psi}(s)\d s\right)$$
which gives the result.
%
\end{proof}
\begin{nb}
Notice that, since $2\mathbf{B}_{\psi}(s)+\alpha\,\mathbf{a}_{\psi}(s)=\alpha\,\mathbf{b}_{\psi}(s)$ for any $s \geq 0$, we get
$$\int_{\R^{d}}f(t,v)|v-\bm{u}_{f}(t)|^{2}\d v=dn_{f}(t)T_{f}(t)=dn_{f_{0}}T_{f_{0}}\exp\left(-\alpha\int_{0}^{\tau(t)}\mathbf{b}_{\psi}(s)\d s\right), \qquad \forall t \geq 0.$$
\end{nb}

In all the sequel, we shall assume $f_{0} \in L^{1}_{3}(\R^{d})$ to be given and satisfy the assumptions of Proposition \ref{prop:cauc} and will denote by $f(t,v)$ and $\psi(\tau,\xi)$ the associated unique solutions to \eqref{BE} and \eqref{rescaBE} provided by Proposition \ref{prop:cauc}. 

\part{Spectral analysis of the linearized operator}\label{part1}

The scope of this part is to prove Theorem \ref{theo:decayX0}. We  shall consider in the sequel the weight
\begin{equation}\label{eq:weight}
\m(\xi)=\exp(a|\xi|), \qquad a >0.\end{equation}
Inspired by \cite{Tr}, we work on  the following \emph{scales of Banach spaces}:
$$\X_{2} \subset \X_{1} \subset \X_{0}$$
where 
\begin{equation}\label{eq:defX}
\X_{0}=L^{1}(\m), \qquad \X_{1}=\W^{1,1}_{1}(\m), \qquad \X_{2}=\W^{2,1}_{2}(\m).\end{equation}
Recall that the  linearized operator associated to $\mathbb{B}_\alpha$ around the \textit{unique} steady state $\psi_{\alpha}$ has been defined in Definition \ref{defi:linear}. We notice that, for any $\alpha \in (0,\alpha_{0})$
$$\X_{1}=\D(\LL), \qquad \X_{2}=\D(\LL^{2}).$$

\section{Properties of the linearized operators $\LL$ and $\mathscr{L}_{0}$}\label{sec:ll}

We investigate in this section general properties of the linearized operators $\LL$ and $\mathscr{L}_{0}$ in general weighted spaces $\W^{k,1}_{q}(\m)$\footnote{\emph{To avoid too heavy notations, we shall still denote by $\mathscr{L}_{\alpha}$ and $\mathscr{L}_{0}$ the restriction of the above defined operators in the spaces $\X_{1}$ and $\X_{2}$. We adopt the same convention for the associated semigroups and spectral projections in those different spaces. However, one should always keep in mind the underlying space on which one considers such operators.}}. We should keep in mind that we are mainly interested in the properties of the operators in the Banach spaces $\X_{i}$, $i=0,1,2$ and shall restrict ourselves to these spaces at some point.

\subsection{Elastic limit} A crucial role in our analysis will be played by the fact that, in some suitable sense, $\LL$ is close to the elastic linearized operator $\mathscr{L}_{0}$ for $\alpha \simeq 0.$ 
Let us begin with the following lemma.
\begin{lem}\phantomsection\label{prop:psi} There exists some explicit $\overline{a} >0$ such that, for all $k \in \N$, $q \geq 0$, there exists a explicit  function $\eta_{k,q}\::\:(0,\alpha_{0}) \to \R^{+}$ with $\lim_{\alpha\to 0^{+}}\eta_{k,q}(\alpha)=0$ such that
$$\|\psi_{\al} -\M\|_{\W^{k,1}_{q}(\m)} \leq \eta_{k,q}(\alpha) \qquad \forall \alpha \in (0,\alpha_{0}),$$ where the weight function $\m$ is given by $\m(\xi)=\exp(a|\xi|)$, $a \in (0,\overline{a}).$
\end{lem}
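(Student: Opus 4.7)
The plan is to combine uniform-in-$\alpha$ a priori estimates for the stationary profile $\psi_\alpha$ in a strong weighted Sobolev space with a compactness/uniqueness argument identifying $\M$ as the only possible limit at $\alpha=0$. Since the statement demands only $\eta_{k,q}(\alpha)\to 0$ and not a quantitative rate, it suffices to set $\eta_{k,q}(\alpha):=\|\psi_\alpha-\M\|_{\W^{k,1}_q(\m)}$ and to prove that this quantity vanishes as $\alpha\to 0^+$.

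The first and main step is to establish, for some $a'>a$ with $a'<\overline{a}$ (the threshold $\overline{a}$ being fixed by Povzner-type constraints), uniform bounds
$$\sup_{\alpha\in(0,\alpha_0)}\|\psi_\alpha\|_{\W^{k+1,1}_{q+1}(\exp(a'|\cdot|))}<+\infty,$$
valid for every $k\in\N$, $q\ge 0$. The $L^{1}$-based exponential moment control is obtained via Povzner-type estimates applied to the stationary equation \eqref{steady}, exploiting the key estimate from Remark \ref{imporem}, namely $|\mathbf{A}_\alpha|+|\mathbf{B}_\alpha|\le C\alpha$; the perturbing terms $\mathbf{A}_\alpha\psi_\alpha+\mathbf{B}_\alpha\,\xi\cdot\nabla_\xi\psi_\alpha+\alpha\,\Q_-(\psi_\alpha,\psi_\alpha)$ in \eqref{steady} are then small enough for the elastic collision dissipation to dominate, uniformly in $\alpha\in(0,\alpha_0)$. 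The Sobolev estimates are bootstrapped from the pointwise identity
$$\bigl[(1-\alpha)\Sigma_{\psi_\alpha}(\xi)+\mathbf{A}_\alpha\bigr]\psi_\alpha(\xi)=(1-\alpha)\Q_+(\psi_\alpha,\psi_\alpha)(\xi)-\mathbf{B}_\alpha\,\xi\cdot\nabla_\xi\psi_\alpha(\xi)$$
derived from \eqref{steady}, together with the well-known regularizing properties of $\Q_+$ for hard-sphere kernels and the fact that $\Sigma_{\psi_\alpha}$ is comparable to $\langle\xi\rangle$ uniformly in $\alpha$.

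The second step is compactness and identification of the limit. A Rellich-Kondrachov-type embedding shows that $\{\psi_\alpha\}_{\alpha\in(0,\alpha_0)}$ is relatively compact in $\W^{k,1}_q(\m)$, thanks to the loss of one derivative and of a small amount of exponential weight in the space used for the a priori bound. For any sequence $\alpha_n\to 0^+$, extract a subsequence with $\psi_{\alpha_n}\to\psi_*$ in $\W^{k,1}_q(\m)$. Passing to the limit in \eqref{steady}, the terms $\mathbf{A}_{\alpha_n}\psi_{\alpha_n}$, $\mathbf{B}_{\alpha_n}\,\xi\cdot\nabla_\xi\psi_{\alpha_n}$ and $\alpha_n\Q_-(\psi_{\alpha_n},\psi_{\alpha_n})$ all vanish, while $(1-\alpha_n)\Q(\psi_{\alpha_n},\psi_{\alpha_n})\to\Q(\psi_*,\psi_*)$ by the continuity of $\Q$ on the controlled space, yielding $\Q(\psi_*,\psi_*)=0$. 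The moment normalizations \eqref{init} pass to the limit, so $\psi_*$ is a Maxwellian of mass $1$, zero momentum and energy $d/2$; by the classical H-theorem $\psi_*=\M$. Since every subsequence contains a further subsequence converging to the same limit, the full family converges, i.e. $\eta_{k,q}(\alpha)\to 0$ as $\alpha\to 0^+$.

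The hard part is Step 1, the uniform-in-$\alpha$ bound in the exponential-weight Sobolev space. The $L^{1}$ exponential moment bound requires performing Povzner-type estimates directly on the stationary equation and tracking carefully the $\alpha$-dependence through $\mathbf{A}_\alpha,\mathbf{B}_\alpha$; the smallness of these coefficients is precisely what lets the elastic collisional dissipation dominate for every $\alpha\in(0,\alpha_0)$ and implicitly fixes the threshold $\overline{a}$. The regularity bootstrap is delicate because the transport term $\mathbf{B}_\alpha\,\xi\cdot\nabla_\xi\psi_\alpha$ does not produce any smoothing on its own, but since $\mathbf{B}_\alpha=\mathcal{O}(\alpha)$ it can be absorbed as a small perturbation at each iteration step and the smoothing of the gain operator $\Q_+$ takes over.
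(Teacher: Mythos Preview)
Your approach is correct in principle but takes a genuinely different route from the paper's proof. The paper does not argue by uniform bounds plus compactness; instead it simply cites two results from the companion paper \cite{jde2}: first, $\|\psi_\alpha-\M\|_{\W^{k,2}_q}\to 0$ for all $k,q$ (polynomial-weight, $L^2$-based Sobolev convergence with explicit rate), and second, $\|\psi_\alpha-\M\|_{L^1_q(m_b)}\to 0$ for all $q$ and $b\in[0,A/2)$ (exponential-weight $L^1$ convergence). It then closes with a ready-made interpolation inequality from \cite[Lemma B.1]{MiMo3} of the form
\[
\|f\|_{\W^{k,1}_q(\m)} \le C\,\|f\|_{\W^{8k+7(1+d/2),2}}\,\|f\|_{L^1(m_{12a})}^{1/8}\,\|f\|_{L^1(\m)}^{3/4},
\]
which combines the two cited convergences into the desired one.

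The trade-offs are as follows. Your compactness argument is self-contained and conceptually transparent, but your Step~1 (uniform $\W^{k+1,1}_{q+1}$ bounds with exponential weight, via Povzner estimates and $\Q_+$-bootstrapping with the $\mathcal{O}(\alpha)$ drift absorbed perturbatively) essentially reproves from scratch much of what \cite{jde2} already established; moreover, a pure compactness/subsequence argument does not by itself deliver the \emph{explicit} function $\eta_{k,q}$ promised in the statement, whereas the paper's interpolation route inherits explicitness directly from the cited rates. The paper's proof is therefore shorter and quantitative, at the price of outsourcing the substantive work to \cite{jde2} and \cite{MiMo3}.
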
\phantomsection
\begin{proof} 
According to \cite[Theorem 3.1]{jde2}, for all $k \in \N$, $q \geq 0$,
\begin{equation}\label{eq:lim}
\lim_{\alpha\to0^{+}}\left\|\psi_{\alpha}-\M\right\|_{\W^{k,2}_{q}}=0\end{equation}
with some explicit rate of convergence, while, according to \cite[Corollary 3.3]{jde2},  there is $A >0$ such that
$$\lim_{\alpha\to0^{+}}\left\|\psi_{\alpha}-\M\right\|_{L^{1}_{q}(m_{b})}=0, \quad \forall q \geq 0, b \in [0,A/2)$$
where $m_{b}(\xi)=\exp(b|\xi|)$.  Using the following interpolation inequality (see \cite[Lemma B.1]{MiMo3} where we recall that $\m=m_{a}$)
$$\|f\|_{\W^{k,1}_{q}(\m)} \leq C\,\|f\|_{\W^{8k+7(1+d/2),2}}\,\|f\|_{L^{1}(m_{12a})}^{1/8}\,\|f\|^{3/4}_{L^{1}(\m)}$$
valid for all $f \in\W^{8k+7(1+d/2),2}(\R^{d})  \cap L^{1}(m_{12a})$, 
we deduce easily the conclusion. Notice that the above rate of convergence can be made explicit.\end{proof}

\noindent $\bullet$\textit{\textbf{ From now on, we always assume the weight $\m$ to be given by \eqref{eq:weight} for $a \in (0,\overline{a}).$}} \medskip

On the underlying space $\W^{k,1}_{q}(\m)$, introduce the operator 
$T_{\alpha}\::\:\D(T_{\alpha}) \subset \W^{k,1}_{q}(\m) \to \W^{k,1}_{q}(\m)$ defined by $\D(T_{\alpha})=\W^{k+1,1}_{q+1}(\m)$ and 
$$T_{\alpha}h=-\mathbf{B}_{\alpha}\mathrm{div}(\xi\,  h(\xi)), \qquad h \in \D(T_{\alpha}).$$
One sees that the operator $T_{\alpha}$ is the one responsible for the discrepancy between the domain of $\mathscr{L}_{0}$ and $\LL$. Because of this, we set
$$\P_{\alpha}^{0}\::\:\D(\P_{\alpha}^{0}) \subset \W^{k,1}_{q}(\m) \to \W^{k,1}_{q}(\m)$$
as $\P^{0}_{\alpha}=\mathscr{L}_{0}-\LL + T_{\alpha}$ with domain
$$\D(\P_{\alpha}^{0})=\D(\mathscr{L}_{0})=\W^{k,1}_{1+q}(\m).$$
One has then the following Proposition 
\begin{prop}\phantomsection\label{prop:converLLL0} For any $k \in \N$ and any $q \geq 0$, there exists some explicit function $\ep_{k,q}\::\:(0,\alpha_{0}) \to \R^{+}$ with 
$\lim_{\alpha\to 0^{+}}\ep_{k,q}(\alpha)=0$ and such that
\begin{equation}\label{eq:llXk0}
\|\P_{\alpha}^{0}h\|_{\W^{k,1}_{q}(\m)} \leq \ep_{k,q}(\alpha)\,\|h\|_{\W^{k,1}_{1+q}(\m)} \qquad \forall h \in \W^{k,1}_{1+q}(\m).\end{equation}
As a consequence, 
\begin{equation}\label{eq:llXk}
\|\LL h -\mathscr{L}_{0}h\|_{\W^{k,1}_{q}(\m)} \leq \ep_{k,q}(\alpha)\,\|h\|_{\W^{k+1,1}_{1+q}(\m)} \qquad \forall h \in \W^{k+1,1}_{1+q}(\m).\end{equation}
\end{prop}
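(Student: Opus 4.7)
The plan is to expand $\P_\alpha^0 h = \mathscr{L}_0 h - \LL h + T_\alpha h$ using the definitions and exploit the fact that $\P_\alpha^0$ was designed precisely to cancel the first-order differential term appearing in $\LL$. Using the identity $\mathrm{div}_\xi(\xi\, h) = d\,h + \xi\cdot\nabla_\xi h$, the transport term $\mathbf{B}_\alpha\,\xi\cdot\nabla_\xi h$ inside $\LL$ is compensated by the corresponding piece of $T_\alpha$, leaving
\begin{align*}
\P_\alpha^0 h =\;& \Q(h,\M-\psi_\alpha) + \Q(\M-\psi_\alpha,h)\\
&+\alpha\bigl[\Q(h,\psi_\alpha)+\Q(\psi_\alpha,h) + \Q_-(\psi_\alpha,h)+\Q_-(h,\psi_\alpha)\bigr]\\
&+ (\mathbf{A}_\alpha - d\,\mathbf{B}_\alpha)\,h.
\end{align*}
The strategy is then to estimate each group in $\W^{k,1}_q(\m)$ separately, losing only one power of the polynomial weight (since $\Q_\pm$ gain at most a $\langle\xi\rangle$-weight) but no derivative.

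First, I would invoke the standard bilinear continuity estimates for $\Q$ and $\Q_-$ in weighted Sobolev spaces with exponential weight $\m$ of the form
\begin{equation*}
\|\Q^{\sharp}(f,g)\|_{\W^{k,1}_q(\m)} \leq C\bigl(\|f\|_{\W^{k,1}_{1+q}(\m)}\|g\|_{L^1_{1+q}(\m)} + \|f\|_{L^1_{1+q}(\m)}\|g\|_{\W^{k,1}_{1+q}(\m)}\bigr),
\end{equation*}
valid for $\Q^{\sharp}\in\{\Q,\Q_-\}$ — these estimates are classical for hard sphere kernels with exponentially weighted $L^1$ spaces and are used throughout \cite{jde,GMM,MiMo3}. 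Applying them to the first pair of terms with $f$ or $g$ equal to $\M-\psi_\alpha$ produces the factor $\|\M-\psi_\alpha\|_{\W^{k,1}_{1+q}(\m)}$, which vanishes as $\alpha\to 0^+$ by Lemma \ref{prop:psi}. For the $\alpha$-weighted group, the same continuity estimates combined with the uniform bound $\sup_{\alpha\in(0,\alpha_0)}\|\psi_\alpha\|_{\W^{k,1}_{1+q}(\m)}<\infty$ (also from Lemma \ref{prop:psi} applied to bound $\psi_\alpha = \M + (\psi_\alpha - \M)$) give a contribution of order $\alpha\,\|h\|_{\W^{k,1}_{1+q}(\m)}$. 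For the last term, Remark \ref{imporem} provides $|\mathbf{A}_\alpha - d\mathbf{B}_\alpha|\le C\alpha$, so that it is dominated by $C\alpha\,\|h\|_{L^1_q(\m)}$. Summing these three contributions yields \eqref{eq:llXk0} with
\begin{equation*}
\ep_{k,q}(\alpha) = C\Bigl(\|\M - \psi_\alpha\|_{\W^{k,1}_{1+q}(\m)} + \alpha\Bigr),
\end{equation*}
which goes to zero as $\alpha\to 0^+$ with an explicit rate inherited from Lemma \ref{prop:psi}.

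The consequence \eqref{eq:llXk} is then immediate: writing $\LL h - \mathscr{L}_0 h = -\P_\alpha^0 h + T_\alpha h$ and bounding $T_\alpha h = -\mathbf{B}_\alpha(d\,h + \xi\cdot\nabla_\xi h)$ in $\W^{k,1}_q(\m)$ by $C|\mathbf{B}_\alpha|\,\|h\|_{\W^{k+1,1}_{1+q}(\m)}\le C\alpha\,\|h\|_{\W^{k+1,1}_{1+q}(\m)}$, and combining with \eqref{eq:llXk0} (up to redefinition of $\ep_{k,q}$), delivers the claimed bound; note that the loss of one derivative appears solely through the $T_\alpha$ contribution, which reflects the fact that $\LL$ and $\mathscr{L}_0$ are not defined on the same domain. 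The only mildly delicate step is the bilinear continuity of $\Q$ and $\Q_-$ with the exponential weight $\m(\xi)=\exp(a|\xi|)$ for $a\in(0,\overline{a})$; this is where the restriction on $a$ matters, and the estimate follows from the standard Bobylev/Gamba-Panferov--Villani type manipulation of $\m(\xi')\m(\xi_\ast') \leq \m(\xi)\m(\xi_\ast)$ after splitting the gain operator. Once these continuity estimates are in place, the remainder is purely bookkeeping.
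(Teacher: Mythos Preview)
Your proof is correct and follows essentially the same approach as the paper. The only cosmetic difference is that the paper combines your $\alpha$-group as $\alpha[\Q_+(h,\psi_\alpha)+\Q_+(\psi_\alpha,h)]$ (using $\Q+\Q_-=\Q_+$) and writes the zeroth-order coefficient as $\alpha\mathbf{a}_\alpha$ rather than $\mathbf{A}_\alpha-d\mathbf{B}_\alpha$, but the estimates and the use of Lemma~\ref{prop:psi} are identical; the passage to higher $k$ via $\nabla\Q_\pm(g,f)=\Q_\pm(\nabla g,f)+\Q_\pm(g,\nabla f)$ is also the same.
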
\phantomsection
\begin{proof} The proof is based upon the well-known estimate for the operators $\Q_{\pm}$ associated to hard-potentials (see Lemma \ref{lem:estimatQ} in Appendix \ref{app:point} for a simple proof): for any $q \geq 0$, there is some universal positive constant $C_{q} >0$ such that
\begin{equation}\label{eq:qpm}
\|\Q_{\pm}(g,f)\|_{L^{1}_{q}(\m)} \leq C_{q}\|g\|_{L^{1}_{q+1}(\m)}\,\|f\|_{L^{1}_{q+1}(\m)}, \qquad \forall f,g \in L^{1}_{q+1}(\m).\end{equation}
Then, since
\begin{multline}\label{eq:LLL0}
\LL h(\xi)-\mathscr{L}_{0}h(\xi)=\Q(h,\psi_{\alpha}-\M)(\xi) + \Q(\psi_{\alpha}-\M,h)(\xi) \\
- \alpha\,\left[\Q_{+}(h,\psi_{\alpha})(\xi)+\Q_{+}(\psi_{\alpha},h)(\xi)\right]
-\alpha\mathbf{a}_{\alpha}h(\xi)-\mathbf{B}_{\alpha}\mathrm{div}(\xi h(\xi))\end{multline}
(where we used that $\alpha\mathbf{a}_{\alpha}=d\mathbf{B}_{\alpha}-\mathbf{A}_{\alpha}$), 
one deduces from \eqref{eq:qpm} that
$$\|\P_{\alpha}^{0}h\|_{L^{1}_{q}(\m)} \leq 2C_{q}\|h\|_{L^{1}_{1+q}(\m)}\,\|\psi_{\alpha}-\M\|_{L^{1}_{1+q}(\m)} 
+2C_{q}\alpha\,\|h\|_{L^{1}_{1+q}(\m)}\,\|\psi_{\alpha}\|_{L^{1}_{1+q}(\m)} +  \alpha\mathbf{a}_{\alpha} \|h\|_{L^{1}_{q}(\m)}.$$
Using the fact that $c_{q}:=\sup_{\alpha \in (0,\alpha_{0})}\|\psi_{\alpha}\|_{L^{1}_{1+q}(\m)} < \infty$ while there exists $\mathbf{a} >0$ such that $\sup_{\alpha \in (0,\alpha_{0})}\mathbf{a}_{\alpha}=\mathbf{a}<\infty$ we get that
$$\|\P_{\alpha}^{0}h\|_{L^{1}_{q}(\m)} \leq \left(2C_{q}\,\eta_{0,1+q}(\alpha) + 2C_{q}\,c_{q}\alpha + \alpha\,\mathbf{a}\right)\|h\|_{L^{1}_{1+q}(\m)}, \qquad \forall h \in L^{1}_{1+q}(\m)$$
where $\eta_{0,1+q}(\alpha)$ is provided by Lemma \ref{prop:psi}. This proves \eqref{eq:llXk0} for $k=0$ with 
$$\ep_{0,q}(\alpha)=\left(2C_{q}\,\eta_{0,1+q}(\alpha) + 2C_{q}\,c_{q}\alpha + \alpha\mathbf{a}\right).$$ 

To prove the result for higher-order derivatives, say for $k=1$, one argues as before using the fact that 
$$\nabla \Q_{\pm}(g,f)=\Q_{\pm}(\nabla g,f) + \Q_{\pm}(g,\nabla f).$$ 
One obtains then easily from \eqref{eq:LLL0} that
\begin{multline*}
\|\P_{\alpha}^{0}h\|_{\W^{1,1}_{q}(\m)} \leq 2C_{q}\|h\|_{\W^{1,1}_{q+1}(\m)}\,\|\psi_{\alpha}-\M\|_{\W^{1,1}_{1+q}(\m)} 
+2C_{q}\alpha \|h\|_{\W^{1,1}_{q+1}(\m)}\,\|\psi_{\alpha}\|_{\W^{1,1}_{1+q}(\m)}\\
+\alpha\mathbf{a}_{\alpha} \|h\|_{W^{1,1}_{q}(\m)}.\end{multline*}
Setting $\ep_{1,q}(\alpha)=\left(2C_{q}\,\eta_{1,1+q}(\alpha) + 2C_{q}\,c_{1,q}\alpha + \mathbf{a}\alpha\right)$ where $\eta_{1,1+q}(\alpha)$ is given in Lemma \ref{prop:psi} and $c_{1,q}=\sup_{\alpha\in (0,\alpha_{0})}\|\psi_{\alpha}\|_{\W^{1,1}_{1+q}(\m)} <\infty,$ we get \eqref{eq:llXk} for $k=1$. The proof for $k > 1$ follows along the same paths. One deduces then \eqref{eq:llXk} from \eqref{eq:llXk0} using the obvious estimate $\|T_{\alpha}h\|_{\W^{k,1}_{q}(\m)} \leq |\mathbf{B}_{\alpha}|\|h\|_{\W^{k+1,1}_{1+q}(\m)}$.\end{proof}

\subsection{Splitting of $\LL$}\label{sec:hypo} Let us now recall the following splitting of $\mathscr{L}_{0}$ introduced in \cite{Mo,Tr}. For any $\delta\in (0,1)$, we consider 
$\Theta_\delta = \Theta_\delta(\xi,\xi_*, \sigma) \in\C^{\infty}(\R^d\times \R^d\times \S^{d-1})$
which is bounded by $1$, which equals $1$ on 
$$J_{\delta}:=\left\{(\xi,\xi_{*},\sigma)\in \R^d\times \R^d\times \S^{d-1},\;\;|\xi|\leq \delta^{-1}\;;\;\: 2\delta \leq |\xi-\xi_*|\leq \delta^{-1}\,;\;\;
 |\cos\theta| \leq 1-2\delta \right\}$$
and whose support is included in $J_{\delta/2}$ (here above $\cos \theta=\langle \frac{\xi-\xi_{*}}{|\xi-\xi_{*}|},\sigma\rangle$). We then set 
\begin{equation}\label{eq:L0Rdelta}\begin{split}
\mathscr{L}_{0}^{S,\delta}h(\xi)& =\int_{\R^d\times \S^{d-1}} 
[\M(\xi'_*)h(\xi') +\M(\xi')h(\xi'_*)-\M(\xi)h(\xi_*)] 
\:|\xi-\xi_*|\,\Theta_\delta (\xi,\xi_{*},\sigma) \d\xi_*\d\sigma  \\ 
\mathscr{L}_{0}^{{R,\delta}}h(\xi)&=  \int_{\R^d\times \S^{d-1}}
[\M(\xi'_*)h(\xi') +\M(\xi')h(\xi'_*)-\M(\xi)h(\xi_*)] 
\,|\xi-\xi_*|\,(1-\Theta_\delta(\xi,\xi,\sigma) ) \d\xi_*\d\sigma
\end{split}\end{equation}
so that 
$$\mathscr{L}_{0}= \mathscr{L}_{0}^{{S,\delta}}+ \mathscr{L}_{0}^{{R,\delta}}-\Sigma_\M$$
where $\Sigma_{\M}$ denotes both the mapping 
$$\Sigma_{\M}(\xi)=\int_{\R^{d}}\M(\xi_{*})|\xi-\xi_{*}|\d\xi_{*}, \qquad \xi \in \R^{d}$$
and the associated multiplication operator. We define then 
\begin{equation*} {\mathcal A}_\delta (h):=  \mathscr{L}_{0}^{S,\delta}(h)\qquad \text{ and } \qquad
{\mathcal B}_{0,\delta} (h) := \mathscr{L}_{0}^{{R,\delta}}-\Sigma_\M
\end{equation*}
so that $\mathscr{L}_{0}=\mathcal{A}_{\delta}+ \mathcal{B}_{0,\delta}$. Let us recall \cite[Lemma 4.16]{GMM}:
\begin{lem}\phantomsection\label{prop:hypo1}For any $k \in \N$ and $\delta >0,$ there are two positive constants $C_{k,\delta} >0$ and $R_{\delta} >0$ such that
$\mathrm{supp}\left(\mathcal{A}_{\delta}f\right)\subset B(0,R_{\delta})$
and
\begin{equation}\label{eq:Adelta}
\|\mathcal{A}_{\delta}f\|_{\W^{k,2}} \leq C_{k,\delta}\|f\|_{L^{1}_{1}}, \qquad \forall  f \in L^{1}_{1}(\R^{d})\end{equation}
\end{lem}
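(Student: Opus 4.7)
The plan is to exploit the three effects of the truncation $\Theta_\delta$: it confines $\xi$ to the ball $\{|\xi| \le 2/\delta\}$, it keeps $|\xi-\xi_*|$ bounded between $\delta$ and $2/\delta$, and it stays away from grazing collisions by imposing $|\cos\theta|\le 1-\delta$. The first bullet gives the compact support claim immediately with $R_\delta = 2/\delta$, since every term in the definition of $\mathscr{L}_0^{S,\delta}h(\xi)$ contains the factor $\Theta_\delta(\xi,\xi_*,\sigma)$, which vanishes when $|\xi|>2/\delta$.

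For the Sobolev estimate, I would split $\mathcal{A}_\delta = T_1 + T_2 - T_3$ along the three pieces appearing in \eqref{eq:L0Rdelta}, and treat the loss term $T_3$ separately from the two gain terms. The loss term has the form
\begin{equation*}
T_3 h(\xi) = \M(\xi)\int_{\R^d} h(\xi_*)\,|\xi-\xi_*|\,\Big(\int_{\S^{d-1}}\Theta_\delta(\xi,\xi_*,\sigma)\,\d\sigma\Big)\d\xi_*,
\end{equation*}
which can be read as $T_3 h(\xi) = \M(\xi)\int h(\xi_*)\kappa(\xi,\xi_*)\d\xi_*$ with $\kappa$ smooth and compactly supported in $\xi_*$ for each $\xi$. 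Differentiating under the integral against $\xi$ and using the Gaussian decay of $\M$ and its derivatives yields the estimate $\|T_3 h\|_{W^{k,2}} \lesssim_{k,\delta} \|h\|_{L^1_1}$ via Cauchy--Schwarz on the bounded support in $\xi$.

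The genuine work is on the two gain terms. For these I would apply the Carleman representation to rewrite, e.g.,
\begin{equation*}
T_1 h(\xi) = \int_{\R^d} \mathbb{K}_\delta(\xi,\xi')\, h(\xi')\, \d\xi',
\end{equation*}
where, after changing variables from $(\xi_*,\sigma)$ to $(\xi',\xi'_*)$, the kernel $\mathbb{K}_\delta$ inherits the truncation $\Theta_\delta$ and an explicit Jacobian factor. The condition $|\cos\theta|\le 1-\delta$ keeps this Jacobian bounded and prevents any grazing singularity, while the bounds $\delta\le|\xi-\xi_*|\le 2/\delta$ and $|\xi|\le 2/\delta$ restrict the integration domain to a compact set and ensure that the Gaussian factor $\M(\xi'_*)$ and all of its derivatives are smooth and bounded in this region. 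Consequently $\mathbb{K}_\delta \in C_c^\infty(\R^d\times\R^d)$, so one may differentiate arbitrarily many times in $\xi$ under the integral sign, and the $W^{k,2}$-bound against $\|h\|_{L^1_1}$ follows from Cauchy--Schwarz in $\xi$ together with the uniform pointwise control of $\partial_\xi^\beta \mathbb{K}_\delta$.

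The main obstacle, and essentially the whole technical content, is the Carleman change of variables and the resulting smoothness of $\mathbb{K}_\delta$: one must check that the truncation $\Theta_\delta$ descends to a smooth compactly supported cutoff in the new variables and that the Jacobian remains uniformly bounded away from zero in the support. This is exactly the mechanism underlying the regularizing part in Grad's splitting of the Boltzmann operator, and it is the reason the result can be quoted verbatim from \cite{GMM}; I would simply indicate how the three constraints in the definition of $\Theta_\delta$ remove respectively the unboundedness in $\xi$, the small/large relative velocities, and the grazing singularity, and refer to the original source for the detailed computation.
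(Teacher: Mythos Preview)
Your proposal is correct and follows the standard argument. Note, however, that the paper does not actually give a proof of this lemma: it is stated as a recollection of \cite[Lemma 4.16]{GMM} with no further details, so there is nothing to compare against beyond the cited source. Your sketch --- compact support from the cutoff $|\xi|\le 2/\delta$, direct treatment of the loss piece, and Carleman representation for the gain pieces yielding a $C_c^\infty$ kernel thanks to the three truncations in $\Theta_\delta$ --- is exactly the mechanism behind the result in \cite{GMM}.
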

This leads to the following splitting of $\LL$:
$$\mathscr{L}_{\alpha}=\mathcal{B}_{\alpha,\delta} + \mathcal{A}_{\delta}$$
where $\mathcal{B}_{\alpha,\delta}=\mathcal{B}_{0,\delta}+\left[\LL-\mathscr{L}_{0}\right]$. One has the following properties of $\mathcal{B}_{\alpha,\delta}$ (see \cite[Lemma 2.7, 2.8, 2.9]{Tr} for a similar result)
\begin{prop}\phantomsection\label{prop:hypo}
For any $k, q \geq 0,$ there exists $\alpha^{\dagger}_{k,q} >0$, $\delta_{k,q} >0$ and ${\nu}_{k} >0$ such that
$$\mathcal{B}_{\alpha,\delta} + {\nu}_{k} \qquad \text{ is hypo--dissipative in } \: \W^{k,1}_{q}(\m), \qquad \forall \alpha \in (0,\alpha^{\dagger}_{k,q}), \:\:\delta \in (0,\delta_{k,q})$$
with $\D(\mathcal{B}_{\alpha,\delta})=\W^{k+1,1}_{q+1}(\m)$ and
$$\mathcal{B}_{\alpha,\delta}h=\mathcal{B}_{0,\delta}h-\P_{\alpha}^{0}h+T_{\alpha}h\,, \qquad h \in \W^{k+1,1}_{q+1}(\m).$$
\end{prop}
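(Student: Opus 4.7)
The plan is to treat $\mathcal{B}_{\alpha,\delta}$ as a perturbation of the elastic splitting $\mathcal{B}_{0,\delta}=\mathscr{L}_0^{R,\delta}-\Sigma_\M$, exploiting the identity
$\mathcal{B}_{\alpha,\delta}=\mathcal{B}_{0,\delta}+(\LL-\mathscr{L}_0)=\mathcal{B}_{0,\delta}-\P_\alpha^0+T_\alpha$
given in the statement. The argument proceeds in three layers, the first of which is the classical hypo-dissipativity of the elastic operator $\mathcal{B}_{0,\delta}$ in $\W^{k,1}_q(\m)$, known from \cite{Mo} and \cite{Tr}. Concretely, using the lower bound $\Sigma_\M(\xi)\geq c_\M\langle\xi\rangle$ one obtains strong dissipation $-c_\M\|h\|_{L^1_{q+1}(\m)}$ from the multiplicative piece, whereas the non-local truncated piece $\mathscr{L}_0^{R,\delta}$ admits a bracket bound of the form $\eta(\delta)\|h\|_{L^1_{q+1}(\m)}+C_\delta\|h\|_{L^1}$ with $\eta(\delta)\to 0$ as $\delta\to 0$, because the cut-off kills precisely the concentration region of the gain kernel. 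Summing over $|\beta|\leq k$ and using that $\Q_\pm$ commute with differentiation modulo terms of the same form gives the analogous estimate in $\W^{k,1}_q(\m)$.

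Next, I control the extra contributions $-\P_\alpha^0+T_\alpha$ in the dissipativity bracket with weight $\phi(\xi)=\langle\xi\rangle^{q}\m(\xi)$. Proposition \ref{prop:converLLL0} directly gives
$$\|\P_\alpha^0 h\|_{\W^{k,1}_q(\m)}\leq \ep_{k,q}(\alpha)\|h\|_{\W^{k,1}_{q+1}(\m)},$$
with $\ep_{k,q}(\alpha)\to 0$, so its bracket contribution is dominated by $\ep_{k,q}(\alpha)\|h\|_{\W^{k,1}_{q+1}(\m)}$. For the transport term, integration by parts yields
$$\int \mathrm{sign}(h)\,T_\alpha h\,\phi\,\d\xi = \mathbf{B}_\alpha\int |h|\,\xi\cdot\nabla\phi\,\d\xi\leq C(q,a)\,|\mathbf{B}_\alpha|\,\|h\|_{L^1_{q+1}(\m)},$$
which is $\mathcal{O}(\alpha)$ thanks to $|\mathbf{B}_\alpha|\leq C\alpha$ (Remark \ref{imporem}). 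For $k\geq 1$ I commute derivatives with $T_\alpha$, noting that $[\partial_i,T_\alpha]=-\mathbf{B}_\alpha\partial_i$, so the commutator is bounded on $\W^{k,1}_q(\m)$ with norm $\mathcal{O}(\alpha)$ and produces no principal-symbol contribution to the bracket.

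Combining the three layers, I get the global bracket bound
$$\bigl[\mathrm{sign}(h),\mathcal{B}_{\alpha,\delta}h\bigr]_{\W^{k,1}_q(\m)}\leq\bigl(-c_\M+\eta(\delta)+\ep_{k,q}(\alpha)+C|\mathbf{B}_\alpha|\bigr)\|h\|_{\W^{k,1}_{q+1}(\m)}+C_{k,\delta}\|h\|_{\W^{k,1}}.$$
Choosing $\delta_{k,q}$ first so that $\eta(\delta)\leq c_\M/4$ for $\delta\in(0,\delta_{k,q})$, and then $\alpha^\dagger_{k,q}$ small enough that $\ep_{k,q}(\alpha)+C|\mathbf{B}_\alpha|\leq c_\M/4$ for $\alpha\in(0,\alpha^\dagger_{k,q})$, the leading coefficient becomes $\leq -c_\M/2=:-2\nu_k$. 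The residual lower-order term $C_{k,\delta}\|h\|_{\W^{k,1}}$ is absorbed by passing to the equivalent hypo-dissipativity norm built by adding suitable multiples of lower-order norms, exactly as in \cite[Lemma 2.7]{Tr}, yielding $\mathcal{B}_{\alpha,\delta}+\nu_k$ hypo-dissipative in $\W^{k,1}_q(\m)$.

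The main obstacle is the coupled choice of the two small parameters: $\delta$ must be fixed first (depending on $k,q$) to dominate the non-local elastic remainder, and only then can $\alpha$ be chosen so small that both $\P_\alpha^0$ and the transport term $T_\alpha$ lie below the available spectral gap. A subtler technical point is propagating the estimate to $k\geq 1$: one must verify that commutators with $T_\alpha$ and with the weight $\m(\xi)=e^{a|\xi|}$ (which is merely Lipschitz at the origin) do not generate terms growing faster than $\langle\xi\rangle$, so that the dissipation from $\Sigma_\M$ still absorbs them.
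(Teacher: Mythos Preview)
Your proposal is correct and follows essentially the same strategy as the paper's proof: decompose $\mathcal{B}_{\alpha,\delta}$ into the elastic absorption $-\Sigma_\M$, the truncated remainder $\mathscr{L}_0^{R,\delta}$, the perturbation $-\P_\alpha^0$, and the drift $T_\alpha$, bound each contribution to the dissipativity bracket separately, and for $k\geq1$ absorb the lower-order commutator terms via an equivalent norm with small prefactor on the higher derivatives. The paper in fact explicitly remarks that one could shortcut by invoking the known hypo-dissipativity of $\mathcal{B}_{0,\delta}$ from \cite{MiMo3,Tr} and then perturbing---which is exactly what you do---but chooses instead to write out a self-contained direct proof for $k=0$ and $k=1$.
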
\phantomsection
\begin{nb}\phantomsection Notice that, for $k=0$, the hypo--dissipativity of $\mathcal{B}_{\alpha,\delta}$ simply reads
$$\int_{\R^{d}}\mathcal{B}_{\alpha,\delta}f(\xi)\mathrm{sign}f(\xi)\,\langle \xi\rangle^{q}\m(\xi)\d\xi \leq -{\nu}_{0}\|f\|_{L^{1}_{q+1}(\m)}, \qquad \forall f \in L^{1}_{q+1}(\m)$$
which means that, for $k=0$, $\mathcal{B}_{\alpha,\delta}$ is actually dissipative. For $k \geq 1,$ there exists a norm -- denoted by $\llbracket\cdot\rrbracket$ -- which is equivalent to the $\|\cdot\|_{\W^{k,1}_{q}(\m)}$ norm (for which $\llbracket\cdot\rrbracket_{\star}$ denotes the norm on the dual space $\left(\W^{k,1}_{q}(\m)\right)^{\star}$) and such that for all  $f \in \D(\mathcal{B}_{\alpha,\delta}),$  there exists $\bm{u}_{f} \in \left(\W^{k,1}_{q}(\m)\right)^{\star}$ such that
\begin{equation*}
\langle \bm{u}_{f},f\rangle=\llbracket f\rrbracket^{2}=\llbracket\bm{u}_{f}\rrbracket_{\star}^{2}\qquad \text{ and } \quad
\mathrm{Re}\langle \bm{u}_{f},\mathcal{B}_{\alpha,\delta}f\rangle \leq -{\nu}_{k}\llbracket f\rrbracket^{2}
\end{equation*}
where here $\langle\cdot\;,\;\cdot\rangle$ denote the duality bracket between $\left(\W^{k,1}_{q}(\m)\right)^{\star}$ and $\W^{k,1}_{q}(\m)$.
\end{nb}\phantomsection
\begin{proof} Notice that the analysis performed in \cite{MiMo3} and \cite{Tr} (in the spatially inhomogeneous case) proves that, for any $k, q \geq 0,$ there exist $\delta >0$ and ${\nu}_{k} >0$ such that
$$\mathcal{B}_{0,\delta} + {\nu}_{k} \qquad \text{ is hypo--dissipative in } \: \W^{k,1}_{q}(\m).$$
It would be possible to simplify the proof we give using such an estimate. We prefer to give a direct and full proof of the result. Notice that our proof is a technical adaptation of the one given in \cite{Tr}. We first consider the case $k=0$. We write 
${\mathcal B}_{\alpha,\delta} (h)=\sum_{i=1}^{4}C_{i}(h)$ with 
$$C_{1}(h)=-\P_{\alpha}^{0}h,\qquad  C_{2}(h)=\mathscr{L}_{0}^{R,\delta}(h), \quad C_{3}(h)=-\mathbf{B}_{\alpha}\mathrm{div}(\xi\,h(\xi)), \quad C_{4}(h)=-\Sigma_{\M}h$$ and correspondingly,
\begin{equation*}
\int_{\R^d} {\mathcal B}_{\alpha,\delta} (h)(\xi) \, 
\mbox{sign}(h(\xi)) \, \langle\xi\rangle^q \, \m(\xi)\, \d\xi 
 =:\sum_{i=1}^{4}I_{i}(h).\end{equation*}
First, it follows from Proposition \ref{prop:converLLL0} that 
$$I_1(h)\leq \|\P_{\alpha}^{0}h\|_{L^1_{q}(\m)} \leq \ep_{0,q}(\alpha) \|h\|_{L^1_{q+1}( \m)},$$
with $ \displaystyle \lim_{\alpha\to 0^+} \ep_{0,q}(\alpha)=0$. Now, as in \cite[Eq. (2.10)]{Tr}, one has 
$$I_2(h)\leq \|\mathscr{L}_{0}^{R,\delta}(h)\|_{L^1_{q}(\m)}
\leq \tau(\delta) \|h\|_{L^1_{q+1}(\m)},$$
with $\displaystyle \lim_{\delta\to 0} \tau(\delta)=0$. Then, since $h \nabla \mbox{sign}h=0$, one has 
$$I_3(h) = -\mathbf{B}_{\alpha}  \int_{\R^d} \mathrm{div}(\xi |h(\xi)|) 
 \, \langle\xi\rangle^q \, \m(\xi)\, \d\xi  
=  \mathbf{B}_{\alpha}  \int_{\R^d}|h(\xi)|  \xi \cdot \nabla\left(
 \langle\xi\rangle^q \, \m(\xi)\right)\, \d\xi $$
Since $\xi \cdot \nabla\left(
 \langle\xi\rangle^q \, \m(\xi)\right)=q\,|\xi|^{2}\langle \xi\rangle^{q-2}\m(\xi)+a\langle \xi\rangle^{q}|\xi|\m(\xi)$, it is not difficult to see then that there is $C >0$ such that
$$I_3(h) \leq  C \frac{\alpha}{2} \; (\mathbf{a}_{\alpha}+ \mathbf{b}_{\alpha} )
\|h\|_{L^1_{q+1}(\m)} \leq \alpha C\|h\|_{L^{1}_{q+1}(\m)}.$$

Finally, it is well-known that there exists some constants $\sigma_0, \sigma_1>0$ such that, for any $\xi\in\R^d$, 
\begin{equation}\label{nu}
 0<\sigma_0\leq \sigma_0\langle\xi\rangle\leq  \Sigma_\M(\xi)\leq \sigma_1\langle\xi\rangle, 
\end{equation}
which leads to 
$$I_4(h) \leq -\sigma_0  \|h\|_{L^1_{q+1}( \m)}.$$
Gathering the previous estimates, one obtains 
\begin{equation}\label{eq:Bal}
\int_{\R^d} {\mathcal B}_{\alpha,\delta} (h)(\xi) \, 
\mbox{sign}(h(\xi)) \, \langle\xi\rangle^q \, \m(\xi)\, \d\xi 
\leq (\ep_{0,q}(\alpha) + \alpha\,C +\tau(\delta) -\sigma_0)\|h\|_{L^1_{q+1}(\m)}.\end{equation}
Let $\alpha^{\dagger}_{0,q}\in(0,\alpha_{0})$ be such that $\ep_{0,q}(\alpha)+ \alpha C <\sigma_0$ for all $\alpha\in[0,\alpha^{\dagger}_{0,q}).$ We then choose $\delta_{0,q}$ small enough such that, for any $\delta \in (0, \delta_{0,q})$
  $${\nu}_0:= -\left( \tau(\delta) +  \ep_{0,q}(\alpha)+ \alpha C  -\sigma_0 \right) >0 $$
  for all $\alpha\in[0,\alpha^{\dagger}_{0,q})$ and get the result. We now investigate the case $k=1$. We consider the norm 
$$\llbracket h\rrbracket = \|h\|_{L^1_{q}(\m)}+\eta \|\nabla h\|_{L^1_{q}(\m)}, $$
for some $\eta>0$, the value of which shall be fixed later on. This norm is equivalent to the classical $W^{1,1}_{q} (\m)$-norm. We shall prove that for some ${\nu}_1>0$, ${\mathcal B}_{\alpha,\delta}+{\nu}_1$ is dissipative in $W^{1,1}_{q}(\m)$ for the norm $\llbracket \cdot\rrbracket$ and thus hypo-dissipative in  $W^{1,1}_{q}(\m)$.  To this end, we consider 
$$\int_{\R^d} \nabla ({\mathcal B}_{\alpha,\delta} h(\xi))  \cdot  
\mbox{sign}(\nabla h(\xi)) \, \langle\xi\rangle^q \, \m(\xi)\, \d\xi $$
where we used the shorthand notation $\mbox{sign}(\nabla h(\xi))=\left(\mbox{sign}(\partial_{\xi_{1}}h(\xi)),\ldots,\mbox{sign}(\partial_{\xi_{d}}h(\xi))\right).$
First, 
\begin{multline}
\label{eq:nabB}
\nabla ({\mathcal B}_{\alpha,\delta}h)=\nabla ({\mathcal B}_{0,\delta}h) -\nabla (\P_{\alpha}^{0}h) + \nabla (T_{\alpha}h)
=\nabla [\mathscr{L}_{0}^{R,\delta}h -\Sigma_\M(\xi) h ] -\nabla (\P_{\alpha}^{0}h) 
+  \nabla (T_{\alpha}h).\end{multline}
It then follows from Proposition \ref{prop:converLLL0} that 
\begin{equation}\label{eq:nab}
\|\nabla (\P_{\alpha}^{0}h) \|_{L^{1}_{q}(\m)} \leq \ep_{1,q}(\alpha)\|h\|_{\W^{1,1}_{q+1}(\m)}=\ep_{1,q}(\alpha)\|h\|_{L^{1}_{1+q}(\m)}+\ep_{1,q}(\alpha)\|\nabla h\|_{L^{1}_{1+q}(\m)}\end{equation}
with $\lim_{\alpha\to 0^{+}}\ep_{1,q}(\alpha)=0.$ Now, 
$$\nabla [\mathscr{L}_{0}^{{R,\delta}}h -\Sigma_\M(\xi) h ]
= \mathscr{L}_{0}^{R,\delta}(\nabla h) -\Sigma_\M(\xi) \nabla h 
+{\mathcal R} (h) , $$
where 
$${\mathcal R}( h )=\Q(h,\nabla \M)+ \Q(\nabla \M,h) 
- (\nabla{\mathcal A}_\delta)(h) - {\mathcal A}_\delta(\nabla h). $$
Again as in \cite[Eq. (2.10)]{Tr}, one has 
$$\|\mathscr{L}_{0}^{{R,\delta}}(\nabla h)\|_{L^1_{q}(\m)}
\leq \tau(\delta) \|\nabla h\|_{L^1_{q+1}( \m)},$$
with $\displaystyle \lim_{\delta\to 0} \tau(\delta)=0$. 
Then, by \eqref{nu}, 
\begin{equation*}\begin{split}
- \int_{\R^d} \Sigma_\M(\xi) \, \nabla h(\xi) \cdot  
\mbox{sign}(\nabla h(\xi)) \, \langle\xi\rangle^q \, \m(\xi)\, \d\xi
& = - \int_{\R^d} \Sigma_\M(\xi) \,| \nabla h(\xi)|\, \langle\xi\rangle^q \, 
\m(\xi)\, \d\xi \\
& \leq  -\sigma_0  \,\|\nabla h\|_{L^1_{q+1}(\m)}.
\end{split}\end{equation*}
Here above, $| \nabla h(\xi)|=\sum_{i=1}^d |\partial_ih(\xi)|$.
Still,  as in \cite[p. 1942]{Tr}, an integration by parts leads to 
$$\|  (\nabla{\mathcal A}_\delta)(h)\|_{L^1_{q}(\m)} 
+\| {\mathcal A}_\delta(\nabla h)\|_{L^1_{q}(\m)}
\leq   C_\delta \|h\|_{L^1_{q}(\m)}, $$
for some constant $C_\delta>0$.  
Hence,  
$$\| {\mathcal R}( h )\|_{L^1_{q}(\m)}  
\leq  C_\delta \|h\|_{L^1_{q+1}(\m)}. $$
Therefore,
\begin{equation}\label{eq:nabL0}
\|\nabla [\mathscr{L}_{0}^{{R,\delta}}(h) -\Sigma_\M\, h ]\|_{L^{1}_{q}(\m)} \leq C_\delta	\|h\|_{L^{1}_{q+1}(\m)}+\left(\tau(\delta)-\sigma_{0}\right) \|\nabla h\|_{L^1_{q+1}(\m)}\end{equation}
with $\lim_{\delta \to 0^{+}}\tau(\delta)=0.$
Finally, 
\begin{equation}\label{eq:div}\begin{split} 
 \int_{\R^d}  \nabla ( T_{\alpha}h(\xi)) \cdot  
&\mbox{sign}(\nabla h(\xi)) \, \langle\xi\rangle^q \, \m(\xi)\, \d\xi
= - (d+1) \, \mathbf{B}_\alpha \int_{\R^d} |\nabla h(\xi)| \, 
\langle\xi\rangle^q \, \m(\xi)\, \d\xi \\
&   +\mathbf{B}_\alpha \int_{\R^d} |\nabla h(\xi)| \, 
\nabla\cdot (\xi  \langle\xi\rangle^q \, \m(\xi))\, \d\xi 
\leq  \alpha\, C\, \|\nabla h\|_{L^1_{q+1}(\m)}.
\end{split}\end{equation}
Combining \eqref{eq:nabB} with the above estimates \eqref{eq:nab}--\eqref{eq:div}, one obtains  
\begin{multline*}
\int_{\R^d} \nabla ({\mathcal B}_{\alpha,\delta} (h)) \cdot  
\mbox{sign}(\nabla h(\xi)) \, \langle\xi\rangle^q \, \m(\xi)\, \d\xi \\
\leq (C_\delta +\ep_{1,q}(\alpha) )\| h\|_{L^1_{q+1}(\m)} + (\ep_{1,q}(\alpha)+\alpha\,C+\tau(\delta)-\sigma_0) \|\nabla h\|_{L^1_{q+1}(\m)}. 
\end{multline*}
Hence, combining this estimate with \eqref{eq:Bal}
\begin{multline*}
\int_{\R^d} {\mathcal B}_{\alpha,\delta} (h)(\xi) \, 
\mbox{sign}(h(\xi)) \, \langle\xi\rangle^q \, \m(\xi)\, \d\xi 
 +\eta \int_{\R^d} \nabla ({\mathcal B}_{\alpha,\delta} (h)(\xi))  \cdot  
\mbox{sign}(\nabla h(\xi)) \, \langle\xi\rangle^q \, \m(\xi)\, \d\xi \\
\leq  (\ep_{0,q}(\alpha) +  \tau(\delta)+ \alpha C -\sigma_0 
+\eta\, (C_\delta +\ep_{1,q}(\alpha) ) )\|h\|_{L^1_{q+1}(\m)} \\
+ \eta\, (\ep_{1,q}(\alpha)+\alpha\,C+\tau(\delta)-\sigma_0)\|\nabla h\|_{L^1_{q+1}(\m)}.
\end{multline*}
We now choose $\delta >0$ and $\alpha>0$ small enough so that 
$-\lambda:=\ep_{1,q}(\alpha) + \alpha\,C + \tau(\delta)-\sigma_0 < 0$.
Let then $\eta>0$ be small enough such that ${\nu}_0-\eta\, (C_\delta +\ep_{1,q}(\alpha) )>0$. We set 
${\nu}_1:=\min \left\{ {\nu}_0-\eta\, (C_\delta +\ep_{1,q}(\alpha) ), \; \lambda\right\}$ and we finally obtain 
\begin{multline*}
\int_{\R^d} {\mathcal B}_{\alpha,\delta} (h) \, 
\mbox{sign}(h(\xi)) \, \langle\xi\rangle^q \, \m(\xi)\, \d\xi 
 +\eta \int_{\R^d} \nabla ({\mathcal B}_{\alpha,\delta} (h))  \cdot  
\mbox{sign}(\nabla h(\xi)) \, \langle\xi\rangle^q \, \m(\xi)\, \d\xi \\
\leq  -{\nu}_1\left [\|h\|_{L^1_{q+1}(\m)}
+ \eta\,  \|\nabla h\|_{L^1_{q+1}(\m)}\right]\leq -{\nu}_1 \llbracket h \rrbracket, 
\end{multline*}
which means that ${\mathcal B}_{\alpha,\delta}+{\nu}_1$ is hypo-dissipative in $\W^{1,1}_{q}(\m)$. We prove the result for higher order derivatives in the same way.
\end{proof}

\begin{nb}\phantomsection\label{NB:size}
Notice that, for any $\varepsilon >0$ and any $k, q \geq 0,$ a careful reading of the above proof shows that one can chose
${\nu}_{k}=\sigma_{0}-\varepsilon$
up to choosing $\alpha^{\dagger}_{k,q} >0$ and $\delta_{k,q} >0$ small enough.
\end{nb}

\subsection{Properties on the scale of Banach spaces $\X_{i}$, $i=0,1,2$}

Let us from now on restrict ourselves to the scales of Banach spaces $\X_{2} \subset \X_{1} \subset \X_{0}$ introduced earlier. We begin this section by recalling the spectral properties of $\mathscr{L}_{0}$ in the spaces $\X_{i}$, referring to \cite{MiMo3} for details.

\begin{theo}\phantomsection\label{theo:spec} For $i=0,1,2$, the operator $\mathscr{L}_{0}\::\:\D(\mathscr{L}_{0}) \subset \X_{i} \to \X_{i}$ with domain 
$$\D(\mathscr{L}_{0})=\W^{i,1}_{i+1}(\m)$$ is such that $0$ is an eigenvalue of $\mathscr{L}_{0}$ associated to the null set 
$$\mathscr{N}(\mathscr{L}_{0})=\mathrm{Span}(\mathcal{M},\xi_1\mathcal{M},\ldots,\xi_d\mathcal{M},|\xi|^2\mathcal{M}).$$
Moreover $\mathscr{L}_{0}$ admits a positive spectral gap $\mu_\star>0$, i.e. 
$$\mathfrak{S}(\mathscr{L}_{0}) \cap \{\lambda \in \mathbb{C}\;;\;\mathrm{Re}\lambda > -\mu_\star \}=\{0\}$$
and $\mathscr{L}_{0}$ is the generator of a $C_{0}$-semigroup $\{\cS_{0}(t)\;;\;t\geq0\}$ in $\X_{i}$ for which there exists a positive constant $C_{0} >0$ such that
$$\left\|\cS_{0}(t)h-\mathbf{P}_{0}h\right\|_{\X_{i}} \leq C_{0}\exp(-\mu_\star \,t)\|h-\mathbf{P}_{0}h\|_{\X_{i}}, \qquad \forall t \geq 0, \qquad h \in \X_{i}$$
where $\mathbf{P}_{0}$ is the spectral projection of $\mathscr{L}_{0}$ associated to the eigenvalue $\{0\}$.  Moreover,  there exists $n_{0} \in \N$ and $C(n_{0}) >0$ such that 
\begin{equation}\label{eq:Lo}\left\|\mathcal{R}(\lambda,\mathscr{L}_{0})\right\|_{\mathscr{B}(\X_{2})} \leq \frac{C(n_{0})}{|\lambda|^{n_{0}}}, \qquad \forall \mathrm{Re}{\lambda} \geq -\mu_\star.\end{equation}
\end{theo}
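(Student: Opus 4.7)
The overall plan is a classical enlargement-and-factorization argument in the spirit of Gualdani--Mischler--Mouhot \cite{GMM} and Mischler--Mouhot \cite{MiMo3}: I would first recall that everything is known on the small Hilbert space $E:=L^2(\mathcal{M}^{-1})$, where Grad's classical analysis gives that $\mathscr{L}_0$ is self-adjoint with a spectral gap $\mu_\star>0$, kernel exactly equal to $\mathrm{Span}(\mathcal{M},\xi_1\mathcal{M},\ldots,\xi_d\mathcal{M},|\xi|^2\mathcal{M})$, and an associated $C_0$-semigroup on $E$ decaying like $\mathcal{O}(e^{-\mu_\star t})$ on the orthogonal of its kernel. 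The kernel identification uses the weak form of $\mathcal{Q}$ and the collision invariants $1,\xi,|\xi|^2$ together with the linearized $H$-theorem, and the continuous embedding $E\hookrightarrow \X_{i}$ (which holds since $\m$ has at most exponential growth of rate $a<\overline{a}$, with Maxwellian decay compensating) ensures that the same four functions belong to $\mathscr{N}(\mathscr{L}_{0})$ when $\mathscr{L}_{0}$ is viewed on $\X_i$.

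Next, I would transfer the spectral gap and decay from $E$ to the larger spaces $\X_i$ by invoking the abstract enlargement theorem of \cite{GMM}. The splitting $\mathscr{L}_0=\mathcal{A}_{\delta}+\mathcal{B}_{0,\delta}$ from Section \ref{sec:hypo} is tailor-made for this: Proposition \ref{prop:hypo} applied at $\alpha=0$ (or directly the analogous statement in \cite{Tr,MiMo3}) shows that $\mathcal{B}_{0,\delta}+\nu_k$ is hypo-dissipative on $\W^{k,1}_{q}(\m)$ for $\delta$ small enough, with $\nu_k$ as close as we want to $\sigma_0$ (cf.\ Remark \ref{NB:size}), while Lemma \ref{prop:hypo1} says that $\mathcal{A}_{\delta}$ is bounded from $L^{1}_1$ into $\W^{k,2}$ with compact support in $B(0,R_\delta)$. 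Iterating this regularization, any convolution product $\bigl(\mathcal{A}_{\delta}\,\cS_{\mathcal{B}_{0,\delta}}\bigr)^{(\ast N)}(t)$ maps $\X_i$ into $E$ continuously for $N$ large enough, which is exactly the hypothesis of the GMM enlargement theorem and yields the spectral gap, the exponential decay estimate, and the fact that $\mathscr{L}_0$ generates a $C_0$-semigroup on each $\X_i$.

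For the polynomial resolvent bound \eqref{eq:Lo} in $\X_2$, I would apply the quantitative version of Weyl's theorem contained in \cite{MiSc}: writing the resolvent identity
\begin{equation*}
\mathcal{R}(\lambda,\mathscr{L}_{0})=\mathcal{R}(\lambda,\mathcal{B}_{0,\delta})+\mathcal{R}(\lambda,\mathcal{B}_{0,\delta})\mathcal{A}_{\delta}\mathcal{R}(\lambda,\mathscr{L}_{0})
\end{equation*}
and iterating $n_0$ times, one reduces the estimate to bounding $\bigl(\mathcal{R}(\lambda,\mathcal{B}_{0,\delta})\mathcal{A}_{\delta}\bigr)^{n_0}\mathcal{R}(\lambda,\mathscr{L}_{0})$; the hypo-dissipativity of $\mathcal{B}_{0,\delta}+\nu_k$ provides the necessary $|\lambda|^{-1}$-decay of $\mathcal{R}(\lambda,\mathcal{B}_{0,\delta})$ along vertical lines $\mathrm{Re}\,\lambda\geq -\mu_\star$, and the compactness of $\mathcal{A}_{\delta}$ closes the argument.

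The main technical obstacle is the verification of the convolution/regularization chain in the enlargement argument, and in particular the choice of $n_0$ and the control of norms across the three spaces $\X_0,\X_1,\X_2$; but since the underlying kernel $\m$ is sub-gaussian (Maxwellian beats any $e^{a|\xi|}$) and all the ingredients of \cite{GMM,MiMo3,Tr} apply verbatim here up to replacing the inelastic operator by the elastic $\mathscr{L}_0$, this verification is essentially a bookkeeping exercise, and I would indeed simply refer to \cite{MiMo3} for the full technical details.
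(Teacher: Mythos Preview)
Your proposal is correct and follows exactly the approach the paper has in mind: the paper does not give a proof of this theorem at all but simply introduces it as ``recalling the spectral properties of $\mathscr{L}_{0}$ in the spaces $\X_{i}$, referring to \cite{MiMo3} for details.'' Your sketch of the enlargement/factorization argument from $L^{2}(\M^{-1})$ via the splitting $\mathscr{L}_{0}=\mathcal{A}_{\delta}+\mathcal{B}_{0,\delta}$ is precisely the content of \cite{MiMo3,GMM} being invoked, so you have in fact supplied more detail than the paper itself.
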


\begin{nb}\phantomsection\label{remP0} Notice that the above projection operator $\mathbf{P}_{0}$ does not depend on the space $\X_{i}$ $(i=0,1,2)$, i.e. it acts in the same way in each of the spaces {$\W^{0,1}_{0}(\m),$ $\W^{1,1}_{1}(\m)$ and $\W^{2,1}_{2}(\m)$.} Indeed, setting $M_{0}=\M$, $M_{i}(\xi)=\xi_{i}\,\M(\xi)$ $(i=1,\ldots,d)$ and $M_{d+1}(\xi)=|\xi|^{2}\M(\xi)$, for any $i=0,1,2$ and any $h \in \X_{i}$, one has $\mathbf{P}_{0}h=\sum_{j=0}^{d+1}\eta_{j}(h)M_{j}$
for some $\eta_{j}(h) \in \R.$ Moreover $\mathrm{Range}(\mathbf{I}-\mathbf{P}_{0}) \subset \mathrm{Range}(\mathscr{L}_{0})$ from \cite[equation (6.34), p 180]{kato} so that 
$$\int_{\R^{d}}(\mathbf{I}-\mathbf{P}_{0})h(\xi)\left(\begin{array}{c}1 \\ \xi_{j} \\|\xi|^{2}\end{array}\right)\d \xi=\left(\begin{array}{c}0 \\0 \\0\end{array}\right)\qquad \forall j=1,\ldots,d.$$
 Little algebra, using standard Gaussian computations, allows to determine $\eta_{j}(h)$ and we get easily that
$$\eta_{0}(h)=\int_{\R^{d}}h(\xi)\left(\frac{d+2}{2}-|\xi|^{2}\right)\d\xi, \qquad \eta_{d+1}(h)=\int_{\R^{d}}h(\xi)\left(\frac{2}{d}|\xi|^{2}-1\right)\d\xi, $$
and $\eta_{j}(h)=2\ds\int_{\R^{d}}h(\xi)\xi_{j}\d\xi\,$ for $j=1,\ldots,d.$ 
Notice in particular that, since all the $M_{j}$ are smooth, it holds 
$\mathbf{P}_{0} \in \mathscr{B}(\X_{i},\X_{i+1}),$  for $i=0,1.$
\end{nb}

We have the following result whose proof is differed to Appendix \ref{app:gener}
\begin{prop}\phantomsection\label{prop:gen} For $i=0,1,2$, there exist some explicit $\delta_{*} >0$ and $\alpha^{\dagger} >0$ small enough such that, for all $\alpha \in (0,\alpha^{\dagger})$, $\delta \in (0,\delta_{*})$ the operator
$$\mathcal{B}_{\alpha,\delta} \::\:\D(\mathcal{B}_{\alpha,\delta}) \subset \X_{i} \to \X_{i}$$
is the generator of a $C_{0}$-semigroup $\{\mathcal{U}_{\alpha,\delta}(t)\;;\;t \geq 0\}$. Moreover, there exists ${\nu}_{*} \in (\mu_\star,\sigma_0)$ and $C >0$  such that
$$\left\|\,\mathcal{U}_{\alpha,\delta}(t)\,\right\|_{\mathscr{B}(\X_{i})} \leq C_{i}\exp(-{\nu}_{*}t) \qquad \forall t \geq 0, i=0,1,2.$$
\end{prop}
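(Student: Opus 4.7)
The strategy is to combine the hypo-dissipativity established in Proposition \ref{prop:hypo} (sharpened by Remark \ref{NB:size}) with a verification of the range condition, and apply the Lumer--Phillips theorem in the equivalent norm $\llbracket\cdot\rrbracket$ on each $\X_i$. By Remark \ref{NB:size}, the hypo-dissipativity constant $\nu_i$ in $\X_i$ may be chosen as close to $\sigma_0$ as desired by shrinking the thresholds $\alpha^{\dagger}_{i,i}$ and $\delta_{i,i}$. Since $\mu_\star<\sigma_0$, I would fix once and for all some $\nu_*\in(\mu_\star,\sigma_0)$, set $\varepsilon:=\sigma_0-\nu_*>0$, and define $\alpha^\dagger$, $\delta_*$ as the common minimum over $i=0,1,2$ of the thresholds thus obtained. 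In the equivalent norm, $\mathcal{B}_{\alpha,\delta}+\nu_*$ is then dissipative simultaneously on $\X_0,\X_1,\X_2$; what remains is the range condition.

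To establish generation, I would split
\begin{equation*}
\mathcal{B}_{\alpha,\delta}=\mathcal{T}_\alpha+\mathcal{K}_{\alpha,\delta},\qquad \mathcal{T}_\alpha h := -\Sigma_{\M}\,h-\mathbf{B}_\alpha\,\mathrm{div}_{\xi}(\xi h),\qquad \mathcal{K}_{\alpha,\delta}:=\mathscr{L}_{0}^{R,\delta}-\P_\alpha^0,
\end{equation*}
so that $\mathcal{T}_\alpha$ is a pure transport--absorption operator. Its semigroup is computed by characteristics: the flow of $\dot{\xi}(t)=\mathbf{B}_\alpha\xi(t)$ is $\xi(t)=e^{\mathbf{B}_\alpha t}\xi_0$, and one obtains
\begin{equation*}
(V_\alpha(t)h)(\xi)=\exp\!\left(-d\mathbf{B}_\alpha t-\int_0^t\Sigma_{\M}\bigl(e^{-\mathbf{B}_\alpha(t-s)}\xi\bigr)\,\d s\right)h\bigl(e^{-\mathbf{B}_\alpha t}\xi\bigr),
\end{equation*}
which defines a positivity-preserving $C_0$-semigroup on each $\X_i$; thanks to $\Sigma_{\M}\geq\sigma_0$ and the smallness of $\mathbf{B}_\alpha=\mathcal{O}(\alpha)$, it decays in $\X_0$ like $e^{-(\sigma_0-\mathcal{O}(\alpha))t}$. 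The perturbation $\mathcal{K}_{\alpha,\delta}$ is controlled: the truncated operator $\mathscr{L}_0^{R,\delta}$ defined by \eqref{eq:L0Rdelta} is bounded from $L^1_1(\m)$ into $L^1(\m)$ with small norm as $\delta\to 0$ (by the estimate \eqref{eq:qpm} combined with the structure of $1-\Theta_\delta$), while Proposition \ref{prop:converLLL0} bounds $\P_\alpha^0$ by $\ep_{k,q}(\alpha)\to 0$. Hence $\mathcal{K}_{\alpha,\delta}$ is $\mathcal{T}_\alpha$-bounded with an arbitrarily small relative constant once $\alpha,\delta$ are small, and the Miyadera--Voigt perturbation theorem for $L^1$-semigroups (equivalently, convergence of the Dyson--Phillips series $\mathcal{R}(\lambda,\mathcal{T}_\alpha)\sum_n(\mathcal{K}_{\alpha,\delta}\mathcal{R}(\lambda,\mathcal{T}_\alpha))^n$ for $\mathrm{Re}\lambda$ large) yields that $\mathcal{T}_\alpha+\mathcal{K}_{\alpha,\delta}$ generates a $C_0$-semigroup $\mathcal{U}_{\alpha,\delta}(t)$ on $\X_i$ and that $\mathrm{Range}(\lambda-\mathcal{B}_{\alpha,\delta})=\X_i$ for $\mathrm{Re}\lambda$ large enough.

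Combining generation with the dissipativity of $\mathcal{B}_{\alpha,\delta}+\nu_*$ in $\llbracket\cdot\rrbracket$, one has $\llbracket\mathcal{U}_{\alpha,\delta}(t)h\rrbracket\leq e^{-\nu_* t}\llbracket h\rrbracket$, and passing to the native $\X_i$-norm via the equivalence of norms delivers the constant $C_i$. The main obstacle I foresee lies in treating $\X_1$ and $\X_2$: one must commute $V_\alpha(t)$ with $\nabla_\xi^\beta$ and check that the polynomial weight $\langle\xi\rangle^q$ together with the exponential weight $\m(\xi)=e^{a|\xi|}$ remain stable under the dilation flow $\xi\mapsto e^{\mathbf{B}_\alpha t}\xi$. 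This forces $\mathbf{B}_\alpha=\mathcal{O}(\alpha)$ to be small relative to $a$, so the thresholds $\alpha^\dagger,\delta_*$ must be chosen not only for dissipativity but also to keep $\m(e^{\mathbf{B}_\alpha t}\xi)/\m(\xi)$ integrable on bounded time intervals across all three scales; modulo this careful bookkeeping, the rest is a direct application of standard $L^1$-generation theory.
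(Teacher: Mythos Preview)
Your proposal is correct and follows essentially the same route as the paper's proof (given in Appendix~\ref{app:gener}): explicit characteristics for the transport--absorption part $-\Sigma_{\M}-\mathbf{B}_\alpha\,\mathrm{div}(\xi\,\cdot)$, a resolvent Neumann series to absorb the relatively small pieces $\mathscr{L}_0^{R,\delta}$ and $\P_\alpha^0$, and then Lumer--Phillips combined with the hypo-dissipativity of Proposition~\ref{prop:hypo} and Remark~\ref{NB:size} to obtain the decay rate $\nu_*\in(\mu_\star,\sigma_0)$. The only cosmetic difference is that the paper further splits $\mathscr{L}_0^{R,\delta}=\mathscr{L}_{0,+}^{R,\delta}-\mathscr{L}_{0,-}^{R,\delta}$, handling the loss part as a genuinely bounded perturbation at the very end rather than as part of the resolvent series; your lumped treatment works equally well since the estimate $\|\mathscr{L}_0^{R,\delta}h\|_{L^1_q(\m)}\leq\tau(\delta)\|h\|_{L^1_{q+1}(\m)}$ with $\tau(\delta)\to 0$ is already available from the proof of Proposition~\ref{prop:hypo}.
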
\phantomsection
\begin{nb}\phantomsection With the notations of Proposition \ref{prop:hypo}, one notices simply that $C_{i}$ is a positive constant which relates the usual norm to the modified equivalent norm $\llbracket\cdot\rrbracket$ in $\X_{i}$. Moreover, using Remark \ref{NB:size}, one also sees that for $\alpha,\delta$ small enough, one has ${\nu}_{*}$ arbitrarily close   to $\sigma_{0}$ (with ${\nu}_{*} < \sigma_{0}$).  {By \cite{Mo}, we have $\mu_\star<\sigma_0$. We can thus assume that $\sigma_0 > {\nu}_*>\mu_\star$}.\end{nb}\phantomsection

Notice that, since $\mathcal{A}_{\delta}f$ is compactly supported for any $f \in L^{1}_{1}(\R^{d})$, one can deduce easily from \eqref{eq:Adelta} that $\mathcal{A}_{\delta} \in \mathscr{B}(\W^{k,1}_{q}(\m))$ for any $k,q \geq 0.$ In particular, from the bounded perturbation Theorem, one has the following 
\begin{theo}\phantomsection\label{theo:gen} With the notations of Proposition \ref{prop:gen}, for any $i=0,1,2$ and $\alpha \in (0,\alpha^{\dagger})$ the linearized operator
$$\LL\::\:\D(\LL) \subset \X_{i} \to \X_{i}$$
is the generator of a $C_{0}$-semigroup $\{\mathcal{S}_{\alpha}(t)\,;\,t \geq 0\}$ given by $\mathcal{S}_{\alpha}(t)=\sum_{n=0}^{\infty}\mathcal{V}^{(n)}_{\alpha}(t)$,
where\footnote{Notice that, for each $n \in \N$, the above Dyson-Phillips iterated $\mathcal{V}_{\alpha}^{(n)}(t)$ depends on $\delta.$ We do not explicitly show this dependence to avoid heavy notation.} $\mathcal{V}^{(0)}_{\alpha}(t)=\mathcal{U}_{\alpha,\delta}(t)$ and
$$\mathcal{V}_{\alpha}^{(n+1)}(t)=\int_{0}^{t}\mathcal{V}_{\alpha}^{(n)}(t-s)\mathcal{A}_{\delta}\mathcal{U}_{\alpha,\delta}(s)\d s, \qquad n \in \N, \qquad t \geq 0$$
where $\{\mathcal{U}_{\alpha,\delta}(t)\,;\,t \geq 0\}$ is defined in Proposition \ref{prop:gen} and the above series converges in ${\mathscr{B}}(\X_{i})$ $(i=0,1,2)$.
\end{theo}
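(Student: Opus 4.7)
The plan is to view $\mathscr{L}_\alpha = \mathcal{B}_{\alpha,\delta} + \mathcal{A}_\delta$ as a bounded perturbation of the generator $\mathcal{B}_{\alpha,\delta}$ and then apply the standard Dyson--Phillips construction.

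First, I would verify that $\mathcal{A}_\delta \in \mathscr{B}(\X_i)$ for $i=0,1,2$. Lemma \ref{prop:hypo1} gives $\mathrm{supp}(\mathcal{A}_\delta f) \subset B(0,R_\delta)$ together with $\|\mathcal{A}_\delta f\|_{\W^{k,2}} \leq C_{k,\delta}\|f\|_{L^1_1}$ for any $k \in \N$. Since the support is bounded, Sobolev embedding and the polynomial-exponential weight $\langle\xi\rangle^{i}\m(\xi)$ are uniformly bounded on $B(0,R_\delta)$, so that $\|\mathcal{A}_\delta f\|_{\W^{i,1}_{i}(\m)} \leq C'_{i,\delta}\|f\|_{L^1_1} \leq C''_{i,\delta}\|f\|_{\X_i}$ for $i=0,1,2$. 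Thus $\mathcal{A}_\delta$ is a bounded linear operator on each $\X_i$.

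Second, since $(\mathcal{B}_{\alpha,\delta},\D(\mathcal{B}_{\alpha,\delta}))$ generates the $C_0$-semigroup $\{\mathcal{U}_{\alpha,\delta}(t)\}_{t\geq 0}$ on $\X_i$ by Proposition \ref{prop:gen}, the bounded perturbation theorem (Phillips' theorem, see e.g.\ Engel--Nagel) ensures that $\LL=\mathcal{B}_{\alpha,\delta}+\mathcal{A}_\delta$, with the same domain $\D(\LL)=\D(\mathcal{B}_{\alpha,\delta})=\W^{1,1}_1(\m)$ in the case $i=1$ (and analogously for $i=0,2$), is itself the generator of a $C_0$-semigroup $\{\mathcal{S}_\alpha(t)\}_{t\geq 0}$ on $\X_i$. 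This semigroup is given by the Duhamel/Dyson--Phillips formula
\begin{equation*}
\mathcal{S}_\alpha(t) = \mathcal{U}_{\alpha,\delta}(t) + \int_0^t \mathcal{S}_\alpha(t-s)\,\mathcal{A}_\delta\,\mathcal{U}_{\alpha,\delta}(s)\,\d s,
\end{equation*}
which upon iteration yields the series $\mathcal{S}_\alpha(t)=\sum_{n\geq 0}\mathcal{V}^{(n)}_\alpha(t)$ with $\mathcal{V}^{(0)}_\alpha(t)=\mathcal{U}_{\alpha,\delta}(t)$ and the recursive definition given in the statement.

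Finally, I would establish norm-convergence of the series in $\mathscr{B}(\X_i)$. Using Proposition \ref{prop:gen} and boundedness of $\mathcal{A}_\delta$, an induction on $n$ gives
\begin{equation*}
\|\mathcal{V}^{(n)}_\alpha(t)\|_{\mathscr{B}(\X_i)} \leq C_i\,\frac{(C_i\|\mathcal{A}_\delta\|_{\mathscr{B}(\X_i)}\,t)^n}{n!}\,\exp(-{\nu}_*\,t),
\end{equation*}
which shows absolute and uniform convergence on compact time intervals and provides the growth bound $\|\mathcal{S}_\alpha(t)\|_{\mathscr{B}(\X_i)} \leq C_i\exp\left((C_i\|\mathcal{A}_\delta\|_{\mathscr{B}(\X_i)}-{\nu}_*)t\right)$. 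No real obstacle is expected here: the only point requiring any care is checking that the bounded perturbation argument is compatible with the three different spaces $\X_0,\X_1,\X_2$ simultaneously, which follows because the domain of $\mathcal{B}_{\alpha,\delta}$ in each $\X_i$ is precisely $\W^{i+1,1}_{i+1}(\m)$ and $\mathcal{A}_\delta$ maps each $\X_i$ into itself by the first step.
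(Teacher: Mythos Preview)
Your proposal is correct and matches the paper's approach exactly: the paper simply observes that $\mathcal{A}_\delta \in \mathscr{B}(\W^{k,1}_q(\m))$ for all $k,q\ge 0$ (by compact support of $\mathcal{A}_\delta f$ and Lemma \ref{prop:hypo1}) and then invokes the bounded perturbation theorem to conclude. Your write-up supplies the same argument with the standard Dyson--Phillips iteration details filled in.
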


For notations convenience, we introduce for any $n \in \N$,
$$\T_{\alpha}^{(n+1)}(t)=\mathcal{A}_{\delta}\mathcal{V}_{\alpha}^{(n)}(t), \qquad \forall t \geq 0.$$
Notice that, with the notations of \cite{GMM,Tr}, $\T_{\alpha}^{(n+1)}(t)=\left(\A_{\delta}\mathcal{U}_{\alpha,\delta}\right)^{*(n+1)}(t)$.

\begin{prop}\phantomsection\label{prop:TnT} Let $i=0,1$ be given. Let $\delta \in (0,\delta_{*})$ and $\alpha \in (0,\alpha^{\dagger})$ be given as in Proposition \ref{prop:gen}. Then, for any $n \in \N$, there is $C_{\delta,n} >0$  such that
\begin{equation*}
\|\mathcal{V}_{\alpha}^{(n)}(t)\|_{\mathscr{B}(\X_{i})} \leq C_{\delta,n}t^{n}\exp\left(-{\nu}_{*}\,t\right) \qquad (t >0),
\end{equation*}
and
\begin{equation*}
\|\mathcal{T}_{\alpha}^{(n+1)}(t)f\|_{\mathscr{B}(\X_{i},\X_{i+1})}\leq C_{\delta,n}\,t^{n}\,\exp\left(-{\nu}_{*}\,t\right) \qquad (t >0).
\end{equation*}  
\end{prop}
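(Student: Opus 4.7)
The plan is a short induction on $n$, combining the exponential decay of $\{\mathcal{U}_{\alpha,\delta}(t)\}$ on each $\X_i$ given by Proposition~\ref{prop:gen} with the strong smoothing of $\mathcal{A}_{\delta}$ supplied by Lemma~\ref{prop:hypo1}.

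The first step is to record the mapping properties of $\mathcal{A}_\delta$ on the scale $\X_0, \X_1, \X_2$. Because the weight $\m(\xi)=\exp(a|\xi|)$ dominates every polynomial, one has continuous inclusions $\X_j\hookrightarrow L^1_q$ for every $j,q\geq 0$. On the other hand, Lemma~\ref{prop:hypo1} says that $\mathcal{A}_\delta f$ is supported in $B(0,R_\delta)$ with $\|\mathcal{A}_\delta f\|_{\W^{k,2}}\leq C_{k,\delta}\|f\|_{L^1_1}$ for any $k$; on the compact ball $B(0,R_\delta)$ the weight $\m$ and all polynomial factors are bounded, and Sobolev embedding gives $\W^{k,2}\subset \W^{k,1}$ on that ball. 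Composing these three facts, for any $i,j\in\{0,1,2\}$ there exists $M_{\delta,i,j}>0$ with
\[
\|\mathcal{A}_\delta f\|_{\X_j}\leq M_{\delta,i,j}\|f\|_{\X_i},\qquad\forall f\in\X_i.
\]
In particular $\mathcal{A}_\delta\in\mathscr{B}(\X_i)$ and $\mathcal{A}_\delta\in\mathscr{B}(\X_i,\X_{i+1})$.

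I would then prove the bound on $\mathcal{V}_\alpha^{(n)}$ by induction on $n$. The case $n=0$ is exactly Proposition~\ref{prop:gen}. Assume the bound at rank $n$; the defining recursion combined with submultiplicativity of the operator norm gives
\begin{align*}
\|\mathcal{V}_{\alpha}^{(n+1)}(t)\|_{\mathscr{B}(\X_i)}
&\leq \int_0^t \|\mathcal{V}_{\alpha}^{(n)}(t-s)\|_{\mathscr{B}(\X_i)}\,\|\mathcal{A}_\delta\|_{\mathscr{B}(\X_i)}\,\|\mathcal{U}_{\alpha,\delta}(s)\|_{\mathscr{B}(\X_i)}\,\d s\\
&\leq C_{\delta,n}\,M_{\delta,i,i}\,C_i\,e^{-\nu_* t}\int_0^t (t-s)^n\,\d s
=\frac{C_{\delta,n}M_{\delta,i,i}C_i}{n+1}\,t^{n+1}\,e^{-\nu_* t},
\end{align*}
where the crucial cancellation $e^{-\nu_*(t-s)}e^{-\nu_* s}=e^{-\nu_* t}$ produces the pure $t^{n+1}$ factor. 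Defining $C_{\delta,n+1}$ accordingly closes the induction.

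For $\mathcal{T}_{\alpha}^{(n+1)}$ the argument is then immediate: since $\mathcal{T}_{\alpha}^{(n+1)}(t)=\mathcal{A}_\delta\,\mathcal{V}_{\alpha}^{(n)}(t)$,
\[
\|\mathcal{T}_{\alpha}^{(n+1)}(t)\|_{\mathscr{B}(\X_i,\X_{i+1})}
\leq \|\mathcal{A}_\delta\|_{\mathscr{B}(\X_i,\X_{i+1})}\,\|\mathcal{V}_{\alpha}^{(n)}(t)\|_{\mathscr{B}(\X_i)}
\leq M_{\delta,i,i+1}\,C_{\delta,n}\,t^n\,e^{-\nu_* t},
\]
which is the required estimate after relabelling the constant. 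The only non-routine ingredient is the gain-of-regularity-and-weight property of $\mathcal{A}_\delta:\X_i\to\X_{i+1}$, which however is already packaged in Lemma~\ref{prop:hypo1}; the remainder of the proof is the standard Dyson--Phillips bookkeeping.
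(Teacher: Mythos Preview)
Your proof is correct and follows essentially the same approach as the paper: induction on $n$ using Proposition~\ref{prop:gen} as the base case and $\mathcal{A}_\delta\in\mathscr{B}(\X_i)$ for the inductive step, then the smoothing $\mathcal{A}_\delta\in\mathscr{B}(\X_i,\X_{i+1})$ (deduced from compact support via Lemma~\ref{prop:hypo1}) to handle $\mathcal{T}_\alpha^{(n+1)}$. Your justification of the mapping properties of $\mathcal{A}_\delta$ is in fact slightly more explicit than the paper's one-line remark that ``$\mathcal{A}_\delta h$ has compact support for any $h$''.
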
\phantomsection
\begin{proof} The proof of the first point is easily obtained by induction. Indeed, it holds true for $n=0$ thanks to Proposition \ref{prop:gen}. Assume it holds true for some $n \in \mathbb{N}$. Since $\mathcal{A}_{\delta} \in \mathscr{B}(\X_{i})$ one has
\begin{multline*}
\|\mathcal{V}_{\alpha}^{(n+1)}(t)\|_{\mathscr{B}(\X_{i})} \leq \int_{0}^{t}\left\|\mathcal{V}_{\alpha}^{(n)}(t-s)\A_{\delta}\mathcal{U}_{\alpha,\delta}(s)\right\|_{\mathscr{B}(\X_{i})}\d s\\
\leq C_{\delta,n}\|\A_{\delta}\|_{\mathscr{B}(\X_{i})}\int_{0}^{t}(t-s)^{n}\exp(-{\nu}_{*}(t-s))\|\mathcal{U}_{\alpha,\delta}(s)\|_{\mathscr{B}(\X_{i})}\d s\\
\leq C_{i}C_{\delta,n}\|\A_{\delta}\|_{\mathscr{B}(\X_{i})}\exp(-{\nu}_{*}t)\int_{0}^{t}(t-s)^{n}\d s
\end{multline*}
so the result is true for $\mathcal{V}_{\alpha}^{(n+1)}(t)$ by setting $C_{\delta,n+1}:=\frac{1}{n+1}C_{i}C_{\delta,n}\|\A_{\delta}\|_{\mathscr{B}(\X_{i})}.$ 

Since $\mathcal{A}_{\delta}h$ has compact support for any $h$, we get $\mathcal{A}_{\delta} \in \mathscr{B}(\W^{k,1}_{q}(\m),\W^{k+1,1}_{1+q}(\m))$ and therefore $\|\mathcal{T}_{\alpha}^{(n+1)}(t)\|_{\mathscr{B}(\X_{i},\X_{i+1})} \leq \|\A_{\delta}\|_{\mathscr{B}(\X_{i},\X_{i+1})}\,\|\mathcal{V}_{\alpha}^{(n)}(t)\|_{\mathcal{B}(\X_{i})}$. The  result follows from the first point.
\end{proof}

A simple consequence of this is the following
\begin{lem}\phantomsection\label{lem:laplace} For any $n \in \mathbb{N}$, there exists $C(\delta,n) \geq 0$ such that, for all $i=0,1$ and all $\lambda \in \mathbb{C}$ with $\mathrm{Re}\lambda >-{\nu}_{*}$,  it holds 
\begin{equation}\label{eq:ARB}
\left\|\left[\mathcal{A}_{\delta} \mathcal{R}(\lambda,\mathcal{B}_{\alpha,\delta})\right]^{n}\right\|_{\mathscr{B}(\X_{i},\X_{i+1})}
\leq C(\delta,n)
\left(\mathrm{Re}\lambda+{\nu}_{*}\right)^{-n}.\end{equation}
\end{lem}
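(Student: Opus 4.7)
The plan is to represent the operator $[\mathcal{A}_{\delta}\mathcal{R}(\lambda,\mathcal{B}_{\alpha,\delta})]^{n}$ as a Laplace transform of the iterates $\mathcal{T}_{\alpha}^{(n)}(t)=\mathcal{A}_{\delta}\mathcal{V}_{\alpha}^{(n-1)}(t)$ and then invoke the time-decay estimate from Proposition \ref{prop:TnT}. Since $\{\mathcal{U}_{\alpha,\delta}(t)\}_{t\geq 0}$ is a $C_{0}$-semigroup on $\X_{i}$ with $\|\mathcal{U}_{\alpha,\delta}(t)\|_{\mathscr{B}(\X_{i})}\leq C_{i}e^{-\nu_{*}t}$ (Proposition \ref{prop:gen}), the resolvent of its generator admits the integral representation
\begin{equation*}
\mathcal{R}(\lambda,\mathcal{B}_{\alpha,\delta})=\int_{0}^{\infty}e^{-\lambda t}\,\mathcal{U}_{\alpha,\delta}(t)\,\mathrm{d}t,\qquad \mathrm{Re}\,\lambda>-\nu_{*},
\end{equation*}
where the integral converges absolutely in $\mathscr{B}(\X_{i})$.

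Next, I would establish the key algebraic identity
\begin{equation*}
\bigl[\mathcal{A}_{\delta}\mathcal{R}(\lambda,\mathcal{B}_{\alpha,\delta})\bigr]^{n}=\int_{0}^{\infty}e^{-\lambda t}\,\mathcal{T}_{\alpha}^{(n)}(t)\,\mathrm{d}t
\end{equation*}
by induction on $n\geq 1$. For $n=1$ this is immediate after pushing $\mathcal{A}_{\delta}\in\mathscr{B}(\X_{i},\X_{i+1})$ inside the integral. For the inductive step, starting from the recursion $\mathcal{V}_{\alpha}^{(n)}(t)=\int_{0}^{t}\mathcal{V}_{\alpha}^{(n-1)}(t-s)\mathcal{A}_{\delta}\mathcal{U}_{\alpha,\delta}(s)\,\mathrm{d}s$ (so that $\mathcal{T}_{\alpha}^{(n+1)}$ is the convolution of $\mathcal{T}_{\alpha}^{(n)}$ with $\mathcal{A}_{\delta}\mathcal{U}_{\alpha,\delta}$ after applying $\mathcal{A}_{\delta}$), one applies the Laplace transform and uses that the transform of a convolution is the product of transforms; combined with the induction hypothesis and the $n=1$ case this yields the identity at rank $n+1$. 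The use of Fubini here is justified by the absolute convergence of all the integrals involved, thanks to Proposition \ref{prop:TnT}.

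The final step is purely a computation: using the bound $\|\mathcal{T}_{\alpha}^{(n)}(t)\|_{\mathscr{B}(\X_{i},\X_{i+1})}\leq C_{\delta,n-1}\,t^{n-1}e^{-\nu_{*}t}$ from Proposition \ref{prop:TnT}, one has for any $\lambda\in\mathbb{C}$ with $\mathrm{Re}\,\lambda>-\nu_{*}$,
\begin{equation*}
\bigl\|\bigl[\mathcal{A}_{\delta}\mathcal{R}(\lambda,\mathcal{B}_{\alpha,\delta})\bigr]^{n}\bigr\|_{\mathscr{B}(\X_{i},\X_{i+1})}\leq C_{\delta,n-1}\int_{0}^{\infty}t^{n-1}e^{-(\mathrm{Re}\,\lambda+\nu_{*})t}\,\mathrm{d}t=\frac{(n-1)!\,C_{\delta,n-1}}{(\mathrm{Re}\,\lambda+\nu_{*})^{n}},
\end{equation*}
and setting $C(\delta,n):=(n-1)!\,C_{\delta,n-1}$ finishes the proof.

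I do not anticipate a real obstacle here: everything is a clean consequence of Proposition \ref{prop:TnT} once the Laplace-transform identity is written down. The only minor technical care lies in justifying the interchange of integration in the inductive step (handled by absolute convergence in $\mathscr{B}(\X_{i},\X_{i+1})$) and in making sure one is tracking the correct scale $\X_{i}\to\X_{i+1}$ at each application of $\mathcal{A}_{\delta}$, which is precisely where Proposition \ref{prop:TnT} feeds in the regularity gain from the compactly supported image of $\mathcal{A}_{\delta}$.
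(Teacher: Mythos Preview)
Your proposal is correct and follows essentially the same approach as the paper: both establish that $[\mathcal{A}_{\delta}\mathcal{R}(\lambda,\mathcal{B}_{\alpha,\delta})]^{n}$ is the Laplace transform of $\mathcal{T}_{\alpha}^{(n)}(t)$ via an induction argument (the paper phrases the induction on $\mathcal{R}(\lambda,\mathcal{B}_{\alpha,\delta})[\mathcal{A}_{\delta}\mathcal{R}(\lambda,\mathcal{B}_{\alpha,\delta})]^{k-1}$ being the Laplace transform of $\mathcal{V}_{\alpha}^{(k-1)}$, then applies $\mathcal{A}_{\delta}$ on the left), and both conclude by applying the time-decay bound of Proposition~\ref{prop:TnT}. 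Your write-up is slightly more explicit about the Fubini justification and the final Gamma-integral computation, but the argument is the same.
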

\begin{proof} Using the fact that, for $\lambda \in \mathbb{C}$ with $\mathrm{Re}\lambda >-{\nu}_{*}$, $\mathcal{R}(\lambda,\mathcal{B}_{\alpha,\delta})\left[\mathcal{A}_{\delta} \mathcal{R}(\lambda,\mathcal{B}_{\alpha,\delta})\right]^{k-1}$ is the Laplace transform of $\mathcal{V}_{\alpha}^{(k-1)}(t)$ (which is easily checked by induction argument), we obtain that
\begin{multline*}
\left[\mathcal{A}_{\delta} \mathcal{R}(\lambda,\mathcal{B}_{\alpha,\delta})\right]^{k}=\mathcal{A}_{\delta}\mathcal{R}(\lambda,\mathcal{B}_{\alpha,\delta})\left[\mathcal{A}_{\delta} \mathcal{R}(\lambda,\mathcal{B}_{\alpha,\delta})\right]^{k-1}
=\mathcal{A}_{\delta}\int_{0}^{\infty}\exp(-\lambda\,t)\mathcal{V}_{\alpha}^{(k-1)}(t)\d t\\
=\int_{0}^{\infty}\exp(-\lambda\,t)\mathcal{A}_{\delta}\mathcal{V}_{\alpha}^{(k-1)}(t)\d t
=\int_{0}^{\infty}\exp(-\lambda\,t)\mathcal{T}_{\alpha}^{(k)}(t)\d t.\end{multline*}
The result follows then directly from the previous Proposition.\end{proof}

\section{Spectral analysis in  $\X_{1}$}\label{sec:spectral}

\subsection{Spectral properties of $\LL$ in $\X_{1}$} \label{sec:spectral1}

 In all the sequel, we \emph{fix} $\delta \in (0,\delta_{*})$ and simply write
$$\A=\A_{\delta}, \qquad \B_{\alpha}=\B_{\alpha,\delta}, \qquad
\B_{0}=\B_{0,\delta} .$$
We obtain the following whose proof is the same as \cite[Lemma 2.16]{Tr} and is postponed to Appendix \ref{app:prooflemma}:
\begin{lem}\phantomsection\label{lem:inverse} {For all $\lambda \in \mathbb{C}\backslash\{0\}$ with $\mathrm{Re}\lambda > -\mu_\star$} and all $k \in \N$, let
$$\mathcal{J}_{\alpha,k}(\lambda)=\left(\LL -\mathscr{L}_{0}\right)\mathcal{R}(\lambda,\mathscr{L}_{0})\left[\A\,\mathcal{R}(\lambda,\B_{\alpha})\right]^{k}.$$
Then,  {for all ${\nu}_{*}' \in (0,\mu_\star)$}, there exists $r_{k}\::\:(0,\alpha^{\dagger}) \to \R^{+}$ with $\lim_{\alpha\to0^{+}}r_{k}(\alpha)=0$ and such that
\begin{equation}\label{eq:Jalk}
\left\|\mathcal{J}_{\alpha,k}(\lambda)\right\|_{\mathscr{B}(\X_{1})} \leq r_{k}(\alpha), \qquad \forall \lambda \in \bm{\Omega}_{k}(\alpha)\end{equation}
where $\bm{\Omega}_{k}(\alpha)=\{\lambda \in \mathbb{C}\;;\;\mathrm{Re}\lambda >-{\nu}_{*}' \text{ and } |\lambda| > r_{k}(\alpha)\}.$ Moreover, there exists $\underline{\alpha}_{k} \in (0,\alpha^{\dagger})$ such that $\mathbf{Id}-\mathcal{J}_{\alpha,k}(\lambda)$ and $\lambda-\LL$ are invertible in $\X_{1}$ for any $\lambda \in \bm{\Omega}_{k}(\alpha)$, $\alpha \in (0,\underline{\alpha}_{k})$  with
\begin{equation}\label{eq:reso}
\mathcal{R}(\lambda,\LL)=\Gamma_{\alpha,k}(\lambda)(\mathbf{Id}-\mathcal{J}_{\alpha,k}(\lambda))^{-1}, \qquad \lambda \in \bm{\Omega}_{k}(\alpha)\end{equation}
where $\Gamma_{\alpha,k}(\lambda)=\sum_{j=0}^{k-1}\mathcal{R}(\lambda,\B_{\alpha})\left[\A\,\mathcal{R}(\lambda,\B_{\alpha})\right]^{j}+\mathcal{R}(\lambda,\mathscr{L}_{0})\left[\A\,\mathcal{R}(\lambda,\B_{\alpha})\right]^{k}.$ Finally, there exists some positive constant $C_{k} >0$ such that
\begin{equation}\label{eq:estimR}
\|\mathcal{R}(\lambda,\LL)\|_{\mathscr{B}(\X_{1})} \leq \frac{C_{k}}{1-r_{k}(\alpha)}\sum_{j=0}^{k}\frac{1}{({\nu}_{*}-{\nu}_{*}')^{j}}, \qquad \forall \lambda \in \bm{\Omega}_{k}(\alpha), \:\:\alpha \in (0,\underline{\alpha}_{k}).\end{equation}
\end{lem}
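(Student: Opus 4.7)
The proof follows the factorization/enlargement approach of \cite{GMM,Tr}: from the splitting $\LL=\mathcal{B}_\alpha+\mathcal{A}$ together with the perturbation estimate $\|\LL-\mathscr{L}_0\|_{\mathscr{B}(\X_2,\X_1)}\leq \varepsilon_{1,1}(\alpha)$ furnished by Proposition \ref{prop:converLLL0}, one writes $\mathcal{R}(\lambda,\LL)$ in terms of the well-controlled resolvents $\mathcal{R}(\lambda,\mathcal{B}_\alpha)$ (good on $\X_1$) and $\mathcal{R}(\lambda,\mathscr{L}_0)$ (good on $\X_2$), and then treats the leftover error $\mathcal{J}_{\alpha,k}(\lambda)$ by Neumann inversion once it has been shown to be small.

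The first step is the bound \eqref{eq:Jalk}. I would read $\mathcal{J}_{\alpha,k}(\lambda)$ as the composition $(\LL-\mathscr{L}_0)\circ \mathcal{R}(\lambda,\mathscr{L}_0)\circ [\mathcal{A}\mathcal{R}(\lambda,\mathcal{B}_\alpha)]^{k}$ acting from $\X_1$ to $\X_1$ via the intermediate space $\X_2$. On $\{\mathrm{Re}\lambda>-{\nu}_{*}'\}$, with ${\nu}_{*}'<\mu_\star<{\nu}_{*}$, one has $\mathrm{Re}\lambda+{\nu}_{*}\geq{\nu}_{*}-{\nu}_{*}'>0$, so Lemma \ref{lem:laplace} yields $\|[\mathcal{A}\mathcal{R}(\lambda,\mathcal{B}_\alpha)]^{k}\|_{\mathscr{B}(\X_1,\X_2)}\leq C(\delta,k)({\nu}_{*}-{\nu}_{*}')^{-k}$; the bound \eqref{eq:Lo} furnishes $\|\mathcal{R}(\lambda,\mathscr{L}_0)\|_{\mathscr{B}(\X_2)}\leq C(n_0)\,|\lambda|^{-n_0}$; and Proposition \ref{prop:converLLL0} contributes the small factor $\varepsilon_{1,1}(\alpha)\to 0$ in the final arrow. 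Multiplying, $\|\mathcal{J}_{\alpha,k}(\lambda)\|_{\mathscr{B}(\X_1)}\leq \widetilde C_k\,\varepsilon_{1,1}(\alpha)\,|\lambda|^{-n_0}$ on that half-plane, and the choice $r_k(\alpha):=(\widetilde C_k\,\varepsilon_{1,1}(\alpha))^{1/(n_0+1)}$ then produces \eqref{eq:Jalk} with $r_k(\alpha)\to 0$ as $\alpha\to 0$.

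The second step is purely algebraic. The identity $(\lambda-\LL)=(\lambda-\mathcal{B}_\alpha)-\mathcal{A}$, iterated, gives by telescoping
\[
(\lambda-\LL)\sum_{j=0}^{k-1}\mathcal{R}(\lambda,\mathcal{B}_\alpha)[\mathcal{A}\mathcal{R}(\lambda,\mathcal{B}_\alpha)]^{j} = \mathbf{Id}-[\mathcal{A}\mathcal{R}(\lambda,\mathcal{B}_\alpha)]^{k},
\]
while $(\lambda-\LL)=(\lambda-\mathscr{L}_0)-(\LL-\mathscr{L}_0)$ yields
\[
(\lambda-\LL)\mathcal{R}(\lambda,\mathscr{L}_0)[\mathcal{A}\mathcal{R}(\lambda,\mathcal{B}_\alpha)]^{k}=[\mathcal{A}\mathcal{R}(\lambda,\mathcal{B}_\alpha)]^{k}-\mathcal{J}_{\alpha,k}(\lambda).
\]
Adding the two produces $(\lambda-\LL)\Gamma_{\alpha,k}(\lambda)=\mathbf{Id}-\mathcal{J}_{\alpha,k}(\lambda)$. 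Restricting to $\alpha\in(0,\underline{\alpha}_k)$ small enough that $r_k(\alpha)<1$ on $\bm{\Omega}_k(\alpha)$, the Neumann series inverts $\mathbf{Id}-\mathcal{J}_{\alpha,k}(\lambda)$, simultaneously establishing the invertibility of $\lambda-\LL$ and the formula \eqref{eq:reso}. Finally, \eqref{eq:estimR} is obtained by combining $\|(\mathbf{Id}-\mathcal{J}_{\alpha,k}(\lambda))^{-1}\|_{\mathscr{B}(\X_1)}\leq(1-r_k(\alpha))^{-1}$ with the Laplace-transform estimate $\|\mathcal{R}(\lambda,\mathcal{B}_\alpha)[\mathcal{A}\mathcal{R}(\lambda,\mathcal{B}_\alpha)]^{j}\|_{\mathscr{B}(\X_1)}\leq C_{\delta,j}({\nu}_{*}-{\nu}_{*}')^{-(j+1)}$ read off from Proposition \ref{prop:TnT}, plus the composition estimate for the last summand in $\Gamma_{\alpha,k}(\lambda)$ already used in step one.

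The principal obstacle is the calibration of the excluded disc radius $r_k(\alpha)$. On the one hand, $r_k(\alpha)\to 0$ is needed so that $\bm{\Omega}_k(\alpha)$ exhausts the entire spectrally relevant part of the half-plane $\{\mathrm{Re}\lambda>-{\nu}_{*}'\}$ in the limit $\alpha\to 0$; on the other hand, the singular behavior of $\mathcal{R}(\lambda,\mathscr{L}_0)$ at the eigenvalue $\lambda=0$ forces one to avoid a neighborhood of the origin. The polynomial order $n_0$ supplied by \eqref{eq:Lo} must be strong enough to defeat the perturbation size $\varepsilon_{1,1}(\alpha)$, and this is precisely what permits the root-type threshold $r_k(\alpha)\sim \varepsilon_{1,1}(\alpha)^{1/(n_0+1)}$.
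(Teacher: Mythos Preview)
Your approach matches the paper's almost exactly: the same factorization through $\X_{2}$ to bound $\mathcal{J}_{\alpha,k}$, the same telescoping identity $(\lambda-\LL)\Gamma_{\alpha,k}(\lambda)=\mathbf{Id}-\mathcal{J}_{\alpha,k}(\lambda)$, and the same final resolvent estimate. However, there is a genuine gap in your second step.

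The algebraic identity only exhibits $\Gamma_{\alpha,k}(\lambda)(\mathbf{Id}-\mathcal{J}_{\alpha,k}(\lambda))^{-1}$ as a \emph{right} inverse of $\lambda-\LL$; in infinite dimensions this gives surjectivity but not injectivity, so it does not by itself ``simultaneously establish the invertibility of $\lambda-\LL$''. The paper's proof explicitly flags this and supplies a separate injectivity argument: assuming $\LL h=\lambda h$ with $h\in\D(\LL)=\X_{2}$, one writes $(\lambda-\mathscr{L}_{0})h=(\LL-\mathscr{L}_{0})h$ to obtain $\|h\|_{\X_{1}}\leq \ep_{1,1}(\alpha)\,\|\mathcal{R}(\lambda,\mathscr{L}_{0})\|_{\mathscr{B}(\X_{1})}\,\|h\|_{\X_{2}}$, and then, from $(\lambda-\B_{\alpha})h=\A h$, uses $h=\mathcal{R}(\lambda,\B_{\alpha})\A h$ together with $\A\in\mathscr{B}(\X_{1},\X_{2})$ to bound $\|h\|_{\X_{2}}$ back by $\|h\|_{\X_{1}}$. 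Closing the loop gives $\|h\|_{\X_{1}}\leq c(\alpha)\|h\|_{\X_{1}}$ with $c(\alpha)\to 0$, forcing $h=0$ for $\alpha$ small. Without this injectivity step, the formula \eqref{eq:reso} is not justified.
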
\phantomsection

{Let us fix ${\nu}_{*}' \in (0,\mu_\star)$ and $k \in \mathbb{N}$. There exists 
$\underline{\alpha}_k^\dagger\in (0,\underline{\alpha}_k)$ such that $r_k(\alpha)\leq{\nu}'_*$ for any $\alpha\in(0,\underline{\alpha}_k^\dagger)$. From the previous result, one gets in particular that,
$$\mathfrak{S}(\LL) \cap \{\lambda\in \mathbb{C}\,;\,\mathrm{Re}\lambda \geq -{\nu}_{*}'\} \subset \{z \in \mathbb{C}\,;\,|z| \leq r_{k}(\alpha)\}, \qquad \forall \alpha \in (0,\underline{\alpha}^\dagger_{k}).$$
We denote then by $\mathbb{P}_{\alpha}$ the spectral projection in $\X_{1}$ associated to the set
$$\mathfrak{S}_{\alpha}:=\mathfrak{S}(\LL) \cap \{\lambda\in \mathbb{C}\,;\,\mathrm{Re}\lambda \geq -{\nu}_{*}'\}=\mathfrak{S}(\LL) \cap  \{z \in \mathbb{C}\,;\,|z| \leq r_{k}(\alpha)\}.$$}
One can deduce then the following whose proof -- similar to that of \cite[Lemma 2.17]{Tr} -- is postponed to Appendix \ref{app:prooflemma}
\begin{lem}\phantomsection\label{lem:PaP0}\phantomsection For any $\alpha$ small enough, $\mathbb{P}_{\alpha} \in \mathscr{B}(\X_{1},\X_{2}).$ Moreover, there exists some explicit   $\ell_{0}\::\:(0,\underline{\alpha}^{\dagger}_k) \to \R^{+}$ such that $\lim_{\alpha\to0^{+}}\ell_{0}(\alpha)=0$ and 
\begin{equation}\label{eq:limproj}\left\|\mathbb{P}_{\alpha}-\mathbf{P}_{0}\right\|_{\mathscr{B}(\X_{1})} \leq \ell_{0}(\alpha).\end{equation}
\end{lem}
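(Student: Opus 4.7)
The plan is to exploit the Dunford--Riesz representation of the spectral projections together with the resolvent factorization of Lemma \ref{lem:inverse}.

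First, I would fix a common contour, say the positively oriented circle $\Gamma = \partial \mathbb{D}(0, \rho)$ with $r_k(\alpha) < \rho < {\nu}_{*}'$, which is admissible for $\alpha$ small enough. By Theorem \ref{theo:spec} and Lemma \ref{lem:inverse} this contour encloses both $\{0\}$ (the only point of $\mathfrak{S}(\mathscr{L}_0)$ to the right of $-\mu_\star$) and $\mathfrak{S}_\alpha$, so
$$\mathbb{P}_\alpha = \frac{1}{2\pi i}\oint_\Gamma \mathcal{R}(\lambda, \LL)\,\d\lambda, \qquad \mathbf{P}_0 = \frac{1}{2\pi i}\oint_\Gamma \mathcal{R}(\lambda, \mathscr{L}_0)\,\d\lambda.$$

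Using the identity $\mathcal{R}(\lambda, \LL) = \Gamma_{\alpha,k}(\lambda) + \mathcal{R}(\lambda, \LL)\mathcal{J}_{\alpha,k}(\lambda)$ coming from \eqref{eq:reso}, together with Cauchy's theorem applied to the operator-valued mapping $\lambda \mapsto \mathcal{R}(\lambda, \B_\alpha)[\A\mathcal{R}(\lambda, \B_\alpha)]^j$ (analytic inside $\Gamma$ since $\mathfrak{S}(\B_\alpha) \subset \{\mathrm{Re}\,\lambda \leq -{\nu}_*\}$ by Proposition \ref{prop:gen}), the first $k$ terms of $\Gamma_{\alpha,k}$ integrate to zero. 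Performing the analogous decomposition for $\mathscr{L}_0 = \B_0 + \A$ (for which the analogue of $\mathcal{J}_{\alpha,k}$ vanishes identically) gives
$$\mathbb{P}_\alpha - \mathbf{P}_0 = \frac{1}{2\pi i}\oint_\Gamma \mathcal{R}(\lambda, \mathscr{L}_0)\bigl([\A\mathcal{R}(\lambda,\B_\alpha)]^k - [\A\mathcal{R}(\lambda,\B_0)]^k\bigr)\d\lambda + \frac{1}{2\pi i}\oint_\Gamma \mathcal{R}(\lambda, \LL)\mathcal{J}_{\alpha,k}(\lambda)\,\d\lambda.$$
The second integral is controlled by combining \eqref{eq:estimR} with \eqref{eq:Jalk} and is of size $O(r_k(\alpha))$. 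The first integral I would expand telescopically using $\B_\alpha - \B_0 = \LL - \mathscr{L}_0$ and the second resolvent identity, which produces factors of the form $\A\mathcal{R}(\lambda, \B_0)(\LL - \mathscr{L}_0)\mathcal{R}(\lambda, \B_\alpha)$; thanks to Proposition \ref{prop:converLLL0} and the smoothing property of $\A$ provided by Lemma \ref{prop:hypo1}, each such factor is bounded in $\mathscr{B}(\X_1)$ by $O(\varepsilon_{0,0}(\alpha))$, the remaining factors being uniformly bounded on $\Gamma$ via Proposition \ref{prop:gen} and Lemma \ref{lem:laplace}. Altogether this yields $\|\mathbb{P}_\alpha - \mathbf{P}_0\|_{\mathscr{B}(\X_1)} \leq \ell_0(\alpha)$ with $\ell_0(\alpha) = C(r_k(\alpha)+\varepsilon_{0,0}(\alpha)) \to 0$.

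For the inclusion $\mathbb{P}_\alpha \in \mathscr{B}(\X_1, \X_2)$, the smallness just obtained together with the classical Kato stability theorem for spectral projections (\cite{kato}, Chapter IV) implies that, for $\alpha$ small, $\mathbb{P}_\alpha$ has the same finite rank as $\mathbf{P}_0$, namely $d+2$. Its range is then a finite-dimensional subspace invariant under $\LL$, on which $\LL$ restricts to a bounded (matrix-valued) operator; consequently $\mathrm{Range}(\mathbb{P}_\alpha) \subset \bigcap_{n\geq 1}\D(\LL^n) \subset \D(\LL^2) = \X_2$. A finite-rank operator valued in $\X_2$ is automatically continuous from $\X_1$ into $\X_2$.

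The main obstacle is balancing regularity across the scales $\X_0 \subset \X_1 \subset \X_2$: since the operator $\LL - \mathscr{L}_0$ has small norm only \emph{between} distinct scales (Proposition \ref{prop:converLLL0}), each occurrence of it in the telescopic expansion must be flanked by operators that supply the missing degree of weight and derivative — the role played precisely by $\A$, whose smoothing is quantified by Lemma \ref{prop:hypo1}. Carefully verifying that every intermediate factor lies in the correct $\mathscr{B}(\X_i,\X_j)$ class, uniformly in $\lambda \in \Gamma$, is the principal technical work.
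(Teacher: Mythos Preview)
Your approach is correct and essentially follows the paper's own proof: the contour-integral representation, the cancellation of the analytic $\B_\alpha$-resolvent pieces, and the telescopic expansion of $[\A\mathcal{R}(\lambda,\B_\alpha)]^k-[\A\mathcal{R}(\lambda,\B_0)]^k$ are exactly the paper's ingredients, and your remark that the $j=k-1$ term forces one to pass through $\X_0$ (hence the appearance of $\ep_{0,0}(\alpha)$) is precisely the three-scale issue the paper stresses. The only substantive differences are cosmetic or organisational: you work on a fixed circle $\Gamma$ of radius $\rho\in(r_k(\alpha),\nu_*')$ whereas the paper uses the shrinking circle $\gamma_k(\alpha)$ of radius $r_k(\alpha)$ (both are legitimate, the shrinking radius simply trades a larger contour length for a larger $\|\mathcal{R}(\lambda,\mathscr{L}_0)\|$); and you deduce $\mathbb{P}_\alpha\in\mathscr{B}(\X_1,\X_2)$ \emph{a posteriori} from the finite rank of $\mathbb{P}_\alpha$, whereas the paper proves it first by a direct graph-norm estimate $\|\mathcal{R}(\lambda,\LL)\|_{\mathscr{B}(\X_1,\X_2)}\le C_\alpha(1+|\lambda|)\|\mathcal{R}(\lambda,\LL)\|_{\mathscr{B}(\X_1)}+C_\alpha$. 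Your finite-rank argument is slightly slicker but yields the inclusion only after the smallness of $\|\mathbb{P}_\alpha-\mathbf{P}_0\|$ has been established, which is harmless since the lemma only claims it for $\alpha$ small enough.
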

From the above result, there exists some explicit $\alpha_{1} \in (0,\alpha_{0})$ such that
$$\|\mathbb{P}_{\alpha}-\mathbf{P}_{0}\|_{\mathscr{B}(\X_{1})} < 1, \qquad \forall \alpha \in (0,\alpha_{1}).$$
According to \cite[Paragraph I.4.6]{kato} (see also \cite[Lemma 2.18]{Tr}), for all $\alpha \in (0,\alpha_{1})$
$$\mathrm{dim}\,\mathrm{Range}(\mathbb{P}_{\alpha})=\mathrm{dim}\,\mathrm{Range}(\mathbf{P}_{0})=d+2$$
where the last identity is deduced from Theorem \ref{theo:spec}. This leads to the following
\begin{prop}\phantomsection \label{cor:mu}
Let us fix ${\nu}_{*}'\in (0,\mu_\star)$.
There is some explicit $\alpha_{1} \in (0,\alpha_{0})$ such that, for all $\alpha \in (0,\alpha_{1})$, the linearized operator $\LL\::\:\D(\LL) \subset \X_{1}\to \X_{1}$ is such that,
$$\mathfrak{S}(\LL) \cap \{z \in \mathbb{C}\;;\,\mathrm{Re}z\geq -{\nu}_{*}'\}=\{\mu_{\alpha}^{1},\ldots,\mu_{\alpha}^{d+2}\} $$ 
where $\mu_{\alpha}^{1},\ldots,\mu_{\alpha}^{d+2}$ are eigenvalues of $\LL$ (not necessarily distinct) with $|\mu_{\alpha}^{j}| \leq r_{k}(\alpha)$ for $j=1,\ldots,d+2$.
\end{prop}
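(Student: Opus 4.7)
The plan is to combine the resolvent construction of Lemma \ref{lem:inverse} with the projection-closeness of Lemma \ref{lem:PaP0}, treating the conclusion as the spectral bookkeeping that converts the analytic estimates into a finite-dimensional picture of $\mathfrak{S}_\alpha := \mathfrak{S}(\LL) \cap \{\mathrm{Re}\,z \geq -\nu_*'\}$.

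First, I fix $k \in \mathbb{N}$ and pick $\alpha$ small enough that $r_k(\alpha) < \nu_*'$, which is possible since $\lim_{\alpha \to 0^+} r_k(\alpha) = 0$. By the invertibility statement of Lemma \ref{lem:inverse}, every $\lambda \in \bm{\Omega}_k(\alpha)$ lies in the resolvent set $\varrho(\LL)$, with the representation \eqref{eq:reso} and the uniform bound \eqref{eq:estimR}. Hence
$$\mathfrak{S}_\alpha \subset \overline{\mathbb{D}(r_k(\alpha))} \subset \{z \in \mathbb{C}\,;\,\mathrm{Re}\,z > -\nu_*'\},$$
and the boundary $\{|\lambda| = r\}$ for any $r \in (r_k(\alpha),\nu_*')$ is a contour inside $\varrho(\LL)$ enclosing $\mathfrak{S}_\alpha$. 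In particular $\mathfrak{S}_\alpha$ is an isolated bounded component of the spectrum, and the associated Riesz projector
$$\mathbb{P}_\alpha = \frac{1}{2i\pi}\oint_{|\lambda|=r}\mathcal{R}(\lambda,\LL)\,\mathrm{d}\lambda$$
is a well-defined bounded operator on $\X_1$ that commutes with $\LL$ and projects onto its spectral subspace relative to $\mathfrak{S}_\alpha$.

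Next, Lemma \ref{lem:PaP0} supplies $\|\mathbb{P}_\alpha - \mathbf{P}_0\|_{\mathscr{B}(\X_1)} \leq \ell_0(\alpha)$ with $\ell_0(\alpha) \to 0$. I then choose $\alpha_1 \in (0,\underline{\alpha}_k^\dagger)$ so that $\ell_0(\alpha) < 1$ on $(0,\alpha_1)$. The classical stability result of \cite[Paragraph I.4.6]{kato}, asserting that two projections whose difference has norm strictly less than $1$ are similar, applies and gives
$$\mathrm{dim}\,\mathrm{Range}(\mathbb{P}_\alpha) = \mathrm{dim}\,\mathrm{Range}(\mathbf{P}_0) = d+2,$$
the second equality coming from Theorem \ref{theo:spec}.

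To finish, I invoke the standard spectral decomposition induced by the Riesz projector: $\LL$ splits on $\X_1 = \mathrm{Range}(\mathbb{P}_\alpha) \oplus \mathrm{Ker}(\mathbb{P}_\alpha)$, and its restriction to the finite-dimensional subspace $\mathrm{Range}(\mathbb{P}_\alpha)$ is a linear operator on a $(d+2)$-dimensional space whose spectrum equals $\mathfrak{S}_\alpha$. That restriction has exactly $d+2$ eigenvalues counted with algebraic multiplicity, all lying in $\overline{\mathbb{D}(r_k(\alpha))}$; enumerating them $\mu_\alpha^1,\ldots,\mu_\alpha^{d+2}$ yields the claim. There is no genuine obstacle here beyond what is already packaged in Lemmas \ref{lem:inverse} and \ref{lem:PaP0}; the only slightly delicate point is verifying that for small enough $\alpha$ the disc $\overline{\mathbb{D}(r_k(\alpha))}$ is well separated from the vertical line $\{\mathrm{Re}\,z = -\nu_*'\}$, which is automatic once $r_k(\alpha) < \nu_*'$.
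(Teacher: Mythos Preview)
Your proof is correct and follows essentially the same approach as the paper: both combine Lemma~\ref{lem:inverse} (localizing $\mathfrak{S}_\alpha$ in $\overline{\mathbb{D}(r_k(\alpha))}$) with Lemma~\ref{lem:PaP0} and \cite[Paragraph~I.4.6]{kato} to force $\dim\mathrm{Range}(\mathbb{P}_\alpha)=\dim\mathrm{Range}(\mathbf{P}_0)=d+2$. You have simply spelled out the Riesz-projector and finite-dimensional spectral-decomposition steps that the paper leaves implicit in the paragraph preceding the proposition.
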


\subsection{Semigroup decay in $\X_{1}$}

Let us now deduce, from the above results, the decay of the semigroup associated to $\LL$ in the space $\X_{1}$. This is done thanks to the following quantitative spectral mapping Theorem which can be deduced from \cite[Theorem 2.1]{MiSc} (see more precisely \cite{MiSc2} for a slight correction on the assumptions).
\begin{theo}\phantomsection {\textbf{\textit{(Quantitative Spectral Mapping Theorem --\cite{MiSc})}}}\label{theo:SMT}
Let $X$ be a given Banach space and let $\Lambda\::\:\D(\Lambda) \subset X \to X$ be the generator of a $C_{0}$-semigroup $\{S_{\Lambda}(t)\,;\,t\geq 0\}$ in $X$. Assume that $\Lambda$ can be split as
$$\Lambda=\A +\B$$
where  $\B$ is the generator of a $C_{0}$-semigroup $\{S_{\B}(t)\,;\,t\geq0\}$ on $X$ and  $\A$ is $\B$-bounded. Assume moreover that
\begin{enumerate}
\item[H1)] There exists $a^{*} \in \R$ such that, for all $a > a^{*}$ and any $\ell \geq 0$, there exists $C=C_{a,\ell} >0$ such that
$$\left\|S_{\B} \ast \left(\A S_{\B}\right)^{(\ast \ell)}(t)\right\|_{\mathscr{B}(X)} \leq C\,\exp(a\,t) \qquad t \geq 0.$$
\item[H2)] There exists $\zeta \in (0,1]$, $s \in [0,\zeta)$ such that $\A \in \mathscr{B}(X_{s},X)$ and there exists $n \geq 1$ such that, for all $a > a^{*}$
$$\left\|\left(\A S_{\B}\right)^{(\ast n)}(t)\right\|_{\mathscr{B}(X,X_{\zeta})} \leq C_{n}\,\exp(a\,t) \qquad t \geq 0$$
for some positive constant $C_{n}$ depending only on $a,n,\zeta$ and $X_{s}$ denotes the abstract Sobolev space associated to $\Lambda$.
\item[H3)] The spectrum $\mathfrak{S}(\Lambda)$ satisfies
$$\mathfrak{S}(\Lambda) \cap \{z \in \mathbb{C}\,;\,\mathrm{Re}z > a^{*}\} \subset \{z \in \mathbb{C}\,;\,\mathrm{Re}z > a'\}$$
for some $a' >a^{*}.$
\end{enumerate}
Then, there exists a projector $\Pi \in \mathscr{B}(X)$ satisfying 
$$\Lambda\,\Pi=\Pi\,\Lambda, \qquad \Lambda_{1}=\Lambda\vert_{X_{1}} \in \mathscr{B}(X_{1}),\qquad \mathfrak{S}(\Lambda_{1}) \subset \{z \in \mathbb{C}\,;\,\mathrm{Re}z > a^{*}\}$$ 
where $X_{1}=\mathrm{Range}(\Pi)$ and, for any $a >a^{*}$, there exists some positive constant $C_{a} >0$ such that
$$\left\|S_{\Lambda}(t)(\mathbf{Id}-\Pi)\right\|_{\mathscr{B}(X)} \leq C_{a}\exp(a\,t), \qquad t \geq0.$$
\end{theo}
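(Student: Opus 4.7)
The plan is to combine an iterated Dyson--Phillips expansion of $S_\Lambda$ with an inverse Laplace transform argument, extracting the decay from the spectral separation in H3 via the regularity/factorization gains of H1--H2.

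First I would iterate the Duhamel identity $S_\Lambda = S_\B + S_\Lambda * (\mathcal{A} S_\B)$ to obtain, for the integer $n$ supplied by H2,
$$S_\Lambda(t) = \sum_{j=0}^{n-1} S_\B * (\mathcal{A} S_\B)^{(*j)}(t) + S_\Lambda * (\mathcal{A} S_\B)^{(*n)}(t).$$
By H1 the first $n$ terms already satisfy the target bound $C_a e^{at}$ for any $a>a^*$, so the issue reduces to controlling the final convolution. Correspondingly, on the resolvent side one has
$$\mathcal{R}(\lambda,\Lambda) = \sum_{j=0}^{n-1}\mathcal{R}(\lambda,\B)\bigl(\mathcal{A}\mathcal{R}(\lambda,\B)\bigr)^{j} + \mathcal{R}(\lambda,\Lambda)\bigl(\mathcal{A}\mathcal{R}(\lambda,\B)\bigr)^{n}$$
valid on a suitable half-plane. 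The factor $(\mathcal{A}\mathcal{R}(\lambda,\B))^n$, being the Laplace transform of $(\mathcal{A} S_\B)^{(*n)}$, inherits from H2 a bounded mapping $X\to X_\zeta$; this is the key smoothing/compactness that will allow a contour shift later.

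Next I would construct the projection $\Pi$. Hypothesis H3 isolates $\mathfrak{S}(\Lambda)\cap\{\mathrm{Re}\,z>a^*\}$ inside a strip $\{\mathrm{Re}\,z>a'\}$ bounded away from $\{\mathrm{Re}\,z=a^*\}$. Using the compactness afforded by the last term of the resolvent formula, this part of the spectrum consists of finitely many eigenvalues of finite algebraic multiplicity, and one defines
$$\Pi = \frac{1}{2\pi i}\oint_\gamma \mathcal{R}(\lambda,\Lambda)\,\d\lambda,$$
with $\gamma$ a closed contour enclosing this set inside $\{\mathrm{Re}\,z>a^*\}$. Standard functional calculus gives $\Lambda\Pi=\Pi\Lambda$, $\Lambda\vert_{\mathrm{Range}(\Pi)}\in\mathscr{B}(\mathrm{Range}(\Pi))$, and $\mathfrak{S}(\Lambda\vert_{\mathrm{Range}(\Pi)})$ is the enclosed eigenvalue set.

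For the decay of $S_\Lambda(t)(\mathbf{Id}-\Pi)$, I would use the inverse Laplace representation
$$S_\Lambda(t)(\mathbf{Id}-\Pi) = \frac{1}{2\pi i}\lim_{T\to\infty}\int_{\mu-iT}^{\mu+iT} e^{\lambda t}\,\mathcal{R}(\lambda,\Lambda)(\mathbf{Id}-\Pi)\,\d\lambda$$
for $\mu$ larger than the growth bound, and shift the contour to the vertical line $\mathrm{Re}\lambda=a$ with arbitrary $a>a^*$. By H3 no spectrum is crossed (all eigenvalues in $\{\mathrm{Re}\,z>a^*\}$ have been absorbed into $\Pi$), and on the new contour the decomposition of $\mathcal{R}(\lambda,\Lambda)$ displayed above expresses it in terms of $\mathcal{R}(\lambda,\B)$ and $(\mathcal{A}\mathcal{R}(\lambda,\B))^j$, all controlled by the Laplace transforms of the quantities in H1--H2. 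Once the shift is justified, the resulting integral is dominated by $e^{at}$, giving the claimed bound.

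The principal obstacle is justifying the contour shift, that is, proving that $\|\mathcal{R}(\lambda,\Lambda)(\mathbf{Id}-\Pi)\|_{\mathscr{B}(X)}$ is integrable -- or at least sufficiently small at infinity -- along vertical lines with $\mathrm{Re}\lambda=a>a^*$. In a Hilbert space this would be a Gearhart--Pr\"uss matter, but here $X$ is a general Banach space (ultimately an $L^1$-type space in the application). The resolution exploits precisely the factor $(\mathcal{A}\mathcal{R}(\lambda,\B))^n$: H2 forces this operator to gain $\zeta$ orders of regularity, which translates into polynomial decay in $|\mathrm{Im}\,\lambda|$ and makes the remainder in the resolvent expansion small enough for the tail of the Laplace contour integral to vanish. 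Combined with the uniform control of the first $n-1$ terms via H1, this closes the argument and yields the exponential estimate $\|S_\Lambda(t)(\mathbf{Id}-\Pi)\|_{\mathscr{B}(X)}\leq C_a e^{at}$ for every $a>a^*$.
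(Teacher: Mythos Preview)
The paper does not prove this theorem: it is quoted verbatim as an external result from \cite{MiSc} (with the correction \cite{MiSc2}) and then applied in Proposition~\ref{prop:decayX1}. There is therefore no ``paper's own proof'' to compare against. Your sketch is a faithful outline of the Mischler--Scher strategy in the cited reference: the iterated Duhamel/Dyson--Phillips expansion, the resolvent factorization, the construction of $\Pi$ by a Cauchy integral, and the contour shift justified by the regularization gain H2 which yields decay of $(\mathcal{A}\mathcal{R}(\lambda,\B))^{n}$ along vertical lines. That is exactly the mechanism of \cite{MiSc}, so as a proof proposal your approach is correct and matches the intended argument.
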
\phantomsection

We deduce from this the following decay in $\X_{1}$
\begin{prop}\phantomsection\label{prop:decayX1} Let us fix ${\nu}_{*}'\in (0,\mu_\star)$. There exists $\alpha^{\star} \in (0,\alpha_{0})$ such that, for any $\alpha \in (0,\alpha^{\star})$ the $C_{0}$-semigroup $\{\mathcal{S}_{\alpha}(t)\;;\;t \geq 0\}$ in $\X_{1}$ generated by $\LL\::\:\D(\LL) \subset \X_{1} \to \X_{1}$ satisfies, for all $\mu \in (0,{\nu}_{*}')$
$$\left\|\mathcal{S}_{\alpha}(t)\big(\mathbf{Id}-\mathbb{P}_{\alpha}\big)\right\|_{\mathscr{B}(\X_{1})} \leq C_{\mu}\exp(-\mu\,t) \qquad \forall t \geq 0$$
for some positive constant $C_{\mu} >0$.
\end{prop}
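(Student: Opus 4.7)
The plan is to invoke the quantitative spectral mapping theorem of Mischler and Scher (Theorem \ref{theo:SMT}) with $X = \X_{1}$, $\Lambda = \LL$, $\B = \B_{\alpha,\delta}$, $\A = \A_{\delta}$ and threshold $a^{\ast} = -{\nu}_{\ast}'$. Its conclusion furnishes a projector $\Pi \in \mathscr{B}(\X_{1})$ commuting with $\LL$ such that the part of $\LL$ on $\mathrm{Range}(\Pi)$ has spectrum inside $\{\mathrm{Re}\,z > -{\nu}_{\ast}'\}$, together with the decay bound $\|\mathcal{S}_{\alpha}(t)(\mathbf{Id}-\Pi)\|_{\mathscr{B}(\X_{1})} \leq C_{a}\,e^{a t}$ for every $a > -{\nu}_{\ast}'$. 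The Riesz projection onto the finite, isolated spectral subset $\mathfrak{S}_{\alpha} = \{\mu_{\alpha}^{1},\ldots,\mu_{\alpha}^{d+2}\}$ supplied by Proposition \ref{cor:mu} being unique, this forces $\Pi = \mathbb{P}_{\alpha}$, and setting $\mu = -a$ produces the announced estimate.

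\textbf{Verification of H1 and H2.} The Dyson-Phillips construction of Theorem \ref{theo:gen} together with a direct induction yields
$$\mathcal{V}_{\alpha}^{(n)}(t) = S_{\B_{\alpha,\delta}} \ast (\A\,S_{\B_{\alpha,\delta}})^{(\ast n)}(t), \qquad \mathcal{T}_{\alpha}^{(n+1)}(t) = (\A\,S_{\B_{\alpha,\delta}})^{(\ast(n+1))}(t),$$
where $S_{\B_{\alpha,\delta}}(t) = \mathcal{U}_{\alpha,\delta}(t)$. The first estimate of Proposition \ref{prop:TnT} then gives H1, since $\|\mathcal{V}_{\alpha}^{(\ell)}(t)\|_{\mathscr{B}(\X_{1})} \leq C_{\delta,\ell}\,t^{\ell}\,e^{-{\nu}_{\ast}t}$ and since ${\nu}_{\ast} > \mu_{\star} > {\nu}_{\ast}'$ (by Proposition \ref{prop:gen} and the remark following it) the polynomial $t^{\ell}$ is absorbed into $e^{a t}$ for any $a > -{\nu}_{\ast}'$. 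For H2, the convention of the footnote in Section \ref{sec:ll} identifies the domain of $\LL$ on $\X_{1}$ with $\X_{2}$, so the abstract graph-norm space of index $\zeta = 1$ associated with $\LL$ coincides with $\X_{2}$. Taking $s = 0$ and observing that $\A \in \mathscr{B}(\X_{1})$ (by Lemma \ref{prop:hypo1}, $\A$ is actually much more regularizing), the second estimate of Proposition \ref{prop:TnT} with $i = 1$ yields
$$\|(\A\,S_{\B_{\alpha,\delta}})^{(\ast(n+1))}(t)\|_{\mathscr{B}(\X_{1},\X_{2})} = \|\mathcal{T}_{\alpha}^{(n+1)}(t)\|_{\mathscr{B}(\X_{1},\X_{2})} \leq C_{\delta,n}\,t^{n}\,e^{-{\nu}_{\ast}t},$$
which gives H2 after the same absorption of the polynomial factor.

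\textbf{Verification of H3 and identification of the projector.} Proposition \ref{cor:mu} provides, for every $\alpha \in (0,\alpha_{1})$, that $\mathfrak{S}(\LL) \cap \{\mathrm{Re}\,z \geq -{\nu}_{\ast}'\}$ is contained in the closed disc of radius $r_{k}(\alpha)$ about $0$. Since $r_{k}(\alpha) \to 0$ as $\alpha \to 0^{+}$, after further shrinking $\alpha$ we may assume $r_{k}(\alpha) < {\nu}_{\ast}'$; then every $\mu_{\alpha}^{j}$ satisfies $\mathrm{Re}\,\mu_{\alpha}^{j} \geq -r_{k}(\alpha) > -{\nu}_{\ast}'$, and the choice $a' = -\tfrac{1}{2}({\nu}_{\ast}' + r_{k}(\alpha))$ realizes the strict gap required by H3. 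Defining $\alpha^{\star}$ to be the minimum of the thresholds encountered along the way, Theorem \ref{theo:SMT} applies and delivers both a projector $\Pi$ and the decay estimate. The expected main obstacle, or rather the one step requiring genuine care, is the identification $\Pi = \mathbb{P}_{\alpha}$: since the two projectors both split $\X_{1}$ into complementary $\LL$-invariant closed subspaces whose associated spectra are respectively $\mathfrak{S}_{\alpha}$ and $\mathfrak{S}(\LL)\setminus\mathfrak{S}_{\alpha}$, uniqueness of the Riesz projection onto an isolated part of the spectrum (Kato) forces $\Pi = \mathbb{P}_{\alpha}$, completing the proof. All remaining steps are routine combinations of the preparatory material of Sections \ref{sec:ll} and \ref{sec:spectral1}.
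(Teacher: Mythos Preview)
Your proof is correct and follows essentially the same route as the paper: both apply Theorem \ref{theo:SMT} with $X=\X_{1}$, $\Lambda=\LL$, $a^{*}=-{\nu}_{*}'$, verify H1 and H2 via the Dyson--Phillips estimates of Proposition \ref{prop:TnT} (absorbing the polynomial prefactor thanks to ${\nu}_{*}>{\nu}_{*}'$), verify H3 via Proposition \ref{cor:mu}, and then identify the abstract projector $\Pi$ with the Riesz projection $\mathbb{P}_{\alpha}$ onto the isolated finite set $\{\mu_{\alpha}^{1},\ldots,\mu_{\alpha}^{d+2}\}$. Your identification step via uniqueness of the Riesz projection is exactly the content of the paper's concluding argument that $\mathfrak{S}(\LL\vert_{\mathrm{Range}(\mathbf{\Pi}_{\alpha})})$ coincides with $\{\mu_{\alpha}^{1},\ldots,\mu_{\alpha}^{d+2}\}$.
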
\phantomsection
\begin{proof} Let $\alpha_{1} \in (0,\alpha_{0})$ be such that Proposition \ref{cor:mu} holds true. Given $\alpha \in (0,\alpha_{1})$, we apply the above Theorem \ref{theo:SMT} with $X=\X_{1}$ and $\Lambda=\LL$. The splitting of $\LL$ has been established in Section \ref{sec:ll}. According to Proposition \ref{cor:mu}, if we set $a^{*}=-{\nu}_{*}'$, one sees that Hypothesis $H3)$ is met with $a^*<a'<\min(\mu_{\alpha}^{1},\ldots,\mu_{\alpha}^{d+2}).$ Notice also that, for all $n \in \N$, $S_{\B} \ast (\A S_{\B})^{(\ast n)}(t)$ is exactly $\mathcal{V}_{\alpha}^{(n+1)}(t)$ so that Assumption $H1)$ is met thanks to Proposition \ref{prop:TnT} since, for all $n \in \mathbb{N}$, $t^{n+1}\exp(-{\nu}_{*}t) \leq C\,\exp(-{\nu}_{*}'t),$ for all $t \geq 0$,
for some positive constant depending only on $n,{\nu}_{*},{\nu}_{*}'$. In the same way, for $\zeta=1$ so that $X_{\zeta}=\X_{2}$ and $s=0$ so that $X_{s}=\X_{1}$ one sees that $H2)$ is met thanks to Proposition \ref{prop:TnT}. This proves that there exists a projector $\mathbf{\Pi}_{\alpha}$ such that, { for all $\mu \in (0,{\nu}_{*}')$}
$$\left\|\mathcal{S}_{\alpha}(t)(\mathbf{Id}-\mathbf{\Pi}_{\alpha})\right\|_{\mathscr{B}(\X_{1})} \leq C_{\mu}\exp(-\mu\,t).$$
As well-known, this implies that the spectrum of the generator $\LL$ satisfies
$$\mathfrak{S}(\LL)=\mathfrak{S}(\LL\vert_{\mathrm{Range}(\mathbf{Id}-\mathbf{\Pi}_{\alpha})}) \cup \mathfrak{S}(\LL\vert_{\mathrm{Range}(\mathbf{\Pi}_{\alpha})})$$
and, since $\mathfrak{S}(\LL\vert_{\mathrm{Range}(\mathbf{\Pi}_{\alpha})}) \subset \{z \in \mathbb{C}\,;\,\mathrm{Re}z > -{\nu}_{*}'\}$ according to Theorem \ref{theo:SMT}, we see that it coincides with $\{\mu_{\alpha}^{1},\ldots,\mu_{\alpha}^{d+2}\}$ and therefore $\mathbf{\Pi}_{\alpha}=\mathbb{P}_{\alpha}.$
\end{proof}

\section{Stability in $\X_{0}$}\label{sec:hypo0}

We still denote here by  $\{\mathcal{S}_{\alpha}(t)\,;\,t \geq 0\}$ the $C_{0}$-semigroup in $\X_{0}$ generated by the linearized operator $\LL$. 
To deduce the decay of the associated semigroup from the above fine properties of the spectrum of $\LL$, we shall resort to the following enlargement result which ensures some suitable quantitative spectral mapping theorem from $\X_{1}$ to $\X_{0}$
\begin{theo}{\textbf{\textit{(Enlargement result -- \cite[Theorem 2.13]{GMM} )}}}\phantomsection\label{theo:GMM}
Let $E$, $\mathcal{E}$ be two Banach spaces with $E \subset \E$ dense with continuous embedding, and consider  $L \in  \mathscr{C}(E)$, $\mathcal{L} \in  \mathscr{C}(\E)$ with $\mathcal{L}\vert_{E} = L$ and $a \in \R$. Assume the following
\begin{enumerate}
\item[A1)] $L$ is the generator of a $C_{0}$-semigroup $\{U(t)\,;\,t\geq 0\}$ in $E$,
$$\mathfrak{S}(L) \cap \{\lambda\;;\;\mathrm{Re}\lambda \geq a\} = \{\xi_{1},\ldots,\xi_{k}\} \subset \mathfrak{S}_{d}(L)$$
and $L-a$ is hypo-dissipative on $\mathrm{Range}(\mathbf{Id}-\mathbf{\Pi}_{L,a})$ where $\mathbf{\Pi}_{L,a}$ is the spectral projection on $E$ associated to the above set of eigenvalues.
\item[A2)] The operator $\mathcal{L}$  can be written as
$$\mathcal{L}=\A + \B$$ 
with $\A, \B \in  \mathscr{C}(\E)$ where $\A \in \mathscr{B}(\E)$ and $\mathcal{L}$ generates a $C_{0}$-semigroup $\{\mathcal{S}(t)\,;\,t\geq 0\}$ in $\E$ and such that\begin{enumerate}
\item $(\B-a)$ is hypo-dissipative on $\E$ while $\A \in \mathscr{B}(\E)$ and $\A\vert_{E} \in \mathscr{B}(E)$;
\item there are constants $n \in \N$, $C_{a} \geq 1$,  such that
$$\left\|\left(\A\,\mathcal{S}\right)^{*n}(t)\right\|_{\mathscr{B}(\E,E)} \leq C_{a}\,\exp(a\,t), \qquad \forall t \geq 0.$$
\end{enumerate}
\end{enumerate}
Then, $\mathcal{L}$ is hypo-dissipative on $\E$ and  there exists some constructive constant $C_{a}' \geq 1$ such that
$$\left\|\mathcal{S}(t)(\mathbf{Id}-\bm{\Pi}_{\mathcal{L},a})\right\|_{\mathscr{B}(\E)} \leq C_{a}'\,t^{n}\,\exp(a\,t), \qquad \forall t \geq 0$$
where $\bm{\Pi}_{\mathcal{L},a}$ is the spectral projector of $\mathcal{L}$ associated to $\{\xi_{1},\ldots,\xi_{k}\}$ in $\E$. 
\end{theo}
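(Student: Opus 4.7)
The plan is to exploit the iterated Duhamel identity relating the semigroups of $\mathcal{L}$ and $\mathcal{B}$ on the large space $\mathcal{E}$, combined with the regularization provided by the assumption $(\mathcal{A}\mathcal{S})^{*n}(t)\::\:\mathcal{E} \to E$, in order to transport the fine spectral/semigroup information known on $E$ back to $\mathcal{E}$. The starting point is the standard Duhamel identity $\mathcal{S}(t)=S_{\mathcal{B}}(t)+(\mathcal{S}\ast \mathcal{A} S_{\mathcal{B}})(t)$; iterating this relation $n$ times yields
$$\mathcal{S}(t)=\sum_{j=0}^{n-1}\bigl(S_{\mathcal{B}}\ast (\mathcal{A} S_{\mathcal{B}})^{*j}\bigr)(t)+\bigl(\mathcal{S}\ast (\mathcal{A} S_{\mathcal{B}})^{*n}\bigr)(t).$$
The first $n$ terms act $\mathcal{E}\to\mathcal{E}$, and from the hypo-dissipativity of $(\mathcal{B}-a)$ combined with $\mathcal{A}\in\mathscr{B}(\mathcal{E})$ one immediately obtains bounds of order $C_j t^j e^{at}$, so they contribute an $\mathcal{O}(t^{n-1}e^{at})$ term in $\mathscr{B}(\mathcal{E})$.

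The decisive step is the treatment of the remainder $\mathcal{R}(t):=(\mathcal{S}\ast (\mathcal{A} S_{\mathcal{B}})^{*n})(t)$. By assumption A2(b), $(\mathcal{A} S_{\mathcal{B}})^{*n}(s)$ sends $\mathcal{E}$ continuously into $E$ with norm $\leq C e^{as}$. Since $\mathcal{L}\vert_E=L$, one has $\mathcal{S}(t)\vert_E=U(t)$; hence, once we have landed in $E$, one may replace $\mathcal{S}(t-s)$ by $U(t-s)$ and split $U(t-s)=U(t-s)\mathbf{\Pi}_{L,a}+U(t-s)(\mathbf{Id}-\mathbf{\Pi}_{L,a})$ using the spectral projection on $E$ provided by A1. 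The component $(U\ast (\mathcal{A} S_{\mathcal{B}})^{*n})(t)(\mathbf{Id}-\mathbf{\Pi}_{L,a})$ decays like $C_a t^n e^{at}$, by combining the $\mathcal{E}\to E$ estimate with the hypo-dissipativity of $L-a$ on $\mathrm{Range}(\mathbf{Id}-\mathbf{\Pi}_{L,a})$. The residual piece, involving $U(t-s)\mathbf{\Pi}_{L,a}$, lives in the finite-dimensional invariant subspace $\mathrm{Range}(\mathbf{\Pi}_{L,a})$ and will be absorbed into the spectral projection $\bm{\Pi}_{\mathcal{L},a}$ on $\mathcal{E}$.

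To construct $\bm{\Pi}_{\mathcal{L},a}$ rigorously, I would work at the resolvent level. The factorization
$$\mathcal{R}(\lambda,\mathcal{L})=\sum_{j=0}^{n-1}\mathcal{R}(\lambda,\mathcal{B})\bigl(\mathcal{A}\,\mathcal{R}(\lambda,\mathcal{B})\bigr)^{j}+\mathcal{R}(\lambda,\mathcal{L})\bigl(\mathcal{A}\,\mathcal{R}(\lambda,\mathcal{B})\bigr)^{n},$$
valid on the intersection of the resolvent sets of $\mathcal{L}$ and $\mathcal{B}$, shows, via the Laplace-transform counterpart of the semigroup argument above, that $\mathcal{R}(\lambda,\mathcal{L})$ differs from a holomorphic family on $\{\mathrm{Re}\,\lambda>a\}$ only by a term that factors through $E$ via $\mathcal{R}(\lambda,L)$. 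This localizes the spectrum of $\mathcal{L}$ in the half-plane $\{\mathrm{Re}\,\lambda\geq a\}$ to the eigenvalues $\{\xi_1,\ldots,\xi_k\}$ and lets one define $\bm{\Pi}_{\mathcal{L},a}$ as the Dunford--Riesz integral of $\mathcal{R}(\cdot,\mathcal{L})$ around a small contour enclosing them. One then checks $\mathcal{L}\,\bm{\Pi}_{\mathcal{L},a}=\bm{\Pi}_{\mathcal{L},a}\,\mathcal{L}$ and that $\bm{\Pi}_{\mathcal{L},a}\vert_E=\mathbf{\Pi}_{L,a}$, which identifies the finite-dimensional residual above with $\mathcal{S}(t)\bm{\Pi}_{\mathcal{L},a}$ and yields the announced bound on $\mathcal{S}(t)(\mathbf{Id}-\bm{\Pi}_{\mathcal{L},a})$.

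The main obstacle I foresee is the careful bookkeeping needed to pass between the semigroup level, where the iterated Duhamel formula is most transparent, and the resolvent level, where defining $\bm{\Pi}_{\mathcal{L},a}$ is cleanest, while ensuring the compatibility $\bm{\Pi}_{\mathcal{L},a}\vert_E=\mathbf{\Pi}_{L,a}$ and controlling the polynomial factor $t^n$ uniformly. In particular, showing that the contour integrals defining $\bm{\Pi}_{\mathcal{L},a}$ converge in operator norm on $\mathcal{E}$ requires integrability estimates sharper than the mere exponential bounds in A2; one must leverage the resolvent-side counterpart of the $\mathcal{E}\to E$ regularization, namely $\|(\mathcal{A}\,\mathcal{R}(\lambda,\mathcal{B}))^{n}\|_{\mathscr{B}(\mathcal{E},E)}\lesssim (\mathrm{Re}\,\lambda-a)^{-n}$, obtained by Laplace-transforming the $n$-fold convolution. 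This is precisely the mechanism already used in Lemma \ref{lem:laplace} above, so the strategy parallels the one deployed in Part I of this paper.
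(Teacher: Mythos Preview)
The paper does not provide its own proof of this statement: Theorem~\ref{theo:GMM} is quoted verbatim from \cite[Theorem~2.13]{GMM} and is used as a black box in the subsequent proof of Theorem~\ref{theo:decayX0}. There is therefore nothing to compare your attempt against within this paper.

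That said, your sketch accurately reconstructs the strategy of the original proof in \cite{GMM}: the iterated Duhamel expansion $\mathcal{S}(t)=\sum_{j=0}^{n-1}(S_{\mathcal{B}}\ast(\mathcal{A}S_{\mathcal{B}})^{*j})(t)+(\mathcal{S}\ast(\mathcal{A}S_{\mathcal{B}})^{*n})(t)$, the use of the $\mathcal{E}\to E$ regularization of the $n$-fold convolution to replace $\mathcal{S}(t-s)$ by $U(t-s)$ on the remainder, and the resolvent factorization to localize $\mathfrak{S}(\mathcal{L})\cap\{\mathrm{Re}\,\lambda\geq a\}$ and define $\bm{\Pi}_{\mathcal{L},a}$ via a Dunford integral. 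One minor point: the hypothesis A2(b) as written in this paper reads $(\mathcal{A}\,\mathcal{S})^{*n}$, but your use of $(\mathcal{A}\,S_{\mathcal{B}})^{*n}$ is the intended reading (and is what is actually verified via Proposition~\ref{prop:TnT} when the theorem is applied).
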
\phantomsection

We are now in position to prove our main result concerning the linearized operator $\LL$ in $\X_{0}$:
\begin{proof}[Proof of Theorem \ref{theo:decayX0}] We apply Theorem \ref{theo:GMM} with $\E=\X_{0}$, $E=\X_{1}$ and $L=\LL\vert_{\X_{1}}$. The spectral structure of $L$ in $\X_{1}$ is given by Proposition \ref{cor:mu}.  {From Proposition \ref{prop:decayX1}, we deduce that, for any $\mu$ satisfying $-\min(\mu_{\alpha}^{1},\ldots,\mu_{\alpha}^{d+2})<\mu<\nu'_{*}$, the operator $L-\mu$ is hypo-dissipative in $\mathrm{Range}(\mathbf{Id}-\mathbb{P}_{\alpha})$ }(see \cite[Theorem 2.9]{GMM} for the equivalence between hypo-dissipativity and decay of the semigroup). Again Proposition \ref{prop:TnT} shows that Assumption A2) is met and the conclusion follows.
\end{proof}

As already mentioned, it is not clear whether the above Lemma \ref{lem:PaP0} holds true in $\X_{0}$ or not. However, it appears important for our subsequent analysis to obtain suitable norms of $\mathbf{P}_{\alpha}$ in $\X_{0}$ for small values of $\alpha$. This will be done thanks to the following:

\begin{lem} With the notations of Lemma \ref{lem:PaP0}, one has
$\sup_{\alpha \in (0,\underline{\alpha}^{\dagger}_k)}\|\mathbf{P}_{\alpha}\|_{\mathscr{B}(\X_{0},\X_{1})} <\infty.$
\end{lem}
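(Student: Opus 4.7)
The plan is to combine the Cauchy-type representation of $\mathbb{P}_\alpha$ with the regularization gain of $\A\,\mathcal{R}(\lambda,\B_\alpha)$ already recorded in Lemma \ref{lem:laplace}. Fix any $r_\star \in (0,{\nu}_*')$ and let $\gamma$ denote the positively-oriented circle of radius $r_\star$ centred at the origin. For $\alpha\in(0,\underline{\alpha}_k^\dagger)$ small enough so that $r_k(\alpha) < r_\star$, Proposition \ref{cor:mu} ensures that the full finite spectrum $\{\mu_\alpha^1,\ldots,\mu_\alpha^{d+2}\}$ lies in the open disc of radius $r_\star$, while $\gamma$ itself is contained in $\bm{\Omega}_k(\alpha)\cap\varrho(\LL)$. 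The spectral projection therefore admits the representation
\[
\mathbb{P}_\alpha \;=\; \frac{1}{2\pi i}\oint_\gamma \mathcal{R}(\lambda,\LL)\,d\lambda.
\]

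The key manipulation is the resolvent identity
\[
\mathcal{R}(\lambda,\LL)\;=\;\mathcal{R}(\lambda,\B_\alpha)\;+\;\mathcal{R}(\lambda,\LL)\,\A\,\mathcal{R}(\lambda,\B_\alpha),
\]
coming from the decomposition $\LL=\A+\B_\alpha$. Proposition \ref{prop:gen} yields $\mathfrak{S}(\B_\alpha)\subset\{\mathrm{Re}\,z\leq-{\nu}_*\}$, and since $r_\star<{\nu}_*'<{\nu}_*$, the map $\lambda\mapsto \mathcal{R}(\lambda,\B_\alpha)$ is holomorphic in an open neighbourhood of the closed disc bounded by $\gamma$. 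Cauchy's theorem therefore annihilates the first contribution and leaves
\[
\mathbb{P}_\alpha\;=\;\frac{1}{2\pi i}\oint_\gamma \mathcal{R}(\lambda,\LL)\,\A\,\mathcal{R}(\lambda,\B_\alpha)\,d\lambda.
\]

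It then suffices to estimate this integral from $\X_0$ to $\X_1$. Applying Lemma \ref{lem:laplace} with $n=1$ and $i=0$, one has
\[
\sup_{\lambda\in\gamma}\big\|\A\,\mathcal{R}(\lambda,\B_\alpha)\big\|_{\mathscr{B}(\X_0,\X_1)}\;\leq\;\frac{C(\delta,1)}{{\nu}_*-r_\star},
\]
uniformly in $\alpha$, while \eqref{eq:estimR} from Lemma \ref{lem:inverse} yields $\sup_{\lambda\in\gamma}\|\mathcal{R}(\lambda,\LL)\|_{\mathscr{B}(\X_1)}\leq K$ for some constant $K$ independent of $\alpha\in(0,\underline{\alpha}_k^\dagger)$ (the prefactor $1/(1-r_k(\alpha))$ stays bounded since $r_k(\alpha)\to 0$). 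Bringing operator norms under the integral and using $|\gamma|=2\pi r_\star$ produces
\[
\|\mathbb{P}_\alpha v\|_{\X_1}\;\leq\;\frac{r_\star\,K\,C(\delta,1)}{{\nu}_*-r_\star}\,\|v\|_{\X_0},\qquad v\in\X_0,
\]
with a right-hand side independent of $\alpha$, which is the desired uniform bound. The only delicate point in the argument is the disappearance of $\mathcal{R}(\lambda,\B_\alpha)$ by holomorphy: it is precisely this cancellation that converts the mere $\X_0$-boundedness of $\mathcal{R}(\lambda,\B_\alpha)$ into the crucial $\X_0\to\X_1$ regularity gain supplied by $\A\,\mathcal{R}(\lambda,\B_\alpha)$.
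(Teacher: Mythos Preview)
Your argument is correct and takes a genuinely different route from the paper's. The paper argues softly: it first observes that $\mathrm{Range}(\mathbf{P}_\alpha)\subset\X_1$ (the eigenfunctions lie in $\X_1$ by the enlargement result), then uses Lemma~\ref{lem:PaP0} to get $\mathbb{P}_\alpha h\to\mathbf{P}_0 h$ in $\X_1$ for each $h\in\X_1$, extends this pointwise boundedness to $h\in\X_0$ by density, and closes with the Banach--Steinhaus theorem. Your approach is instead constructive: you rewrite the spectral projection via the factorized resolvent identity $\mathcal{R}(\lambda,\LL)=\mathcal{R}(\lambda,\B_\alpha)+\mathcal{R}(\lambda,\LL)\,\A\,\mathcal{R}(\lambda,\B_\alpha)$, kill the first piece by holomorphy of $\mathcal{R}(\lambda,\B_\alpha)$ inside the contour, and then read off an \emph{explicit} $\X_0\to\X_1$ bound from Lemma~\ref{lem:laplace} and \eqref{eq:estimR}. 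This bypasses both Lemma~\ref{lem:PaP0} and the uniform boundedness principle, and yields a quantitative constant rather than mere finiteness --- which is arguably preferable given the paper's emphasis on explicit estimates.

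Two small points to clean up. First, the projection you are bounding is $\mathbf{P}_\alpha$ (defined on $\X_0$), not $\mathbb{P}_\alpha$ (its restriction to $\X_1$); the contour integral you write is the $\X_0$-spectral projection, and the argument works because the $\X_0$- and $\X_1$-resolvents of $\LL$ agree on $\X_1$, so once $\A\,\mathcal{R}(\lambda,\B_\alpha)v\in\X_1$ you may invoke the $\X_1$-bound \eqref{eq:estimR}. Second, as written you only cover $\alpha$ small enough that $r_k(\alpha)<r_\star$, whereas the lemma asks for all $\alpha\in(0,\underline{\alpha}_k^\dagger)$. The fix is immediate: since $r_k(\alpha)\le\nu_*'$ on this whole range, run Lemma~\ref{lem:inverse} with any $\nu_*''\in(\nu_*',\mu_\star)$ in place of $\nu_*'$ and choose the circle $\gamma$ of radius $r_\star\in(\nu_*',\nu_*'')$; then $\gamma\subset\bm{\Omega}_k(\alpha)$ for every $\alpha\in(0,\underline{\alpha}_k^\dagger)$ and all your bounds are uniform.
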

\begin{proof} According to \cite[Theorem 2.1]{GMM}, for any $\alpha \in (0,\underline{\alpha}^{\dagger}_k)$, the restriction of  projection operator  $\mathbf{P}_{\alpha}$ on $\X_{1}$ is exactly $\mathbb{P}_{\alpha}$ and, for all $j=1,\ldots,d+2$, 
$$\mathrm{Ker}(\LL-\mu_{\alpha}^{j})^{m_{j}}=\mathrm{Ker}(L-\mu_{\alpha}^{j})^{m_{j}}, \qquad j=1,\ldots,d+2$$
where $m_{j}$ is the algebraic multiplicity of $\mu_{\alpha}^{j}$ and, as in the proof of Theorem \ref{theo:decayX0}, we set $L=\LL\vert_{\X_{1}}$. In particular, the eigenfunctions of $\LL$ associated to $\mu_{\alpha}^{j}$ belongs to $\X_{1}.$ One gets therefore easily that  $\mathbf{P}_{\alpha} \in \mathscr{B}(\X_{0},\X_{1}).$  Using Lemma \ref{lem:PaP0} we have that, for all $h \in \X_{1}$, $\lim_{\alpha\to0}\|\mathbb{P}_{\alpha}h-\mathbf{P}_{0}h\|_{\X_{1}}=0$ while, according to Remark \ref{remP0}, $\mathbf{P}_{0}\in {\mathscr{B}(\X_{0},\X_{1})}$. Since $\X_{1}$ is dense in $\X_{0}$ and $\mathbf{P}_{\alpha}\vert_{\X_{1}}=\mathbb{P}_{\alpha}$, this implies that $\sup_{\alpha\in (0,\underline{\alpha}^{\dagger}_k)}\|\mathbf{P}_{\alpha}h\|_{\X_{1}}< \infty$ for any $h \in \X_{0}$ and we get the conclusion thanks to Banach-Steinhaus Theorem.\end{proof}

We deduce from this the following
\begin{lem}\phantomsection\label{lem:projnorm} There exists a mapping $\ell_{1}\::\:(0,\underline{\alpha}^{\dagger}_k) \to (0,1)$ with $\lim_{\alpha \to 0}\ell_{1}(\alpha)=0$ and 
\begin{equation}\label{eq:Pa2}
\|(\mathbf{P}_{\alpha}-\mathbf{P}_{0})^{2}\|_{\mathscr{B}(\X_{0})} \leq \ell_{1}(\alpha) \qquad \forall \alpha \in (0,\underline{\alpha}^{\dagger}_k).\end{equation}
In particular, there exists $\alpha^{\star}_{1}$ such that, for all $\alpha \in (0,\alpha^{\star}_{1})$, $(\mathbf{Id}-(\mathbf{P}_{\alpha}-\mathbf{P}_{0})$ is invertible in $\X_0$ and there exists $C >0$  -- independent of $\alpha$ -- such that
$$\|(\mathbf{Id}-(\mathbf{P}_{\alpha}-\mathbf{P}_{0}))^{-1}\|_{\mathscr{B}(\X_{0})} \leq C \qquad \forall \alpha \in (0,\alpha^{\star}_{1}).$$
\end{lem}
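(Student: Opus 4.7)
The plan is to exploit the smoothing property established in the preceding lemma, namely that both $\mathbf{P}_{0}$ and $\mathbf{P}_{\alpha}$ map $\X_{0}$ continuously into $\X_{1}$ with norm uniformly bounded in $\alpha \in (0,\underline{\alpha}^{\dagger}_{k})$. Observe that a single factor of $\mathbf{P}_{\alpha}-\mathbf{P}_{0}$ need not be small in $\mathscr{B}(\X_{0})$, since Lemma \ref{lem:PaP0} only controls it in the stronger space $\X_{1}$. However, the square $(\mathbf{P}_{\alpha}-\mathbf{P}_{0})^{2}$ allows one to decouple the two factors: the inner factor produces something in $\X_{1}$, upon which the outer factor can be estimated using the $\mathscr{B}(\X_{1})$-bound of Lemma \ref{lem:PaP0}.

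More precisely, I would factor
\[
(\mathbf{P}_{\alpha}-\mathbf{P}_{0})^{2}=(\mathbf{P}_{\alpha}-\mathbf{P}_{0})\big|_{\X_{1}}\circ (\mathbf{P}_{\alpha}-\mathbf{P}_{0}),
\]
where the inner operator is viewed as an element of $\mathscr{B}(\X_{0},\X_{1})$ and the outer one as $\mathbb{P}_{\alpha}-\mathbf{P}_{0}\in \mathscr{B}(\X_{1})$. This yields
\[
\|(\mathbf{P}_{\alpha}-\mathbf{P}_{0})^{2}\|_{\mathscr{B}(\X_{0})}\leq \|\mathbb{P}_{\alpha}-\mathbf{P}_{0}\|_{\mathscr{B}(\X_{1})}\,\|\mathbf{P}_{\alpha}-\mathbf{P}_{0}\|_{\mathscr{B}(\X_{0},\X_{1})}.
\]
The first factor is bounded by $\ell_{0}(\alpha)$ thanks to Lemma \ref{lem:PaP0}, while the second factor is uniformly bounded by $\|\mathbf{P}_{0}\|_{\mathscr{B}(\X_{0},\X_{1})}+\sup_{\alpha\in(0,\underline{\alpha}^{\dagger}_{k})}\|\mathbf{P}_{\alpha}\|_{\mathscr{B}(\X_{0},\X_{1})}<\infty$ by Remark \ref{remP0} and the previous lemma. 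Setting $\ell_{1}(\alpha):=C\,\ell_{0}(\alpha)$ for a suitable uniform constant $C>0$ establishes \eqref{eq:Pa2}.

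For the invertibility statement, I would use the algebraic identity
\[
\mathbf{Id}-(\mathbf{P}_{\alpha}-\mathbf{P}_{0})^{2}=\big(\mathbf{Id}-(\mathbf{P}_{\alpha}-\mathbf{P}_{0})\big)\big(\mathbf{Id}+(\mathbf{P}_{\alpha}-\mathbf{P}_{0})\big),
\]
in which the two factors on the right commute. Choose $\alpha_{1}^{\star}\in(0,\underline{\alpha}^{\dagger}_{k})$ so that $\ell_{1}(\alpha)<1$ for all $\alpha\in(0,\alpha_{1}^{\star})$; then $\mathbf{Id}-(\mathbf{P}_{\alpha}-\mathbf{P}_{0})^{2}$ is invertible in $\mathscr{B}(\X_{0})$ by Neumann series with
\[
\big\|\big(\mathbf{Id}-(\mathbf{P}_{\alpha}-\mathbf{P}_{0})^{2}\big)^{-1}\big\|_{\mathscr{B}(\X_{0})}\leq \tfrac{1}{1-\ell_{1}(\alpha)}.
\]
Defining $(\mathbf{Id}-(\mathbf{P}_{\alpha}-\mathbf{P}_{0}))^{-1}:=\big(\mathbf{Id}+(\mathbf{P}_{\alpha}-\mathbf{P}_{0})\big)\big(\mathbf{Id}-(\mathbf{P}_{\alpha}-\mathbf{P}_{0})^{2}\big)^{-1}$ and using that all operators involved commute (being polynomials in $\mathbf{P}_{\alpha}-\mathbf{P}_{0}$) shows it is a genuine two-sided inverse. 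Its norm is controlled by $\big(1+\|\mathbf{P}_{\alpha}\|_{\mathscr{B}(\X_{0})}+\|\mathbf{P}_{0}\|_{\mathscr{B}(\X_{0})}\big)/(1-\ell_{1}(\alpha))$, which is uniformly bounded on $(0,\alpha_{1}^{\star})$ by the previous lemma.

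The only subtle point in this plan is verifying that the restriction of $\mathbf{P}_{\alpha}-\mathbf{P}_{0}$ to $\X_{1}$ coincides, in the factorisation above, with $\mathbb{P}_{\alpha}-\mathbf{P}_{0}$: this is precisely the content of the identity $\mathbf{P}_{\alpha}\vert_{\X_{1}}=\mathbb{P}_{\alpha}$ established in the preceding lemma (via \cite[Theorem 2.1]{GMM}), together with the fact that $\mathbf{P}_{0}$ acts identically on $\X_{0}$ and on $\X_{1}$ (Remark \ref{remP0}). No new analytical ingredient is needed; the argument is purely a bootstrap from the $\X_{1}$-control of Lemma \ref{lem:PaP0} using the smoothing property of the projections.
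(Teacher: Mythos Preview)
Your argument is correct and, for the estimate \eqref{eq:Pa2}, coincides with the paper's: both factor the square so that the inner copy of $\mathbf{P}_{\alpha}-\mathbf{P}_{0}$ lands in $\X_{1}$ (via the uniform bound $\sup_{\alpha}\|\mathbf{P}_{\alpha}\|_{\mathscr{B}(\X_{0},\X_{1})}<\infty$) and the outer copy is controlled by $\|\mathbb{P}_{\alpha}-\mathbf{P}_{0}\|_{\mathscr{B}(\X_{1})}\leq\ell_{0}(\alpha)$ from Lemma \ref{lem:PaP0}; the paper merely writes this out term by term as $(\mathbf{P}_{\alpha}-\mathbf{P}_{0})\mathbf{P}_{\alpha}+(\mathbf{P}_{0}-\mathbf{P}_{\alpha})\mathbf{P}_{0}$.

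For the invertibility you take a slightly different route. The paper bounds $\|(\mathbf{P}_{\alpha}-\mathbf{P}_{0})^{n}\|_{\mathscr{B}(\X_{0})}$ directly by pairing consecutive factors (so that $\|(\mathbf{P}_{\alpha}-\mathbf{P}_{0})^{n}\|\lesssim \delta^{(n-1)/2}$ once $\ell_{1}(\alpha)\leq\delta<1$) and then sums the Neumann series $\sum_{n\geq0}(\mathbf{P}_{\alpha}-\mathbf{P}_{0})^{n}$, which converges even though $\|\mathbf{P}_{\alpha}-\mathbf{P}_{0}\|_{\mathscr{B}(\X_{0})}$ itself is not assumed small. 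Your factorisation $\mathbf{Id}-A^{2}=(\mathbf{Id}-A)(\mathbf{Id}+A)$ with $A=\mathbf{P}_{\alpha}-\mathbf{P}_{0}$ achieves the same conclusion more directly and yields the explicit inverse $(\mathbf{Id}+A)(\mathbf{Id}-A^{2})^{-1}$; the commutation you invoke is legitimate since $(\mathbf{Id}-A^{2})^{-1}=\sum_{n\geq0}A^{2n}$ commutes with any polynomial in $A$. Both arguments give a uniform bound on the inverse via the uniform control of $\|\mathbf{P}_{\alpha}\|_{\mathscr{B}(\X_{0})}$.
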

\begin{proof} Since, for all $\alpha \in (0,\underline{\alpha}^{\dagger}_k)$, $(\mathbf{P}_{\alpha}-\mathbf{P}_{0})^{2}=\left(\mathbf{P}_{\alpha}-\mathbf{P}_{0}\right)\mathbf{P}_{\alpha} +\left(\mathbf{P}_{0}-\mathbf{P}_{\alpha}\right)\mathbf{P}_{0}$,
and since, for any $h \in \X_{1}$, $\|h\|_{\X_{0}} \leq \|h\|_{\X_{1}}$
one gets
$$\|(\mathbf{P}_{\alpha}-\mathbf{P}_{0})^{2}\|_{\mathscr{B}(\X_{0})} \leq \|\mathbb{P}_{\alpha}-\mathbf{P}_{0}\|_{\mathscr{B}(\X_{1})}\|\mathbf{P}_{\alpha}\|_{\mathscr{B}(\X_{0},\X_{1})} +  \|\mathbb{P}_{\alpha}-\mathbf{P}_{0}\|_{\mathscr{B}(\X_{1})}\|\mathbf{P}_{0}\|_{\mathscr{B}(\X_{0},\X_{1})}$$
and, thanks to the previous Lemma and Lemma \ref{lem:PaP0}, we get \eqref{eq:Pa2} with $\ell_{1}(\alpha)=C\ell_{0}(\alpha)$ where $C=2\,\sup_{\alpha \in [0,\underline{\alpha}^{\dagger}_k)}\|\mathbf{P}_{\alpha}\|_{\mathscr{B}(\X_{0},\X_{1})}<\infty.$ Now, given $\delta \in (0,1)$, there exists $\alpha^{\star}_{1}$ such that $\ell_{1}(\alpha) \leq \delta$ for all $\alpha \in (0,\alpha^{\star}_{1})$. Then, from \eqref{eq:Pa2} and since $\|\mathbf{P}_{\alpha}-\mathbf{P}_{0}\|_{\mathscr{B}(\X_{0})} \leq 2$ for any $\alpha$, we get 
$$\|(\mathbf{P}_{\alpha}-\mathbf{P}_{0})^{n}\|_{\mathscr{B}(\X_{0})} \leq 2\delta^{\frac{n-1}{2}} \qquad \forall n \geq 2.$$
We deduce easily that  $(\mathbf{Id}-(\mathbf{P}_{\alpha}-\mathbf{P}_{0}))$ is invertible with $(\mathbf{Id}-(\mathbf{P}_{\alpha}-\mathbf{P}_{0}))^{-1}=\sum_{n=0}^{\infty}\left(\mathbf{P}_{\alpha}-\mathbf{P}_{0}\right)^{n}$ and $\|(\mathbf{Id}-(\mathbf{P}_{\alpha}-\mathbf{P}_{0}))^{-1}\|_{\mathscr{B}(\X_{0})} \leq 1+2+2\sum_{n=2}^{\infty}\delta^{\frac{n-1}{2}} \leq 3+\frac{\sqrt{\delta}}{1-\sqrt{\delta}}.$
This proves the result.
\end{proof}

\part{Stability analysis}\label{part2}

We establish here the main results concerning the long-time behavior of the solution to \eqref{rescaBE} that we recall here for the reader convenience:
\begin{multline*}
\partial_{t}\psi(t,\xi) +\left(\mathbf{A}_{\psi}(t)-d\mathbf{B}_{\psi}(t)\right)\,\psi(t,\xi) + \mathbf{B}_{\psi}(t)\mathrm{div}_{\xi}\big(\left({\xi}-\bm{v}_{\psi}(t)\right)\psi(t,\xi))\\
=(1-\alpha)\Q(\psi,\psi)(t,\xi)-\alpha\Q_{-}(\psi,\psi)(t,\xi)\end{multline*}
where we recall that $\psi(t,\xi)$ is obtained from the original solution $f(t,v)$ to \eqref{BE} through the scaling \eqref{scalingPsi}. As already said, our approach combines the entropy production method with the spectral analysis performed in the First Part. In all this part, $\psi(t,\xi)$ will be the unique solution to \eqref{rescaBE} obtained through the scaling \eqref{scalingPsi} in Proposition \ref{prop:cauc}.

\section{Entropy production method}\label{sec:entropy}

Introduce the time-dependent relative entropy
\begin{equation}\label{Ht}
\mathcal{H}(t)=\mathcal{H}\left(\psi(t)\,|\,\M\right):=\IR \psi(t,\xi)\log\left(\frac{\psi(t,\xi)}{\M(\xi)}\right)\d \xi, \qquad t \geq 0
\end{equation}
where we recall that $\M$ denotes the Maxwellian distribution with same mass, momentum and kinetic energy of $\psi(t,\cdot)$ and $\psi_{\alpha}$, that is,
\begin{equation*}\label{maxwellianM0}
\M(\xi)=\pi^{-d/2}\exp\left(-|\xi|^{2}\right), \qquad \xi \in \R^{d}.\end{equation*}

We also introduce the entropy production functional associated to the \emph{elastic} Boltzmann operator
\begin{equation}\label{D1}
\D_{0}(t)=-\IR \Q\big(\psi,\psi\big)(t,\xi)\log\left(\frac{\psi(t,\xi)}{\M(\xi)}\right)\d \xi\,.
\end{equation}
\begin{lem}\phantomsection\phantomsection\label{lem:dht} The evolution of $\mathcal{H}(t)$ is given by the following
\begin{equation}\label{dHt}
\dfrac{\d}{\d t}\mathcal{H}(t)+(1-\alpha)\D_{0}(t)=\left(d\mathbf{B}_{\psi}(t)-\mathbf{A}_{\psi}(t)\right)\mathcal{H}(t) + \mathcal{I}_1(t)  \,,\qquad \forall t \geq 0
\end{equation}
with
\begin{equation}\label{I1}
\mathcal{I}_{1}(t):=-\alpha \IR\Q_-\big(\psi,\psi\big)(t,\xi) \log\left(\frac{\psi(t,\xi)}{\M(\xi)}\right)\d \xi.
\end{equation}
\end{lem}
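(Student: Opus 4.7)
The plan is to differentiate $\mathcal{H}(t)$ in time, substitute the rescaled equation \eqref{rescaBE} for $\partial_{t}\psi$, and then identify each resulting term with those appearing in \eqref{dHt}.

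First, since $\psi(t,\cdot)$ preserves its mass, $\int_{\R^d}\partial_{t}\psi(t,\xi)\,\d\xi = 0$, so one has
$$\frac{\d}{\d t}\mathcal{H}(t) = \int_{\R^d}\partial_{t}\psi(t,\xi)\,\log\!\left(\frac{\psi(t,\xi)}{\M(\xi)}\right)\d\xi.$$
Using \eqref{rescaBE} to replace $\partial_{t}\psi$, four terms appear. The term $(\mathbf{A}_\psi-d\mathbf{B}_\psi)\psi$ immediately contributes $-(\mathbf{A}_\psi(t)-d\mathbf{B}_\psi(t))\mathcal{H}(t) = (d\mathbf{B}_{\psi}(t)-\mathbf{A}_{\psi}(t))\mathcal{H}(t)$. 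The gain-and-loss term $(1-\alpha)\Q(\psi,\psi)$ contributes exactly $-(1-\alpha)\D_{0}(t)$ by the definition \eqref{D1}, and the term $-\alpha\Q_{-}(\psi,\psi)$ contributes $\mathcal{I}_{1}(t)$ by \eqref{I1}.

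It then remains to check that the transport contribution
$$\mathcal{T}(t) := -\mathbf{B}_{\psi}(t)\int_{\R^d}\mathrm{div}_{\xi}\!\big((\xi-\bm{v}_{\psi}(t))\psi(t,\xi)\big)\,\log\!\left(\frac{\psi(t,\xi)}{\M(\xi)}\right)\d\xi$$
vanishes. Integration by parts (with no boundary contribution thanks to the fast decay of $\psi$) gives
$$\mathcal{T}(t) = \mathbf{B}_{\psi}(t)\int_{\R^d}(\xi-\bm{v}_{\psi}(t))\psi(t,\xi)\cdot\nabla_{\xi}\log\!\left(\frac{\psi(t,\xi)}{\M(\xi)}\right)\d\xi.$$
Since $\nabla_{\xi}\log\M(\xi) = -2\xi$, one has $\nabla_{\xi}\log(\psi/\M) = \nabla_{\xi}\psi/\psi + 2\xi$, so that $\mathcal{T}(t)$ splits into two pieces. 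The first, $\mathbf{B}_{\psi}(t)\int(\xi-\bm{v}_{\psi}(t))\cdot\nabla_{\xi}\psi\,\d\xi$, equals $-d\,\mathbf{B}_{\psi}(t)$ by integration by parts together with $\int\psi\,\d\xi=1$. The second, $2\mathbf{B}_{\psi}(t)\int(\xi-\bm{v}_{\psi}(t))\cdot\xi\,\psi\,\d\xi$, equals $2\mathbf{B}_{\psi}(t)(d/2)=d\,\mathbf{B}_{\psi}(t)$ using the zero-momentum and normalized-energy conditions \eqref{eq:conserve}. The two cancel exactly, so $\mathcal{T}(t)=0$.

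Gathering all contributions yields \eqref{dHt}. The only mildly delicate point is the cancellation in $\mathcal{T}(t)$, which is not a coincidence but a direct consequence of the fact that the scaling \eqref{scalingPsi} is precisely the one that enforces conservation of mass, momentum, and kinetic energy on $\psi(t,\cdot)$; without these normalizations, extra zero-order terms would have appeared on the right-hand side of \eqref{dHt}.
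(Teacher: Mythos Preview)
Your proof is correct and follows essentially the same approach as the paper: multiply \eqref{rescaBE} by $\log(\psi/\M)$, integrate, and check via integration by parts that the transport contribution vanishes thanks to the normalizations \eqref{eq:conserve}. Your write-up is in fact more explicit than the paper's, which simply states that $\mathcal{I}_{2}(t)=0$ follows from $\int|\xi|^{2}\psi\,\d\xi=\tfrac{d}{2}=\tfrac{d}{2}\int\psi\,\d\xi$ without spelling out the two cancelling pieces.
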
\phantomsection
\begin{proof} The proof consists simply in multiplying \eqref{rescaBE} by $\log\left(\frac{\psi(t,\xi)}{\M(\xi)}\right)$ and integrating over $\R^{d}$.  This leads to
$$\dfrac{\d}{\d t}\mathcal{H}(t)+(1-\alpha)\D_{0}(t)=\left(d\mathbf{B}_{\psi}(t)-\mathbf{A}_{\psi}(t)\right)\mathcal{H}(t) + \mathcal{I}_1(t)+\mathbf{B}_{\psi}(t)\mathcal{I}_2(t)$$where
$$\mathcal{I}_{2}(t)=-\IR \nabla_{\xi}\cdot\big((\xi-\bm{v}_{\psi}(t))\,\psi(t,\xi) \big)\log\left(\frac{\psi(t,\xi)}{\M(\xi)}\right)\d \xi\,.$$
One checks, integrating by parts, that $\mathcal{I}_{2}(t)=0$ since $\int_{\R^{d}}|\xi|^{2}\psi(t,\xi)\d\xi=\frac{d}{2}=\frac{d}{2}\int_{\R^{d}}\psi(t,\xi)\d\xi$.  This shows the result.
\end{proof}

In order to estimate the term $\mathcal{I}_{1}(t)$ we need the propagation of the $3^{rd}$ moment and some $L^{p}$ Lebesgue norm.  We refer to Appendix \ref{app:point}. for a discussion on propagation and creation of moments and the proof of the following result (also, see \cite[Theorem 1.6 and Remark 1.7]{jde}).
\begin{lem}\phantomsection \label{lem:Lp} For any $\eta \geq 0$, there exists some explicit $\alpha^{\star}_{\eta} \in (0,1)$ such that for all $p \in (1,\infty)$ and any $\alpha \in (0,\alpha^{\star}_{\eta})$, if $f_{0} \in L^{1}_{3}(\R^{d})\cap L^{1}_{\eta+\frac{d-2}{1-\theta}}(\R^{d}) \cap L^{p}_{\eta}(\R^{d})$ with 
$$ \theta=\left\{\begin{array}{cc} \frac{1}{d} & \mbox{ if } p\in(1,2],\\
\\
\frac{d(p-2)+1}{d(p-1)}  & \mbox{ if } p\in [2,\infty),\end{array}\right.$$
then \begin{equation}\label{eq:boundLp}
\sup_{t \geq 0}\|\psi(t)\|_{L^p_{\eta}(\R^{d})} \leq  \max\left\{\|\psi_0\|_{L^p_\eta(\R^{d})},C_{p,\eta}(\psi_0)\right\}\end{equation}
for an explicit constant $C_{p,\eta}(\psi_0)>0$ depending only on  $p$, $d$, $\|\psi_{0}\|_{L^{1}_{3}(\R^{d})}$ and $\|\psi_{0}\|_{L^{1}_{\eta+\frac{d-2}{1-\theta}}(\R^{d})}$ but not on $\alpha$.
\end{lem}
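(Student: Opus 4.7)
My plan is to derive a differential inequality for $y(t):=\|\psi(t)\|_{L^p_\eta(\R^d)}^p$ from the rescaled equation \eqref{rescaBE}, in which a coercive loss term dominates everything else, and then use a nonlinear Grönwall-type argument to deduce uniform-in-time boundedness. Since a full proof appears in \cite{jde}, the task is to track how the scaling-induced coefficients $\mathbf{A}_\psi(t)$, $\mathbf{B}_\psi(t)$, $\bm{v}_\psi(t)$ enter the energy estimate and to check that they are controlled uniformly in $t$ by $\mathcal{O}(\alpha)$.

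First I multiply \eqref{rescaBE} by $p\,\psi^{p-1}\langle\xi\rangle^{p\eta}$ and integrate. The transport term is treated by integration by parts using the identity
\begin{equation*}
p\,\psi^{p-1}\mathrm{div}_\xi\big((\xi-\bm{v}_\psi)\psi\big) = \mathrm{div}_\xi\big((\xi-\bm{v}_\psi)\psi^p\big) + d(p-1)\psi^p,
\end{equation*}
so that after integration by parts
\begin{equation*}
p\int \psi^{p-1}\langle\xi\rangle^{p\eta}\mathrm{div}_\xi\big((\xi-\bm{v}_\psi)\psi\big)\d\xi = -\int \psi^p(\xi-\bm{v}_\psi)\cdot\nabla\langle\xi\rangle^{p\eta}\d\xi + d(p-1)\|\psi\|_{L^p_\eta}^p.
\end{equation*}
Since $|\nabla\langle\xi\rangle^{p\eta}|\leq p\eta\langle\xi\rangle^{p\eta-1}$, and since \eqref{eq:BAV} together with the uniform bound on moments of $\psi$ implies $|\mathbf{A}_\psi(t)|+|\mathbf{B}_\psi(t)|+|\bm{v}_\psi(t)|\leq C\alpha$ uniformly in $t$, the contribution of the linear terms (coefficient $\mathbf{A}_\psi-d\mathbf{B}_\psi$) and of the transport term to $\tfrac{\d}{\d t}y(t)$ is bounded by $C\alpha\,y(t)+C\alpha\|\psi\|_{L^p_{\eta+1/p}}^p$.

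The dominant dissipative term comes from the loss parts of $(1-\alpha)\Q$ and $-\alpha\Q_-$, which combine into $-p\int\psi^p\Sigma_\psi\langle\xi\rangle^{p\eta}\d\xi$. The classical lower bound $\Sigma_\psi(\xi)\geq \sigma_0\langle\xi\rangle$ (uniform in $t$ thanks to the conservation of mass and energy enforced by \eqref{eq:conserve}) yields the coercive estimate
\begin{equation*}
-p\int \psi^p\Sigma_\psi\langle\xi\rangle^{p\eta}\d\xi \leq -p\,\sigma_0 \|\psi\|_{L^p_{\eta+1/p}}^p\,.
\end{equation*}
For the gain term $(1-\alpha)\Q_+$, I invoke the standard bilinear estimate for hard potentials (à la Gamba--Panferov--Villani, used in \cite{jde}):
\begin{equation*}
\left|\int \Q_+(\psi,\psi)\,\psi^{p-1}\langle\xi\rangle^{p\eta}\d\xi\right| \leq C\,\|\psi\|_{L^1_{\eta+1}}\,\|\psi\|_{L^p_{\eta+1/p}}^p\,,
\end{equation*}
followed by the interpolation inequality
\begin{equation*}
\|\psi\|_{L^p_{\eta+1/p}}^p \leq \|\psi\|_{L^1_{\eta+\frac{d-2}{1-\theta}}}^{1-\theta}\,\|\psi\|_{L^p_\eta}^{p\theta/\ldots}\,,
\end{equation*}
with exponent $\theta$ as in the statement, which is tuned precisely so that the resulting power $y^{1/\theta}$ exceeds $y$. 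Collecting all contributions, and taking $\alpha$ small enough (so that $C\alpha < \tfrac{p\sigma_0}{2}$), I obtain a differential inequality of the form
\begin{equation*}
\dfrac{\d}{\d t}y(t) \leq K_1(\psi_0) - K_2\,y(t)^{1/\theta}\,,\qquad t\geq 0\,,
\end{equation*}
where $K_1$ depends only on the conserved quantities and on the $L^1_{\eta+(d-2)/(1-\theta)}$ norm of $\psi_0$, and $K_2>0$. A standard barrier argument then gives $y(t)\leq \max\{y(0),(K_1/K_2)^{\theta}\}$, which is exactly the desired estimate \eqref{eq:boundLp}.

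The main obstacle is obtaining the bilinear $\Q_+$ estimate with sharp weights and then choosing the interpolation exponent correctly; the two cases $p\in(1,2]$ and $p\in[2,\infty)$ arise because the gain operator enjoys slightly different integrability properties in those two regimes, and the exponent $\theta$ in the statement reflects this dichotomy. Everything else is bookkeeping, controlling the $\mathcal{O}(\alpha)$ perturbative terms generated by the rescaling, and referring to \cite{jde} for the detailed computations.
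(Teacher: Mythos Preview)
Your approach is essentially the same as the paper's: derive a differential inequality for $\|\psi(t)\|_{L^p_\eta}^p$, absorb the $\mathcal{O}(\alpha)$ scaling terms into the coercive loss $-\Sigma_\psi$, control $\Q_+$ via the standard bilinear estimate (the paper cites \cite{AloGa} and only writes out the case $p=2$), and conclude by a barrier argument. One small correction: it is $\mathbf{B}_\psi(t)\bm{v}_\psi(t)$, not $\bm{v}_\psi(t)$ itself, that is $\mathcal{O}(\alpha)$ uniformly in $t$---but since only the product enters the equation, this does not affect your argument.
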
 

\begin{nb}\phantomsection Notice that the bound obtained in \cite[Theorem 1.6 \& Remark 1.7]{jde} actually depends on $\alpha$. However, a careful reading of the proof shows that it depends on $\alpha$ only through the parameter  $\mu_{\alpha}$ such that $\inf_{t\geq 0}\int_{\R^{d}}\psi(t,\xi)|\xi-\xi_{*}|\d\xi_{*}\geq \mu_{\alpha}\langle \xi\rangle$. 
Using that the upper bound on, say, the third-order moment of $\psi(t)$ is independent of $\alpha$, Lemma \ref{appB.2}, we deduce from \cite[Lemma 2.1]{alogam} that $\mu_{\alpha}$ is actually independent of $\alpha$, i.e. there exists $\kappa_{0} >0$ such that
\begin{equation}\label{eq:kapp0}
\int_{\R^{d}}\psi(t,\xi)|\xi-\xi_{*}|\d\xi_{*}\geq \kappa_{0}\langle \xi\rangle, \qquad \forall \xi \in \R^{d},\quad t \geq 0\end{equation}
and the bound in \eqref{eq:boundLp} turns out to be uniform with respect to $\alpha$. Notice also that the proof in \cite{jde} is done for $\eta=0$. It is straightforward to extend it to $\eta >0$. We provide in the Appendix a full proof in the case $p=2$, which is the one we use in the sequel.\end{nb}
We have all in hands to estimate the term  $\mathcal{I}_{1}(t)$   defined in Lemma \ref{lem:dht}. 
\begin{lem}\phantomsection\phantomsection
\label{lem:I1} Let $f_{0} \in L^{1}_{3}(\R^{d}) \cap L^{p}(\R^{d})$
for some $p >1$. Then, there exists a positive constant $C$ depending only on $\|\psi_{0}\|_{L^p(\R^{d})}$ and $\|\psi_{0}\|_{L^{1}_{3}(\R^{d})}$ such that, for all  $0 < \alpha < \min(\alpha_{\star},\alpha^{\star}_0)$, 
\begin{equation}\label{estimI1}
\left|\mathcal{I}_1(t)\right|  \leq C\,\alpha \qquad \qquad t \geq 0,
\end{equation}
\end{lem}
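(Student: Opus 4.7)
The plan is to expand $\mathcal{I}_1(t)$ using the explicit form of $\mathcal{M}$ and control each resulting piece via moment and $L^p$ estimates that are uniform in time. Writing $\log(\psi/\mathcal{M}) = \log\psi + |\xi|^2 + \frac{d}{2}\log \pi$, I would split
\[
\mathcal{I}_1(t) = -\alpha\int_{\R^d}\psi\,\Sigma_\psi\log\psi\,\d\xi \;-\;\alpha\int_{\R^d}\psi\,\Sigma_\psi|\xi|^2\d\xi \;-\;\tfrac{\alpha d}{2}\log\pi \int_{\R^d}\psi\,\Sigma_\psi\,\d\xi,
\]
recalling that $\mathcal{Q}_-(\psi,\psi) = \psi\,\Sigma_\psi$ with $\Sigma_\psi(\xi)\le |\xi| + M_1(\psi(t))$.

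The last two terms are pure moment integrals. Since $n_\psi(t)=1$ and $\int|\xi|^2\psi\,\d\xi=d/2$ are conserved, Cauchy--Schwarz gives $M_1(\psi(t))\le\sqrt{d/2}$, so that $\Sigma_\psi(\xi)\le C\langle\xi\rangle$ uniformly in $t$. Then the $\d\xi$-integral of $\psi\Sigma_\psi$ is bounded by $M_0+M_1$, while the weighted one is bounded by $C(M_1 M_2 + M_3(\psi(t)))$. The uniform bound on $M_3(\psi(t))$ is exactly the third-moment propagation result recalled in Appendix \ref{app:point} (Lemma \ref{appB.2}), and it depends only on $\|\psi_0\|_{L^1_3}$, not on $\alpha$. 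These two pieces contribute a term of order $\alpha$ with the desired dependence on the data.

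The delicate piece is $J(t):=\int \psi\,\Sigma_\psi\log\psi\,\d\xi$. Using $\Sigma_\psi\le C\langle\xi\rangle$ one has $|J(t)|\le C\int \psi\,|\log\psi|\,\langle\xi\rangle\,\d\xi$. Now apply the elementary inequality
\[
x\,|\log x| \le \tfrac{1}{\delta}\bigl(x^{1-\delta}+x^{1+\delta}\bigr), \qquad x>0,\;\delta\in(0,1),
\]
which follows from $\log y \le (y^\delta-1)/\delta$ applied to $y=x$ on $\{x\ge 1\}$ and to $y=1/x$ on $\{x<1\}$. This reduces the task to bounding $\int(\psi^{1-\delta}+\psi^{1+\delta})\langle\xi\rangle\,\d\xi$ uniformly in $t$ for a suitably small $\delta>0$.

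For the super-linear part, pick $\delta\in(0,p-1)$; writing $\psi^{1+\delta}\langle\xi\rangle = \psi^{\delta}\cdot(\psi\langle\xi\rangle)$ and applying Hölder with exponents $p/\delta$ and $p/(p-\delta)$ gives a bound in terms of $\|\psi(t)\|_{L^p}^\delta$ and $\|\psi(t)\langle\xi\rangle\|_{L^{p/(p-\delta)}}$; the latter is controlled by interpolation between $\|\psi(t)\|_{L^p}$ and $\|\psi(t)\|_{L^1_3}$ by choosing $\delta$ small enough. For the sub-linear part, write $\psi^{1-\delta}\langle\xi\rangle = (\psi\langle\xi\rangle^s)^{1-\delta}\cdot\langle\xi\rangle^{1-s(1-\delta)}$ with $s\in\bigl((1+d\delta)/(1-\delta),3\bigr]$ (possible for $\delta$ small) and apply Hölder with exponents $1/(1-\delta)$ and $1/\delta$: the first factor is controlled by $M_s(\psi(t))^{1-\delta}\le M_3(\psi(t))^{1-\delta}$, while the second factor is integrable by the choice of $s$. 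By Lemma \ref{lem:Lp} (with $\eta=0$) and Appendix \ref{app:point}, both $\sup_t\|\psi(t)\|_{L^p}$ and $\sup_t\|\psi(t)\|_{L^1_3}$ are finite and depend only on $\|\psi_0\|_{L^p}$ and $\|\psi_0\|_{L^1_3}$, not on $\alpha$. Combining these estimates yields $|J(t)|\le C$, hence $|\mathcal{I}_1(t)|\le C\alpha$ as claimed.

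The main technical obstacle is the sub-linear piece $\int\psi^{1-\delta}\langle\xi\rangle\,\d\xi$, since the measure of $\{\psi<1\}$ is not a priori controlled; this is handled only after paying with a weight $\langle\xi\rangle^{s}$ (with $s>1$) inside $\psi$, which forces $s\le 3$ and therefore makes uniform-in-time propagation of $M_3(\psi(t))$ essential. All other steps are standard Hölder/interpolation manipulations, and the $\alpha$-independence of every constant involved is the reason the final bound scales linearly in $\alpha$.
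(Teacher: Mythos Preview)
Your proof is correct and follows essentially the same route as the paper: split $\log(\psi/\M)$, bound the pure-moment pieces via conserved quantities and third-moment propagation, and control $\int\langle\xi\rangle\psi|\log\psi|$ by separating the regimes $\psi\gtrless 1$ and invoking $L^p$ propagation (Lemma~\ref{lem:Lp}) together with moment bounds. The only cosmetic differences are: (i) you encode the two regimes through the single inequality $x|\log x|\le \delta^{-1}(x^{1-\delta}+x^{1+\delta})$, whereas the paper splits explicitly into $\{\psi\ge1\}$ and $\{\psi<1\}$ and uses $|\log r|^2\le C_p r^{p-1}$ (for $r\ge1$) and $r^\beta|\log r|\le D_\beta$ (for $r<1$); (ii) for the super-linear piece the paper first applies Cauchy--Schwarz against $\langle\xi\rangle\sqrt\psi$, which pulls out only the conserved second moment and leaves a bare $\|\psi\|_{L^p}$, while your H\"older-plus-interpolation route trades a bit more of the weight against $\|\psi\|_{L^1_3}$. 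Both sub-linear treatments are the same H\"older trick with $\langle\xi\rangle^{-(1-2\beta)/\beta}$ integrable; the paper picks $\beta=1/(d+3)$ and uses only $M_2$, while you allow $s$ up to $3$. Either way the constants depend on $\|\psi_0\|_{L^p}$ and $\|\psi_0\|_{L^1_3}$ and not on $\alpha$, so the conclusion is the same.
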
\phantomsection
\begin{proof}  Clearly, there is some positive constant $C_{d}$ depending only on $d$ such that
$$\left|\int_{\R^{d}}\Q_{-}(\psi,\psi)(t,\xi)\log\frac{\psi(t,\xi)}{\M(\xi)}\d\xi\right| \leq \int_{\R^{d}}\Q_{-}(\psi,\psi)(t,\xi)\left|\log \psi(t,\xi)\right|\d\xi  + C_{d}\int_{\R^{d}}\Q_{-}(\psi,\psi)(t,\xi)(1+|\xi|^{2})\d\xi,$$
so that
$$\left|\mathcal{I}_{1}(t)\right| \leq \alpha\, 
\|\psi(t)\|_{L^{1}_{1}(\R^{d})}\int_{\R^{d}}\langle\xi\rangle\,\psi(t,\xi)\left|\log \psi(t,\xi)\right|\d\xi + \alpha C_{d}\|\psi(t)\|_{L^{1}_{1}(\R^{d})} \|\psi(t)\|_{L^{1}_{3}(\R^{d})}.$$
{
Now,
$$\int_{\R^{d}}\langle\xi\rangle\,\psi(t,\xi)\left|\log \psi(t,\xi)\right|\d\xi
= \int_{|\psi|< 1}\langle\xi\rangle\,\psi(t,\xi)\left|\log \psi(t,\xi)\right|\d\xi+ \int_{|\psi|\geq 1}\langle\xi\rangle\,\psi(t,\xi)\left|\log \psi(t,\xi)\right|\d\xi $$
On the one hand, setting  $C_p=\sup_{r\geq 1} r^{1-p}|\log r|^{2}$ , we deduce that 
\begin{eqnarray*}
 \int_{|\psi|\geq 1}\langle\xi\rangle\,\psi(t,\xi)\left|\log \psi(t,\xi)\right|\d\xi
&\leq &  \sqrt{1+\frac{d}{2}} \left(\int_{|\psi|\geq 1} \psi(t,\xi)\left|\log \psi(t,\xi)\right|^2\d\xi\right)^{1/2}\\
&\leq &   \sqrt{C_p} \; \sqrt{1+\frac{d}{2}}\; \|\psi(t)\|^{p/2}_{L^p(\R^d)}
\end{eqnarray*}
On the other hand, for $\beta\in(0,1)$, setting $D_\beta=\sup_{r\in(0,1)} r^\beta|\log r|$, we have 
\begin{eqnarray*}
\int_{|\psi|< 1}\langle\xi\rangle\,\psi(t,\xi)\left|\log \psi(t,\xi)\right|\d\xi
&\leq & D_\beta \int_{\R^{d}} \langle\xi\rangle^{2(1-\beta)-(1-2\beta)}\,\psi(t,\xi)^{1-\beta}\d\xi\\
&\leq & D_\beta  \; \left(1+\frac{d}{2}\right)^{1-\beta}\; \left(\int_{\R^{d}} \langle\xi\rangle^{-(1-2\beta)/\beta} \d\xi\right)^\beta
\end{eqnarray*}
The choice $\beta=\frac{1}{d+3}$, together with propagation of the third moment and Lemma \ref{lem:Lp} yield the result.
}
\end{proof}
The following technical lemma, refer to Appendix \ref{app:point} for a proof, proves the appearance of gaussian-like pointwise lower bound.  Because of the use we make such lower bound later, a precise estimate on the time rate appearance is needed.
\begin{theo}{\textbf{\textit{(Gaussian-like lower bound)}}}\phantomsection \label{a-l3}
Let  $\psi_{0} \in L^{1}_{3}(\R^{d}) \cap L^{p}(\R^{d})$
for some $p >1$. Let $0 < \alpha < \min(\alpha_{\star},\alpha_0^{\star})$ be given. Then, for any  $t_0 \in (0,1)$ and any $\varepsilon >0$ there exist some explicit constant $c_{0}(\alpha)$ and some integer $N \in \mathbb{N}$ depending on $\varepsilon,$ $\|\psi_{0}\|_{L^{1}_{3}}$ and $\|\psi_{0}\|_{L^{p}}$ and $\alpha$ (but not on $t_{0}$) such that  
\begin{equation}\label{eq:pointlower}
\psi(t,\xi)\geq c_{0}(\alpha)t_{1}^{N}\exp\left(-c_{0}(\alpha)\left(1 + \log\left(\tfrac{1}{t_{0}}\right)\right)\,|\xi|^{2+\varepsilon}\right)\,,\qquad t \geq t_{0}\,,\quad \xi \in \R^{d}.
\end{equation}
\end{theo}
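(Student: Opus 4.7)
The strategy is to adapt the classical Carleman / Pulvirenti--Wennberg iterative construction of Maxwellian lower bounds (as refined by Mouhot and Gamba--Panferov--Villani) to the non-autonomous rescaled equation \eqref{rescaBE}. The first step is to derive uniform-in-$\tau$ bounds on the coefficients $\mathbf{A}_{\psi}(\tau)$, $\mathbf{B}_{\psi}(\tau)$ and $\bm{v}_{\psi}(\tau)$ from Remark \ref{nb:1.3} together with the uniform bounds on the low-order moments of $\psi(\tau)$ (using propagation of the third moment and \eqref{eq:kapp0}). This in particular gives a uniform bound of order $\alpha$ on these coefficients, which will be absorbed into the constants.

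The second step is to produce an initial localised "bubble". Using $\psi_{0}\in L^{1}_{3}\cap L^{p}$, the uniform $L^{p}$ estimate from Lemma \ref{lem:Lp}, and an argument in the spirit of \cite[Lemma 2.1]{alogam}, one shows that there exist $\tau_{1}\in(0,t_{0})$, a point $\bar\xi\in\R^{d}$, a radius $r_{0}>0$ and $\delta_{0}>0$ --- depending on $t_{0}$ only through $\log(1/t_{0})$ --- such that
\begin{equation*}
\psi(\tau_{1},\xi)\ \geq\ \delta_{0}\,\mathbf{1}_{B(\bar\xi,r_{0})}(\xi),\qquad \xi\in\R^{d}.
\end{equation*}
At this stage the key point is that, by propagation of moments, $\psi(\tau,\cdot)$ cannot concentrate, so one can find a macroscopic set on which $\psi$ is bounded below by a quantitative constant for all $\tau\geq t_{0}/2$.

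The third and main step is the Duhamel/iteration argument. Writing \eqref{rescaBE} as
\begin{equation*}
\partial_{\tau}\psi+\mathbf{B}_{\psi}(\tau)(\xi-\bm{v}_{\psi}(\tau))\cdot\nabla_{\xi}\psi+\Bigl(\Sigma_{\psi}(\tau,\xi)+\gamma(\tau)\Bigr)\psi\ \geq\ (1-\alpha)\,\Q_{+}(\psi,\psi)(\tau,\xi),
\end{equation*}
with $\gamma(\tau)$ a uniformly bounded function, one uses the method of characteristics associated with the linear transport field $\mathbf{B}_{\psi}(\tau)(\xi-\bm{v}_{\psi}(\tau))$ to rewrite the inequality in mild form. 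The exponential weight produced by the absorption term $\Sigma_{\psi}+\gamma$ is controlled using $\Sigma_{\psi}(\tau,\xi)\leq C\langle\xi\rangle$ (uniform moment bounds). One then iterates the Pulvirenti--Wennberg spreading estimate: if $\psi(s,\cdot)\geq \delta_{n}\mathbf{1}_{B(\xi_{n},r_{n})}$ on a short time interval, then lower-bounding $\Q_{+}(\psi,\psi)$ by a convolution-type inequality yields
\begin{equation*}
\psi(s',\cdot)\ \geq\ \delta_{n+1}\,\mathbf{1}_{B(\xi_{n+1},r_{n+1})},\qquad r_{n+1}=\sqrt{2}\,r_{n},\ \ \delta_{n+1}\geq c\,(s'-s)\,\delta_{n}^{2},
\end{equation*}
the transport characteristics only translating the centre and the scaling factor $\sqrt{2}$ being the classical gain of the $\Q_{+}$-spreading in dimension $d\geq 2$. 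After $n$ steps the ball radius is of size $2^{n/2}r_{0}$ while $\delta_{n}\geq c_{0}^{2^{n}}(t_{0})^{N}$ with $c_{0}$ depending only on the structural constants; converting $n$ into $|\xi|$ via $|\xi|\simeq 2^{n/2}r_{0}$ produces the desired envelope $\exp(-C|\xi|^{2+\varepsilon})$, the loss $\varepsilon$ accommodating the logarithmic correction that prevents an exact Gaussian.

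The main obstacle is the non-autonomous transport term: the coefficients $\mathbf{B}_{\psi}(\tau)$ and $\bm{v}_{\psi}(\tau)$ have no definite sign, and the characteristics must be controlled so that the ball $B(\xi_{n},r_{n})$ used at step $n$ is effectively transported to a position which can be chained to step $n+1$. This is handled by noting that the characteristic flow $X(s;\tau,\xi)$ stays in a controlled neighbourhood (since $\mathbf{B}_{\psi}$ is of order $\alpha$, i.e.\ small), and by choosing the time increments in the iteration uniformly, so that all constants can be tracked explicitly. The precise dependence on $t_{0}$ enters only through the initial bubble of Step~2, which produces the factor $1+\log(1/t_{0})$ in the Gaussian exponent and the polynomial factor $t_{0}^{N}$ in front.
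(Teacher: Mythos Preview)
Your plan is essentially the same as the paper's: a Pulvirenti--Wennberg spreading iteration run through a Duhamel formula for the non-autonomous transport--absorption operator, with the characteristics controlled via the $\mathcal{O}(\alpha)$ bound on $\mathbf{B}_{\psi}$ and $\mathbf{B}_{\psi}\bm{v}_{\psi}$. The paper's initial bubble (your Step~2) is obtained by iterating Duhamel \emph{twice} and invoking a lower bound on $\Q_{+}\big(\widehat{G}_{0},\Q_{+}(\widehat{G}_{0},\widehat{G}_{0})\big)$ (Lemma~\ref{l1}), which is more explicit than your ``argument in the spirit of \cite{alogam}'' but serves the same purpose.

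Two bookkeeping points deserve correction, as they affect where the $t_{0}$-dependence actually sits. First, the height $\delta_{0}$ of the initial bubble is \emph{polynomial} in $t_{1}$, not logarithmic: one gets $\mu_{1}=\mathcal{O}(t_{1}^{N})$ for an explicit integer $N$ (see \eqref{eq:mu1}), and this is precisely the origin of the prefactor $t_{1}^{N}$ in \eqref{eq:pointlower}. Second, the time increments in the main iteration are not uniform but geometric, $\tau_{k+1}-\tau_{k}=t_{1}2^{-(k+1)}$, so that they sum to $t_{1}$; the recursion then reads $\mu_{k+1}\gtrsim (t_{1}2^{-k})\mu_{k}^{2}$, and solving it gives $\log\mu_{k}\sim -2^{k}\bigl(C+\log(1/t_{1})\bigr)$. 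Since $|\xi|\sim 2^{k/2}$, this yields the factor $(1+\log(1/t_{0}))$ in the exponent. In other words, the logarithmic loss in the exponent comes from the iteration (both through $\log\mu_{1}$ and through the shrinking time steps), while the polynomial prefactor covers the bounded region $|\xi|\leq R_{1}$ via $\mu_{1}$ directly. With these adjustments your outline matches the paper's proof.
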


\begin{nb}
It readily follows from Theorem \ref{a-l3} that, for any $\varepsilon>0$, 
$$|\log \psi(t,\xi)|\leq C_\varepsilon (1+\log^+(1/t))\langle\xi \rangle^{2+\varepsilon} +\psi(t,\xi), \qquad \xi\in\R^d,t>0$$
for some universal constant $C_\varepsilon>0$. Indeed, for $\psi(t,\xi)\geq1$, we clearly have $\log\psi(t,\xi)\leq \psi(t,\xi)$ whereas for $\psi(t,\xi)<1$, \eqref{eq:pointlower} ensures that 
$$-\log\psi(t,\xi)\leq C_\varepsilon (1+\log^+(1/t))\langle\xi \rangle^{2+\varepsilon}.$$
\end{nb}
 
\begin{theo}\phantomsection\phantomsection\label{prop:decayHt}
Assume that $f_{0} \in L^{1}_{3}(\R^{d}) \cap L^{2}(\R^{d})$.  Given $0 < \alpha < \min(\alpha_{\star},\alpha_0^{\star})$ where $\alpha_{\star}$ and $\alpha_0^{\star}$ are defined respectively in Theorem \ref{main} and Lemma \ref{lem:Lp}, the unique solution $\psi(t,\xi)$ to \eqref{rescaBE} satisfies, for all $t_{0} > 0$
\begin{equation}\label{eq:estimH}
\mathcal{H}(t)=\mathcal{H}(\psi(t)|\M) \leq C_{H}\left((1+t)^{-1/2}+\alpha^{1/3}\right) \qquad \forall\, t > t_{0}\end{equation}
where the positive constant $C_{H}$ depends explicitly on $t_0$, $\sup_{t\geq t_{0}}\|\psi(t)\|_{L^{1}_{s_{0}}(\R^{d})}$ (with $s_{0}\geq2$ large but explicit) and $\|\psi_{0}\|_{L^2(\R^{d})}$.
\end{theo}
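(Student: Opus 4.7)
The plan is to turn the entropy identity \eqref{dHt} into a closed Bernoulli-type differential inequality for $\mathcal{H}(t)$ via Villani's entropy production estimate (Proposition~\ref{theo:Vill}), and then integrate. Starting from Lemma~\ref{lem:dht}, Remark~\ref{nb:1.3} identifies
$$d\mathbf{B}_{\psi}(t)-\mathbf{A}_{\psi}(t) \;=\; \alpha\,\mathbf{a}_{\psi}(t) \;=\; \alpha \int_{\R^d\times\R^d}\psi(t,\xi)\psi(t,\xi_*)|\xi-\xi_*|\,\d\xi\d\xi_* \;\leq\; C\,\alpha,$$
uniformly in $t\geq 0$, thanks to the uniform-in-time propagation of the third moment (Lemma~\ref{appB.2}). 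Together with Lemma~\ref{lem:I1} this produces
$$\frac{\d}{\d t}\mathcal{H}(t) + (1-\alpha)\,\D_0(t) \;\leq\; C\,\alpha\,\bigl(1+\mathcal{H}(t)\bigr), \qquad t\geq 0.$$

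Next, I would lower-bound $\D_0(t)$ using Proposition~\ref{theo:Vill} applied to $\psi(t,\cdot)$ for each fixed $t\geq t_0$. The assumption \eqref{villpoint} is furnished, with $q_0=2+\varepsilon$ and constants $K_0,A_0$ depending on $t_0$ but not on $t\geq t_0$, by Theorem~\ref{a-l3}; the uniform $L^2$ bound comes from Lemma~\ref{lem:Lp} with $p=2$, and the uniform $L^1_{s(q_0)}$ bound from propagation of sufficiently high moments. Fixing $\delta\in(0,1)$ small enough that $\gamma:=(1+\delta)(1+\tfrac{2}{d})\leq 3$ (which is always possible for $d\geq 2$, e.g.\ $\delta=1/3$), one obtains a constant $\lambda_\delta>0$ independent of $t\geq t_0$ such that $\D_0(t) \geq \lambda_\delta\,\mathcal{H}(t)^{\gamma}$. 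Restricting to $\alpha<1/2$ and inserting this bound into the previous inequality yields the key estimate
$$\frac{\d}{\d t}\mathcal{H}(t) \;\leq\; -\frac{\lambda_\delta}{2}\,\mathcal{H}(t)^{\gamma} + C\,\alpha\,\bigl(1+\mathcal{H}(t)\bigr), \qquad t\geq t_0.$$

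The final step is the standard Bernoulli analysis of this ODE. I would introduce the threshold $\theta:=K\,\alpha^{1/\gamma}$ with $K$ large enough (and $\alpha$ small enough) that on the set $\{\mathcal{H}(t)\geq \theta\}$ the superlinear sink absorbs the perturbation $C\alpha(1+\mathcal{H})$, leaving $\frac{\d}{\d t}\mathcal{H}(t) \leq -\tfrac{\lambda_\delta}{4}\,\mathcal{H}(t)^{\gamma}$; integrating this scalar comparison gives $\mathcal{H}(t) \leq C(1+t)^{-1/(\gamma-1)}$. A one-dimensional comparison argument then shows that once $\mathcal{H}$ crosses below $\theta$ it cannot return above it. Since $\gamma\leq 3$ yields $(1+t)^{-1/(\gamma-1)}\leq (1+t)^{-1/2}$ and, for $\alpha<1$, $\alpha^{1/\gamma}\leq \alpha^{1/3}$, combining the two regimes produces \eqref{eq:estimH}.

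The main obstacle is securing uniform-in-$t$ positivity of $\lambda_\delta$ in Villani's inequality. This relies on: the Gaussian-like lower bound of Theorem~\ref{a-l3} with constants chosen independently of $t$ for $t\geq t_0$ (the blow-up of those constants as $t_0 \to 0^+$ through the $\log(1/t_0)$ factor is harmless since the conclusion is only for $t>t_0$ and is absorbed into $C_H$); the $L^2$ bound of Lemma~\ref{lem:Lp}; and the propagation of a sufficiently high $L^1_s$ moment with the constant $s=s(q_0)$ prescribed by Proposition~\ref{theo:Vill}. Each of these is provided by the a priori theory of Section~\ref{sec:moments} and the appendix, and, crucially, none depends on $\alpha$ in a degenerate way.
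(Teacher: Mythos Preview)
Your proposal is correct and follows essentially the same route as the paper: combine the entropy identity of Lemma~\ref{lem:dht} with the bound on $\mathcal{I}_1$ from Lemma~\ref{lem:I1}, invoke the Gaussian-like lower bound (Theorem~\ref{a-l3}) and the uniform $L^2$ and moment controls to apply Proposition~\ref{theo:Vill}, and then integrate the resulting Bernoulli-type inequality $\frac{\d}{\d t}\mathcal{H}+c\,\mathcal{H}^{\gamma}\leq C\alpha$. The only cosmetic differences are that the paper writes the exponent directly as $3$ and absorbs the term $C\alpha\,\mathcal{H}(t)$ into the constant via the uniform bound $\mathcal{H}(t)\leq C_{3}\bigl(\|\psi(t)\|_{L^2}+\|\psi(t)\|_{L^{1}_{2}}\bigr)$, whereas you keep $\gamma\leq 3$ and carry $C\alpha(1+\mathcal{H})$ through the comparison argument.
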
\phantomsection
\begin{proof} Since $f_{0} \in L^{1}_{3}(\R^{d}) \cap L^{2}(\R^{d})$, for all $t_{0} >0$, according to Proposition \ref{a-l3}, there exists $c_{0} >0$ such \eqref{eq:pointlower} holds true for, say, $\varepsilon=1$. Then, according to Proposition \ref{theo:Vill}, for all $t \geq t_{0}$, one has
$$\D_{0}(t) \geq \lambda(\psi(t))\,\mathcal{H}^{3}(t)$$
for $\lambda(\psi(t))$ depending only on $c_{0}$, $\|\psi(t)\|_{L^2}$ and $\|\psi(t)\|_{L^{1}_{s_{0}}}$ for some explicit $s_{0} > 0$ related to $c_{0}$.  By virtue of the creation of moments Lemma \ref{appB.2} and from Lemma \ref{lem:Lp},
$$\sup_{t\geq t_0}\|\psi(t)\|_{L^{1}_{s_0}} \leq C_{s_{0}}(t_{0}), \qquad \sup_{t \geq 0}\|\psi(t)\|_{L^2} \leq C_{L^2}$$
for $C_{2} >0$ depending only on $\|\psi_{0}\|_{L^2}$.  In other words, $\inf_{t \geq t_{0}}\lambda(\psi(t)) \geq \lambda_{0}$ for some positive $\lambda_{0}$ depending only on $c_{0}$, $C_{s_{0}}(t_{0})$, and $\|\psi_{0}\|_{L^2}$. This shows that
$$\dfrac{\d}{\d t}\mathcal{H}(t) +(1-\alpha)\lambda_{0}\mathcal{H}^{3}(t) \leq \left(d\mathbf{B}_{\psi}(t)-\mathbf{A}_{\psi}(t)\right)\mathcal{H}(t) +\mathcal{I}_{1}(t)  \qquad \forall t \geq t_{0}$$
which, thanks to Lemma \ref{lem:I1}, yields
$$\dfrac{\d}{\d t}\mathcal{H}(t) +(1-\alpha)\lambda_{0}\mathcal{H}^{3}(t) \leq \left(d\mathbf{B}_{\psi}(t)-\mathbf{A}_{\psi}(t)\right)\mathcal{H}(t) + C\alpha,\qquad \forall t \geq t_{0}.$$
Because, 
$$d\mathbf{B}_{\psi}(t)-\mathbf{A}_{\psi}(t)=\alpha\,\int_{\R^{d}}\Q_{-}(\psi,\psi)(t,\xi)\d\xi \leq \alpha\,\|\psi(t)\|^2_{L^{1}_{1}(\R^{d})}\leq C_{1}\alpha$$
for some $C_{1} >0$ depending only on $\|\psi_{0}\|_{L^{1}_{1}(\R^{d})}$ while $\mathcal{H}(t) \leq C_{3}\left(\|\psi(t)\|_{2}+\|\psi(t)\|_{L^{1}_{2}(\R^{d})}\right)$ for some positive constant $C_{3}$, we obtain, using Lemma \ref{lem:Lp} again, the following inequality satisfies by $\mathcal{H}(t)$,
$$\dfrac{\d}{\d t}\mathcal{H}(t) + \lambda_{0}\,(1-\alpha_{\star}) \mathcal{H}^{3}(t) \leq C\,\alpha$$
for some positive constant $C$ depending only on  $\|\psi_{0}\|_{L^2}$ and $\|\psi_{0}\|_{L^{1}_{s_{0}}(\R^{d})}$. Integration of this inequality yields the desired result.\end{proof}

\section{Stability result}\label{sec:stabi}

We begin this section collecting uniform estimates for $\psi(t)$ in the scale of Banach spaces $\X_{0},\X_{1}$.

\subsection{Uniform bounds on $\X_{0}$ and $\X_{1}$}

In all this section, we shall assume that the initial datum $f_0$ is nonnegative, with positive mass and temperature, and such that
$$f_{0} \in L^{1}_{3}(\R^{d}) \cap L^{2}(\R^{d}).$$
By \eqref{scalingPsi}, we have $\psi_0(\xi)=\frac{(2T_{f_{0}})^{d/2}}{n_{f_{0}}}\;f_{0}\left(\sqrt{2T_{f_{0}}}\, \xi+\mathbf{u}_{f_{0}}\right)$, so that $\psi_{0} \in L^{1}_{3}(\R^{d}) \cap L^{2}(\R^{d})$. 

The following result shows the appearance of exponential moments for the solutions to \eqref{rescaBE}. We refer to  Lemma \ref{appB.2} and subsequent discussion in the Appendix for a proof.
\begin{theo}\phantomsection\phantomsection \label{theo:tail}  For any $\alpha \in (0,\alpha_{\star})$, let $\psi(t,\xi)$ be the unique solution to \eqref{rescaBE} with initial datum $\psi_0$. Let $\beta>1$. Then, there exists $A >0$, $C >0$ explicit and depending on $\beta$, $d$ and $\int_{\R^d}\psi_0(\xi)|\xi|^3\,\d\xi$ such that
$$\int_{\R^{d}}\psi(t,\xi)\exp\left(a\min\{1,t^{\beta}\}|\xi|\right)\d\xi\leq C,\qquad \forall\, a \in (0,A).$$
In particular, for any $\alpha \in (0,\alpha_{\star})$,
$$\psi(t) \in \X_{0}\,,\quad \text{with}\quad \|\psi(t)\|_{\X_{0}}\leq C\,, \qquad \forall\, t \geq 1,$$
with exponential weight
\begin{equation}\label{Eq:varpi}
\m(\xi)=\exp(a|\xi|), \qquad 0 < a < A.\end{equation}
\end{theo}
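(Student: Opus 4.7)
The strategy is the classical generating-function method of Bobylev--Gamba--Panferov--Mouhot, adapted to account for the annihilation loss and the non-autonomous self-similar drift present in \eqref{rescaBE}. Introduce the polynomial moments $\mathbf{m}_k(t) := \int_{\R^d} \psi(t,\xi)\,|\xi|^k\,\d\xi$ and form the generating series
$$E(t,a) := \sum_{k \geq 0} \frac{a^k}{k!}\,\mathbf{m}_k(t) = \int_{\R^d} \psi(t,\xi)\exp(a|\xi|)\,\d\xi.$$
A Povzner-type inequality on $\int \mathbb{B}_\alpha(\psi,\psi)(t,\xi)\,|\xi|^k\,\d\xi$ extending Lemma \ref{Mk<1} to integer $k \geq 2$ yields a differential inequality of the form
$$\dot{\mathbf{m}}_k(t) \leq -\kappa_0\,\gamma_k\,\mathbf{m}_{k+1}(t) + S_k(t) + C\,\alpha\,k\,\mathbf{m}_k(t),$$
where $\gamma_k>0$ are the sharp Povzner constants (satisfying $1-\gamma_k\asymp 1/k$), $\kappa_0>0$ is the $\alpha$-independent constant from the lower bound \eqref{eq:kapp0}, $S_k$ is an explicit polynomial in $\mathbf{m}_0,\ldots,\mathbf{m}_k$ with binomial-type coefficients coming from the gain operator $\Q_+$, and the last term arises from integration by parts against the drift $\mathbf{B}_\psi\,\mathrm{div}_\xi((\xi-\bm{v}_\psi)\psi)$ and the loss $\alpha\mathbf{a}_\psi(t)\psi$ of \eqref{rescaBE}. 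Crucially, $\kappa_0$ and $\gamma_k$ are independent of $\alpha$ thanks to \eqref{eq:kapp0} and the favorable sign of $-\alpha\Q_-$.

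For the propagation statement $\|\psi(t)\|_{\X_0} \leq C$ on $[1,\infty)$, invoke Lemma \ref{appB.2} (all polynomial moments are uniformly bounded for $t \geq 1$), multiply the above inequality by $a^k/k!$ and sum over $k$. The Povzner loss combines with the factorial weights to produce
$$\dot{E}(t,a) + \kappa_1\,a\,Z(t,a) \leq C_1\,E(t,a),$$
where $Z(t,a):=\sum_{k \geq 0}\frac{a^k}{k!}\mathbf{m}_{k+1}(t)$ controls $E$ from above by interpolation using $\mathbf{m}_0=1$; for $a \in (0,A)$ with $A$ small enough (determined by $\kappa_0$ and the asymptotics of $\gamma_k$), this closes into a uniform bound on $E(\cdot,a)$.

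For the appearance part on $(0,1]$, set $a(t):=a\,t^\beta$ and study $F(t):=E(t,a(t))$. Differentiating,
$$\dot{F}(t) = \dot{a}(t)\sum_{k \geq 1}\frac{a(t)^{k-1}}{(k-1)!}\,\mathbf{m}_k(t) + \sum_{k \geq 0}\frac{a(t)^k}{k!}\,\dot{\mathbf{m}}_k(t),$$
and use $\dot{a}(t)=\beta\,a(t)/t$ together with the Povzner loss to get
$$\dot{F}(t) \leq \left(C_1+\beta\,C_2\,a\,t^{\beta-1}\right)F(t) - \tfrac{\kappa_0}{2}\,a(t)\,Z(t,a(t)) + R(t),$$
with remainder $R(t)\leq C_3(\mathbf{m}_3(t)+1)$ controlled by the assumption $\int \psi_0|\xi|^3\,\d\xi<\infty$ and propagation of the third moment (Lemma \ref{lem1}). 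Choosing $A$ so small that the Povzner gain dominates the growth induced by $\dot{a}$---which is possible \emph{because} $\beta>1$ makes $\dot{a}(t)/a(t)=\beta/t$ integrable against the available dissipation and compatible with the factorial structure of $S_k$---yields $F(t)\leq C$ on $(0,1]$.

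The main obstacle will be the sharp bookkeeping of the Povzner constants $\gamma_k$ as $k\to\infty$ together with the combinatorial analysis of the source series $\sum_k (a^k/k!)\,S_k$, which must not destroy the gain when one sums over $k$; this is by now a standard but delicate computation. A secondary difficulty is ensuring all constants are $\alpha$-independent, which relies on the uniform lower bound \eqref{eq:kapp0} (valid thanks to the Remark following Lemma \ref{lem:Lp}) and on the dissipative sign of $-\alpha\Q_-$ in the moment identities. Finally, the fact that $\mathbf{A}_\psi$, $\mathbf{B}_\psi$ and $|\mathbf{B}_\psi\bm{v}_\psi|$ are all $\mathcal{O}(\alpha)$ (cf. Remark \ref{nb:1.3}) guarantees that the drift contributions are absorbed by $-\kappa_0\gamma_k\mathbf{m}_{k+1}$ for $k$ large.
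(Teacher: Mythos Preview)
Your approach is essentially the same as the paper's: the generating-function method of \cite{cmpde}, adapted to \eqref{rescaBE} via Lemma~\ref{lem:appmom}, with the time-dependent rate $a(t)=at^{\beta}$ on $(0,1]$ and constant rate $a$ for $t\geq 1$. The paper works with the truncated sums $E_{1}^{n}(t,z)=\sum_{p=0}^{n}m_{p}(t)z^{p}/p!$ and $I_{1}^{n}$, and uses a continuity argument (showing $E_{1}^{n}(t,at^{\beta})<4$ for all $n$) rather than manipulating the full series directly; you will need something of this kind to make the formal computations rigorous, since $E(t,a)$ is not a priori finite.

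One point in your write-up is misleading and could derail the execution. You explain the role of $\beta>1$ by saying that $\dot a(t)/a(t)=\beta/t$ is ``integrable against the available dissipation''. It is not: $\beta/t$ is not integrable at $0$, and comparing $\dot a(t)Z$ with the Povzner loss $\kappa\,a(t)Z$ gives $\beta t^{\beta-1}$ versus $\kappa\,a t^{\beta}$, which fails as $t\to 0$. The actual mechanism is an index shift in the series: the $\dot a$-term is
\[
\sum_{p\geq p_{0}} m_{p}(t)\,\frac{a^{p}t^{\beta p-1}}{(p-1)!}
=\sum_{q\geq p_{0}-1} m_{q+1}(t)\,\frac{a^{q+1}t^{\beta q+\beta-1}}{q!}
\leq a\sum_{q\geq 0} m_{q+1}(t)\,\frac{(at^{\beta})^{q}}{q!}=a\,I_{1}^{n}(t,at^{\beta}),
\]
precisely because $\beta>1$ forces $t^{\beta-1}\leq 1$ on $(0,1]$. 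After this shift the derivative contribution is bounded by $a\,I_{1}^{n}$ with \emph{no} singular prefactor, and is absorbed by the Povzner loss $K_{1}I_{1}^{n}$ by taking $a$ small (the paper imposes $\beta a\leq K_{1}/4$). The same shift handles the two drift terms $\alpha K_{2}\,sp\,m_{sp}$ and $\alpha spd\,m_{sp-1}$ in Lemma~\ref{lem:appmom}. Keep this in mind when you carry out the computation.
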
\phantomsection

It is more intricate to derive uniform bounds on the solution $\psi(t,\xi)$ in the weighted Sobolev space $W^{1,1}_{1}(\m)$.  Using the estimates on $\Q_{+}$ in weighted spaces provided in \cite[Section 4]{alogam}, it would be simple to prove the propagation of $H^{1}(\m)$ norms.  We adopt here a \emph{new} viewpoint which is based on the propagation of \emph{Fisher information} and relies on the pointwise lower bounds (Theorem \ref{a-l3}). We recall that Fisher information has been defined in \eqref{eq:fisher}.
\begin{theo}{\textit{\textbf{(Uniform bound on the Fisher information)}}}\phantomsection\label{theo:fisher} Assume, in additon, that
$$f_{0} \in L^{1}_{\eta}(\R^{d}) \cap H^{\frac{(5-d)^{+}}{2}}_{\eta}(\R^{d}) \qquad \text{ and } \qquad I(f_{0}) < \infty,$$
for some $\eta > 4+d/2$, $d \geq 2$.  Then, the unique solution $\psi(t)$ of \eqref{rescaBE} satisfies
$$\sup_{t\geq0}\,I(\psi(t)) \leq C$$
for some positive constant $C$ depending on $I(\psi_{0})$ and the $L^{1}_{\eta}\cap H^{\frac{(5-d)^{+}}{2}}_{\eta}$-norm of $\psi_{0}.$
\end{theo}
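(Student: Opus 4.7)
The plan is to derive a differential inequality for $I(\psi(t))$ and close it with a Gronwall-type argument. Writing $I(\psi)=\|\nabla\sqrt{\psi}\|_{L^{2}}^{2}$ and differentiating in time via \eqref{rescaBE} gives
$$\frac{\d}{\d t}I(\psi(t))=-\int_{\R^{d}}\frac{\Delta \sqrt{\psi(t,\xi)}}{\sqrt{\psi(t,\xi)}}\,\partial_t \psi(t,\xi)\,\d\xi,$$
which I would split into four pieces corresponding to the right-hand side of \eqref{rescaBE}. For the zero-order term $\alpha\mathbf{a}_\psi\psi$ and the Fokker-Planck-type drift $-\mathbf{B}_\psi\mathrm{div}((\xi-\bm{v}_\psi)\psi)$, two integrations by parts combined with the identity $\int \Delta\sqrt{\psi}\,\xi\cdot\nabla\sqrt{\psi}\,\d\xi=\tfrac{d-2}{2}\,I(\psi)$ and the Galilean invariance of $I$ reduce the contribution to $(\alpha\mathbf{a}_\psi-2\mathbf{B}_\psi)I(\psi)=\alpha(2\mathbf{a}_\psi-\mathbf{b}_\psi)I(\psi)$, which is $\mathcal{O}(\alpha)I(\psi)$ since $\mathbf{a}_\psi,\mathbf{b}_\psi,\mathbf{B}_\psi$ are uniformly controlled in time by Theorem~\ref{theo:tail} and Remark~\ref{nb:1.3}.

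The heart of the argument is the control of the collision contributions $(1-\alpha)\Q(\psi,\psi)-\alpha\Q_-(\psi,\psi)$. I would exploit two structural ingredients. First, Carleman/Lions-type regularization estimates for the gain operator $\Q_+$ provide a smoothing of order $(d-1)/2$ in dimension $d$, which together with the Sobolev threshold $H^{(5-d)^{+}/2}_{\eta}$ imposed on $f_0$ yields exactly enough regularity to make the integrand $\Delta\sqrt{\psi}\cdot\Q_+(\psi,\psi)/\sqrt{\psi}$ absolutely integrable (this is presumably why the exponent $(5-d)^{+}/2$, which vanishes for $d\geq 5$, appears in the hypotheses). Second, the Gaussian-like pointwise lower bound of Theorem~\ref{a-l3}, combined with the uniform exponential moments of Theorem~\ref{theo:tail}, absorbs the singular factor $1/\sqrt{\psi}$: concretely, $1/\sqrt{\psi(t,\xi)}$ is replaced by a factor of the form $\exp(C(t)|\xi|^{2+\varepsilon})$ and the mismatch is compensated by the exponential weight $\m$ after taking $\varepsilon$ small enough. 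The loss term $-\alpha\Sigma_\psi\psi$ is handled analogously, with the additional help of the coercivity $\Sigma_\psi\sim\langle\xi\rangle$ coming from \eqref{eq:kapp0}.

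Collecting the estimates leads to a differential inequality of the form
$$\frac{\d}{\d t}I(\psi(t))\leq C_1(t)\bigl(1+I(\psi(t))\bigr),\qquad t\geq t_0>0,$$
with $C_1\in L^{1}_{\mathrm{loc}}$ thanks to the moment and $L^p$ bounds of Theorem~\ref{theo:tail} and Lemma~\ref{lem:Lp}. Since $I(\psi_0)<\infty$ (finiteness being preserved through the rescaling \eqref{scalingPsi}), Gronwall's lemma yields the uniform bound on $[t_0,\infty)$, while the short interval $[0,t_0]$ is handled separately using the initial Sobolev regularity $\psi_0\in H^{(5-d)^{+}/2}_\eta$. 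I expect the main obstacle to be the quantitative control of the elastic collision contribution: unlike the relative entropy, Fisher information is not manifestly dissipated by the hard-sphere operator, so one cannot hope for a purely negative right-hand side and must instead settle for a controlled growth balanced against the moment bounds. A secondary subtlety is the logarithmic singularity $\log(1/t_0)$ in the Gaussian lower bound of Theorem~\ref{a-l3}, which is precisely what forces the splitting of $[0,\infty)$ into $[0,t_0]$ and $[t_0,\infty)$ in the argument above.
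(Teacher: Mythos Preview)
Your outline has the right overall shape but contains two genuine gaps that prevent the argument from closing.

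First, your treatment of the $\Q_+$ contribution does not work. You propose to absorb the singular factor $1/\sqrt{\psi}$ by the pointwise lower bound, replacing it by $\exp(C(t)|\xi|^{2+\varepsilon})$ and then compensating with the exponential weight $\m$. But $\m(\xi)=\exp(a|\xi|)$ carries only a \emph{linear} exponent, so no matter how small $\varepsilon$ is, a factor $\exp(C|\xi|^{2+\varepsilon})$ cannot be controlled by it. The paper avoids this obstruction by a different manipulation: writing $g_i=\partial_{\xi_i}\sqrt{\psi}$, the identity $g_i/\sqrt{\psi}=\tfrac12\,\partial_{\xi_i}\log\psi$ followed by one integration by parts transfers both derivatives onto $\Q_+$, namely
\[
\int_{\R^d}\frac{g_i\,\partial_{\xi_i}\Q_+(\psi,\psi)}{\sqrt{\psi}}\,\d\xi
=-\tfrac12\int_{\R^d}\log\psi\;\partial_{\xi_i}^2\Q_+(\psi,\psi)\,\d\xi.
\]
Now it is $\log\psi$, not $1/\sqrt{\psi}$, that appears, and the Gaussian lower bound gives the \emph{polynomial} control $|\log\psi(t,\xi)|\le C_\varepsilon(1+\log^+(1/t))\langle\xi\rangle^{2+\varepsilon}+\psi$. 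This polynomial growth is then matched by the $H^2_{2+3\varepsilon/2+d/2}$ norm of $\Q_+(\psi,\psi)$, which is finite by the Bouchut--Desvillettes smoothing estimate (Theorem~\ref{regularite}); this is the true origin of the exponent $(5-d)^+/2$.

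Second, your final inequality $\tfrac{\d}{\d t}I\le C_1(t)(1+I)$ with $C_1\in L^1_{\mathrm{loc}}$ does not yield a uniform bound on $[t_0,\infty)$: Gronwall would require $C_1\in L^1([t_0,\infty))$, which you have not established, and with uniformly bounded $C_1$ you only get exponential growth. The paper obtains instead a \emph{dissipative} inequality. The loss term carries coefficient $-1$, not $-\alpha$ (since $(1-\alpha)\Q-\alpha\Q_-=(1-\alpha)\Q_+-\Q_-$), and its coercivity $\Sigma_\psi\ge\kappa_0\langle\xi\rangle$ produces $-\tfrac{\kappa_0}{2}\|g\|_{L^2}^2$ on the left-hand side. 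Together with the good sign of the term $-\int g^2\Q_+/\psi$, one arrives at
\[
\frac{\d}{\d t}\|g\|_{L^2}+\frac{\kappa_0}{4}\|g\|_{L^2}\le C\bigl(1+\log^+(1/t)\bigr),
\]
and the integrability of $\log^+(1/t)$ at $t=0$ then gives $\sup_{t\ge0}\|g\|_{L^2}<\infty$ directly, without any separate short-time argument.
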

\begin{proof} Let us multiply Equation \eqref{rescaBE} by $\frac{1}{2\,\sqrt{\psi(t,\xi)}}$ to get
\begin{align*}
\partial_{t}\sqrt{\psi(t,\xi)} + \frac{1}{2}\mathbf{A}_{\psi}(t)\sqrt{\psi(t,\xi)} &+ \mathbf{B}_{\psi}(t)(\xi-\bm{v}_{\psi}(t))\cdot\nabla\sqrt{\psi(t,\xi)}\\
&=\dfrac{1-\alpha}{2\sqrt{\psi(t,\xi)}}\Q_{+}(\psi,\psi)(t,\xi) - \dfrac{1}{2}\sqrt{\psi(t,\xi)}R(\psi)(t,\xi)\,,
\end{align*}
where $R(\psi)(t,\xi)=\int_{\R^{d}}\psi(t,\xi_{*})|\xi-\xi_{*}|\d\xi_{*}.$
Now, given $i=1,\ldots,d$, let us define $g(t,\xi):=\partial_{\xi_{i}}\sqrt{\psi(t,\xi)},$ so that $\partial_{\xi_{i}}\big((\xi-\bm{v}_{\psi}(t))\cdot\nabla\sqrt{\psi}\big) = (\xi-\bm{v}_{\psi}(t))\cdot\nabla g + g$.  Then, $g(t,\xi)$ satisfies 
\begin{multline*}
\partial_{t}g(t,\xi) + \tfrac{1}{2}\big(\mathbf{A}_{\psi}(t)+2\mathbf{B}_{\psi}\big)g(t,\xi) + \mathbf{B}_{\psi}(t)(\xi-\bm{v}_{\psi}(t))\cdot\nabla g(t,\xi) =\\
=\frac{1-\alpha}{2}\bigg[\frac{\partial_{\xi_{i}}\Q_{+}(\psi,\psi)(t,\xi)}{\sqrt{\psi(t,\xi)}} - \frac{g(t,\xi)\,\Q_{+}(\psi,\psi)(t,\xi)}{\psi(t,\xi)}\bigg]\\
 - \frac{1}{2}g(t,\xi)\,R(\psi)(t,\xi) - \frac{1}{2}\sqrt{\psi(t,\xi)}\,\partial_{\xi_{i}}R(\psi)(t,\xi)\,.
\end{multline*}  
Multiplying this equation  by $g(t,\xi)$ and integrating over $\R^{d}$, it follows that
\begin{multline*}
\frac{1}{2}\frac{\d}{\d {t}}\|g(t)\|_{L^{2}}^{2}+ \frac{1}{2}\big(\mathbf{A}_{\psi}(t)+2\mathbf{B}_{\psi}(t)\big)\|g(t)\|_{L^{2}}^{2} + \mathbf{B}_{\psi}(t)\int_{\mathbb{R}^{d}}\big((\xi-\bm{v}_{\psi}(t))\cdot\nabla g(t,\xi)\big)g(t,\xi) \,\d\xi \\
 =\frac{1-\alpha}{2}\bigg[\int_{\mathbb{R}^{d}} \frac{g\,\partial_{\xi_{i}}\Q_{+}(\psi,\psi)}{\sqrt{\psi}}\,\d\xi - \int_{\mathbb{R}^{d}}\frac{g^{2}\,\Q_{+}(\psi,\psi)}{\psi}\,\d\xi \bigg] \\
-\frac{1}{2}\int_{\mathbb{R}^{d}} g^{2}\,R(\psi)\,\d\xi - \frac{1}{2}\int_{\mathbb{R}^{d}} g\sqrt{\psi}\,\partial_{\xi_{i}}R(\psi)\,\d\xi\,.
\end{multline*}
Now, integration by parts leads to $\int_{\mathbb{R}^{d}}((\xi-\bm{v}_{\psi}(t))\cdot\nabla g(t,\xi))g(t,\xi) \,\d\xi= -\frac{d}{2}\|g(t)\|_{L^{2}}^{2}$ and
\begin{equation*}
\int_{\mathbb{R}^{d}} \frac{g\,\partial_{\xi_{i}}\Q_{+}(\psi,\psi)}{\sqrt{\psi}}\d\xi= \frac{1}{2}\int_{\mathbb{R}^{d}} \big(\partial_{\xi_{i}}\log\psi\big)\partial_{\xi_{i}}\Q_{+}(\psi,\psi)\d\xi= -\frac{1}{2}\int_{\mathbb{R}^{d}} \log\psi\, \partial^{2}_{\xi_{i}}\Q_{+}(\psi,\psi)\,\d\xi\,.\end{equation*}
Moreover, as already observed (see \eqref{eq:kapp0}), $R(\psi)(t,\xi)\geq \kappa_{0}\langle \xi \rangle$ for some positive $\kappa_{0}$ so that
\begin{multline*}
\frac{1}{2}\frac{\d}{\d t}\|g(t)\|_{L^{2}}^{2}+ \frac{1}{2}\Big(\mathbf{A}_{\psi}(t)+(2-d)\mathbf{B}_{\psi}(t)\Big)\|g(t)\|_{L^{2}}^{2} +\frac{\kappa_{0}}{2}\|g(t)\|_{L^{2}}^{2}\\
\leq -\frac{1-\alpha}{4}\int_{\mathbb{R}^{d}} \log\psi(t,\xi)\,\partial^{2}_{\xi_{i}}Q_{+}(\psi,\psi)(t,\xi)\,\d\xi\\
  - \frac{1}{2}\int_{\mathbb{R}^{d}} g(t,\xi)\sqrt{\psi(t,\xi)}\,\partial_{\xi_{i}}R(\psi)(t,\xi)\,\d\xi\,.
\end{multline*}
Moreover, we have that $|\partial_{\xi_{i}}R(\psi)(t,\xi)|\leq \int_{\R^{d}}\psi(t,\xi)\d\xi=1$.  Thus, we get
\begin{equation*}
\left|\int_{\mathbb{R}^{d}} g(t,\xi)\sqrt{\psi(t,\xi)}\,\partial_{\xi_{i}}R(\psi)(t,\xi)\,\d\xi\right|\leq \|g(t)\|_{L^{2}}\,.
\end{equation*}
Furthermore, by the instantaneous appearance of an exponential lower bound (see Theorem \ref{a-l3} and the remark afterwards), we have for any $\varepsilon >0$
\begin{equation*}
|\log \psi(t,\xi)| \leq c_{\varepsilon}(t)\langle \xi \rangle^{2+\varepsilon} + \psi(t,\xi),\qquad \xi \in \R^{d}, \:\:t >0
\end{equation*}
where $c_{\varepsilon}(t) =  C_{\varepsilon}(1+\log^{+}(1/t))$ for some universal constant $C_{\varepsilon} >0$. Thus, using first Cauchy Schwarz inequality we get
\begin{equation*}\begin{split}
\int_{\mathbb{R}^{d}} \log\psi(t,\xi)\,\partial^{2}_{\xi_{i}}\Q_{+}&(\psi,\psi)(t,\xi)\,\d\xi \leq \int_{\mathbb{R}^{d}} \big(c_{\varepsilon}(t)\langle \xi \rangle^{2+\varepsilon} + \psi(t,\xi)\big)\Big|\partial^{2}_{\xi_{i}}\Q_{+}(\psi,\psi)(t,\xi)\Big|\,\d\xi \\
&\leq C_{d,\varepsilon}\Big(c_{\varepsilon}(t) + \|\psi(t)\|_{L^{2}}\Big)\Big\|\langle \xi \rangle^{2+3\varepsilon/2+d/2}\,\partial^{2}_{\xi_{i}}\Q_{+}(\psi,\psi)(t,\xi)\Big\|_{L^{2}} \qquad \forall\, t >0\end{split}\end{equation*}
for some universal positive constant depending on $d,\varepsilon$. Now, using Theorem \ref{regularite} we can estimate the last term as
\begin{equation*} 
\Big\|\langle \xi \rangle^{2+3\varepsilon/2+d/2}\,\partial^{2}_{\xi_{i}}\Q_{+}(\psi,\psi)(t,\xi)\Big\|_{L^{2}}\leq \left\|\Q_{+}(\psi,\psi)(t)\right\|_{H^{2}_{2+3\varepsilon/2+d/2}}\leq C\Big(\|\psi\|^{2}_{H^{s}_{\eta_{1}}} + \|\psi(t)\|^{2}_{L^{1}_{\eta_{2}}}\Big)\,
\end{equation*}
with
$$\eta_{1}:=\frac{8+d+3\varepsilon}{2}, \qquad \eta_{2}:=\frac{6+3\varepsilon+d}{2}, \qquad s=2 - \frac{(5-d)^{+}}{2}.$$
Therefore, using the uniform estimates on the $H^{s}_{\eta_{1}}(\R^{d})$ and moments we obtain that, for a suitable choice of $\varepsilon >0$ small enough and $\alpha \in (0,\alpha_0^{*})$, it holds
\begin{equation*}
\frac{\d }{\d t}\|g(t)\|_{L^{2}}^{2}  + \frac{\kappa_{0}}{2}\|g(t)\|_{L^{2}}^{2}  \leq C(1+\log^{+}(1/t))\|g(t)\|_{L^{2}}\, \qquad t >0,
\end{equation*}
or, equivalently
\begin{equation*} \frac{\d }{\d t}\bm{y}(t) + \frac{\kappa_{0}}{4}\bm{y}(t)  \leq \frac{C}{2}(1+\log^{+}(1/t)), \qquad t >0,
\qquad \text{ with } \quad\bm{y}(t):=\|g(t)\|_{L^{2}}\,. 
\end{equation*}
Using that the mapping $t\mapsto 1+\log^{+}(1/t)$ is integrable at $t=0$, simple integration of this differential inequality implies that $\sup_{t\geq0}\bm{y}(t)\leq C_{1}\bm{y}(0)+C_2<\infty$ for some explicit constants $C_{1}$ and $C_2.$ This proves the result.
\end{proof}
\begin{cor}\phantomsection\label{cor:fisher} Under the assumptions of Theorem \ref{theo:fisher}, the unique solution $\psi(t)$ to \eqref{rescaBE} satisfies the estimate
$\sup_{t \geq 1}\|\psi(t)\|_{\X_{1}} < \infty$,
where we recall that $\X_{1}=W^{1,1}_{1}(\m)$, with weight $\m$ having rate $a<A/2$. \end{cor}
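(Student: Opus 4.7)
The plan is a short interpolation argument combining the exponential moment bound provided by Theorem \ref{theo:tail} with the uniform control on the Fisher information given by Theorem \ref{theo:fisher}, linked by Cauchy--Schwarz.

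By definition,
$$\|\psi(t)\|_{\X_{1}} = \|\psi(t)\|_{L^{1}_{1}(\m)} + \sum_{i=1}^{d} \|\partial_{\xi_{i}} \psi(t)\|_{L^{1}_{1}(\m)},$$
with weight $\m(\xi) = \exp(a|\xi|)$ for some $a \in (0, A/2)$. The zero-order term is immediate: pick $a' \in (a, A/2)$ so that $\langle \xi \rangle \m(\xi) \le C \exp(a'|\xi|)$ for a universal constant; since $a' < A$, Theorem \ref{theo:tail} yields
$$\sup_{t \ge 1} \|\psi(t)\|_{L^{1}_{1}(\m)} \le C \sup_{t \ge 1} \int_{\R^{d}} \psi(t,\xi)\,\exp(a'|\xi|)\,\d\xi < \infty.$$

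For the first-order part, I would write, using Cauchy--Schwarz,
\begin{align*}
\int_{\R^{d}} |\nabla \psi(t,\xi)|\,\langle \xi\rangle\,\m(\xi)\,\d\xi
&= \int_{\R^{d}} \frac{|\nabla \psi(t,\xi)|}{\sqrt{\psi(t,\xi)}}\,\sqrt{\psi(t,\xi)}\,\langle \xi\rangle\,\m(\xi)\,\d\xi \\
&\le \Bigl(\int_{\R^{d}} \frac{|\nabla\psi(t,\xi)|^{2}}{\psi(t,\xi)}\,\d\xi\Bigr)^{1/2} \Bigl(\int_{\R^{d}} \psi(t,\xi)\,\langle \xi\rangle^{2}\,\exp(2a|\xi|)\,\d\xi\Bigr)^{1/2} \\
&= 2\,\sqrt{I(\psi(t))}\;\Bigl(\int_{\R^{d}} \psi(t,\xi)\,\langle \xi\rangle^{2}\,\exp(2a|\xi|)\,\d\xi\Bigr)^{1/2}.
\end{align*}
The first factor is uniformly bounded in $t$ by Theorem \ref{theo:fisher}. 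For the second, since $a < A/2$, pick $a'' \in (2a, A)$; then $\langle \xi\rangle^{2}\exp(2a|\xi|) \le C \exp(a''|\xi|)$, and Theorem \ref{theo:tail} ensures the resulting integral is uniformly bounded for $t \ge 1$. Summing over $i=1,\dots,d$ completes the bound on $\|\psi(t)\|_{\X_{1}}$.

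There is no substantive obstacle here: the deep analytic inputs --- propagation of exponential moments (Theorem \ref{theo:tail}) and propagation of the Fisher information (Theorem \ref{theo:fisher}) --- are already established, and the corollary reduces to the above Cauchy--Schwarz interpolation between the $L^{2}$ quantity carried by $\nabla\sqrt{\psi}$ and an exponentially weighted $L^{1}$ quantity on $\psi$. The only quantitative point to verify is the strict inequality $2a < A$, which is precisely the role of the condition $a < A/2$ in the statement.
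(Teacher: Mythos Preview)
Your proof is correct and follows essentially the same approach as the paper's: both use Cauchy--Schwarz to split $|\nabla\psi|\,\m$ into $|\nabla\psi|/\sqrt{\psi}$ (controlled by the Fisher information, Theorem \ref{theo:fisher}) and $\sqrt{\psi}\,\m$ (controlled by exponential moments, Theorem \ref{theo:tail}), with the condition $a<A/2$ ensuring $2a<A$. Your version is in fact slightly more careful in explicitly handling the polynomial factor $\langle\xi\rangle$ and the zero-order term $\|\psi(t)\|_{L^{1}_{1}(\m)}$, which the paper's terse proof leaves implicit.
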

\begin{proof} Using Cauchy-Schwarz inequality
$$\int_{\R^{d}}|\nabla \psi(t,\xi)|\exp(a/2|\xi|)\d\xi \leq 2\sqrt{I(\psi(t)}\left(\int_{\R^{d}}\psi(t,\xi)\exp(a|\xi|)\d\xi\right)^{1/2}\leq C\,,\quad\text{for any}\quad a<A\,,\quad t\geq1\,.$$
The boundedness in the last inequality is concluded thanks to Theorem \ref{theo:fisher} and Theorem \ref{theo:tail}.\end{proof}
\subsection{Stability estimate}\label{sec:stabi7}
Using the Csisz\'ar-Kullback inequality (see \cite[Theorem A.2, p. 131]{jungel}), we deduce from Theorem \ref{prop:decayHt} the following result.
{\begin{cor}\phantomsection\phantomsection \label{cor:Lal}
Assume that $0<f_{0} \in L^{1}_{3}(\R^{d})\cap L^{2}(\R^{d})$ and that $0<a<\min\{\overline{a},A/2\}$ where $\overline{a}$ and $A$ are defined respectively in Lemma \ref{prop:psi} and Theorem \ref{theo:tail}.  There exists some explicit function $\ell\::\:(0,\alpha^{\ddagger}] \to \mathbb{R}^{+}$ with $\lim_{\alpha \to 0}\ell(\alpha)=0$ and some constant $C>0$ both depending on  the $L^{1}_{3}\cap L^{2}$-norm of $\psi_{0}$ such that, for any $\alpha \in (0,\alpha^{\ddagger})$ the solution $\psi(t,\xi)$ to \eqref{rescaBE} satisfies
$$\left\|\psi(t)-\psi_{\alpha}\right\|_{L^{1}(m_a)} \leq \ell(\alpha)+C (1+t)^{-1/4} \qquad \forall\, t \geq 1.$$ 
where $m_{a}(\xi):=\exp(a|\xi|)$. 
\end{cor}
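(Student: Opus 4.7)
The strategy is to combine the relative entropy decay of Theorem \ref{prop:decayHt} with a \emph{weighted} Csisz\'ar--Kullback--Pinsker type inequality which directly controls $\|\psi(t)-\M\|_{L^{1}(m_a)}$ by $\sqrt{\mathcal{H}(\psi(t)|\M)}$; the triangle inequality together with Lemma \ref{prop:psi} then handles the passage from $\M$ to $\psi_\alpha$. The crucial point is that the assumption $a<A/2$ implies that the squared weight $m_a^{2}=m_{2a}$ still lies within the range of exponential moments granted by Theorem \ref{theo:tail}, so that $\sup_{t\ge 1}\int_{\R^d}(\psi(t)+\M)\,m_{2a}\,\d\xi<\infty$; the Gaussianity of $\M$ takes care of its contribution automatically.

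Concretely, I will write $\psi(t,\xi)=\M(\xi)\bigl(1+\varphi(t,\xi)\bigr)$ with $\int \M\,\varphi\,\d\xi=0$ (since both densities have unit mass) and use the elementary Csisz\'ar inequality $(1+x)\log(1+x)-x\ge \tfrac{x^{2}}{2+2x/3}$ for $x>-1$ to deduce
\[
\mathcal{H}(\psi(t)|\M)\;\ge\;\int_{\R^{d}}\M(\xi)\,\frac{\varphi(t,\xi)^{2}}{2\bigl(1+\varphi(t,\xi)/3\bigr)}\,\d\xi .
\]
Applying Cauchy--Schwarz with the weight $m_a$ then yields
\[
\|\psi(t)-\M\|_{L^{1}(m_a)}\;=\;\int_{\R^{d}}\M|\varphi|\,m_a\,\d\xi\;\le\; \sqrt{2\,\mathcal{H}(\psi(t)|\M)}\;\sqrt{\int_{\R^{d}}\bigl(\M+\tfrac{1}{3}(\psi-\M)\bigr)m_a^{2}\,\d\xi}.
\]
Thanks to $a<A/2$ and Theorem \ref{theo:tail}, the second factor is uniformly bounded in $t\ge 1$, and combining with Theorem \ref{prop:decayHt} together with $\sqrt{x+y}\le\sqrt{x}+\sqrt{y}$ produces
\[
\|\psi(t)-\M\|_{L^{1}(m_a)}\;\le\;C\bigl((1+t)^{-1/4}+\alpha^{1/6}\bigr),\qquad t\ge 1.
\]
Finally Lemma \ref{prop:psi} (for $k=q=0$, applicable because $a<\overline{a}$) gives $\|\M-\psi_\alpha\|_{L^{1}(m_a)}\le\eta_{0,0}(\alpha)$ with $\eta_{0,0}(\alpha)\to 0$, and the triangle inequality yields the claim with $\ell(\alpha):=\eta_{0,0}(\alpha)+C\alpha^{1/6}$.

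The main technical obstacle is the derivation of the weighted Csisz\'ar--Kullback--Pinsker inequality: it is not the plain Pinsker bound and relies on the uniform control $\sup_{t\ge 1}\int \psi(t)\,m_{2a}\,\d\xi<\infty$, which is precisely what the restriction $a<A/2$ secures through Theorem \ref{theo:tail}. Once this uniform moment bound is in place, the remaining steps -- plugging in the entropy decay from Theorem \ref{prop:decayHt} and comparing $\M$ to $\psi_\alpha$ via Lemma \ref{prop:psi} -- are routine.
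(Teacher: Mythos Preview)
Your argument is correct and follows essentially the same route as the paper: control $\|\psi(t)-\M\|_{L^{1}(m_a)}$ by $\sqrt{\mathcal{H}(\psi(t)|\M)}$ times a uniformly bounded exponential moment (requiring $a<A/2$), plug in Theorem~\ref{prop:decayHt}, and then pass from $\M$ to $\psi_\alpha$ via Lemma~\ref{prop:psi}. The only difference is cosmetic: the paper obtains the weighted Pinsker step by first interpolating $\|\psi(t)-\M\|_{L^{1}(m_a)}^{2}\le \|\psi(t)-\M\|_{L^{1}}\,\|\psi(t)-\M\|_{L^{1}(m_{2a})}$ via Cauchy--Schwarz and then applying the standard Csisz\'ar--Kullback inequality to the unweighted $L^{1}$ factor, whereas you use the pointwise bound $(1+x)\log(1+x)-x\ge \tfrac{x^{2}}{2(1+x/3)}$ and a single weighted Cauchy--Schwarz; both yield the same estimate under the same constraint $2a<A$.
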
}\phantomsection 
\begin{proof} Using both Csisz\'ar-Kullback and Cauchy-Schwarz inequalities, we obtain, for all $m_{a}(\xi):=\exp(a|\xi|)$
  \begin{align*}
    \|\psi(t)-\M\|_{L^{1}(m_{a})}^{2}
    &\leq \|\psi(t)-\M\|_{L^{1}(\R^{d})}\,\|\psi(t)-\M\|_{L^{1}(m_{2a})}
    \\
    &\leq \sqrt{2\,H(\psi(t)|\M)}\,\|\psi(t)-\M\|_{L^{1}(m_{2a})}.
  \end{align*}
Due to Theorem \ref{theo:tail}, choosing $a< A/2$, gives $\sup_{t \geq 1} \|\psi(t)-\M\|_{L^{1}(m_{2a})} < \infty$.  One deduces from \eqref{eq:estimH} that there exists some constant $C>0$ such that
$$\|\psi(t)-\M\|_{L^{1}(m_a)} \leq C \left((1+t)^{-1/4}+ \alpha^{1/6}\right)\,, \qquad \forall\, t \geq 1\,.$$
Using that $\|\psi(t)-\psi_{\alpha}\|_{L^{1}(m_a)} \leq \|\psi(t)-\M\|_{L^{1}(m_a)}+\|\M-\psi_{\alpha}\|_{L^{1}(m_a)}$, we obtain the conclusion invoking Lemma \ref{prop:psi}.
\end{proof}
Let us move to a perturbative setting. Set $h(t,\xi) := \psi(t,\xi) - \psi_{\alpha}(\xi)$, so that,
\begin{equation*}\begin{split}
    \partial_t h(t,\xi) &= \mathscr{L}_{\alpha}h(t,\xi)
    + \mathbb{B}_{\alpha}(h,h)(t,\xi) + \left[\mathbf{A}_{\alpha}-\mathbf{A}_{\psi}(t)\right]\psi(t,\xi)\\
    &\phantom{+++} + \left[\mathbf{B}_{\alpha}-\mathbf{B}_{\psi}(t)\right]\xi \cdot \nabla_{\xi} \psi(t,\xi) + \mathbf{B}_{\psi}(t)\bm{v}_{\psi}(t) \cdot \nabla_{\xi}\psi(t,\xi).
    \end{split}
  \end{equation*}
As already mentioned, defining $\bm{v}_{\alpha}$ through
$\mathbf{B}_{\alpha}\bm{v}_{\alpha}:=-\alpha\int_{\R^{d}}\xi\,\Q_{-}(\psi_{\alpha},\psi_{\alpha})(\xi)\d\xi$,
one sees that $\bm{v}_{\alpha}$ is equal to zero since $\Q_{-}(\psi_{\alpha},\psi_{\alpha})$ is radially symmetric. Therefore, one can rewrite the evolution as
  \begin{equation}\label{Eq:dth}\begin{split}
    \partial_t h(t,\xi) &= \mathscr{L}_{\alpha}h(t,\xi)
    + \mathbb{B}_{\alpha}(h,h)(t,\xi) + \left[\mathbf{A}_{\alpha}-\mathbf{A}_{\psi}(t)\right]\psi(t,\xi)\\
    &\phantom{+++} + \left[\mathbf{B}_{\alpha}-\mathbf{B}_{\psi}(t)\right]\xi \cdot \nabla_{\xi} \psi(t,\xi) + \left[\mathbf{B}_{\psi}(t)\bm{v}_{\psi}(t)-\mathbf{B}_{\alpha}\bm{v}_{\alpha}\right] \cdot \nabla_{\xi}\psi(t,\xi).\end{split}\end{equation}
Moreover, for any $t\geq1$ we have that $h(t,\xi) \in \X_{0}$, and 
 $$\int_{\R^{d}}h(t,\xi)\left(\begin{array}{c}1 \\ \xi \\|\xi|^{2}\end{array}\right)\d\xi=\left(\begin{array}{c}0 \\ 0 \\0\end{array}\right)\,,\qquad \forall\,t\geq0.$$
Let us introduce, for any $t \geq 0$
\begin{align*}
\mathbf{G}_{\alpha}(t,\xi)=\mathbb{B}_{\alpha}(h,h)&(t,\xi) + \left[\mathbf{A}_{\alpha}-\mathbf{A}_{\psi}(t)\right]\psi(t,\xi)\\
& + \left[\mathbf{B}_{\alpha}-\mathbf{B}_{\psi}(t)\right]\xi \cdot \nabla_{\xi} \psi(t,\xi) + \left[\mathbf{B}_{\psi}(t)\bm{v}_{\psi}(t)-\mathbf{B}_{\alpha}\bm{v}_{\alpha}\right] \cdot \nabla_{\xi}\psi(t,\xi).\end{align*}
As a consequence, using Duhamel's formula, where we recall that $\{\mathcal{S}_{\alpha}(t)\,;\,t\geq 0\}$ denotes the $C_{0}$-semigroup generated by $\mathscr{L}_{\alpha}$ in $\X_{0}$, we can write
{\begin{equation}\label{eq:duham}
h(t) = \mathcal{S}_\alpha(t-t_0)h(t_{0}) + \int_{t_{0}}^t \mathcal{S}_\alpha({t-s}) \mathbf{G}_\alpha(s) \,\d s, \qquad \forall\, t   \geq t_{0} > 0\,.\end{equation}}
\begin{lem}\label{eq:Ga}\phantomsection Assume the conditions of Theorem \ref{theo:fisher} for $f_0>0$ and take $a\in\left(0,\frac{1}{2}\min\{\overline{a},A/2\}\right)$ and any $t_{0} \geq  1$.  Then,

\begin{equation*}
\|\mathbf{G}_{\alpha}(s)\|_{\X_{0}} \leq C\|h(s)\|_{\X_{0}} \left(\alpha + \ell(\alpha)^{1/2}+(1+s)^{-1/8}\right) \qquad \forall\, s \geq t_{0}.\end{equation*}
The constant $C>0$ depends on $L^{1}_{\eta}\cap H^{(5-d)^{+}/2}_{\eta}$-norm of $\psi_{0}$, with $\eta>4+d/2$.
\end{lem}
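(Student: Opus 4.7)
The plan is to bound the four constituent terms of $\mathbf{G}_\alpha(s)$ separately, using the uniform bounds already established on $\psi$ (Theorems \ref{theo:tail}, \ref{theo:fisher} and Corollary \ref{cor:fisher}) together with the convergence estimate of Corollary \ref{cor:Lal}. The assumption $a<\tfrac{1}{2}\min\{\overline{a},A/2\}$ leaves enough room above $a$ so that all weighted moments and Sobolev norms of $\psi(s)$ and $\psi_\alpha$ are uniformly bounded in $s\geq 1$, with a weight strictly larger than $\m$. I will denote this uniform bound by $M$ throughout.

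For the nonlinear part $\mathbb{B}_\alpha(h,h)$, the bilinear estimate \eqref{eq:qpm} gives $\|\mathbb{B}_\alpha(h,h)\|_{\X_0}\leq C\|h\|_{L^1_1(\m)}^2$. Cauchy--Schwarz in the weighted measure yields $\|h\|_{L^1_1(\m)}^2\leq \|h\|_{L^1(\m)}\,\|h\|_{L^1_2(\m)}=\|h\|_{\X_0}\,\|h\|_{L^1_2(\m)}$, and a second application of Cauchy--Schwarz gives $\|h\|_{L^1_2(\m)}\leq \|h\|_{\X_0}^{1/2}\,\|h\|_{L^1_4(\m)}^{1/2}$, where the last factor is controlled by $C\|h\|_{L^1(m_{2a})}^{1/2}\leq CM^{1/2}$ by absorbing the polynomial into a slightly larger exponential weight. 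Combining these inequalities gives $\|\mathbb{B}_\alpha(h,h)\|_{\X_0}\leq CM^{1/2}\|h\|_{\X_0}\,\|h\|_{\X_0}^{1/2}$, and Corollary \ref{cor:Lal} provides $\|h\|_{\X_0}^{1/2}\leq \ell(\alpha)^{1/2}+C(1+s)^{-1/8}$ for $s\geq 1$, which is the desired smallness.

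For the three coefficient correction terms, I will use bilinearity: $\mathbf{A}_\psi(s)-\mathbf{A}_\alpha$ and $\mathbf{B}_\psi(s)-\mathbf{B}_\alpha$ both have the form $-\tfrac{\alpha}{2}\!\int\!P(\xi)[\Q_-(h,\psi)+\Q_-(\psi_\alpha,h)]\,\d\xi$ with $P$ polynomial of degree at most $2$, and $\mathbf{B}_\psi(s)\bm{v}_\psi(s)-\mathbf{B}_\alpha\bm{v}_\alpha=-\alpha\!\int\!\xi[\Q_-(h,\psi_\alpha)+\Q_-(\psi_\alpha,h)+\Q_-(h,h)]\,\d\xi$ since $\Q_-(\psi_\alpha,\psi_\alpha)$ is radial so $\bm{v}_\alpha=0$. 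Using $|\xi|,|\xi|^2\lesssim \m(\xi)$ and $\int\!\Q_-(f,g)(1+|\xi|^2)\,\d\xi\leq C(\|f\|_{L^1_3}\|g\|_{L^1_1}+\|f\|_{L^1_1}\|g\|_{L^1_3})$, each of these three scalar quantities is bounded by $C\alpha\|h\|_{\X_0}$ (plus a strictly quadratic $\alpha\|h\|_{\X_0}^2$ piece in the $\bm{v}_\psi$ term, which is absorbed). Multiplying by the corresponding factor $\psi(s)$, $\xi\cdot\nabla_\xi\psi(s)$ or $\nabla_\xi\psi(s)$, whose $\X_0$-norms are bounded by $M$ owing to Theorem \ref{theo:tail} and Corollary \ref{cor:fisher} (the latter yielding $\|\nabla\psi(s)\|_{L^1_1(\m)}\leq M$), produces the contribution $C\alpha\|h\|_{\X_0}$.

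Adding the four estimates proves the claim. I expect the main technical subtlety to be the bilinear term: producing the factor $(\alpha+\ell(\alpha)^{1/2}+(1+s)^{-1/8})\|h\|_{\X_0}$ (rather than, say, $\|h\|_{\X_0}^{3/2}$) requires the two successive Cauchy--Schwarz splits described above, combined with the uniform exponential moment bounds of Theorem \ref{theo:tail} at rate strictly larger than $a$. The remaining coefficient terms are comparatively straightforward once one notices that $\bm{v}_\alpha=0$ by radiality of $\psi_\alpha$, so that each correction is automatically of order $\alpha\|h\|_{\X_0}$.
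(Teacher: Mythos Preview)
Your argument is correct and follows essentially the same route as the paper. The only notable difference is in the bilinear term: the paper invokes the sharper quadratic estimate of Lemma~\ref{lem:estimatQ} directly, namely $\|\mathbb{B}_\alpha(h,h)\|_{\X_0}\leq C\|h\|_{L^1_2(\m)}\|h\|_{\X_0}$, and then bounds $\|h\|_{L^1_2(m_a)}\leq C\|h\|_{L^1(m_{2a})}^{1/2}$ via a single Cauchy--Schwarz (with the remaining factor absorbed into the uniform moment bound), applying Corollary~\ref{cor:Lal} at the larger weight $m_{2a}$. You instead start from the general bilinear bound $\|\mathbb{B}_\alpha(h,h)\|_{\X_0}\leq C\|h\|_{L^1_1(\m)}^2$, perform two Cauchy--Schwarz splits, and apply Corollary~\ref{cor:Lal} at weight $m_a$. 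Both routes land on the same estimate; the paper's is slightly more economical, while yours makes the role of the assumption $a<\tfrac12\min\{\overline a,A/2\}$ (providing headroom to absorb polynomial moments into a larger exponential weight) perhaps a bit more explicit. The treatment of the three coefficient-correction terms is the same in both.
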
\phantomsection
\begin{proof} 
Denote by $C >0$ a constant that may depend on $L^{1}_{\eta}\cap H^{(5-d)^{+}/2}_{\eta}$-norm of $\psi_{0}$, with $\eta>4+d/2$, and that can change from line to line.  Using Lemma \ref{lem:estimatQ} in Appendix \ref{app:point}
$$\|\mathbb{B}_{\alpha}(h(s),h(s))\|_{\X_{0}}  \leq C\|h(s)\|_{L^{1}_{2}(\m)}\|h(s)\|_{L^{1}(\m)} \qquad \qquad \forall s\,\geq t_{0}.$$
{Moreover, using Cauchy-Schwarz inequality together with Corollary \ref{cor:Lal}, one sees that
$$\|h(s)\|_{L^{1}_{2}(\m)} =\|h(s)\|_{L^{1}_{2}(m_a)}\leq C\|h(s)\|_{L^1(m_{2a})}^{1/2} \leq C\left(\ell(\alpha)^{1/2} +(1+s)^{-1/8}\right)\qquad \forall s \geq t_{0},$$
where $m_{a}(\xi):=\exp(a|\xi|)$.
}
Therefore,
$$\|\mathbb{B}_{\alpha}(h(s),h(s))\|_{\X_{0}} \leq C\,\left(\ell(\alpha)^{1/2} +(1+s)^{-1/8}\right)\|h(s)\|_{\X_{0}} \qquad \forall s \geq t_{0}.$$
It is easy to check that 
\begin{equation*}\begin{split}
\left|\mathbf{A}_{\alpha}-\mathbf{A}_{\psi}(s)\right| &\leq \frac{\alpha(d+2)}{2}\,\left[\|\Q_{-}(h(s),\psi_{\alpha})\|_{L^{1}_{2}(\R^{d})} + \|\Q_{-}(\psi(s),h(s))\|_{L^{1}_{2}(\R^{d})}\right]\\
&\leq C\alpha\,\|h(s)\|_{L^{1}_{3}(\R^{d})}.
\end{split}\end{equation*}
In the same way, 
\begin{equation}\label{conv}
\left|\mathbf{B}_{\alpha}-\mathbf{B}_{\psi}(s)\right| + \left|\mathbf{B}_{\alpha}\bm{v}_{\alpha}-\mathbf{B}_{\psi}(s)\bm{v}_{\psi}(s)\right|\leq C\alpha\,\|h(s)\|_{L^{1}_{3}(\R^{d})}\,.
\end{equation}
Consequently,
$$\|\mathbf{G}_{\alpha}(s)\|_{\X_{0}} \leq C\|h(s)\|_{\X_{0}}\left(\ell(\alpha)^{1/2}+(1+s)^{-1/8}+\alpha\,\|\psi(s)\|_{\X_{0}} + \alpha \|\psi(s)\|_{\X_{1}}\right)\,,\qquad s\geq t_{0}\geq1\,.$$
Moreover, under our assumption on $f_{0}$, by Theorem \ref{theo:tail} and Corollary \ref{cor:fisher}, $\sup_{s\geq1}\|\psi(s)\|_{\X_{0}} \leq C$ and $\sup_{s\geq1}\|\psi(s)\|_{\X_{1}} \leq C$.\end{proof} 

The following lemma is crucial to the argument.  We use the notations of Lemma \ref{lem:projnorm}.
\begin{lem}\phantomsection\label{lem:I-P}\phantomsection
There exists some constant $C_{0} >0$ such that
$$ \|h(t)\|_{\X_{0}} \leq C_{0}\|(\mathbf{I}-\mathbf{P}_{\alpha})h(t)\|_{\X_{0}}, \qquad \forall\, t \geq 1, \qquad \forall \alpha \in (0,\alpha^{\star}_{1}),$$
where $\alpha^{\star}_{1}$ is defined in Lemma \ref{lem:projnorm}.
\end{lem}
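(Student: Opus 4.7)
The key observation is that, by construction, $h(t) = \psi(t) - \psi_\alpha$ has vanishing mass, momentum, and kinetic energy for every $t \geq 0$. Indeed, both $\psi(t,\cdot)$ (by the conservation identity \eqref{eq:conserve} in Proposition \ref{prop:cauc}) and $\psi_\alpha$ (by the normalization \eqref{init}) share the same mass $1$, zero momentum, and energy $d/2$. Consulting the explicit formulas for the coefficients $\eta_j(h)$ displayed in Remark \ref{remP0}, each of $\eta_0(h(t)), \eta_1(h(t)), \ldots, \eta_{d+1}(h(t))$ is a linear combination of $\int h(t,\xi)\,\d\xi$, $\int h(t,\xi)\,\xi_j\,\d\xi$ and $\int h(t,\xi)\,|\xi|^2\,\d\xi$, and all these integrals vanish identically in $t$. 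Therefore
\begin{equation*}
\mathbf{P}_0 h(t) = 0 \qquad \forall\, t \geq 0.
\end{equation*}

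Next I would use this orthogonality relation to rewrite the $\mathbf{P}_\alpha$ component of $h(t)$ in terms of the difference $\mathbf{P}_\alpha - \mathbf{P}_0$. Starting from the trivial decomposition $h(t) = (\mathbf{I}-\mathbf{P}_\alpha)h(t) + \mathbf{P}_\alpha h(t)$ and inserting $\mathbf{P}_\alpha h(t) = (\mathbf{P}_\alpha - \mathbf{P}_0)h(t)$, I obtain the identity
\begin{equation*}
\bigl(\mathbf{I} - (\mathbf{P}_\alpha - \mathbf{P}_0)\bigr)\,h(t) = (\mathbf{I}-\mathbf{P}_\alpha)\,h(t), \qquad \forall\, t \geq 1,
\end{equation*}
where we also need $h(t) \in \X_0$, which holds for $t \geq 1$ since $\psi(t)\in\X_0$ by Theorem \ref{theo:tail} and $\psi_\alpha \in \X_0$ by Lemma \ref{prop:psi}.

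Finally, Lemma \ref{lem:projnorm} is tailor-made for this situation: for $\alpha \in (0,\alpha^\star_1)$, the operator $\mathbf{I} - (\mathbf{P}_\alpha - \mathbf{P}_0)$ is invertible in $\X_0$ with $\|(\mathbf{I} - (\mathbf{P}_\alpha - \mathbf{P}_0))^{-1}\|_{\mathscr{B}(\X_0)} \leq C$ for some $C > 0$ independent of $\alpha$. Applying this inverse to both sides of the above identity yields
\begin{equation*}
\|h(t)\|_{\X_0} \leq C\,\|(\mathbf{I}-\mathbf{P}_\alpha)\,h(t)\|_{\X_0} \qquad \forall\, t \geq 1,
\end{equation*}
which is the desired conclusion with $C_0 = C$. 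In this argument there is no real obstacle: the potentially delicate point is the uniform invertibility of $\mathbf{I} - (\mathbf{P}_\alpha - \mathbf{P}_0)$, but that work has already been carried out in Lemma \ref{lem:projnorm} thanks to the quadratic estimate $\|(\mathbf{P}_\alpha - \mathbf{P}_0)^2\|_{\mathscr{B}(\X_0)} \leq \ell_1(\alpha) \to 0$ (itself reducing to the enlargement bound on $\|\mathbf{P}_\alpha\|_{\mathscr{B}(\X_0,\X_1)}$ and to Lemma \ref{lem:PaP0}).
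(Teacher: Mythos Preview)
Your proof is correct and follows essentially the same approach as the paper: observe that $\mathbf{P}_0 h(t)=0$ from the conservation properties, rewrite $(\mathbf{I}-\mathbf{P}_\alpha)h(t)=(\mathbf{I}-(\mathbf{P}_\alpha-\mathbf{P}_0))h(t)$, and invoke Lemma~\ref{lem:projnorm} for the uniform invertibility. You provide slightly more justification for $\mathbf{P}_0 h(t)=0$ and for $h(t)\in\X_0$, but the argument is identical in structure and content.
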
\phantomsection
\begin{proof} Since $\mathbf{P}_{0}h(t)=0$ for all $t > 0$, one has
$$\mathbf{P}_{\alpha}h(t)=(\mathbf{P}_{\alpha}-\mathbf{P}_{0})h(t)\,\quad\text{ and }\quad g(t):=(\mathbf{Id}-\mathbf{P}_{\alpha})h(t)=(\mathbf{Id}-(\mathbf{P}_{\alpha}-\mathbf{P}_{0}))h(t)\,.$$ Since $\mathbf{Id}-(\mathbf{P}_{\alpha}-\mathbf{P}_{0})$ is invertible for any $\alpha \in (0,\alpha^{\star}_{1})$ we get from Lemma \ref{lem:projnorm} that there exists $C_{0} >0$ independent of $\alpha$ such that
$$\|h(t)\|_{\X_{0}} \leq \|(\mathbf{Id}-(\mathbf{P}_{\alpha}-\mathbf{P}_{0}))^{-1}\|_{\mathscr{B}(\X_{0})}\|g(t)\|_{\X_{0}} \leq C_{0}\|g(t)\|_{\X_{0}}\,,\qquad \forall\,t\geq1\,,$$
for any $\alpha \in (0,\alpha^{\star}_{1}).$ This proves the result.\end{proof}
\begin{proof}[Proof of Theorem \ref{theo:main-rescaled}]
For any $\alpha \in (0,\alpha^{\star}_{1})$, introduce, as in aforementioned proof ,
$$g(t)=(\mathbf{I}-\mathbf{P}_{\alpha})h(t),\quad \forall\, t > 0.$$
Applying $(\mathbf{I}-\mathbf{P}_{\alpha})$ to Duhamel's formula \eqref{eq:duham} and using that $\mathbf{P}_{\alpha}$ commutes with $\mathcal{S}_{\alpha}(t)$ we get
$$g(t)=\mathcal{S}_\alpha(t-t_0)g(t_{0})
    + \int_{t_{0}}^t \mathcal{S}_\alpha({t-s})\left(\mathbf{I}-\mathbf{P}_{\alpha}\right)
    \mathbf{G}_\alpha(s) \,\d s, \qquad \forall\, t \geq t_{0}.$$
Using Theorem \ref{theo:decayX0} and Lemma \ref{eq:Ga},  for all $\mu \in (0,{\nu}_{*}') $ and any $t \geq t_{0}$ we have that
\begin{equation*}\begin{split}\|g(t)\|_{\X_{0}} &\leq C_{ {\mu}}\exp(- {\mu}(t-t_{0})\|h(t_{0})\|_{\X_{0}}  +C_{{\mu}}\int_{t_{0}}^{t}\exp(-{\mu}(t-s))\|G_{\alpha}(s)\|_{\X_{0}}\d s\\
&\leq C_{\mu}\exp(-\mu(t-t_{0}))\|h(t_{0})\|_{\X_{0}} + C_{\mu}\int_{t_{0}}^{t} \left(\alpha + \ell(\alpha)^{1/2}+(1+s)^{-1/8}\right)\exp(-\mu(t-s))\|h(s)\|_{\X_{0}}\d s. \end{split}\end{equation*}
Using Lemma \ref{lem:I-P}, this translates into
$$\|g(t)\|_{\X_{0}} \leq C_{\mu}\exp(-\mu(t-t_{0}))\|g(t_{0})\|_{\X_{0}}+ C_{0}\,C_{\mu}\int_{t_{0}}^t\left(\alpha + \ell(\alpha)^{1/2}+(1+s)^{-1/8}\right)\exp(-\mu(t-s))\|g(s)\|_{\X_{0}}\d s.$$
Thanks to Gronwall's Lemma, we obtain
$$\|g(t)\|_{\X_{0}} \leq C_{1}\,\exp(-\mu t) \exp\left( C_{0}\,C_{\mu}\left(\alpha t + \ell(\alpha)^{1/2}t + \frac{8}{7}(1+t)^{7/8}\right)   \right)\|h(t_{0})\|_{\X_{0}}, \qquad \forall\, t \geq t_{0}.$$
But, one has 
$(1+t)^{7/8} \leq \chi (1+t) +C_\chi$, for $\chi>0$. Hence,  
$$\|g(t)\|_{\X_{0}} \leq C\,\exp(-\mu_{\alpha}t)\|h(t_{0})\|_{\X_{0}}, \qquad \forall\, t \geq t_{0}$$
with $\mu_{\alpha}=\mu-C_{0}C_{\mu}\left(\alpha + \ell(\alpha)^{1/2} +\chi\right).$ Recall from theorem \ref{theo:decayX0} that $\mu$ may be chosen arbitrarily close to $\mu_\star$. Consequently, $\mu_{\alpha}$ may be chosen arbitrarily close to $\mu_\star$ for $\alpha$ small enough. Using again Lemma \ref{lem:I-P}, this gives
$$\|h(t)\|_{\X_{0}} \leq C\exp(-\mu_{\alpha}t)\|h(t_{0})\|_{\X_{0}} \qquad \forall\, t \geq t_{0},$$
achieving the proof.
\end{proof}

\section{Back to the original variable}\label{sec:original}

Let us now explain how the above convergence result can be translated in the original variable. Recall that, from \eqref{scalingPsi}, the link between the original unknown $f(t,v)$ and the rescaled function $\psi(\tau,\xi)$ is given by 
\begin{equation*}
f(t,v)=n_{f}(t)(2T_{f}(t))^{-d/2}\psi\left(\tau(t),\frac{v-\bm{u}_{f}(t)}{\sqrt{2T_{f}(t)}}\right)
\end{equation*} 
where $n_{f}(t),\bm{u}_{f}(t)$ and $T_{f}(t)$ denote respectively the mass, momentum and  temperature of $f(t,\cdot).$ Then, one obtains the following version of Theorem \ref{main:no-scaled}.
\begin{prop}\phantomsection\label{prop:main-noscal}
Under the assumption and notations of Theorem \ref{main:no-scaled}, for any $\varepsilon >0$ there exist some explicit $\alpha_{c} \in (0,\alpha_{0})$ and $C_{\varepsilon} >0$ such that, for any $\alpha \in (0,\alpha_{c})$
$$\int_{\R^{d}}\left|f(t,v)- \bm{f}_{\alpha}(t,v)\right|\exp\left(a\,\frac{|v-\bm{u}_{f}(t)|}{\sqrt{2T_{f}(t)}}\right)
\d v \leq  {C_{\varepsilon}}{n_{f}(t)}\exp\left(-(\mu_{\star}-\varepsilon)\tau(t)\right), \qquad \tau(t) \geq 1\,,$$
where
$$\bm{f}_{\alpha}(t,v)=n_{f}(t)(2T_{f}(t))^{-d/2}\psi_{\alpha}\left(\frac{v-\bm{u}_{f}(t)}{\sqrt{2T_{f}(t)}}\right).$$
\end{prop}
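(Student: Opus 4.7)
The plan is essentially a direct translation via change of variables, using Theorem \ref{theo:main-rescaled} as a black box.

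First, I would use the defining relation \eqref{scalingPsi} to write, for every $t \geq 0$ and $v \in \R^d$,
$$f(t,v) - \bm{f}_\alpha(t,v) = n_f(t)\,(2T_f(t))^{-d/2}\Bigl[\psi(\tau(t),\xi) - \psi_\alpha(\xi)\Bigr],$$
where $\xi = (v - \bm{u}_f(t))/\sqrt{2T_f(t)}$. Then I would perform the change of variables $v \mapsto \xi$ in the integral, noting that $\d v = (2T_f(t))^{d/2}\,\d\xi$ and that the weight becomes $\exp(a|\xi|) = \m(\xi)$. The Jacobian exactly cancels the prefactor $(2T_f(t))^{-d/2}$, and one obtains
$$\int_{\R^d} |f(t,v) - \bm{f}_\alpha(t,v)|\,\exp\!\left(a\,\tfrac{|v-\bm{u}_f(t)|}{\sqrt{2T_f(t)}}\right)\d v = n_f(t)\,\|\psi(\tau(t)) - \psi_\alpha\|_{\X_0}.$$

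Second, I would invoke Theorem \ref{theo:main-rescaled} directly: under the standing assumptions on $f_0$ (which, through the scaling \eqref{scalingPsi}, transfer to $\psi_0$ since the scaling is an affine change of variable preserving all the regularity, weighted $L^p$, Sobolev, entropy and Fisher information conditions up to constants depending only on $n_{f_0},\bm{u}_{f_0},T_{f_0}$), for any $\varepsilon > 0$ there is $\alpha_c \in (0,\alpha_0)$ and $C_\varepsilon(\alpha) > 0$ such that for $\alpha \in (0,\alpha_c)$,
$$\|\psi(\tau) - \psi_\alpha\|_{\X_0} \leq C_\varepsilon(\alpha)\,\exp\!\bigl(-(\mu_\star - \varepsilon)\tau\bigr)\qquad \forall\,\tau \geq 1.$$
Combining with the identity above (evaluated at $\tau = \tau(t)$) yields exactly
$$\int_{\R^d} |f(t,v) - \bm{f}_\alpha(t,v)|\,\exp\!\left(a\,\tfrac{|v-\bm{u}_f(t)|}{\sqrt{2T_f(t)}}\right)\d v \leq C_\varepsilon\,n_f(t)\,\exp\bigl(-(\mu_\star-\varepsilon)\tau(t)\bigr)$$
whenever $\tau(t) \geq 1$, which is the statement.

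The only substantive point to verify carefully is that the initial datum $\psi_0$ inherits the hypotheses required by Theorem \ref{theo:main-rescaled} (and by Theorem \ref{theo:fisher} etc.) from those of Theorem \ref{main:no-scaled} on $f_0$. Since $\psi_0(\xi) = (2T_{f_0})^{d/2} n_{f_0}^{-1} f_0(\sqrt{2T_{f_0}}\,\xi + \bm{u}_{f_0})$, this is a purely affine change of variables, and all the hypotheses --- $H^{(5-d)^+/2}_\eta \cap L^1_\kappa$ membership, finiteness of entropy and Fisher information, positivity of mass and temperature --- are preserved, with norms bounded in terms of $n_{f_0},\bm{u}_{f_0},T_{f_0}$ and the corresponding norms of $f_0$. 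There is no genuine obstacle here; the whole proposition is a post-processing step built on the rescaled result, and the real work was carried out in Parts \ref{part1}--\ref{part2}. The constant $C_\varepsilon$ in the final statement absorbs $C_\varepsilon(\alpha)$ from Theorem \ref{theo:main-rescaled}.
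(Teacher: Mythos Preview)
Your proposal is correct and follows exactly the approach the paper intends: the proposition is stated without explicit proof precisely because it is the direct change-of-variables translation of Theorem~\ref{theo:main-rescaled} back to the original variables via \eqref{scalingPsi}, and your computation (Jacobian cancellation yielding $n_f(t)\|\psi(\tau(t))-\psi_\alpha\|_{\X_0}$, then invoking Theorem~\ref{theo:main-rescaled}) is exactly that translation. The remark on the transfer of hypotheses from $f_0$ to $\psi_0$ under the affine rescaling is also handled implicitly in the paper (see the beginning of Section~\ref{sec:stabi}).
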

For Proposition \ref{prop:main-noscal} to be operant, we need to have a better understanding of the behavior, as $t \to \infty$, of the quantities $n_{f}(t),\bm{u}_{f}(t),T_{f}(t)$.  We mentioned in Section \ref{sec:moments} that this seems a difficult task, yet, we can profit from the exponential convergence of $\psi(\tau,\xi)$ towards $\psi_{\alpha}$ to obtain estimates for the long-time behavior of these macroscopic quantities.
\begin{lem}\phantomsection\label{lem:ABv}
With the notations of Theorem \ref{main:no-scaled},  for any $\varepsilon >0$ there exist some explicit $\alpha_{c} \in (0,\alpha_{0})$ and $C>0$ depending only on $\varepsilon$ and $f_{0}$ such that, for any $\alpha \in (0,\alpha_{c})$
\begin{equation*}
\big|\mathbf{A}_{\psi}(\tau) - \mathbf{A}_{\alpha}\big| + \big| \mathbf{B}_{\psi}(\tau) - \mathbf{B}_{\alpha}\big| + \big|\mathbf{B}_{\psi}(\tau)\,\bm{v}_{\psi}(\tau)\big|\leq C\,\alpha\,\exp(-(\mu_{\star}-\varepsilon)\tau) \qquad \forall\, \tau \geq 1.
\end{equation*}
where we recall that $\mathbf{A}_{\psi}(\tau), \mathbf{B}_{\psi}(\tau)$ and $\bm{v}_{\psi}(\tau)$ are defined in \eqref{eq:BAV}.\end{lem}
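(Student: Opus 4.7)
Set $h(\tau,\xi)=\psi(\tau,\xi)-\psi_{\alpha}(\xi)$. The plan is to rewrite each of the three quantities as a bilinear expression in $h$ and $\psi,\psi_{\alpha}$, and then exploit on the one hand the weighted $L^{1}$ continuity of $\Q_{-}$ recalled in \eqref{eq:qpm} of Proposition \ref{prop:converLLL0}, and on the other hand the exponential convergence $\|h(\tau)\|_{\X_{0}}\leq C_{\varepsilon}(\alpha)\,\exp\!\left(-(\mu_{\star}-\varepsilon)\tau\right)$ provided by Theorem \ref{theo:main-rescaled}.

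By bilinearity of $\Q_{-}$,
\[
\Q_{-}(\psi,\psi)-\Q_{-}(\psi_{\alpha},\psi_{\alpha})=\Q_{-}(h,\psi_{\alpha})+\Q_{-}(\psi_{\alpha},h)+\Q_{-}(h,h).
\]
Inserting this identity into the formulas \eqref{eq:BAV} and \eqref{Aalpha}--\eqref{Balpha}, one obtains
\begin{align*}
\mathbf{A}_{\psi}(\tau)-\mathbf{A}_{\alpha}&=-\tfrac{\alpha}{2}\IR\left(d+2-2|\xi|^{2}\right)\!\left[\Q_{-}(h,\psi_{\alpha})+\Q_{-}(\psi_{\alpha},h)+\Q_{-}(h,h)\right](\tau,\xi)\,\d\xi,\\
\mathbf{B}_{\psi}(\tau)-\mathbf{B}_{\alpha}&=-\tfrac{\alpha}{2}\IR\left(1-\tfrac{2}{d}|\xi|^{2}\right)\!\left[\Q_{-}(h,\psi_{\alpha})+\Q_{-}(\psi_{\alpha},h)+\Q_{-}(h,h)\right](\tau,\xi)\,\d\xi.
\end{align*}
For the last quantity we use that $\psi_{\alpha}$ is radially symmetric, so $\Q_{-}(\psi_{\alpha},\psi_{\alpha})$ is radial and $\int_{\R^{d}}\xi\,\Q_{-}(\psi_{\alpha},\psi_{\alpha})(\xi)\,\d\xi=0$; therefore
\[
\mathbf{B}_{\psi}(\tau)\bm{v}_{\psi}(\tau)=-\alpha\IR\xi\,\!\left[\Q_{-}(h,\psi_{\alpha})+\Q_{-}(\psi_{\alpha},h)+\Q_{-}(h,h)\right](\tau,\xi)\,\d\xi.
\]

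Each integrand is controlled -- after absorbing the weights $1+|\xi|^{2}$ or $|\xi|$ by the exponential $\m(\xi)=\exp(a|\xi|)$ -- by the weighted norm $\|\cdot\|_{L^{1}_{2}(\m)}$ of the corresponding $\Q_{-}$. By \eqref{eq:qpm}, for $q=2$,
\[
\|\Q_{-}(g,f)\|_{L^{1}_{2}(\m)}\leq C\,\|g\|_{L^{1}_{3}(\m)}\,\|f\|_{L^{1}_{3}(\m)}.
\]
Since $\langle\xi\rangle^{3}\leq C_{a}\,\m(\xi)$ we have $\|h(\tau)\|_{L^{1}_{3}(\m)}\leq C\,\|h(\tau)\|_{\X_{0}}$ up to changing slightly the rate~$a$ (it suffices to take $a<\tfrac{1}{2}\min\{\bar{a},A/2\}$, as in Theorem \ref{theo:main-rescaled}, and to work with the slightly larger weight $\exp(2a|\xi|)$ for $\psi$, which still lies in $L^{1}$ by Theorem \ref{theo:tail}). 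Combining this with Lemma \ref{prop:psi}, which yields $\sup_{\alpha\in(0,\alpha_{0})}\|\psi_{\alpha}\|_{L^{1}_{3}(\m)}<\infty$, and with the uniform bound $\sup_{\tau\geq 1}\|\psi(\tau)\|_{L^{1}_{3}(\m)}<\infty$ from Theorem \ref{theo:tail}, we get
\[
\left\|\Q_{-}(h,\psi_{\alpha})+\Q_{-}(\psi_{\alpha},h)+\Q_{-}(h,h)\right\|_{L^{1}_{2}(\m)}\leq C\,\|h(\tau)\|_{\X_{0}}.
\]

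Plugging this into the three displayed identities and invoking Theorem \ref{theo:main-rescaled} to estimate $\|h(\tau)\|_{\X_{0}}$, we conclude that, for every $\varepsilon>0$, there exists $\alpha_{c}\in(0,\alpha_{0})$ such that, for any $\alpha\in(0,\alpha_{c})$,
\[
\big|\mathbf{A}_{\psi}(\tau)-\mathbf{A}_{\alpha}\big|+\big|\mathbf{B}_{\psi}(\tau)-\mathbf{B}_{\alpha}\big|+\big|\mathbf{B}_{\psi}(\tau)\bm{v}_{\psi}(\tau)\big|\leq C\,\alpha\,\exp\!\left(-(\mu_{\star}-\varepsilon)\tau\right),\qquad \tau\geq 1,
\]
where $C>0$ depends on $\varepsilon$ and on the $L^{1}_{3}(\m)$ norms of $\psi_{\alpha}$ and $\psi(\tau)$, i.e.\ ultimately on $f_{0}$. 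The only technical point to monitor is the compatibility of the weights: one must fix $a$ strictly below $\tfrac{1}{2}\min\{\bar{a},A/2\}$ so that $\Q_{-}$ acts boundedly from $L^{1}_{3}(\m)\times L^{1}_{3}(\m)$ into $L^{1}_{2}(\m)$, and so that $\psi(\tau)$, $\psi_{\alpha}$ and $h(\tau)$ all lie in these spaces uniformly; this is exactly the regime that was already fixed throughout Part \ref{part2}.
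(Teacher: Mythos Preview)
Your proof is correct and follows essentially the same approach as the paper: both expand $\Q_{-}(\psi,\psi)-\Q_{-}(\psi_{\alpha},\psi_{\alpha})$ bilinearly, bound the resulting $L^{1}$-moments of $\Q_{-}$ by $C\alpha\,\|h(\tau)\|_{L^{1}_{3}}$, and then invoke Theorem~\ref{theo:main-rescaled}. The paper's version is slightly leaner in that it works with the unweighted $L^{1}_{3}$ norm directly (since $\langle\xi\rangle^{3}\leq C_{a}\m(\xi)$ gives $\|h\|_{L^{1}_{3}}\leq C\|h\|_{\X_{0}}$ without any weight-shifting), and it uses the telescoping decomposition $\Q_{-}(h,\psi_{\alpha})+\Q_{-}(\psi,h)$ rather than your symmetric-plus-quadratic one, but these are cosmetic differences.
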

\begin{proof} The result was almost established in Lemma \ref{eq:Ga}. Namely, it was proved there that, for all $\tau \geq 0$
$$\big|\mathbf{A}_{\psi}(\tau) - \mathbf{A}_{\alpha}\big| + \left|\mathbf{B}_{\alpha}-\mathbf{B}_{\psi}(\tau)\right| + \left|\mathbf{B}_{\alpha}\bm{v}_{\alpha}-\mathbf{B}_{\psi}(\tau)\bm{v}_{\psi}(\tau)\right| \leq C\alpha\|h(\tau)\|_{L^{1}_{3}}=C\alpha\|\psi(\tau)-\psi_{\alpha}\|_{L^{1}_{3}}\,,$$
for some positive constant $C >0$ depending only on $f_{0}$. Since $\bm{v}_{\alpha}=0$ and
$$\|\psi(\tau)-\psi_{\alpha}\|_{L^{1}_{3}} \leq C \|\psi(\tau)-\psi_{\alpha}\|_{L^{1}(\m)} \leq C_{\varepsilon}\exp(-(\mu_{\star}-\varepsilon)\tau)\,\qquad\tau\geq1,$$
the result follows. 
\end{proof}

This, combined with Lemma \ref{lem:nEtau}, translates in the following result.
\begin{prop}\phantomsection\label{prop:rate}
Under the assumptions of Theorem \ref{main:no-scaled}, for all $\varepsilon >0$ and $\alpha \in (0,\alpha_{c})$  one has
$$\tau(t) \simeq \frac{2}{\alpha(\mathbf{a}_{\alpha}+\mathbf{b}_{\alpha})}\log t, \qquad \text{ as } \qquad t \to \infty$$
and
$$\log n_{f}(t)  \simeq -2\frac{ \mathbf{a}_{\alpha}}{\mathbf{a}_{\alpha}+\mathbf{b}_{\alpha}}\log t, \qquad \text{ and } \qquad \log T_{f}(t) \simeq -\frac{4\mathbf{B}_{\alpha}}{\alpha(\mathbf{a}_{\alpha}+\mathbf{b}_{\alpha})}\log t\qquad \text{ for } t \to \infty.$$
Finally, $\lim_{t \to \infty}\bm{u}_{f}(t)=\bm{u}_{f_{0}}+\sqrt{2T_{f_0}}\displaystyle\int_{0}^{+\infty}\mathbf{B}_{\psi}(s)\bm{v}_{\psi}(s)\exp\left(-\int_{0}^{s}\mathbf{B}_{\psi}(r)\d r\right)\d s$.
\end{prop}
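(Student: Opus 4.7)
The plan is to combine the explicit formulas of Lemma \ref{lem:nEtau} expressing $n_f(t)$, $T_f(t)$, $\bm{u}_f(t)$ and $\tau(t)$ as integrals involving $\mathbf{A}_\psi, \mathbf{B}_\psi, \bm{v}_\psi$, together with the exponential convergence
\[
|\mathbf{A}_\psi(\tau)-\mathbf{A}_\alpha|+|\mathbf{B}_\psi(\tau)-\mathbf{B}_\alpha|+|\mathbf{B}_\psi(\tau)\bm{v}_\psi(\tau)|\le C\alpha e^{-(\mu_\star-\varepsilon)\tau},\qquad \tau\ge 1,
\]
provided by Lemma \ref{lem:ABv}. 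Recalling that $\alpha\,\mathbf{a}_\psi(s)=d\mathbf{B}_\psi(s)-\mathbf{A}_\psi(s)$ and $\alpha\,\mathbf{b}_\psi(s)=(d+2)\mathbf{B}_\psi(s)-\mathbf{A}_\psi(s)$, this immediately yields $|\mathbf{a}_\psi(s)-\mathbf{a}_\alpha|+|\mathbf{b}_\psi(s)-\mathbf{b}_\alpha|\le C e^{-(\mu_\star-\varepsilon)s}$, so the integral
\[
C_0 := \int_0^\infty\left[(\mathbf{a}_\psi(s)+\mathbf{b}_\psi(s))-(\mathbf{a}_\alpha+\mathbf{b}_\alpha)\right]\d s
\]
is absolutely convergent, and
\[
\int_0^\tau (\mathbf{a}_\psi(s)+\mathbf{b}_\psi(s))\,\d s = (\mathbf{a}_\alpha+\mathbf{b}_\alpha)\,\tau + C_0 + O\bigl(e^{-(\mu_\star-\varepsilon)\tau}\bigr).
\]

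The first step is to establish the asymptotics of $\tau(t)$. Set $\kappa:=\tfrac{\alpha}{2}(\mathbf{a}_\alpha+\mathbf{b}_\alpha)$ and $K:=n_{f_0}\sqrt{2T_{f_0}}\,e^{-\alpha C_0/2}>0$. Inserting the expansion above into the ODE from Lemma \ref{lem:nEtau} gives
\[
\dot{\tau}(t)=K\exp(-\kappa\,\tau(t))\bigl(1+O(e^{-(\mu_\star-\varepsilon)\tau(t)})\bigr),
\]
so $\tfrac{d}{dt}e^{\kappa\tau(t)}=\kappa K\,(1+o(1))$ as $\tau(t)\to\infty$. Since the right-hand side is bounded below by a positive constant for $t$ large, $\tau(t)\to\infty$, and integration gives $e^{\kappa\tau(t)}=\kappa K\,t+O(1)$, whence
\[
\tau(t)=\frac{1}{\kappa}\log t+O(1)=\frac{2}{\alpha(\mathbf{a}_\alpha+\mathbf{b}_\alpha)}\log t+O(1)\quad\text{as } t\to\infty,
\]
which is the claimed asymptotic for $\tau(t)$.

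Next, plugging this into the formulas of Lemma \ref{lem:nEtau}, we have
\[
\log n_f(t)=\log n_{f_0}-\alpha\int_0^{\tau(t)}\mathbf{a}_\psi(s)\,\d s=-\alpha\mathbf{a}_\alpha\,\tau(t)+O(1),
\]
and similarly $\log T_f(t)=-2\mathbf{B}_\alpha\,\tau(t)+O(1)$, since the ``defect'' integrals $\int_0^\tau(\mathbf{a}_\psi-\mathbf{a}_\alpha)\d s$ and $\int_0^\tau(\mathbf{B}_\psi-\mathbf{B}_\alpha)\d s$ are bounded uniformly in $\tau$. Substituting the asymptotic $\tau(t)\sim \tfrac{2}{\alpha(\mathbf{a}_\alpha+\mathbf{b}_\alpha)}\log t$ gives the two rate statements for $\log n_f(t)$ and $\log T_f(t)$.

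Finally, for the momentum, the explicit formula of Lemma \ref{lem:nEtau} can be rewritten, multiplying by $\sqrt{2T_f(t)}=\sqrt{2T_{f_0}}\exp(-\int_0^{\tau(t)}\mathbf{B}_\psi(r)\d r)$, as
\[
\bm{u}_f(t)=\bm{u}_{f_0}+\sqrt{2T_{f_0}}\int_0^{\tau(t)}\mathbf{B}_\psi(s)\bm{v}_\psi(s)\exp\!\left(-\int_0^s\mathbf{B}_\psi(r)\,\d r\right)\d s.
\]
Since $|\mathbf{B}_\psi(s)\bm{v}_\psi(s)|\le C\alpha e^{-(\mu_\star-\varepsilon)s}$ by Lemma \ref{lem:ABv} and $\int_0^s\mathbf{B}_\psi(r)\d r$ is uniformly bounded below (as $\mathbf{B}_\psi\to\mathbf{B}_\alpha=O(\alpha)$), the integrand is absolutely integrable on $(0,\infty)$. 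As $\tau(t)\to\infty$, dominated convergence yields the stated limit for $\bm{u}_f(t)$. The main technical point is controlling the ODE for $\tau(t)$ at leading order; once this is done, the rates for $n_f, T_f$ and the convergence of $\bm{u}_f$ follow by direct substitution and dominated convergence arguments.
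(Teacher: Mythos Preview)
Your proof is correct and follows the same overall strategy as the paper's: combine the explicit formulas of Lemma \ref{lem:nEtau} with the convergence of $\mathbf{A}_\psi,\mathbf{B}_\psi,\mathbf{B}_\psi\bm{v}_\psi$ from Lemma \ref{lem:ABv}. The technical implementation differs slightly: the paper only uses the \emph{limit} $\mathbf{a}_\psi(\tau)\to\mathbf{a}_\alpha$, invokes a Cesaro argument to get $\tfrac{1}{\tau(t)}\int_0^{\tau(t)}\mathbf{a}_\psi(s)\d s\to\mathbf{a}_\alpha$, and then applies de L'H\^opital's rule to extract the $\tau(t)$ asymptotics, whereas you exploit the full exponential decay to show the defect integrals are absolutely convergent and integrate the ODE for $e^{\kappa\tau(t)}$ directly; your route yields the marginally sharper $\tau(t)=\tfrac{2}{\alpha(\mathbf{a}_\alpha+\mathbf{b}_\alpha)}\log t+O(1)$. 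One small presentational issue: your sentence ``since the right-hand side is bounded below by a positive constant for $t$ large, $\tau(t)\to\infty$'' is circular as written (the $o(1)$ already presupposes $\tau(t)\to\infty$); you should instead note that $\dot\tau(t)=n_f(t)\sqrt{2T_f(t)}>0$ and invoke the lower bound of Theorem \ref{main} (or simply observe that a bounded limit $\tau_\infty$ would force $\dot\tau(t)$ to stay bounded away from zero, a contradiction).
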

\begin{proof} We notice that, from Lemma \ref{lem:ABv}, $\lim_{\tau\to\infty}\mathbf{a}_{\psi}(\tau)=\mathbf{a}_{\alpha}$ and $\lim_{\tau\to\infty}\mathbf{B}_{\psi}(\tau)=\mathbf{B}_{\alpha}$ so that, by a Cesaro-type argument (noticing that both mappings $s \mapsto \mathbf{a}_{\psi}(s)$ and $s \mapsto \mathbf{B}_{\psi}(s)$ are locally integrable),
$$\lim_{t\to\infty}\frac{1}{\tau(t)}\int_{0}^{\tau(t)}\mathbf{a}_{\psi}(s)\d s=\mathbf{a}_{\alpha}, \qquad \lim_{t\to\infty}\frac{1}{\tau(t)}\int_{0}^{\tau(t)}\mathbf{B}_{\psi}(s)\d s=\mathbf{B}_{\alpha}.$$
Then, a direct consequence of Lemma \ref{lem:nEtau} is that
\begin{equation}\label{eq:nfEf}
n_{f}(t) \simeq n_{f_{0}}\exp\left(-\alpha\,\mathbf{a}_{\alpha}\tau(t)\right), \qquad \text{ and } \qquad T_{f}(t) \simeq T_{f_{0}}\exp\left(-2\mathbf{B}_{\alpha}\tau(t)\right) \qquad \text{ for } t \to \infty\end{equation}
and 
\begin{equation}\label{lim_u}
\bm{u}_{f}(t) \simeq \bm{u}_{f_{0}}
+ \sqrt{2T_{f_0}}\int_{0}^{+\infty}\mathbf{B}_{\psi}(s)\bm{v}_{\psi}(s)\exp\left(-\int_{0}^{s}\mathbf{B}_{\psi}(r)\d r\right)\d s \qquad \text{ as } t \to \infty.
\end{equation}
{Let us note that the above integral converges, at least for $\alpha$ small enough. Indeed, we deduce from \eqref{conv} that there exists some constant $C_\alpha$ such that $$|\mathbf{B}_{\psi}(s)\bm{v}_{\psi}(s)|\leq C_\alpha, \qquad s\geq 0.$$ 
On the other hand, \eqref{conv} and Theorem \ref{theo:main-rescaled} imply that 
for fixed $\alpha$, $\lim_{s\to +\infty} \mathbf{B}_{\psi}(s)=\mathbf{B}_{\alpha}$. Moreover, by Remark \ref{imporem}, we have $\mathbf{B}_{\alpha}>0$ for $\alpha$ small enough. Thus, taking $\alpha$ small enough, there exists $\tau_0>0$ such that 
$$\mathbf{B}_{\psi}(s)\geq \frac{1}{2} \mathbf{B}_{\alpha}>0, \qquad s\geq \tau_0,$$whence the convergence of the integral in \eqref{lim_u}. 
}

The same reasoning as above also shows that 
$$\frac{\d}{\d t}\tau(t) \simeq c_{f_{0}}\exp\left(-\frac{\alpha}{2}(\mathbf{a}_{\alpha}+\mathbf{b}_{\alpha})\tau(t)\right) \qquad \text{ as } \, t \to\infty$$
where we set $c_{f_{0}}=n_{f_{0}}\sqrt{2T_{f_{0}}}$.  An application of de L'H\^{o}pital's rule shows that
$$\lim_{t\to\infty}\frac{1}{t}\exp\left(\alpha\frac{\mathbf{a}_{\alpha}+\mathbf{b}_{\alpha}}{2}\tau(t)\right)=c_{f_{0}}\alpha\frac{\mathbf{a}_{\alpha}+\mathbf{b}_{\alpha}}{2},$$
that is, $$\tau(t) \simeq \frac{2}{\alpha(\mathbf{a}_{\alpha}+\mathbf{b}_{\alpha})}\log\left(c_{f_{0}}\alpha\frac{\mathbf{a}_{\alpha}+\mathbf{b}_{\alpha}}{2}t\right) \simeq \frac{2}{\alpha(\mathbf{a}_{\alpha}+\mathbf{b}_{\alpha})}\log t \qquad \text{ as } \, t \to \infty.$$
This, combined with \eqref{eq:nfEf} gives the result.\end{proof}

\begin{proof}[Proof of Theorem \ref{main:no-scaled}]
The proof follows directly from Propositions \ref{prop:main-noscal} and \ref{prop:rate}. It only remains to show that the rates obtained for $n_f(t)$ and $T_f(t)$ in Proposition \ref{prop:rate} only depend on $\alpha$. Let us fix $\varrho>0$ and $E>0$ and let $\overline{\psi}_\alpha$ be the unique solution to \eqref{steady} that has mass $\varrho$, energy $E$ and zero momentum.  Let us denote by $\overline{\mathbf{A}}_\alpha, \overline{\mathbf{B}}_\alpha, 
\overline{\mathbf{a}}_\alpha$ and $\overline{\mathbf{b}}_\alpha$ the 
associated coefficients defined by \eqref{Aalpha}, \eqref{Balpha} and \eqref{abalpha} where $\psi_\alpha$ is replaced with $\overline{\psi}_\alpha$.
We deduce from \cite[Section 1.2]{jde} and Theorem \ref{theo:exist-unique} that 
$$ \overline{\psi}_\alpha(\xi)= \varrho \left(\frac{d\varrho}{2 \,E}\right)^{d/2} \; 
\psi_\alpha\left(\sqrt{\frac{d\varrho}{2 \,E}}\;\xi\right).$$
Consequently, the scaling properties of $\Q_-$ lead to
$$\overline{\mathbf{A}}_\alpha = \varrho \, \sqrt{\frac{2 \,E}{d\varrho}} \; \mathbf{A}_\alpha, \quad \quad
\overline{\mathbf{B}}_\alpha = \varrho \, \sqrt{\frac{2 \,E}{d\varrho}}\; \mathbf{B}_\alpha, \quad \quad 
\overline{\mathbf{a}}_\alpha = \varrho \, \sqrt{\frac{2 \,E}{d\varrho}}\; \mathbf{a}_\alpha \qquad \mbox{and} \qquad  \overline{\mathbf{b}}_\alpha = \varrho \, \sqrt{\frac{2 \,E}{d\varrho}} \;\mathbf{b}_\alpha.$$
{In particular, 
$$\frac{\overline{\mathbf{B}}_\alpha}{\overline{\mathbf{a}}_\alpha+\overline{\mathbf{b}}_\alpha}=\frac{\mathbf{B}_{\alpha}}{\mathbf{a}_{\alpha}+\mathbf{b}_{\alpha}}, \qquad \text{ and } \qquad  \frac{\overline{\mathbf{a}}_{\alpha}}{\overline{\mathbf{a}}_\alpha+\overline{\mathbf{b}}_\alpha}=\frac{\mathbf{a}_{\alpha}}{\mathbf{a}_{\alpha}+\mathbf{b}_{\alpha}}.$$ This proves that the rates in \eqref{eq:rate} depend only on $\alpha$ and not on the mass and energy of $\psi_\alpha$.}
\end{proof}
 
 \part{Appendices}

 \appendix

\section{Proofs of Lemma \ref{lem:inverse} and Lemma \ref{lem:PaP0}}\label{app:prooflemma}

We collect here the proofs of two fundamental results in Section \ref{sec:spectral1}. Notations are those introduced in Sections \ref{sec:ll} and \ref{sec:spectral}. 
\begin{proof}[Proof of Lemma \ref{lem:inverse}] One has clearly that, for all $\mathrm{Re}\lambda >-{\mu}_{\star}$, $\left[\A \mathcal{R}(\lambda,\B_{\alpha})\right]^{k} \in \mathscr{B}(\X_{1},\X_{2})$, $\mathcal{R}(\lambda,\mathscr{L}_{0}) \in \mathscr{B}(\X_{2})$ and $\LL-\mathscr{L}_{0} \in \mathscr{B}(\X_{2},\X_{1})$ for $\alpha \in (0,\alpha^{\dagger})$ from which
$$\left\|\mathcal{J}_{\alpha,k}(\lambda)\right\|_{\mathscr{B}(\X_{1})} \leq \left\|\LL-\mathscr{L}_{0}\right\|_{\mathscr{B}(\X_{2},\X_{1})}\,\|\mathcal{R}(\lambda,\mathscr{L}_{0})\|_{\mathscr{B}(\X_{2})}\,\left\|\left[\A\,\mathcal{R}(\lambda,\mathcal{B}_{\alpha})\right]^{k}\right\|_{\mathscr{B}(\X_{1},\X_{2})}$$
Using \eqref{eq:Lo}, \eqref{eq:ARB} and \eqref{eq:llXk}, this yields to a bound
$$\left\|\mathcal{J}_{\alpha,k}(\lambda)\right\|_{\mathscr{B}(\X_{1})}  \leq C_{k}\,\ep_{1,1}(\alpha)\,|\lambda|^{-n_{0}}, \qquad \forall \mathrm{Re}\lambda \geq -{\nu}_{*}'$$
for some explicit constant $C_{k} >0$.
Choosing then $r_{k}(\alpha)=\left(C_{k}\,\ep_{1,1}(\alpha)\right)^{\frac{1}{n_{0}+1}}$, we get \eqref{eq:Jalk} and clearly $\lim_{\alpha\to0^{+}}r_{k}(\alpha)=0.$ Clearly then, if $\underline{\alpha}_{k}$ is chosen in such a way that $r_{k}(\alpha) < 1$ for all $\alpha \in (0,\underline{\alpha}_{k})$, one sees that $\mathbf{Id}-\mathcal{J}_{\alpha,k}(\lambda)$ is invertible in $\X_{1}$ for all $\lambda \in \bm{\Omega}_{k}(\alpha)$ with 
$$(\mathbf{Id}-\mathcal{J}_{\alpha,k}(\lambda))^{-1}=\sum_{p=0}^{\infty}\left[\mathcal{J}_{\alpha,k}(\lambda)\right]^{p}.$$
Let us fix then $\alpha \in (0,\underline{\alpha}_{k})$ and $\lambda \in \bm{\Omega}_{k}(\alpha)$. The range of $\Gamma_{\alpha,k}(\lambda)$ is clearly included in $\D(\B_{\alpha})=\D(\LL)$. Then, writing $\LL=\A+\B_{\alpha}$ we get 
$$(\lambda-\LL)\Gamma_{\alpha,k}(\lambda)=\left(\lambda-\B_{\alpha}-\A\right) \sum_{j=0}^{k-1}\mathcal{R}(\lambda,\B_{\alpha})\left[\A\mathcal{R}(\lambda,\B_{\alpha})\right]^{j}
+(\lambda-\LL)\mathcal{R}(\lambda,\mathscr{L}_{0})\left[\A\,\mathcal{R}(\lambda,\B_{\alpha})\right]^{k}.$$
The first term on the right-hand side is equal to
\begin{multline*}
\left(\lambda-\B_{\alpha}-\A\right) \sum_{j=0}^{k-1}\mathcal{R}(\lambda,\B_{\alpha})\left[\A\mathcal{R}(\lambda,\B_{\alpha})\right]^{j}
=\sum_{j=0}^{k-1}\left[\A\mathcal{R}(\lambda,\B_{\alpha})\right]^{j}
-\sum_{j=0}^{k-1}\A\mathcal{R}(\lambda,\B_{\alpha})\left[\A\mathcal{R}(\lambda,\B_{\alpha})\right]^{j} \\
=\mathbf{Id}-\left[\A\mathcal{R}(\lambda,\B_{\alpha})\right]^{k}\end{multline*}
while, writing simply $(\lambda-\LL)$ as $(\lambda-\mathscr{L}_{0})+(\mathscr{L}_{0}-\LL)$ the second term is equal to
$$\left[\A\,\mathcal{R}(\lambda,\B_{\alpha})\right]^{k}+(\mathscr{L}_{0}-\LL)\mathcal{R}(\lambda,\mathscr{L}_{0})\left[\A\,\mathcal{R}(\lambda,\B_{\alpha})\right]^{k}=\left[\A\,\mathcal{R}(\lambda,\B_{\alpha})\right]^{k}-\mathcal{J}_{\alpha,k}(\lambda).$$
This proves that
$$(\lambda-\LL)\Gamma_{\alpha,k}(\lambda)=\mathbf{Id}-\mathcal{J}_{\alpha,k}(\lambda)$$
and shows that $\Gamma_{\alpha,k}(\lambda)(\mathbf{Id}-\mathcal{J}_{\alpha,k}(\lambda))^{-1}$ is a right-inverse of $(\lambda-\LL).$

To prove that $\lambda-\LL$ is invertible, it is therefore enough to prove that it is one-to-one. Consider then the eigenvalue problem
$$\LL h=\lambda\,h, \qquad h \in \D(\LL)=\W_{2}^{2,1}(\m).$$
Since $\D(\mathscr{L}_{0})=\W^{1,1}_{2}(\m)$, one can write this as $(\lambda-\mathscr{L}_{0})h=\LL h-\mathscr{L}_{0}h$ and as such
$$\|h\|_{\X_{1}}=\|\mathcal{R}(\lambda,\mathscr{L}_{0})(\LL-\mathscr{L}_{0})h\|_{\X_{1}} \leq \ep_{1,1}(\alpha)\,\|\mathcal{R}(\lambda,\mathscr{L}_{0})\|_{\mathscr{B}(\X_{1})}\,\|h\|_{\X_{2}}$$
where we noticed that, since $\lambda\neq 0$ and $\mathrm{Re}\lambda >-\mu_\star$, $\lambda \in \varrho(\mathscr{L}_{0})$ and 
where we used \eqref{eq:llXk}. Notice that, according to Hille-Yoshida Theorem, there exits a constant  $C_{0} >0$ such that 
$$\|\mathcal{R}(\lambda,\mathscr{L}_{0})\|_{\mathscr{B}(\X_{1})} \leq C_{0}(\mathrm{Re}\lambda+\mu_\star)^{-1} \leq C_{0}({\mu}_{\star}-{\nu}'_{*})^{-1}$$ so that
$$\|h\|_{\X_{1}} \leq C_{0}\ep_{1,1}(\alpha)({\mu}_{\star}-{\nu}'_{*})^{-1}\|h\|_{\X_{2}}.$$
Let us now estimate $\|h\|_{\X_{2}}$. Since $\LL h=\lambda\,h$ one has $(\lambda-\B_{\alpha})h=\A h$ and
$h=\mathcal{R}(\lambda,\B_{\alpha})\A h$,
so that
$$\|h\|_{\X_{2}} \leq \|\mathcal{R}(\lambda,\B_{\alpha})\|_{\mathscr{B}(\X_{2})}\,\|\A h\|_{\X_{2}} \leq \frac{C_{2}}{\mathrm{Re}\lambda+{\nu}_{*}}\|\A h\|_{\X_{2}}\leq \frac{C_{2}\,\|\A\|_{\mathscr{B}(\X_{1},\X_{2})}}{{\nu}_{*}-{\nu}_{*}'}\|h\|_{\X_{1}}\leq \frac{C_{2}\,\|\A\|_{\mathscr{B}(\X_{1},\X_{2})}}{{\mu}_{\star}-{\nu}_{*}'}\|h\|_{\X_{1}}$$
for some positive constant $C_{2}$ which gives the equivalence between the norm $\|\cdot\|_{\X_{2}}$ and the modified equivalent norm $\llbracket\cdot\rrbracket$ obtained in Proposition \ref{prop:hypo}. Thus,
$$\|h\|_{\X_{1}} \leq C_{0} \ep_{1,1}(\alpha)\,\frac{C_{2}\|\A\|_{\mathscr{B}(\X_{1},\X_{2})}}{\left({\mu}_{\star}-{\nu}_{*}'\right)^{2}}\,\|h\|_{\X_{1}}$$and one sees that, up to reduce $\alpha$, one can assume that  $C_{0} \ep_{1,1}(\alpha)\,\frac{C_{2}\|\A\|_{\mathscr{B}(\X_{1},\X_{2})}}{\left({\mu}_{\star}-{\nu}_{*}'\right)^{2}}< 1$ which implies that $h=0.$ This proves that $\lambda-\LL$ is one-to-one and its right-inverse is actually its inverse. \medskip

To estimate now $\|\mathcal{R}(\lambda,\LL)\|_{\mathscr{B}(\X_{1})}$ one simply notices that
$$\|(\mathbf{Id}-\mathcal{J}_{\alpha,k}(\lambda))^{-1}\|_{\mathscr{B}(\X_{1})} \leq \sum_{p=0}^{\infty}\|\mathcal{J}_{\alpha,k}(\lambda)\|_{\mathscr{B}(\X_{1})}^{p}\leq \frac{1}{1-r_{k}(\alpha)}, \qquad \forall \lambda \in \bm{\Omega}_{k}(\alpha)$$
from which
$$\|\mathcal{R}(\lambda,\LL)\|_{\mathscr{B}(\X_{1})}\leq \frac{1}{1-r_{k}(\alpha)}\,\|\Gamma_{\alpha,k}(\lambda)\|_{\mathscr{B}(\X_{1})}$$
and, from the previous estimates of $\|\mathcal{R}(\lambda,\B_{\alpha})\|_{\mathscr{B}(\X_{1})}$, $\|\left[\A \mathcal{R}(\lambda,\B_{\alpha})\right]^{j}\|_{\mathscr{B}(\X_{1})}$ and $\|\mathcal{R}(\lambda,\mathscr{L}_{0})\|_{\mathscr{B}(\X_{1})}$  one checks without difficulty that there exists $C_{k} >0$ such that
$$\|\Gamma_{\alpha,k}(\lambda)\|_{\mathscr{B}(\X_{1})} \leq C_{k}\sum_{j=0}^{k}(\mathrm{Re}\lambda+{\nu}_{*})^{-j} \leq C_{k}\sum_{j=0}^{k}\left({\nu}_{*}-{\nu}_{*}'\right)^{-j}$$
from which we get the result.\end{proof}

\begin{proof}[Proof of Lemma \ref{lem:PaP0}] We use Lemma \ref{lem:inverse} for some suitable $k \in \N$ and let $\gamma_{k}(\alpha):=\{z \in \mathbb{C}\;;\;|z|=r_{k}(\alpha)\}$ where $r_{k}(\alpha)$ is provided by Lemma  \ref{lem:inverse}. One has
$$\mathbb{P}_{\alpha}=\frac{1}{2i\pi}\oint_{\gamma_{k}(\alpha)}\mathcal{R}(\lambda,\LL)\d\lambda, \qquad \mathbf{P}_{0}=\frac{1}{2i\pi}\oint_{\gamma_{k}(\alpha)}\mathcal{R}(\lambda,\mathscr{L}_{0})\d \lambda.$$
To prove that $\mathbb{P}_{\alpha} \in \mathscr{B}(\X_{1},\X_{2})$, it suffices to find some suitable estimate on $\|\mathcal{R}(\lambda,\LL)\|_{\mathscr{B}(\X_{1},\X_{2})}.$ Notice that, in the space $\X_{1}$, the range of $\mathcal{R}(\lambda,\LL)$ is indeed $\X_{2}$ which is the domain of $\LL$ (i.e. $\X_{2}=\D(\LL\vert_{\X_{1}})$). Therefore, the norm $\|\cdot\|_{\X_{2}}$ is equivalent to the graph norm of $\LL$ (seen as an operator of $\X_{1}$): there exists $C_{\alpha} >0$ such that
$$\|f\|_{\X_{2}} \leq C_{\alpha}\left(\|f\|_{\X_{1}} +\|\LL f\|_{\X_{1}}\right), \qquad \forall f \in \X_{2}.$$
Then, given $\lambda \in \gamma_{k}(\alpha)$ and $g \in \X_{1}$ one has 
$$\|\mathcal{R}(\lambda,\LL)g\|_{\X_{2}} \leq C_{\alpha}\left(\|\mathcal{R}(\lambda,\LL)g\|_{\X_{1}}+ \|\LL\mathcal{R}(\lambda,\LL)g\|_{\X_{1}}\right)$$
Since $\LL\mathcal{R}(\lambda,\LL)g=-g+\lambda\mathcal{R}(\lambda,\LL)g$ and $|\lambda|=r_{k}(\alpha)$ we get
$$\|\mathcal{R}(\lambda,\LL)g\|_{\X_{2}} \leq C_{\alpha}\left((1+r_{k}(\alpha))\|\mathcal{R}(\lambda,\LL)g\|_{\X_{1}} + \|g\|_{\X_{1}}\right).$$
Using \eqref{eq:estimR}, one has $\|\mathcal{R}(\lambda,\LL)\|_{\mathscr{B}(\X_{1})} \leq M_{k}(\alpha)$ for all $\lambda \in \gamma_{k}(\alpha)$ for some positive constant $M_{k}(\alpha)$ depending only on $k,\alpha$ and on ${\nu}_{*}'-{\nu}_{*}$. This shows that 
$$\sup_{\lambda\in \gamma_{k}(\alpha)}\|\mathcal{R}(\lambda,\LL)\|_{\mathscr{B}(\X_{1},\X_{2})}:=C(k,\alpha) < \infty$$
and this proves the bound on $\|\mathbb{P}_{\alpha}\|_{\mathscr{B}(\X_{1},\X_{2})}.$ Let us now prove \eqref{eq:limproj}. Recall that
$$\mathbb{P}_{\alpha}-\mathbf{P}_{0}=\frac{1}{2i\pi}\oint_{\gamma_{k}(\alpha)}\left[\mathcal{R}(\lambda,\LL)-\mathcal{R}(\lambda,\mathscr{L}_{0})\right]\d\lambda$$
with $\mathcal{R}(\lambda,\mathscr{L}_{0})-\mathcal{R}(\lambda,\LL)=\mathcal{R}(\lambda,\mathscr{L}_{0})(\mathscr{L}_{0}-\LL)\mathcal{R}(\lambda,\LL)$. However, even if for small $\alpha$, one can make $\mathscr{L}_{0}-\LL$  small, it appears difficult to obtain bounds on $\|\mathcal{R}(\lambda,\mathscr{L}_{0})-\mathcal{R}(\lambda,\LL)\|_{\mathscr{B}(\X_{1})}$ because of the domain loss in \eqref{eq:llXk}. Indeed, such a domain loss would require uniform bound on $\|\mathcal{R}(\lambda,\LL)\|_{\mathscr{B}(\X_{1},\X_{2})}$ for $\alpha \simeq 0$ and such bound cannot hold true because the range of $\mathcal{R}(\lambda,\mathscr{L}_{0})$ is not $\X_{2}$. We have then to proceed in a different way, following the  approach of \cite[Lemma 2.17]{Tr}. We apply Lemma \ref{lem:inverse}. We simply write 
$$\bm{G}_{\alpha}(\lambda)=\sum_{j=0}^{k-1}\mathcal{R}(\lambda,\B_{\alpha})\left[\A\,\mathcal{R}(\lambda,\B_{\alpha})\right]^{j}, \qquad 0 \leq \alpha < \underline{\alpha}^\dagger_{k}$$
so that Lemma \ref{lem:inverse} reads  $\mathcal{R}(\lambda,\LL)=\bm{G}_{\alpha}(\lambda)(\mathbf{Id}-\mathcal{J}_{\alpha,k}(\lambda))^{-1} + \mathcal{R}(\lambda,\mathscr{L}_{0})\left[\A \mathcal{R}(\lambda,\B_{\alpha}\right]^{k}(\mathbf{Id}-\mathcal{J}_{\alpha,k}(\lambda))^{-1}$
while one proves without difficulty that, since $\mathscr{L}_{0}=\A +\B_{0}$, it holds
$$\mathcal{R}(\lambda,\mathscr{L}_{0})=\bm{G}_{0}(\lambda) + \mathcal{R}(\lambda,\mathscr{L}_{0})\left[\A\,\mathcal{R}(\lambda,\B_{0})\right]^{k}.$$
Since $\lambda \mapsto \mathcal{R}(\lambda,\B_{0})$ and $\lambda \mapsto \mathcal{R}(\lambda,\B_{\alpha})$ are both analytic on $\overline{\mathbb{D}}(0,r_{k}(\alpha))$, one has 
\begin{equation}\label{eq:oiGG}
\oint_{\gamma_{k}(\alpha)}\bm{G}_{\alpha}(\lambda)\d\lambda=\oint_{\gamma_{k}(\alpha)}\bm{G}_{0}(\lambda)\d\lambda=0.\end{equation}
Consequently
$$\mathbf{P}_{0}=\frac{1}{2i\pi}\oint_{\gamma_{k}(\alpha)}\mathcal{R}(\lambda,\mathscr{L}_{0})\d\lambda=\frac{1}{2i\pi}\oint_{\gamma_{k}(\alpha)}\mathcal{R}(\lambda,\mathscr{L}_{0})\left[\A\,\mathcal{R}(\lambda,\B_{0})\right]^{k}\d\lambda$$
while
\begin{multline*}
\mathbb{P}_{\alpha}=\frac{1}{2i\pi}\oint_{\gamma_{k}(\alpha)}\bm{G}_{\alpha}(\lambda)(\mathbf{Id}-\mathcal{J}_{\alpha,k}(\lambda))^{-1}\d\lambda 
+\frac{1}{2i\pi}\oint_{\gamma_{k}(\alpha)}\mathcal{R}(\lambda,\mathscr{L}_{0})\left[\A\mathcal{R}(\lambda,\B_{\alpha})\right]^{k}(\mathbf{Id}-\mathcal{J}_{\alpha,k}(\lambda))^{-1}\d\lambda\\
=\frac{1}{2i\pi}\oint_{\gamma_{k}(\alpha)}\bm{G}_{\alpha}(\lambda)\mathcal{J}_{\alpha,k}(\lambda)(\mathbf{Id}-\mathcal{J}_{\alpha,k}(\lambda))^{-1}\d\lambda 
+\frac{1}{2i\pi}\oint_{\gamma_{k}(\alpha)}\mathcal{R}(\lambda,\mathscr{L}_{0})\left[\A\mathcal{R}(\lambda,\B_{\alpha})\right]^{k}(\mathbf{Id}-\mathcal{J}_{\alpha,k}(\lambda))^{-1}\d\lambda
\end{multline*}
where we used \eqref{eq:oiGG} in the first integral. From this, we get
\begin{equation*}\begin{split}
\mathbb{P}_{\alpha}-\mathbf{P}_{0}&=\frac{1}{2i\pi}\oint_{\gamma_{k}(\alpha)} \mathcal{R}(\lambda,\mathscr{L}_{0})\left\{
\left[\A\mathcal{R}(\lambda,\B_{\alpha})\right]^{k}(\mathbf{Id}-\mathcal{J}_{\alpha,k}(\lambda))^{-1}-\left[\A \mathcal{R}(\lambda,\B_{0})\right]^{k}
\right\}\d\lambda \\
&\phantom{+++++} + \frac{1}{2i\pi}\oint_{\gamma_{k}(\alpha)}\bm{G}_{\alpha}(\lambda)\mathcal{J}_{\alpha,k}(\lambda)(\mathbf{Id}-\mathcal{J}_{\alpha,k}(\lambda))^{-1}\d\lambda\\
&=\frac{1}{2i\pi}\oint_{\gamma_{k}(\alpha)}\mathcal{R}(\lambda,\mathscr{L}_{0})\left[\A\mathcal{R}(\lambda,\B_{\alpha})\right]^{k}\left[(\mathbf{Id}-\mathcal{J}_{\alpha,k}(\lambda))^{-1}-\mathbf{Id}\right]\d\lambda \\
&\phantom{++++++} +\frac{1}{2i\pi}\oint_{\gamma_{k}(\alpha)}\mathcal{R}(\lambda,\mathscr{L}_{0})\left\{\left[\A \mathcal{R}(\lambda,\B_{\alpha})\right]^{k}-\left[\A\mathcal{R}(\lambda,\B_{0})\right]^{k}\right\}\d\lambda \\
&\phantom{++++++ +++} + \frac{1}{2i\pi}\oint_{\gamma_{k}(\alpha)}\bm{G}_{\alpha}(\lambda)\mathcal{J}_{\alpha,k}(\lambda)(\mathbf{Id}-\mathcal{J}_{\alpha,k}(\lambda))^{-1}\d\lambda\\
&=:\mathbb{I}_{1,\alpha}+ \mathbb{I}_{2,\alpha}+\mathbb{I}_{3,\alpha}.\end{split}\end{equation*}
According to \eqref{eq:Jalk} and arguing as in the proof of \eqref{eq:estimR}, for any $\lambda \in \gamma_{k}(\alpha),$ the integrand in $\mathbb{I}_{3,\alpha}$ is such that
$$\|\bm{G}_{\alpha}(\lambda)\mathcal{J}_{\alpha,k}(\lambda)(\mathbf{Id}-\mathcal{J}_{\alpha,k}(\lambda))^{-1}\|_{\mathscr{B}(\X_{1})}
\leq \frac{r_{k}(\alpha)}{1-r_{k}(\alpha)}\|\bm{G}_{\alpha}(\lambda)\|_{\mathscr{B}(\X_{1})} \leq \frac{C_{k}r_{k}(\alpha)}{1-r_{k}(\alpha)}$$
for some positive constant depending on $k$ and on ${\nu}_{*}^{'}.$ Thus, $\|\mathbb{I}_{3,\alpha}\|_{\mathscr{B}(\X_{1})}= \mathcal{O}(r_{k}(\alpha)).$
In the same way, since $\left[(\mathbf{Id}-\mathcal{J}_{\alpha,k}(\lambda))^{-1}-\mathbf{Id}\right]=\mathcal{J}_{\alpha,k}(\lambda)(\mathbf{Id}-\mathcal{J}_{\alpha,k}(\lambda))^{-1}$, one gets that the integrand of $\mathbb{I}_{1,\alpha}$ is such that
\begin{multline*}
\left\|\mathcal{R}(\lambda,\mathscr{L}_{0})\left[\A\mathcal{R}(\lambda,\B_{\alpha})\right]^{k}\left[(\mathbf{Id}-\mathcal{J}_{\alpha,k}(\lambda))^{-1}-\mathbf{Id}\right]\right\|_{\mathscr{B}(\X_{1})} 
\leq \frac{r_{k}(\alpha)}{1-r_{k}(\alpha)}\|\mathcal{R}(\lambda,\mathscr{L}_{0})\left[\A\mathcal{R}(\lambda,\B_{\alpha})\right]^{k}\|_{\mathscr{B}(\X_{1})}
\end{multline*}
and, using \eqref{eq:ARB}, one gets easily that $\mathbb{I}_{1,\alpha}=\mathcal{O}(r_{k}(\alpha)).$
Now, concerning the integrand of $\mathbb{I}_{2,\alpha}$, one has
{\begin{eqnarray*}
\left[\A \mathcal{R}(\lambda,\B_{\alpha})\right]^{k}-\left[\A\mathcal{R}(\lambda,\B_{0})\right]^{k}& =& \sum_{j=0}^{k-1}\left[\A \mathcal{R}(\lambda,\B_{\alpha})\right]^{j}\A\,\bigg(\mathcal{R}(\lambda,\B_{\alpha})-\mathcal{R}(\lambda,\B_{0})\bigg)\left[\A\mathcal{R}(\lambda,\B_{0})\right]^{k-j-1}\\
& =& \sum_{j=0}^{k-1}\left[\A \mathcal{R}(\lambda,\B_{\alpha})\right]^{j+1}\bigg(\B_{\alpha}-\B_{0}\bigg)\mathcal{R}(\lambda,\B_{0})\left[\A\mathcal{R}(\lambda,\B_{0})\right]^{k-j-1}.
\end{eqnarray*}}
Since $k-j-1 \neq 0$ for all $j \in \{0,\ldots,k-2\}$, one can exploit the regularizing effect of $\A$ and prove, as in \eqref{eq:ARB} that, for all $\lambda \in \gamma_{k}(\alpha)$,  
\begin{multline*}
\left\|\big(\B_{\alpha}-\B_{0}\big)\mathcal{R}(\lambda,\B_{0})\left[\A\mathcal{R}(\lambda,\B_{0})\right]^{k-j-1}\right\|_{\mathscr{B}(\X_{1})} \\\leq \|\B_{\alpha}-\B_{0}\|_{\mathscr{B}(\X_{2},\X_{1})}\,\|\mathcal{R}(\lambda,\B_{0})\|_{\mathscr{B}(\X_{2})}\,\|\left[\A\mathcal{R}(\lambda,\B_{0})\right]^{k-j-1}\|_{\mathscr{B}(\X_{1},\X_{2})}
\leq C_{j,k}\ep_{1,1}(\alpha)\end{multline*}
for some positive constant $C_{j,k}$ where we used  \eqref{eq:llXk} since $\LL-\mathscr{L}_{0}=\B_{\alpha}-\B_{0}$. Next, for $j=k-1$, one deduces from  \eqref{eq:llXk} and \eqref{eq:ARB} that, for all $\lambda \in \gamma_{k}(\alpha)$,  
\begin{equation}\label{eq:3spaces}\begin{split} 
\left\|\left[\A \mathcal{R}(\lambda,\B_{\alpha})\right]^{k}\bigg(\B_{\alpha}-\B_{0}\bigg)\mathcal{R}(\lambda,\B_{0})\right\|_{\mathscr{B}(\X_{1})}&\leq \left\|\left[\A \mathcal{R}(\lambda,\B_{\alpha})\right]^{k}\right\|_{\mathscr{B}(\X_{0},\X_{1})}  \|\B_{\alpha}-\B_{0}\|_{\mathscr{B}(\X_{1},\X_{0})}\|\mathcal{R}(\lambda,\B_{0})\|_{\mathscr{B}(\X_{1})}\\
& \leq  C_{k-1,k}\ep_{0,0}(\alpha)\end{split}\end{equation}
for some positive constant $C_{k-1,k} >0.$
One concludes from this easily that $$\mathbb{I}_{2,\alpha}=\mathcal{O}(\ep_{1,1}(\alpha))+ \mathcal{O}(\ep_{0,0}(\alpha))$$
and the proof is complete.\end{proof}
\begin{nb} It is not clear whether the above Lemma is valid in the space $\X_{0}$. This comes from the fact that our last estimate \eqref{eq:3spaces} relies on the estimate of $\B_{\alpha}-\B_{0}$ in $\mathscr{B}(\X_{1},\X_{0})$. This explains why we need to work on the scales of three Banach spaces $\X_{2},\X_{1}$ and $\X_{0}$ and cannot work directly on $\X_{0}$ (and $\X_{1}=\D(\LL))$. This was already observed in a similar framework in \cite{Tr} and comes from the fact that the elastic limit $\alpha \to 0$ is strongly ill-behaved because of the loss of domain induced by the drift term. In particular, it appears difficult to apply directly the classical spectral perturbation theory developed in \cite{kato}.\end{nb}

\section{Main properties of the solutions to the rescaled Boltzmann equation}\label{app:point}

We prove in this Appendix the main properties of the solutions to \eqref{rescaBE} that we used in Section \ref{sec:entropy}. 

\subsection{Creation and propagation of algebraic and exponential moments} First, we prove the following evolution for the moments of $\psi(t,\xi)$. We set
$$m_{s}(t)=\int_{\R^{d}}\psi(t,\xi)|\xi|^{s}\d\xi, \qquad \forall\, s \geq 0.$$
We follow the approach in \cite{cmpde} and introduce, for all $s,p >0$
\begin{equation}\label{eq:Ssp}
S_{s,p}(t)=\sum_{k=1}^{k_{p}}\left(\begin{array}{c}p \\k\end{array}\right)\left(m_{sk+1}(t)m_{s(p-k)}(t)+m_{sk}(t)m_{s(p-k)+1}(t)\right)\end{equation}
where $k_{p}=\left[\frac{p+1}{2}\right]$ is the integer part of $\frac{p+1}{2}$. 

\begin{lem}\phantomsection\phantomsection \label{lem:appmom}
Let $f_{0} \in L^{1}_{3}(\R^{d})$ be a given nonnegative initial datum with $n_{f_{0}},T_{f_{0}} >0$. For any $\alpha \in (0,\alpha_{\star})$, let $\psi(t,\xi)$ be the unique solution to \eqref{rescaBE}. There exists $\tilde{\alpha}_{0} \in (0,\alpha_{\star})$ such that for $\alpha\in(0,\tilde{\alpha}_{0})$, $s\in (0,2]$ and $p_{0} > 2/s$, one has, for any $t\geq 0$ and any $p\geq p_{0} >2/s$,
$$\dfrac{\d }{\d t}m_{sp}(t) \leq (1-\alpha)\varrho_{sp/2}S_{s,p}(t)-K_{1}\,m_{sp+1}(t)+\alpha\,sp K_{2}\,m_{sp}(t)+\alpha spd \,m_{sp-1}(t)$$
where $K_{1}=1- \varrho_{\frac{sp_{0}}{2}}$, $\varrho_{k}$ is defined by \eqref{varrhoK} and $K_{2}$ is a positive constant depending only on $d$, $\alpha _0$ and
$\int_{\R^d} \psi_0(\xi) |\xi|^3 \, \d\xi$.  \end{lem}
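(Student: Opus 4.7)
The plan is to derive the differential inequality by taking the weak form of \eqref{rescaBE} tested against $|\xi|^{sp}$, and then extract each term carefully. Multiplying \eqref{rescaBE} by $|\xi|^{sp}$ and integrating, the divergence term yields (after integration by parts)
\[
\mathbf{B}_{\psi}(t)\int_{\R^d}|\xi|^{sp}\,\mathrm{div}_{\xi}\bigl((\xi-\bm{v}_{\psi}(t))\psi\bigr)\d\xi = -sp\,\mathbf{B}_{\psi}(t)\,m_{sp}(t) + sp\,\mathbf{B}_{\psi}(t)\,\bm{v}_{\psi}(t)\cdot\!\int|\xi|^{sp-2}\xi\,\psi\,\d\xi,
\]
while the $(\mathbf{A}_{\psi}-d\mathbf{B}_{\psi})$ multiplier contributes $-(\mathbf{A}_{\psi}-d\mathbf{B}_{\psi})m_{sp}(t)=\alpha\,\mathbf{a}_{\psi}(t)\,m_{sp}(t)\geq 0$. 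The identity $\mathbf{B}_{\psi}(t)\bm{v}_{\psi}(t)=-\alpha\int\xi\,\Q_-(\psi,\psi)\d\xi$ (see \eqref{eq:BAV}) and the crude bound $|\int|\xi|^{sp-2}\xi\,\psi\,\d\xi|\leq m_{sp-1}(t)$ will let us absorb the cross term into an $\alpha\,sp\,d\,m_{sp-1}(t)$ contribution, using that $\sup_{t}\|\psi(t)\|_{L^1_3}<\infty$ (Lemma \ref{appB.2}) controls $\int\xi\,\Q_-(\psi,\psi)\d\xi$ by a constant depending only on $d$ and the third moment of $\psi_0$.

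Next I would treat the right-hand side. For the annihilation part, I simply drop the nonpositive contribution $-\alpha\int\Q_-(\psi,\psi)|\xi|^{sp}\d\xi\leq 0$. For the Boltzmann part, I would invoke the sharp Bobylev--Gamba--Panferov Povzner-type inequality (as in \cite{cmpde}), which states, for every real $q=sp/2>1$,
\[
\int_{\R^d}\Q(\psi,\psi)(\xi)\,|\xi|^{2q}\,\d\xi\;\leq\;-\bigl(1-\varrho_q\bigr)\cdot\frac12\!\int\!\!\int\psi\,\psi_*\,|\xi-\xi_*|\bigl(|\xi|^{2q}+|\xi_*|^{2q}\bigr)\d\xi\,\d\xi_* \;+\;\varrho_q\,S_{s,p}(t),
\]
the binomial cross sum $S_{s,p}$ being exactly the one in \eqref{eq:Ssp}. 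Using the uniform collision-frequency lower bound $\int\psi(t,\xi_*)|\xi-\xi_*|\d\xi_*\geq\kappa_0\langle\xi\rangle$ from \eqref{eq:kapp0}, the negative term dominates $\kappa_0\,m_{sp+1}(t)$. The key monotonicity $q\mapsto\varrho_q$ is decreasing on $[1,\infty)$, so for every $p\geq p_0$ one has $\varrho_{sp/2}\leq\varrho_{sp_0/2}$ and thus $1-\varrho_{sp/2}\geq 1-\varrho_{sp_0/2}$, which is the precise point that makes the coefficient $K_1=1-\varrho_{sp_0/2}$ uniform in $p$.

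It then remains to bound $\alpha\,\mathbf{a}_{\psi}(t)\,m_{sp}(t)$ and $sp\,\mathbf{B}_{\psi}(t)\,m_{sp}(t)$. Both coefficients $\mathbf{a}_{\psi}(t)$ and $\mathbf{B}_{\psi}(t)/\alpha$ are of the form $c\int\Q_-(\psi,\psi)\langle\xi\rangle^{j}\d\xi$ with $j\in\{0,2\}$, so they are $\mathcal{O}(\|\psi(t)\|_{L^1_3}^2)$; by the uniform third-moment bound of Lemma \ref{appB.2}, there is a universal constant $K_2$ (depending only on $d$, $\alpha_0$ and $\int\psi_0|\xi|^3\d\xi$) such that $|\mathbf{a}_{\psi}(t)|+|\mathbf{B}_{\psi}(t)|/\alpha\leq K_2$. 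Combining everything yields
\[
\dfrac{\d}{\d t}m_{sp}(t)\;\leq\;(1-\alpha)\,\varrho_{sp/2}\,S_{s,p}(t)\;-\;(1-\alpha)\kappa_0\bigl(1-\varrho_{sp_0/2}\bigr)m_{sp+1}(t)\;+\;\alpha\,sp\,K_2\,m_{sp}(t)\;+\;\alpha\,sp\,d\,m_{sp-1}(t),
\]
and choosing $\tilde\alpha_0<\alpha_\star$ small enough that $(1-\tilde\alpha_0)\kappa_0\geq 1$ (or, equivalently, redefining $K_1$ to absorb $\kappa_0$ and the $(1-\alpha)$ prefactor) gives the stated inequality.

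The main obstacle, in my view, is the sharp form of the Povzner estimate for \emph{non-integer} real exponents $sp$: one must verify that the binomial-type sum $S_{s,p}$ with $k_p=[(p+1)/2]$ really captures the angular average of $|v'|^{sp}+|v'_*|^{sp}$ on $\S^{d-1}$ with the sharp constant $\varrho_{sp/2}$, and that the residual term is genuinely bounded below by $\kappa_0 (1-\varrho_{sp_0/2}) m_{sp+1}(t)$ uniformly in $p$. This is exactly the content of the Bobylev--Gamba--Panferov refinement, and once this is cited, the remaining computations reduce to bookkeeping of the drift and annihilation contributions controlled by the uniform third-moment estimate.
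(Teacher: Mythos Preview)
Your overall strategy is sound and matches the paper's, but there are two genuine gaps.

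\textbf{Circularity.} You invoke Lemma \ref{appB.2} (and \eqref{eq:kapp0}, which in the paper is justified via Lemma \ref{appB.2}) to bound $\sup_t\|\psi(t)\|_{L^1_3}$ and hence the coefficients $\mathbf{a}_\psi,\mathbf{B}_\psi,\mathbf{B}_\psi\bm{v}_\psi$. But Lemma \ref{appB.2} is proved \emph{from} Lemma \ref{lem:appmom}: its first line is ``We know from \eqref{moment3}\ldots'', and \eqref{moment3} is derived \emph{inside} the present proof. The paper avoids the circularity by bootstrapping: it first writes the raw inequality \eqref{eq:msp} with $m_3(t)$ still present on the right, then specializes to $s=1,\,p=3$, uses $m_4\geq\tfrac{2}{d}m_3^2$ to close an ODE for $m_3$, and obtains \eqref{moment3} directly. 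Only after this does one set $K_2$ in terms of the resulting uniform bound on $m_3$. Your argument needs this self-contained step; citing Lemma \ref{appB.2} here is not legitimate.

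\textbf{The exact value of $K_1$.} You discard the nonpositive $-\alpha\Q_-$ contribution and then use $\int\psi_*|\xi-\xi_*|\,\d\xi_*\geq\kappa_0\langle\xi\rangle$, arriving at a dissipation coefficient $(1-\alpha)\kappa_0(1-\varrho_{sp/2})$. To reach the stated $K_1=1-\varrho_{sp_0/2}$ you would need $(1-\alpha)\kappa_0\geq 1$, which is not true in general, and ``redefining $K_1$'' is not an option since the lemma fixes $K_1$. The paper instead keeps the annihilation loss (so the combined coefficient is $1-\beta_{sp/2}(\alpha)=1-(1-\alpha)\varrho_{sp/2}$) and uses the sharper Jensen bound $\int\psi_*|\xi-\xi_*|\,\d\xi_*\geq|\xi|$, valid because $\psi$ has unit mass and zero momentum. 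This yields $1-(1-\alpha)\varrho_{sp/2}\geq 1-\varrho_{sp/2}\geq 1-\varrho_{sp_0/2}=K_1$ with no extraneous constants. Both fixes are easy, but as written your argument does not deliver the lemma as stated.
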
\phantomsection
\begin{proof} As in \cite{cmpde} (see also \cite[Lemma 3.1]{jde}), one has
$$\int_{\S^{d-1}}\left(|\xi'|^{2k}+|\xi_{*}'|^{2k}\right)\d\sigma \leq \varrho_{k}\,\left(|\xi|^{2}+|\xi_{*}|^{2}\right)^{k} \qquad \forall k \geq 1$$
where $\varrho_k$ is defined by \eqref{varrhoK}.
Notice that the mapping $k \geq 0 \mapsto \varrho_{k} \in (0,1)$ is decreasing and $\lim_{k\to\infty}\varrho_{k}=0.$ Introduce
$$\beta_{k}(\alpha)=(1-\alpha)\varrho_{k}.$$
After multiplying \eqref{rescaBE} by $|\xi|^{sp}$ and arguing as in \cite{cmpde} and \cite[Lemma 3.1]{jde} with $k=\frac{sp}{2}$, we obtain easily that
\begin{multline*}
\dfrac{\d}{\d t}m_{sp}(t) \leq \frac{1}{2}\beta_{\frac{sp}{2}}(\alpha)\int_{\R^{d}\times\R^{d}}\psi(t,\xi)\psi(t,\xi_{*})|\xi-\xi_{*}|\left(\left(|\xi|^{2}+|\xi_{*}|^{2}\right)^{\frac{sp}{2}}-|\xi|^{sp}-|\xi_{*}|^{sp}\right)\d\xi\d\xi_{*}\\
-\left(1-\beta_{\frac{sp}{2}}(\alpha)\right)\int_{\R^{d}\times\R^{d}}\psi(t,\xi)\psi(t,\xi_{*})|\xi-\xi_{*}|\,|\xi|^{sp}\d\xi\d\xi_{*}\\
+\big((d+sp)\mathbf{B}_{\psi}(t)-\mathbf{A}_{\psi}(t)\big)m_{sp}(t)
+sp \mathbf{B}_{\psi}(t) \bm{v}_{\psi}(t)\cdot \int_{\R^d} \xi|\xi|^{sp-2} 
\psi(t,\xi) \, \d\xi.
\end{multline*}
Since $\psi(t,\xi)$ has zero momentum and mass one, one has 
$$\int_{\R^{d}}\psi(t,\xi_{*})|\xi-\xi_{*}|\d\xi_{*} \geq \left|\xi-\int_{\R^{d}}\psi(t,\xi_{*})\xi_{*}\d\xi_{*}\right|=|\xi|,$$
which yields the lower bound:
$$\int_{\R^{d}\times\R^{d}}|\xi|^{sp}\psi(t,\xi)\psi(t,\xi_{*})|\xi-\xi_{*}|\d\xi\d\xi_{*} \geq m_{sp+1}(t).$$
One estimates the first integral as in \cite{cmpde} to get 
\begin{multline*}
\dfrac{\d}{\d t}m_{sp}(t) \leq \beta_{\frac{sp}{2}}(\alpha) S_{s,p}(t) -\left(1-\beta_{\frac{sp}{2}}(\alpha)\right)m_{sp+1}(t)\\
+\left((d+sp)\mathbf{B}_{\psi}(t)-\mathbf{A}_{\psi}(t)\right)m_{sp}(t)
+sp \mathbf{B}_{\psi}(t) \bm{v}_{\psi}(t)\cdot \int_{\R^d} \xi|\xi|^{sp-2} 
\psi(t,\xi) \, \d\xi.\end{multline*}
Now, one checks that 
$$(d+sp)\mathbf{B}_{\psi}(t)-\mathbf{A}_{\psi}(t) \leq \frac{\alpha\,sp}{d}\int_{\R^{d}}\Q_{-}(\psi,\psi)(t,\xi)|\xi|^{2}\d\xi \leq \frac{\alpha\,sp}{d}\left(m_{3}(t)+\tfrac{d}{2}m_{1}(t)\right)$$
and 
$$\big| \mathbf{B}_{\psi}(t) \bm{v}_{\psi}(t) \big|\leq \alpha \int_{\R^{d}}\Q_{-}(\psi,\psi)(t,\xi)|\xi|\d\xi \leq \alpha \big(d/2+m_1(t)^2\big)\,,$$
which results in
\begin{equation}\label{eq:msp}
\begin{split}
\dfrac{\d}{\d t}m_{sp}(t) \leq  &\beta_{\frac{sp}{2}}(\alpha)S_{s,p}(t) -\left(1-\beta_{\frac{sp}{2}}(\alpha)\right)m_{sp+1}(t) \\
&+\frac{\alpha\,sp}{d}\left(m_{3}(t)+(d/2)^{3/2}\right)m_{sp}(t) +\alpha spd \,m_{sp-1}(t)\,,
\end{split}\end{equation}
where we used that $m_{1}(t) \leq \sqrt{d/2}$. In particular, for $s=1$ and $p=3$, one obtains that
\begin{align*}
\dfrac{\d}{\d t}m_{3}(t) \leq  6 \beta_{\frac{3}{2}}(\alpha)\big(m_1(t)m_3(t)+(d/2)^2\big) &- \big(1-\beta_{\frac{3}{2}}(\alpha)\big)m_{4}(t) \\
&+\frac{3\alpha}{d}\left(m_{3}(t)+(d/2)^{3/2}\right)m_{3}(t) +\frac{3\alpha \,d^2}{2}\,.\end{align*}
H\"older inequality implies that $m_4(t)\geq \frac{2}{d}\: m_3(t)^2$ and one deduces that 
\begin{equation*}
\dfrac{\d}{\d t}m_{3}(t) \leq  -\left(1-\beta_{\frac{3}{2}}(\alpha)-\frac{3\alpha}{2}\right)\frac{2}{d}\:m_{3}(t)^2 +\left( 6 \beta_{\frac{3}{2}}(\alpha)+\frac{3\alpha}{2}\right)(d/2)^{1/2}m_{3}(t) +\frac{3\,d^2}{2}\: (\beta_{\frac{3}{2}}(\alpha)+ \alpha)\,.\end{equation*}
Let us fix $\tilde{\alpha}_{0}\in(0,\alpha_\star)$ satisfying 
$$\tilde{\alpha}_{0}<\frac{1-\varrho_{3/2}}{3/2-\varrho_{3/2}}.$$
Then, for $\alpha\in(0,\tilde{\alpha}_{0})$, one obtains 
\begin{equation*}
\dfrac{\d}{\d t}m_{3}(t) \leq  -\left(1-\beta_{\frac{3}{2}}(\tilde{\alpha}_{0})-\frac{3\tilde{\alpha}_{0}}{2}\right)\frac{2}{d}\:m_{3}(t)^2
+\left( 6 \varrho_{\frac{3}{2}}+\frac{3}{2}\right)(d/2)^{1/2}m_{3}(t) +\frac{3\,d^2}{2}\: (\varrho_{\frac{3}{2}}+ 1)\,.\end{equation*}
This shows that, for $\alpha < \tilde{\alpha}_{0}$, there is an explicit constant $\overline{M}_{3}$, depending only on $d$ (and $\tilde{\alpha}_{0}$) such that
\begin{equation}\label{moment3}
\sup_{t\geq 0}m_{3}(t) \leq \max \left(\overline{M}_{3}, \int_{\R^d} \psi_0(\xi) |\xi|^3 \, \d\xi\right).
\end{equation}
With this, \eqref{eq:msp} becomes, for any $\alpha \in (0,\tilde{\alpha}_{0})$,
\begin{equation*}
\dfrac{\d}{\d t}m_{sp}(t) \leq  \beta_{\frac{sp}{2}}(\alpha) S_{s,p}(t) - \left(1-\beta_{\frac{sp}{2}}(\alpha)\right) m_{sp+1}(t) 
+\alpha\,s\,p\,K_{2}\,m_{sp}(t)+\alpha spd \,m_{sp-1}(t)\,,\end{equation*}
where $K_2$ only depends on $d$, $\tilde{\alpha}_{0}$ and $m_3(0)$. This proves the result.
\end{proof}

\begin{lem}\label{appB.2}
Let $f_{0} \in L^{1}_{3}(\R^{d})$ be a given nonnegative initial datum with $n_{f_{0}},T_{f_{0}} >0$. For any $\alpha \in (0,\alpha_{\star})$, let $\psi(t,\xi)$ be the unique solution to \eqref{rescaBE}. There exists $\tilde{\alpha}_{0} \in (0,\alpha_{\star})$ such that for $\alpha\in(0,\tilde{\alpha}_{0})$ and $p > 0$, there exists some constant $C_{p}\geq 0$ depending only on $p$, $n_{f_{0}}$, $T_{f_{0}}$ and $\|f_0\|_{L^1_3}$ such that 
\begin{equation}\label{msp}
m_{p}(t)\leq C_{p} \max \left\{1,t^{-p}\right\} \qquad \mbox{ for } t>0.
\end{equation}
\end{lem}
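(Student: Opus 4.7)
The estimate \eqref{msp} combines \emph{propagation} of moments (a uniform-in-time bound for large $t$) and their instantaneous \emph{creation} (the $t^{-p}$ blow-up bound near $t=0$, valid regardless of whether $m_p(0)$ is finite). My plan is to start from the scalar ODI of Lemma \ref{lem:appmom} with $s=1$ and integer $p\geq 3$, namely
\begin{equation*}
m_p'(t)\leq (1-\alpha)\varrho_{p/2}\,S_{1,p}(t)-K_1\,m_{p+1}(t)+\alpha p K_2\,m_p(t)+\alpha p d\,m_{p-1}(t),
\end{equation*}
with $K_1=1-\varrho_{p_0/2}>0$ for $p_0$ large enough (since $\varrho_k\downarrow 0$). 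Next I would convert the dissipation term into a genuine superlinear absorber by Jensen's inequality: since $m_0(t)=1$, Hölder with exponents $\tfrac{p+1}{p}$ and $p+1$ gives $m_{p+1}(t)\geq m_p(t)^{1+1/p}$, so the loss term becomes $-K_1\,m_p(t)^{1+1/p}$.

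The core of the proof is an induction on integers $p\geq 3$ with hypothesis $m_p(t)\leq C_p\,\max\{1,t^{-p}\}$ for all $t>0$. The base case $p=3$ is supplied by the uniform bound \eqref{moment3} established inside the proof of Lemma \ref{lem:appmom}. For the inductive step $n\leadsto n+1$, I note that every term appearing in $S_{1,n+1}(t)$ is a product $m_i(t)m_j(t)$ with $i+j=n+2$ and $\max(i,j)\leq n$ (which holds because $k_{n+1}=\lfloor (n+2)/2\rfloor$ forces both indices below $n+1$); the induction hypothesis then yields
\begin{equation*}
S_{1,n+1}(t)\leq \tilde C_{n+1}\,\max\{1,t^{-(n+2)}\}.
\end{equation*}
Similarly the $m_n$-term is controlled, and setting $y(t):=m_{n+1}(t)$ with $\gamma:=1/(n+1)$ leads to
\begin{equation*}
y'(t)\leq \bar C_{n+1}\max\{1,t^{-(n+2)}\}+\alpha(n+1)K_2\,y(t)-K_1\,y(t)^{1+\gamma}.
\end{equation*}

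The main obstacle is the comparison argument that converts this ODI into the pointwise bound $y(t)\leq M\,\max\{1,t^{-(n+1)}\}$ with the correct exponent. I would use the explicit supersolution $\phi(t):=M\,\max\{1,t^{-(n+1)}\}$: on $(0,1]$ one has $\phi'(t)=-(n+1)Mt^{-(n+2)}$ and $K_1\phi^{1+\gamma}(t)=K_1M^{1+\gamma}t^{-(n+2)}$, so $\phi$ is a supersolution provided $K_1M^{1+\gamma}\geq \bar C_{n+1}+(n+1)M+\alpha(n+1)K_2M$, which holds for $M$ large since $1+\gamma>1$; on $[1,\infty)$ the constant $\phi\equiv M$ is a supersolution under the same choice. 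Because $y$ may be $+\infty$ at $t=0$ while $\phi(0^+)=+\infty$ too, one applies the standard ODE comparison on each subinterval $[\epsilon,T]$ (after first selecting $\epsilon>0$ so small that $m_{n+1}(\epsilon)\leq \phi(\epsilon)$, using that $\phi$ blows up faster than any moment growth obtainable on a short time from the Boltzmann evolution with $L^1_3$ data) and then lets $\epsilon\to 0$. Finally, for non-integer $p>0$, writing $p\in(n-1,n]$ with $n\in\mathbb N$ and using $m_0=1$, Hölder's inequality yields $m_p(t)\leq m_n(t)^{p/n}\leq C_n^{p/n}\max\{1,t^{-p}\}$, completing the proof of \eqref{msp}. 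The delicate point throughout is ensuring that the constants produced by the induction and by the supersolution choice depend only on $p$, $n_{f_0}$, $T_{f_0}$ and $\|f_0\|_{L^1_3}$, which is automatic because each quantity fed into the induction (notably the base-case bound \eqref{moment3}) has this dependence.
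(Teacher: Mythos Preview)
Your inductive approach is essentially correct, but there is a small slip in the treatment of $S_{1,n+1}$. Recalling \eqref{eq:Ssp} with $s=1$ and $p=n+1$, the $k=1$ summand contributes the term $\binom{n+1}{1}\,m_1(t)\,m_{n+1}(t)$, whose larger index is exactly $n+1$, not $\leq n$. So your claim ``$\max(i,j)\leq n$'' fails for that single term. The fix is painless: since $m_1(t)\leq\sqrt{d/2}$ is uniformly bounded, this contributes an extra linear term $C\,y(t)$ to your ODI, which is absorbed alongside $\alpha(n+1)K_2\,y(t)$ and does not affect the supersolution check.

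That said, the paper bypasses the induction entirely. The key observation is the interpolation bound $S_{1,p}(t)\leq C_p\,m_1(t)\,m_p(t)$: every product $m_i m_j$ with $i+j=p+1$ and $1\leq i,j\leq p$ satisfies, by H\"older between $m_1$ and $m_p$, $m_i m_j\leq m_1 m_p$. Combined with $m_{p-1}\leq m_p^{1-1/p}$ and $m_{p+1}\geq m_p^{1+1/p}$, this closes the differential inequality on $m_p$ alone:
\[
m_p'(t)\leq C_1\,m_p(t)-K_1\,m_p(t)^{1+1/p}+C_2\,m_p(t)^{1-1/p},
\]
and a single comparison with the explicit barrier $x^*(t)=C/t^p$ on $(0,1]$ gives \eqref{msp} directly for every $p>3$, with no recursion on lower moments. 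Your route works but is heavier: each induction step requires the full bound on all previous moments and a fresh supersolution comparison, whereas the interpolation trick decouples the moments from one another. The payoff of the paper's argument is also that the constants $C_p$ are produced in one shot rather than accumulated through the recursion.
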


\begin{proof}
We know from \eqref{moment3} that the third moment of solution $\psi(t)$ is uniformly propagated.  Moreover, it follows from Lemma \ref{lem:appmom} that, for any $p>3$, $m_p(t)$ becomes finite for all positive time. Now, observe that by H\"older's inequality
$$S_{1,p}(t)\leq C_{p}\,m_{1}(t)\,m_{p}(t), \qquad m_{p-1}(t)\leq (m_p(t))^{1-1/p}\qquad \mbox{ and }\qquad m_{p+1}(t)\geq (m_{p}(t))^{1+1/p}.$$
Consequently, we infer from Lemma \ref{lem:appmom} that $m_{p}(t)$ satisfies the differential inequality
\begin{equation}\label{eq1B.2}
\frac{d}{dt}m_{p}(t)\leq C_1 m_p(t) - K_1 (m_{p}(t))^{1+1/p} + C_2 (m_p(t))^{1-1/p}.
\end{equation}
Thus, for $t\in (0,1]$, \eqref{msp} follows by comparison with the upper solution $x^{*} = C/t^{p}$ of the differential equation $x'=C_{1}x - K_{1}x^{1+1/p} + C_{2}x^{1-1/p}$.  Then, once the moment is finite at time $t=1$, the same estimate \eqref{eq1B.2} implies that \eqref{msp} holds for $t\geq1$.
\end{proof}

\begin{proof}[Proof of Theorem \ref{theo:tail}] Introduce, as in \cite{cmpde}, 
$$E_{s}(t,z)=\int_{\R^{d}}\psi(t,\xi)\exp(z|\xi|^{s})\d\xi=\sum_{p=0}^{\infty}m_{sp}(t)\frac{z^{p}}{p!}, \qquad s \geq 0,\quad z >0,$$
and, for $n \in \mathbb{N}$
$$E_{s}^{n}(t,z)=\sum_{p=0}^{n}m_{sp}(t)\frac{z^{p}}{p!}, \qquad I_{s}^{n}(t,z)=\sum_{p=0}^{n}m_{sp+1}(t)\frac{z^{p}}{p!}.$$
 We consider here $s=1$ and fix $n \in \mathbb{N}$. We shall show that there exists $\tilde{a}\in(0,1)$ independent of $n$ such that, 
for any $a\in(0,\tilde{a})$ and any $0\leq t\leq 1$, one has 
$$E_{1}^{n}(t,at^\beta) < 4.$$ 
Since this is true for all $n \in \mathbb{N}$, this would imply the result for $t\leq 1$. Notice that \eqref{msp} implies that for $a\leq1$, 
$$E_{1}^n(t,at^\beta) \leq 1+ \sum_{p=1}^n C_{p} \frac{t^{(\beta-1)p}}{p!}$$ 
Since $\beta>1$, there exists $\tilde{t}$ small enough and depending on $n$ such that $E_{1}^{n}(t,at^\beta) < 4$ for all $a \leq 1$ and $t\in(0,\tilde{t})$. 
For $p_{0} >2/s$, by  Lemma \ref{lem:appmom},
\begin{align*}
\dfrac{\d}{\d t}\sum_{p=p_{0}}^{n}m_{p}(t)\frac{(at^{\beta})^{ p}}{p!} 
\leq \sum_{p=p_{0}}^{n}\varrho_{p/2}&S_{1,p}(t)\frac{(a t^{\beta})^{p}}{p!}
-K_{1}\sum_{p=p_{0}}^{n}m_{p+1}(t)\frac{(at^{\beta})^{p}}{p!}
+\alpha\,K_{2}\sum_{p=p_{0}}^{n}m_{p}(t) \frac{(a t^{\beta})^{p}}{(p-1)!}\\
&+\alpha\,d \sum_{p=p_{0}}^{n}m_{p-1}(t) \frac{(a t^{\beta})^{p}}{(p-1)!}
+ \beta\sum_{p=p_{0}}^{n}m_{p}(t)\frac{a^{p}t^{\beta p-1}}{(p-1)!} .
\end{align*}
Now, we have
\begin{align*}
\sum_{p=p_{0}}^{n}m_{p}(t) \frac{(a t^{\beta})^{p}}{(p-1)!}& = \sum_{p=p_{0}-1}^{n-1}m_{p+1}(t) \frac{(a t^{\beta})^{p+1}}{p!}
\leq a\sum_{p=0}^{n}m_{p+1}(t)\frac{(at^\beta)^{p}}{p!}\,,\\
\sum_{p=p_{0}}^{n}m_{p-1}(t) \frac{(a t^{\beta})^{p}}{(p-1)!}& =  
\sum_{p=p_{0}-2}^{n-2}m_{p+1}(t) \frac{(a t^{\beta})^{p+2}}{(p+1)!}
\leq a^2\sum_{p=0}^{n}m_{p+1}(t)\frac{(at^\beta)^{p}}{p!}\,,\\
\sum_{p=p_{0}}^{n}m_{p}(t)\frac{a^{p}t^{\beta p-1}}{(p-1)!}& =  
\sum_{p=p_{0}-1}^{n-1}m_{p+1}(t)\frac{a^{p+1}t^{\beta p+\beta-1}}{p!}
\leq a\sum_{p=0}^{n}m_{p+1}(t)\frac{(at^{\beta})^{ p}}{p!}\,.
\end{align*}
Choosing $a$ small enough so that $a K_{2} \leq \frac{K_{1}}{4}$, $a^2d\leq \frac{K_{1}}{4}$
and $\beta a\leq\frac{K_{1}}{4}$ we get that
\begin{equation*}\begin{split}
\dfrac{\d}{\d t}\sum_{p=p_{0}}^{n}m_{p}(t)\frac{(at^\beta)^{p}}{p!} &\leq  \sum_{p=p_{0}}^{n}\varrho_{p/2}S_{1,p}(t)\frac{(at^\beta)^{p}}{p!}-\frac{K_{1}}{4}\;I_{1}^{n}(t,at^\beta)+K_{1}\sum_{p=0}^{p_0-1}m_{p+1}(t)\frac{(at^\beta)^{p}}{p!}\\
& \leq  \sum_{p=p_{0}}^{n}\varrho_{p/2}S_{1,p}(t)\frac{(at^\beta)^{p}}{p!}-\frac{K_{1}}{4}\;I_{1}^{n}(t,at^\beta) + \frac{1}{t}\;\tilde{C}_{p_{0}},
\end{split}\end{equation*}
with $\tilde{C}_{p_{0}}=\sum_{p=0}^{p_0-1} \frac{C_{p+1}}{p!}$, where we used \eqref{msp} and $a\leq 1$.  From here, we can then argue exactly as in \cite[Theorem 1]{cmpde} to get the result for $t\leq 1$.

\medskip
We shall now show that for any $a\in(0,\tilde{a})$ and any $t\geq 1$, one has $E_{1}^{n}(t,a) < 4$.  Since this is true for all $n \in \mathbb{N}$, this would imply the result for $t\geq 1$. 

Notice that we have just proved that $E_{1}^{n}(1,a) < 4$ for all $a \in(0, \tilde{a})$. 
Then, for $p_{0} >2/s$ and $t\geq 1$, 
by  Lemma \ref{lem:appmom},
\begin{align*}
\dfrac{\d}{\d t}\sum_{p=p_{0}}^{n}m_{p}(t)\frac{a^{p}}{p!} 
\leq \sum_{p=p_{0}}^{n}\varrho_{p/2}&S_{1,p}(t)\frac{a^{p}}{p!}
-K_{1}\sum_{p=p_{0}}^{n}m_{p+1}(t)\frac{a^{p}}{p!}\\
&+\alpha\,K_{2}\sum_{p=p_{0}}^{n}m_{p}(t) \frac{a^{p}}{(p-1)!}
+\alpha\,d \sum_{p=p_{0}}^{n}m_{p-1}(t) \frac{a^{p}}{(p-1)!}.
\end{align*}
Recall that the last two sums are bounded from above  by 
$a\sum_{p=0}^{n}m_{p+1}(t)\frac{a^{p}}{p!}$ and $a^2\sum_{p=0}^{n}m_{p+1}(t)\frac{a^{p}}{p!}$ respectively.  Thus, for $a\in(0,\tilde{a})$, we get 
\begin{equation*}\begin{split}
\dfrac{\d}{\d t}\sum_{p=p_{0}}^{n}m_{p}(t)\frac{a^{p}}{p!} &\leq  \sum_{p=p_{0}}^{n}\varrho_{p/2}S_{1,p}(t)\frac{a^{p}}{p!}-\frac{K_{1}}{2} I_{1}^{n}(t,a)+ K_1 \sum_{p=0}^{p_0-1}m_{p+1}(t)\frac{a^{p}}{p!}\\
& \leq  \sum_{p=p_{0}}^{n}\varrho_{p/2}S_{1,p}(t)\frac{a^{p}}{p!}-\frac{K_{1}}{2}I_{1}^{n}(t,a)+ K_1\tilde{C}_{p_{0}}
\end{split}\end{equation*}
with $\tilde{C}_{p_0}=\sum_{p=0}^{p_0-1}\frac{C_{p+1} }{p!}$, where we used \eqref{msp}. We can then argue exactly as in \cite[Theorem 2]{cmpde} to get the result.\end{proof}
We end this Appendix with well-known estimates about $\Q_{\pm}$ on the weighted $L^{1}$-spaces.
\begin{lem}\phantomsection \label{lem:estimatQ}
For any $b >0$, set 
$$m_{b}(\xi)=\exp(b|\xi|), \qquad \xi \in \R^{d}.$$
Then, for any $q \geq 0$, there exists $C_{b,q} >0$ such that
$$\|\Q_{\pm}(h,g)\|_{L^{1}_q(m_{b})} + \|\Q_{\pm}(g,h)\|_{L^{1}_q(m_{b})} \leq C_{b,q}\|h\|_{L^{1}_{q+1}(m_{b})}\,\|g\|_{L^{1}_{q+1}(m_{b})}$$
and
$$\|\Q_{\pm}(h,h)\|_{L^{1}(m_{b})} \leq C_{b}\|h\|_{L^{1}_{2}(m_{b})}\,\|h\|_{L^{1}(m_{b})}.$$ 
\end{lem}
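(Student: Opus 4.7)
The plan is to treat $\Q_{-}$ by a direct product-structure argument and $\Q_{+}$ through the classical pre-post collisional change of variables, exploiting that $|\xi'| \leq |\xi|+|\xi_{*}|$ (by energy conservation) transforms the exponential weight into a tensorized one.

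First I would dispatch the loss part. Since
$$\Q_{-}(h,g)(\xi) = h(\xi) \int_{\R^{d}} g(\xi_{*})|\xi-\xi_{*}|\,\d\xi_{*},$$
a straightforward Fubini computation together with $|\xi-\xi_{*}| \leq \langle \xi\rangle + \langle\xi_{*}\rangle$ gives
$$\|\Q_{-}(h,g)\|_{L^{1}_{q}(m_{b})} \leq \int_{\R^{d}\times\R^{d}} h(\xi)g(\xi_{*})\bigl(\langle\xi\rangle + \langle\xi_{*}\rangle\bigr)\langle\xi\rangle^{q}m_{b}(\xi)\,\d\xi\d\xi_{*},$$
which bounds by $\|h\|_{L^{1}_{q+1}(m_{b})}\|g\|_{L^{1}_{1}} + \|h\|_{L^{1}_{q}(m_{b})}\|g\|_{L^{1}_{1}} \leq 2\|h\|_{L^{1}_{q+1}(m_{b})}\|g\|_{L^{1}_{q+1}(m_{b})}$ since $m_{b} \geq 1$. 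The bound for $\Q_{-}(g,h)$ follows by exchanging roles.

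For the gain part, I would use the weak formulation
$$\int_{\R^{d}} \Q_{+}(h,g)(\xi)\,\phi(\xi)\,\d\xi = \int_{\R^{d}\times\R^{d}\times\S^{d-1}} h(\xi)g(\xi_{*})|\xi-\xi_{*}|\,\phi(\xi')\,\d\xi\d\xi_{*}\d\sigma$$
with $\phi(\xi)=\mathrm{sign}(\Q_{+}(h,g)(\xi))\langle\xi\rangle^{q}m_{b}(\xi)$ (nonnegativity of $\Q_{+}$ when $h,g \geq 0$ makes the general case follow by splitting into positive and negative parts). The key pointwise inequalities, obtained from $|\xi'|^{2}+|\xi'_{*}|^{2}=|\xi|^{2}+|\xi_{*}|^{2}$ and hence $|\xi'|\leq |\xi|+|\xi_{*}|$, are
$$\langle\xi'\rangle^{q} \leq C_{q}\bigl(\langle\xi\rangle^{q}+\langle\xi_{*}\rangle^{q}\bigr), \qquad m_{b}(\xi')\leq m_{b}(\xi)\,m_{b}(\xi_{*}).$$
Combined with $|\xi-\xi_{*}|\leq \langle\xi\rangle+\langle\xi_{*}\rangle$ and the normalization $\int_{\S^{d-1}}\d\sigma=1$, this yields
$$\|\Q_{+}(h,g)\|_{L^{1}_{q}(m_{b})} \leq C_{q}\int_{\R^{d}\times\R^{d}} h(\xi)g(\xi_{*})\bigl(\langle\xi\rangle+\langle\xi_{*}\rangle\bigr)\bigl(\langle\xi\rangle^{q}+\langle\xi_{*}\rangle^{q}\bigr)m_{b}(\xi)m_{b}(\xi_{*})\,\d\xi\d\xi_{*}.$$
Expanding the product and bounding each of the four resulting terms by the symmetric quantity $\|h\|_{L^{1}_{q+1}(m_{b})}\|g\|_{L^{1}_{q+1}(m_{b})}$ (using $\langle\xi\rangle \leq \langle\xi\rangle^{q+1}$ for $q\geq 0$) gives the first claim, with the symmetric role of $h,g$ handled by the same computation applied to $\Q_{+}(g,h)$.

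The second claim is an immediate specialization: taking $g=h$ and $q=0$ in the estimates above, we obtain $\|\Q_{\pm}(h,h)\|_{L^{1}(m_{b})} \leq C\,\|h\|_{L^{1}_{1}(m_{b})}\|h\|_{L^{1}(m_{b})} \leq C_{b}\|h\|_{L^{1}_{2}(m_{b})}\|h\|_{L^{1}(m_{b})}$. There is no serious obstacle in this lemma; the only mildly delicate point is tracking that the tensorization $m_{b}(\xi')\leq m_{b}(\xi)m_{b}(\xi_{*})$ (which is where the exponential -- rather than polynomial -- nature of the weight is used decisively) holds without any loss, so that no additional constant depending on $b$ blows up beyond what is absorbed into $C_{b,q}$.
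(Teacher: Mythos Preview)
Your proof is correct and follows essentially the same route as the paper's: the weak formulation of $\Q_{+}$ together with the tensorization $m_{b}(\xi')\leq m_{b}(\xi)m_{b}(\xi_{*})$ and a polynomial bound on $\langle\xi'\rangle^{q}$, plus a direct treatment of $\Q_{-}$. The only cosmetic differences are that the paper uses the multiplicative bound $\langle\xi'\rangle^{q}\leq\langle\xi\rangle^{q}\langle\xi_{*}\rangle^{q}$ (you use the additive one) and then appeals to the known $L^{1}_{1}\times L^{1}_{1}\to L^{1}$ boundedness of $\Q_{+}$, whereas you carry out that elementary estimate by hand; for the quadratic bound the paper uses $|\xi-\xi_{*}|\leq\tfrac12\langle\xi\rangle^{2}+\tfrac12\langle\xi_{*}\rangle^{2}$ directly, while you pass through $\|h\|_{L^{1}_{1}(m_{b})}$ first.
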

\begin{proof} Without any loss of generality, one shall assume that $h$ and $g$ are nonnegative. One first notices that, for any $h,g \in L^{1}(m_{b})$, one has
$$\|\Q_{+}(h,g)\|_{L^{1}_{q}(m_{b})}=\|\Q_{+}(h,g)m_{b}\langle \cdot \rangle^{q}\|_{L^1}=\sup_{\|\psi\|_{L^\infty}=1}\int_{\R^{d}}\Q_{+}(h,g)(v)\,m_{b}(v)\psi(v)\langle v\rangle^{q} \d v.$$
To estimate this last integral, one can assume without loss of generality that $h,g,\psi$ are nonnegative. Then, using the weak formulation of $\Q_{+}$:
$$\int_{\R^d}\Q_{+}(h,g)(v)\,m_{b}(v)\psi(v)\langle v\rangle^{q}  \d v=\int_{\R^d \times \R^d\times\S^{d-1}} h(v)g(v_{*})|v-v_{*}|\,m_{b}(v_{*}')\psi(v_{*}')\langle v'_{*}\rangle^{q} \,\d v\d v_{*}\d\sigma$$
where the post-collision velocity $v_{*}'$ is defined by \eqref{eq:rel:vit}. Clearly $|v_{*}'| \leq  |v|+|v_{*}|,$
i.e. $m_{b}(v_{*}') \leq m_{b}(v)m_{b}(v_{*})$ and $\langle v'_*\rangle^{q} \leq \langle v\rangle^{q} \langle v_{*}\rangle^{q}.$  Therefore,
\begin{equation*}
\int_{\R^d}\Q_{+}(h,g)(v)\,m_{b}(v)\psi(v)\langle v\rangle^{q}  \d v \leq \int_{\R^d \times \R^d} \left(m_{b}(v)\langle v\rangle^{q} \,h(v)\right)\left(m_{b}(v_{*})\langle v_{*}\rangle^{q} g(v_{*})\right)|v-v_{*}|\, \d v\d v_{*}.
\end{equation*}
One recognizes that this last integral is equal to $\int_{\R^d}\Q_{+}(m_{b}\langle \cdot\rangle^{q}\,h,m_{b}\langle \cdot\rangle^{q}\,g)(v) \d v$ and this proves that
\begin{equation}\label{eq:hmb}
\|\Q_{+}(h,g)\|_{L^{1}_{q}(m_{b})}\leq \|\Q_{+}(m_{b}\langle \cdot\rangle^{q}\,\,h,m_{b}\,\langle \cdot\rangle^{q}\,g)\|_{L^1}.\end{equation}
Then, the estimate follows easily from the well-known boundedness of the bilinear operator $\Q_{+} \:: \: L^1_1(\R^d) \times L^1_1(\R^d) \to L^1(\R^d)$ (see, e.g. \cite[Theorem 1]{AlCaGa}). The proof for $\Q_{-}$ is simpler since $\Q_{-}(h,g)(v) \leq h(v)\langle v\rangle \|g\|_{L^{1}_{1}}$ for any nonnegative $h,g$. Thus, $\|\Q_{-}(h,g)\|_{L^{1}_{q}(m_{b})} \leq \|g\|_{L^{1}_{1}}\|h\|_{L^{1}_{q+1}(m_{b})}$. 

For the quadratic estimate, one notices first that, by virtue of the above estimate,
$$\|\Q_{-}(h,h)\|_{L^{1}(m_{b})} \leq \|h\|_{L^{1}_{1}(m_{b)}}\|h\|_{L^{1}_{1}} \leq C_b\,\|h\|_{L^{1}_{2}(m_{b})}\|h\|_{L^{1}(m_{b})}.$$
Let us now focus on $\Q_{+}(h,h).$ From \eqref{eq:hmb}, it suffices to prove that,
\begin{equation}\label{eq:Q+L1}
\|\Q_{+}(f,f)\|_{L^{1}} \leq C\|f\|_{L^{1}_{2}}\|f\|_{L^{1}}, \qquad \forall f \in L^{1}_{2}(\R^{d}).\end{equation}
Indeed, applying this with $f=h\,m_{b}$ would yield the result. Now, using the weak formulation,
\begin{equation*}\begin{split}
\|\Q_{+}(f,f)\|_{L^{1}}&=\sup_{\|\psi\|_{L^\infty}=1}\int_{\R^{d}}\Q_{+}(f,f)(v)\,\psi(v) \d v\\
&=\sup_{\|\psi\|_{L^\infty}=1}\int_{\R^{d}\times\R^{d}\times\S^{d-1}}f(v)f(v_{*})|v-v_{*}|\psi(v_{*}')\d v\d v_{*}\d\sigma
\end{split}\end{equation*}
Using $|v-v_{*}| \leq \langle v\rangle\langle v_{*}\rangle \leq \frac{1}{2}\langle v\rangle^{2} + \frac{1}{2}\langle v_{*}\rangle^{2}$  we get easily
\begin{equation*}
\int_{\R^{d}\times\R^{d}\times\S^{d-1}}f(v)f(v_{*})|v-v_{*}|\psi(v_{*}')\d v\d v_{*}\d\sigma \leq \|f\|_{L^{1}_{2}}\|f\|_{L^{1}}\|\psi\|_{L^{\infty}}
\end{equation*}
for any $\psi \in L^{\infty}(\R^{d})$. This proves \eqref{eq:Q+L1}.
\end{proof}

\subsection{Propagation of Lebesgue and Sobolev norms}

We start with the proof of Lemma \ref{lem:Lp}. As said in the core of the paper, we provide the proof only for $p=2$ since it is the only case we   are dealing with.
\begin{proof}[Proof of Lemma \ref{lem:Lp}] Let $p=2$ and $\eta \geq 0$ be given. Multiply \eqref{rescaBE} by $\psi(t,\xi)\langle \xi\rangle^{2\eta}$ and integrate over $\R^{d}$. We obtain
\begin{multline*}
\frac{1}{2}\dfrac{\d}{\d t}\|\psi(t)\|_{L^{2}_{\eta}}^{2} + \Big(\mathbf{A}_{\psi}(t)-(\tfrac{d}{2}+\eta)\mathbf{B}_{\psi}(t)\Big)\|\psi(t)\|_{L^{2}_{\eta}}^{2} + \eta\,\mathbf{B}_{\psi}(t)\|\psi(t)\|_{L^{2}_{\eta-1}}^{2}\\
  + \eta\,\mathbf{B}_{\psi}(t)\bm{v}_{\psi}(t) \cdot \int_{\R^{d}}\psi(t,\xi)^{2}\xi\langle \xi\rangle^{2\eta-2}\d\xi \\
\leq (1-\alpha)\int_{\R^{d}}\langle \xi\rangle^{2\eta}\psi(t,\xi)\Q_{+}(\psi,\psi)(t,\xi)\d\xi - \int_{\R^{d}}\langle \xi\rangle^{2\eta}\psi(t,\xi)\Q_{-}(\psi,\psi)(t,\xi)\d\xi.
\end{multline*}
Recalling that $\int_{\R^{d}}\psi(t,\xi_{*})|\xi-\xi_{*}|\d\xi_{*} \geq \kappa_{0}\langle \xi\rangle$ for some explicit $\kappa_{0} >0$ we get
$$\int_{\R^{d}}\langle \xi\rangle^{2\eta}\psi(t,\xi)\Q_{-}(\psi,\psi)(t,\xi)\d\xi \geq \kappa_{0}\|\psi(t)\|_{L^{2}_{\eta+1/2}}^{2}.$$
Since moreover there exists a positive constant $\mathbf{K} >0$ such that 
$$\max(|\mathbf{A}_{\psi}(t)|,|\mathbf{B}_{\psi}(t)|,|\mathbf{B}_{\psi}(t)\bm{v}_{\psi}(t)|) \leq \mathbf{K}\,\alpha$$ for all $\alpha \in (0,\alpha_{0})$ so that one can choose $\alpha^{\star}_{\eta} \in(0,\alpha_{0})$ small enough so that 
\begin{equation}\label{eq:Kalphaeta}|\mathbf{A}_{\psi}(t)|+(\tfrac{d}{2}+2\eta)|\mathbf{B}_{\psi}(t)| +  \eta|\mathbf{B}_{\psi}(t)\bm{v}_{\psi}(t)| \leq \frac{\kappa_{0}}{2} \qquad \forall t \geq 0, \quad \alpha \in (0,\alpha_{\eta}^{\star}).\end{equation} Then, for all $\alpha \in (0,\alpha_{\eta}^{\star})$, it holds
\begin{equation*}
\frac{1}{2}\dfrac{\d}{\d t}\|\psi(t)\|_{L^{2}_{\eta}}^{2} + \frac{\kappa_{0}}{2}\|\psi(t)\|_{L^{2}_{\eta+1/2}}^{2} 
\leq (1-\alpha)\int_{\R^{d}}\langle \xi\rangle^{2\eta}\psi(t,\xi)\Q_{+}(\psi,\psi)(t,\xi)\d\xi.\end{equation*}
At this stage, we handle the last integral as for the classical Boltzmann equation (see \cite[Theorem 1]{AloGa}) and get the result.
\end{proof}

We extend the above result to $H^{1}_{\eta}$ spaces:
\begin{prop}\phantomsection\label{prop:h1} Let $\eta\geq 0$. There exists $\tilde{\alpha}_\eta\in(0,1)$ 
such that, for any $\psi_{0} \in L^{1}_{3}(\R^{d})$ with unit mass and satisfying
$$\psi_{0} \in  L^{1}_{\eta+3/2+\frac{d(d-2)}{d-1}}(\R^{d})\cap L^{2}_{\eta+3/2}(\R^{d})\cap H^{1}_{\eta}(\R^{d}),$$
then, the unique solution $\psi(t,\xi)$  to \eqref{rescaBE} with initial condition $\psi_0$ satisfies
$$\sup_{t \geq 0}\|\psi(t)\|_{H^{1}_{\eta}(\R^{d})}:=C_{\eta} <\infty.$$
\end{prop}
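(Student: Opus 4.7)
The strategy is a straightforward weighted $L^{2}$ energy estimate performed directly on the gradient, inspired by the proof of Lemma \ref{lem:Lp}. Differentiate \eqref{rescaBE} in $\xi_{i}$ and use the identity $\partial_{\xi_{i}}Q_{\pm}(\psi,\psi)=Q_{\pm}(g_{i},\psi)+Q_{\pm}(\psi,g_{i})$ already invoked in the proof of Proposition \ref{prop:converLLL0}, where $g_{i}:=\partial_{\xi_{i}}\psi$. A short computation using $\partial_{\xi_{i}}[(\xi-\bm{v}_{\psi})\cdot\nabla\psi]=g_{i}+(\xi-\bm{v}_{\psi})\cdot\nabla g_{i}$ shows that $g_{i}$ solves
\[
\partial_{t}g_{i} + \big(\mathbf{A}_{\psi}(t)+\mathbf{B}_{\psi}(t)\big)g_{i} + \mathbf{B}_{\psi}(t)(\xi-\bm{v}_{\psi}(t))\cdot\nabla g_{i} = (1-\alpha)\big[Q_{+}(g_{i},\psi)+Q_{+}(\psi,g_{i})\big] - g_{i}\,\Sigma_{\psi} - \psi\,\partial_{\xi_{i}}\Sigma_{\psi}.
\]
Multiplying by $g_{i}\langle\xi\rangle^{2\eta}$ and integrating, the drift and multiplicative terms are handled exactly as in Lemma \ref{lem:Lp}: after integration by parts they produce contributions proportional to $|\mathbf{A}_{\psi}(t)|+|\mathbf{B}_{\psi}(t)|+|\mathbf{B}_{\psi}(t)\bm{v}_{\psi}(t)|\leq \mathbf{K}\alpha$ and, for $\alpha$ small enough in the spirit of \eqref{eq:Kalphaeta}, are absorbed by the coercive term issuing from the loss.

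Indeed, the crucial dissipative contribution comes from $\int g_{i}^{2}\,\Sigma_{\psi}\langle\xi\rangle^{2\eta}\d\xi$, which by the uniform lower bound \eqref{eq:kapp0} is at least $\kappa_{0}\|g_{i}\|_{L^{2}_{\eta+1/2}}^{2}$. The companion term $\int \psi\,g_{i}\,\partial_{\xi_{i}}\Sigma_{\psi}\langle\xi\rangle^{2\eta}\d\xi$ is harmless: since $|\partial_{\xi_{i}}\Sigma_{\psi}(\xi)|\leq \|\psi(t)\|_{L^{1}}=1$, Cauchy-Schwarz bounds it by $\|\psi(t)\|_{L^{2}_{\eta}}\|g_{i}\|_{L^{2}_{\eta}}$, which is controlled via Lemma \ref{lem:Lp}.

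The main obstacle is the gain contribution $(1-\alpha)\int\big[Q_{+}(g_{i},\psi)+Q_{+}(\psi,g_{i})\big]g_{i}\langle\xi\rangle^{2\eta}\d\xi$. Here the plan is to apply the sharp trilinear bilinear estimates for the hard-sphere gain operator from \cite{alogam} used elsewhere in the paper, which produce an estimate of the schematic form
\[
\Big|\int \big[Q_{+}(f,g)+Q_{+}(g,f)\big]h\,\langle\xi\rangle^{2\eta}\d\xi\Big| \leq \varepsilon\,\|h\|_{L^{2}_{\eta+1/2}}^{2} + C_{\varepsilon}\,\|g\|_{L^{2}_{\eta+3/2}}^{2}\,\|f\|_{L^{1}_{2\eta+1+d(d-2)/(d-1)}}^{2} + (\cdots),
\]
applied with $f=g_{i}$, $h=g_{i}$, $g=\psi$ (and symmetrically). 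The exponent $d(d-2)/(d-1)$ is precisely the one arising in Lemma \ref{lem:Lp} for $p=2$, through the relation $(d-2)/(1-\theta)=d(d-2)/(d-1)$ with $\theta=1/d$; consequently, the assumption $\psi_{0}\in L^{1}_{\eta+3/2+d(d-2)/(d-1)}\cap L^{2}_{\eta+3/2}$ guarantees via Lemma \ref{lem:Lp} and the moment propagation Lemma \ref{appB.2} that both $\|\psi(t)\|_{L^{2}_{\eta+3/2}}$ and $\|\psi(t)\|_{L^{1}_{2\eta+1+d(d-2)/(d-1)}}$ are uniformly bounded in time.

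Choosing $\varepsilon<\kappa_{0}/4$ and $\alpha<\tilde{\alpha}_{\eta}$ small enough to absorb the drift contributions, one obtains for each $i=1,\ldots,d$ the closed differential inequality
\[
\dfrac{\d}{\d t}\|g_{i}(t)\|_{L^{2}_{\eta}}^{2} + \dfrac{\kappa_{0}}{2}\,\|g_{i}(t)\|_{L^{2}_{\eta+1/2}}^{2} \leq C_{\eta}, \qquad t\geq 0,
\]
for some $C_{\eta}>0$ depending only on the $L^{1}_{\eta+3/2+d(d-2)/(d-1)}\cap L^{2}_{\eta+3/2}$-norm of $\psi_{0}$. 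Since $\|g_{i}\|_{L^{2}_{\eta+1/2}}\geq \|g_{i}\|_{L^{2}_{\eta}}$, this yields $\sup_{t\geq 0}\|g_{i}(t)\|_{L^{2}_{\eta}}^{2}\leq \max\big(\|\partial_{\xi_{i}}\psi_{0}\|_{L^{2}_{\eta}}^{2},\,2C_{\eta}/\kappa_{0}\big)$. Summing in $i$ and combining with the uniform bound on $\|\psi(t)\|_{L^{2}_{\eta}}$ from Lemma \ref{lem:Lp} concludes the proof. The principal difficulty lies in calibrating the trilinear estimate for the gain to produce exactly the weights dictated by the hypothesis; everything else is essentially a perturbative refinement of Lemma \ref{lem:Lp}.
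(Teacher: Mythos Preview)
Your overall framework coincides with the paper's: derive the equation for $g_{i}=\partial_{\xi_{i}}\psi$, multiply by $g_{i}\langle\xi\rangle^{2\eta}$, handle the drift terms via the smallness of $\mathbf{A}_{\psi},\mathbf{B}_{\psi},\mathbf{B}_{\psi}\bm{v}_{\psi}$, and extract the coercive contribution $\kappa_{0}\|g_{i}\|_{L^{2}_{\eta+1/2}}^{2}$ from the loss term. The treatment of the lower-order loss piece $\psi\,\partial_{\xi_{i}}\Sigma_{\psi}$ is also the same.

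The gap is in the gain term. Your schematic trilinear estimate, applied with $f=g_{i}$, $g=\psi$, $h=g_{i}$, leaves the factor $\|f\|_{L^{1}_{2\eta+1+d(d-2)/(d-1)}}^{2}=\|g_{i}\|_{L^{1}_{\ldots}}^{2}$ on the right-hand side with a \emph{large} coefficient $C_{\varepsilon}\|\psi\|_{L^{2}_{\eta+3/2}}^{2}$; this is not controlled by the hypotheses (you only claim control of $\|\psi\|_{L^{1}_{\ldots}}$, not of $\|\partial_{\xi_{i}}\psi\|_{L^{1}_{\ldots}}$), so the inequality does not close. If instead you meant $f=\psi$, $g=g_{i}$, then the term $C_{\varepsilon}\|g_{i}\|_{L^{2}_{\eta+3/2}}^{2}$ carries a weight exceeding the absorbing one $\eta+1/2$ and again cannot be absorbed. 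Either reading fails for the same structural reason: bilinear bounds on $Q_{+}(g_{i},\psi)+Q_{+}(\psi,g_{i})$ are homogeneous of degree one in $g_{i}$ and of degree one in $\psi$, so testing against $g_{i}$ produces a term $\sim\|g_{i}\|^{2}\times\|\psi\|_{L^{1}}$ whose coefficient is of order one and cannot be made small. The paper bypasses this entirely by \emph{not} expanding the derivative: it writes
\[
\int_{\R^{d}}\langle\xi\rangle^{2\eta}g_{i}\,\partial_{\xi_{i}}\Q_{+}(\psi,\psi)\,\d\xi \leq \|g_{i}\|_{L^{2}_{\eta+1/2}}\,\|\partial_{\xi_{i}}\Q_{+}(\psi,\psi)\|_{L^{2}_{\eta-1/2}}
\]
and invokes the regularizing estimate of Theorem~\ref{regularite} (the Bouchut--Desvillettes/Mouhot--Villani gain of $(d-1)/2$ derivatives) with $s=\tfrac{3-d}{2}$, so that $\|\Q_{+}(\psi,\psi)\|_{H^{1}_{\eta-1/2}}$ is bounded purely by $\|\psi\|_{L^{2}_{\eta+3/2}}$ and $\|\psi\|_{L^{1}_{\eta+1/2}}$. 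No $g_{i}$ appears on the right-hand side of the gain estimate at all, and the differential inequality closes immediately after a Young inequality. This smoothing property of $\Q_{+}$ is the missing ingredient in your argument.
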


The proof is based on the following regularity estimates for Boltzmann operator due initially to \cite{bouchut} and extended in \cite{MoVi}
\begin{theo}\phantomsection\label{regularite} For all $s \geq 0$ and all $\eta \geq 0$, it holds
\begin{equation*}
\|\Q_{+}(g,f)\|_{H^{s+\frac{d-1}{2}}_{\eta}} \leq C_{d}\left(\|g\|_{H^{s}_{\eta+2}}\,\|f\|_{H^{s}_{\eta+2}}+\|g\|_{L^{1}_{\eta+1}}\|f\|_{L^{1}_{\eta+1}}\right).
\end{equation*}
for some positive constant $C_{d}$ depending only on the dimension $d$.
\end{theo}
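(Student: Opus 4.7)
The statement is a classical regularity result for the Boltzmann gain operator with hard-sphere kernel, originally due to Bouchut--Desvillettes in the unweighted case and extended to weighted Sobolev spaces by Mouhot--Villani. My plan is to follow their strategy. The starting point will be the Carleman (or Radon) representation of $\Q_+$, which rewrites the operator as
$$\Q_{+}(g,f)(v)=\int_{\R^d}\frac{\d v'}{|v-v'|}\int_{(v-v')^{\perp}+v}K(v,v',v'_*)g(v'_*)f(v')\,\d E(v'_*),$$
so that an integration over an affine $(d-1)$-dimensional hyperplane appears explicitly. This geometric feature is precisely the reason why $\Q_+$ behaves like a Radon-type averaging operator, and is the source of the expected gain of $(d-1)/2$ derivatives on the Sobolev scale.

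The proof will proceed in three steps. First I handle the unweighted case $\eta=0$ by splitting the operator with a smooth truncation $\chi_R(|v-v_*|)$ in the relative velocity. The truncated piece $\Q_+^R(g,f)$ is estimated on the Fourier side: writing the spherical integration as a Fourier multiplier and using the stationary phase estimate $|\widehat{\d\sigma_{\mathbb{S}^{d-1}}}(\xi)|\leq C\langle\xi\rangle^{-(d-1)/2}$, one obtains the bilinear bound $\|\Q_+^R(g,f)\|_{H^{s+(d-1)/2}}\leq C_R\|g\|_{H^s}\|f\|_{H^s}$. The tail piece is bounded directly by $\|g\|_{L^1_1}\|f\|_{L^1_1}$ since $|v-v_*|\chi_R^c(|v-v_*|)\leq |v-v_*|^2/R\leq 2(\langle v\rangle^2+\langle v_*\rangle^2)/R$. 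Optimizing in $R$ yields the unweighted version. Second, I introduce polynomial weights by using the pre-collisional inequality $\langle v\rangle^{2\eta}\leq C\langle v'\rangle^{2\eta}\langle v'_*\rangle^{2\eta}$, which comes from $|v|\leq |v'|+|v'_*|$ combined with energy conservation $|v|^2+|v_*|^2=|v'|^2+|v'_*|^2$. This transfer of the weight into $g$ and $f$, at the price of a shift by $2$, produces exactly the weighted norms on the right-hand side. Third, I distribute fractional derivatives using that, after the symmetrization $(v,v_*)\leftrightarrow(v',v'_*)$, a derivative in $v$ is essentially transferred into a combination of derivatives in $v'$ and $v'_*$ modulo controllable commutator terms (a Leibniz-type identity based on the post-collisional parametrization); this allows me to reduce arbitrary $s\geq 0$ to the case $s=0$.

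The main technical obstacle lies in Step~1, namely the precise justification of the $(d-1)/2$ smoothing. Two routes are available: either a direct Fourier analysis of the Carleman kernel (Bouchut--Desvillettes), which requires careful handling of the oscillatory integral on the sphere and of the singular weight $|v-v'|^{-(d-1)}$ in the Carleman representation, or the more abstract velocity-averaging approach of Mouhot--Villani based on a bilinear interpolation between $L^1$ and $L^2$ bounds for spherical averages. In either case, the endpoint behavior near small relative velocities and the careful bookkeeping of the weights in the bilinear estimate are the delicate points; the $L^1_{\eta+1}$ contribution on the right-hand side, by contrast, is the standard bilinear Morrey-type bound from hard-sphere kinetic theory and is essentially immediate from Lemma~\ref{lem:estimatQ}.
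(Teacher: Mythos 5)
The paper does not actually prove this statement: it is quoted from the literature (Bouchut--Desvillettes \cite{bouchut} for the unweighted $L^2$-based smoothing, Mouhot--Villani \cite{MoVi} for the weighted Sobolev version), so there is no internal proof to compare yours against. Your reconstruction correctly identifies the provenance and the broad strategy: the Carleman/Radon structure, equivalently the decay $|\widehat{\sigma}(\xi)|\lesssim\langle\xi\rangle^{-(d-1)/2}$ of the Fourier transform of the spherical measure, as the source of the $(d-1)/2$ gain; the pre-/post-collisional inequality for transferring weights; and the identity $\nabla\Q_+(g,f)=\Q_+(\nabla g,f)+\Q_+(g,\nabla f)$ (used elsewhere in the paper) to pass from $s=0$ to $s>0$. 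As a roadmap this is sound, though the core analytic step is deferred rather than carried out.

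There is, however, a genuine gap in your Step 1, in the treatment of the tail $|v-v_*|\geq R$. The left-hand side of the theorem is a single $H^{s+\frac{d-1}{2}}_{\eta}$ norm, so every piece of a kernel decomposition must ultimately be measured in that norm. The inequality $|v-v_*|\chi_R^c(|v-v_*|)\leq|v-v_*|^2/R$ controls only the \emph{size} of the kernel and therefore yields an $L^1\times L^1\to L^1$ bound for the tail operator (this is what Lemma \ref{lem:estimatQ} gives); it provides no Sobolev regularity of the output. And none is available for free: feeding $\Q_+$ with two $L^1$-normalized bumps concentrated at $\pm v_0$ with $|v_0|\geq R$ produces, in the limit, a multiple of the uniform measure on the sphere $\{|v|=|v_0|\}$, which lies in $H^{t}$ only for $t<-1/2$ and in particular is not even in $L^2$, while the $L^1_{\eta+1}$ norms of the inputs stay bounded. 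Hence the tail piece is \emph{not} controlled in $H^{s+\frac{d-1}{2}}_{\eta}$ by $\|g\|_{L^{1}_{\eta+1}}\|f\|_{L^{1}_{\eta+1}}$, and "optimizing in $R$" as written compares two quantities measured in incompatible norms, so the argument does not close. In the actual Mouhot--Villani proof the large-$|z|$ region causes no loss of smoothness (the hard-sphere kernel $|z|$ is smooth away from $z=0$); its polynomial growth is absorbed into the weights $\eta+2$ via a dyadic decomposition, and that contribution stays in the $\|g\|_{H^{s}_{\eta+2}}\|f\|_{H^{s}_{\eta+2}}$ term. The $L^1\times L^1$ term cannot arise from a crude norm bound on any fixed piece of the operator; it requires a frequency-localized argument (for instance, on each output frequency shell $|\xi|\sim 2^{j}$ one uses $|\widehat{\mu}(\xi)|\leq\|\mu\|_{L^1}$ for the rough piece, with a truncation scale $R_{j}$ chosen depending on $j$, and only then sums). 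This is the step you would need to rework if you intend to give a self-contained proof rather than cite \cite{MoVi}.
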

With this in hands, the proof is standard computation:
\begin{proof}[Proof of Proposition \ref{prop:h1}] For any $i=1,\ldots,d$, set $\Phi(t,\xi)=\partial_{\xi_{i}}\psi(t,\xi)$. It is straightforward to check that $\Phi(t,\xi)$ satisfies
\begin{multline*}
\partial_{t}\Phi(t,\xi)+\left(\mathbf{A}_{\psi}(t)+\mathbf{B}_{\psi}(t)\right)\Phi(t,\xi)+ \mathbf{B}_{\psi}(t)(\xi-\bm{v}_{\psi}(t)) \cdot\nabla_{\xi} \Phi(t,\xi)
\\
=(1-\alpha)\partial_{\xi_{i}}\Q_{+}(\psi,\psi)(t,\xi)-\partial_{\xi_{i}}\Q_{-}(\psi,\psi)(t,\xi)\end{multline*}
Multiplying by $\langle \xi \rangle^{2\eta}\Phi(t,\xi)$ and integrating over $\R^{d}$ we get  
\begin{multline*}
\frac{1}{2}\dfrac{\d}{\d t}\|\Phi(t)\|_{L^{2}_{\eta}}^{2} + \left(\mathbf{A}_{\psi}(t)+(1-\frac{d}{2}-\eta)\mathbf{B}_{\psi}(t)\right)\|\Phi(t)\|_{L^{2}_{\eta}}^{2} +\eta\,\mathbf{B}_{\psi}(t)\|\Phi(t)\|_{L^{2}_{\eta-1}}^{2} \\
+\eta\mathbf{B}_{\psi}(t)\bm{v}_{\psi}(t) \cdot \int_{\R^{d}} \xi\Phi(t,\xi)^{2} \,\langle \xi\rangle^{2\eta-2}\d\xi\\
\leq (1-\alpha)\int_{\R^{d}}\langle \xi\rangle^{2\eta}\Phi(t,\xi)\,\partial_{\xi_{i}}\Q_{+}(\psi,\psi)(t,\xi)\d \xi- \int_{\R^{d}}\langle \xi\rangle^{2\eta}\Phi(t,\xi)\partial_{\xi_{i}}\Q_{-}(\psi,\psi)(t,\xi)\d\xi.
\end{multline*}
Notice that 
$$\partial_{\xi_{i}}\Q_{-}(\psi,\psi)(t,\xi)=\Phi(t,\xi)\int_{\R^{d}}\psi(t,\xi_{*})|\xi-\xi_{*}|\d\xi_{*} + \Q_{-}(\psi,\Phi)(t,\xi)
$$
we get as before that
\begin{equation*}\int_{\R^{d}}\langle \xi\rangle^{2\eta}\Phi(t,\xi)\partial_{\xi_{i}}\Q_{-}(\psi,\psi)(t,\xi)\d\xi \geq \kappa_{0}\|\Phi(t)\|_{L^{2}_{\eta+1/2}}^{2} +\int_{\R^{2d}}\langle \xi\rangle^{2\eta}\Phi(t,\xi) \Q_{-}(\psi,\Phi)(t,\xi)\d\xi.
\end{equation*}
As for \eqref{eq:Kalphaeta}, one can choose $\tilde{\alpha}_{\eta} \in(0,\alpha_{0})$ small enough so that 
\begin{equation*}|\mathbf{A}_{\psi}(t)|+(1+\frac{d}{2}+2\eta)|\mathbf{B}_{\psi}(t)| +  \eta|\mathbf{B}_{\psi}(t)\bm{v}_{\psi}(t)| \leq \frac{\kappa_{0}}{2} \qquad \forall t \geq 0, \quad \alpha \in (0,\tilde{\alpha}_{\eta}).\end{equation*} It thus holds, for  $\alpha \in (0,\tilde{\alpha}_{\eta})$, 
\begin{align*}
\frac{1}{2}\dfrac{\d}{\d t}\|\Phi(t)\|_{L^{2}_{\eta}}^{2} &+ \frac{1}{2}\kappa_{0}\|\Phi(t)\|_{L^{2}_{\eta+1/2}}^{2} \\
&\leq \|\Phi(t)\|_{L^{2}_{\eta+1/2}}\|\partial_{\xi_{i}}\Q_{+}(\psi(t),\psi(t))\|_{L^{2}_{\eta-1/2}} + \|\Phi(t)\|_{L^{2}_{\eta}}\|\Q_{-}(\psi(t),\Phi(t))\|_{L^{2}_{\eta}}\,.\end{align*}
At this stage the proof is exactly the same as the one usually used for the Boltzmann equation. Namely, noticing that
$$\Q_{-}(\psi,\Phi)(t,\xi)=\psi(t,\xi)\int_{\R^{d}}\Phi(t,\xi_{*})|\xi-\xi_{*}|\d\xi_{*}=\psi(t,\xi) \int_{\R^{d}}\psi(t,\xi_*)\partial_{\xi_{i}}|\xi-\xi_{*}|\d\xi_{*}$$
and $\left|\int_{\R^{d}}\psi(t,\xi_*)\partial_{\xi_{i}}|\xi-\xi_{*}|\d\xi_{*}\right| \leq \int_{\R^{d}}\psi(t,\xi_{*})\d\xi_{*}=1$ we see that
$$\|\Q_{-}(\psi(t),\Phi(t))\|_{L^{2}_{\eta}} \leq \|\psi(t)\|_{L^{2}_{\eta}}.$$
Using now Theorem \ref{regularite} with $s=\frac{3-d}{2}$ 
and the uniform propagation of $L^{2}_{\eta+3/2}$ and $L^{1}_{\eta+1/2}$-norms, we get easily to the conclusion.
\end{proof}

\subsection{Pointwise lower bounds}

We recall the following spreading properties of $\Q_+$ in general dimension $d \geq 3$:
\begin{prop}\phantomsection\label{spread}
For any $v_0 \in \mathbb{R}^d$ and any $\delta >0$, one has
\begin{equation*}
\mathrm{Supp}\left(\Q_{+}\big(\mathbf{1}_{\mathds{B}(v_0,\delta)}\,;\,\mathbf{1}_{\mathds{B}(v_0,\delta)}\big)\right)={\mathds{B}(v_0,\sqrt{2}\delta)}.
\end{equation*}
More precisely, for any $0 < \chi < 1$, there exists a universal $\kappa_{0}>0$ such that
\begin{equation}\label{bound}
\Q_{+}\big(\mathbf{1}_{\mathds{B}(v_0,\delta)}\,;\,\mathbf{1}_{\mathds{B}(v_0,\delta)}\big) \geq \kappa_{0} \,\delta^{d+1} \chi^{d+1} \mathbf{1}_{\mathds{B}(v_0,(1-\chi)\sqrt{2}\delta)}  \qquad \forall \delta >0,
\end{equation}
\end{prop}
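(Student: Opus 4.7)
\medskip

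\noindent\textbf{Plan of proof.} I would first reduce to the canonical case $v_0=0$, $\delta=1$ by exploiting two symmetries of $\Q_+$. A direct change of variables yields the translation covariance $\Q_+(f(\cdot-a),g(\cdot-a))(v)=\Q_+(f,g)(v-a)$ and the scaling identity
\[
\Q_+(\mathbf{1}_{\mathds{B}(0,\delta)},\mathbf{1}_{\mathds{B}(0,\delta)})(v)=\delta^{d+1}\,\Q_+(\mathbf{1}_{\mathds{B}(0,1)},\mathbf{1}_{\mathds{B}(0,1)})(v/\delta),
\]
which together reduce the statement to its normalized version. The support inclusion then follows immediately from energy conservation: if $v',v_*'\in \mathds{B}(0,1)$, then $|v|^2\leq |v|^2+|v_*|^2=|v'|^2+|v_*'|^2\leq 2$, so $|v|\leq\sqrt{2}$; the reverse inclusion is obtained as a corollary of the quantitative lower bound by letting $\chi\to 0^+$.

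For the quantitative lower bound, I would use the standard parametrization $v'=m+r\sigma$, $v_*'=m-r\sigma$ with $m=(v+v_*)/2$, $r=|v-v_*|/2$, together with the key identity $|m|^2+r^2=\tfrac12(|v|^2+|v_*|^2)$. The joint condition $v',v_*'\in \mathds{B}(0,1)$ then reads
\[
|m|^2+r^2+2r\,|m\cdot\sigma|\leq 1.
\]
In the main regime $1/2\leq|v|\leq(1-\chi)\sqrt{2}$, I restrict $v_*\in \mathds{B}(0,c_0\sqrt{\chi})$ with $c_0>0$ a suitable small universal constant. Since
\[
\tfrac12\bigl(|v|^2+|v_*|^2\bigr)\leq (1-\chi)^2+c_0^2\chi/2\leq 1-\kappa\chi
\]
for some universal $\kappa>0$ and $\chi$ below a universal threshold, the remaining constraint confines $\sigma$ to an equatorial band perpendicular to $m$ of angular half-width of order $\chi$, and hence of normalized measure $\gtrsim \chi$ by the classical cap estimate $|\{\sigma\in\S^{d-1}:|\sigma\cdot e|\leq\tau\}|\gtrsim\min(1,\tau)$ valid for $d\geq 2$. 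Bounding $|v-v_*|\geq |v|-c_0\sqrt{\chi}\geq 1/4$ on this set yields
\[
\Q_+(\mathbf{1}_{\mathds{B}(0,1)},\mathbf{1}_{\mathds{B}(0,1)})(v)\gtrsim \chi^{d/2+1}\geq \kappa_0\chi^{d+1}.
\]

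For the remaining regime $|v|\leq 1/2$, a simpler construction closes the argument: take $v_*$ in the fixed annulus $\mathds{B}(0,3/4)\setminus \mathds{B}(0,1/2)$, for which $|m|^2+r^2\leq \tfrac12\bigl((1/2)^2+(3/4)^2\bigr)=13/32$, so that the sphere condition holds for \emph{every} $\sigma\in\S^{d-1}$; together with $|v-v_*|\geq 1/4$ this yields a uniform positive lower bound independent of $\chi$, which trivially dominates $\kappa_0\chi^{d+1}$ for $\chi\in(0,1)$. The main technical step throughout is the spherical-band measure estimate quoted above, together with the calibration of the radius $c_0\sqrt{\chi}$ of the $v_*$-ball against the buffer $\kappa\chi$ available in the energy identity; I do not anticipate any genuine obstacle beyond this careful bookkeeping.
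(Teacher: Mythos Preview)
Your argument is correct and follows the same overall scheme as the paper's proof: reduce to $v_0=0$, $\delta=1$ by translation and scaling, use the energy identity $|m|^2+r^2=\tfrac12(|v|^2+|v_*|^2)$, restrict $v_*$ to a small ball and $\sigma$ to an equatorial band, and estimate the resulting measures. The paper treats the inner region $|\xi|\leq\sqrt{2}/2$ by quoting an external lower bound from \cite{AloLocmp}, whereas your annulus construction for $|v|\leq 1/2$ is self-contained and arguably cleaner. In the outer region the paper takes the $v_*$-ball of radius $\varepsilon\sim\chi$ and the $\sigma$-band of the \emph{same} width $\varepsilon$, which yields exactly $\varepsilon^{d+1}\sim\chi^{d+1}$; your choice of radius $c_0\sqrt{\chi}$ for the $v_*$-ball with a band of width $\sim\chi$ gives the sharper intermediate bound $\chi^{d/2+1}$, which you then discard. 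Either calibration works, and the small-$\chi$ restriction you introduce is harmless since for $\chi$ above any fixed threshold the bound follows from the threshold case by monotonicity of the target ball and boundedness of $\chi^{d+1}$.
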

\begin{proof} We just give a sketch of the proof which is well-known \cite{pulwen}.  We can assume without loss of generality that $v_{0}=0$. Let us assume $\delta=1$. As in \cite[Lemma 5.4]{AloLocmp}, we have 
$$\Q_+(\mathbf{1}_{\mathds{B}(0,1)},\mathbf{1}_{\mathds{B}(0,1)})\geq 
\frac{|\S^{d-1}|}{d+1}\left(\frac{\sqrt{2}}{2}\right)^{d+1}
\left(1-2^{-1/d}\right)\, \mathbf{1}_{\mathds{B}\left(0,\frac{\sqrt{2}}{2}\right)}.$$
Let $r\in(1/2,1)$ and $\xi\in\R^d$ with $|\xi|=\sqrt{2}r$. Let $\varepsilon \in(0,1)$ satisfying $\varepsilon <\frac{1-r}{2+\sqrt{2}}$ and set
$$\Omega_\varepsilon (\xi)=\left\{(\xi_*,\sigma)\in\R^d\times \S^{d-1} ; \quad|\xi_*|\leq \e  , \quad |({\xi+\xi_*})\cdot\sigma|\leq \e\,|\xi+\xi_{*}|\right\}$$
 For $(\xi_*,\s)\in\Omega_\e$, one has 
$$|\xi'|^2 = \frac{|\xi|^2}{2}+\frac{|\xi_*|^2}{2}+\frac{1}{2} \, |\xi-\xi_*| \, ({\xi+\xi_*})\cdot\sigma  \leq  r^2+ \frac{\e^2}{2}+\frac{\e}{2} (\sqrt{2}r+\e)^2
\leq  r+\e(2+\sqrt{2}) <  1,$$
since $r<1$ and $\e<1$. Similarly, $|\xi'_*|<1$. Since one also has $|\xi-\xi_*|\geq \sqrt{2}r-\e$, we deduce that  
\begin{equation*}\begin{split}
\Q_+(\mathbf{1}_{\mathds{B}(0,1)},\mathbf{1}_{\mathds{B}(0,1)})(\xi)& \geq  (\sqrt{2}r-\e) \int_{\Omega_\e} \d\s \d\xi_*\geq  (\sqrt{2}r-\e)\; \frac{|\S^{d-2}|}{|\S^{d-1}|}
\int_{\mathds{B}(0,\e)} \d\xi_*\int_{-\e}^\e (1-s^2)^{\frac{d-3}{2}} \d s\\
&\geq   (\sqrt{2}r-\e) \;|\S^{d-2}|\; \frac{\e^d}{d}\; 2\e\, (1-\e^2)^{\frac{d-3}{2}}.\end{split}
\end{equation*}
Since $\sqrt{2}r-\e>\frac{3}{2+\sqrt{2}} \; r$ and $\e\leq \frac{1}{\sqrt{2}},$ we get that
$$
\Q_+(\mathbf{1}_{\mathds{B}(0,1)},\mathbf{1}_{\mathds{B}(0,1)})(\xi)\geq \frac{6 |\S^{d-2}|}{d(2+\sqrt{2})}\left(\frac{1}{2}\right)^{\frac{d-3}{2}} r \,\e^{d+1}.$$ 
We then conclude as in the proof of \cite[Proposition 5.1]{AloLocmp}.
\end{proof}
Finally, we have the following (see \cite{mmjsp})
\begin{lem}\phantomsection\label{l1}
Fix $p\in(1,\infty]$. Let  $f$ be nonnegative  such that
\begin{equation}\label{itpe1}
\int_{\mathbb{R}^{d}}f(\xi)\d \xi=m,\qquad
\int_{\mathbb{R}^{d}}f(\xi)|\xi|^{2}\d \xi \leq E<\infty, \qquad \|f\|_p < \infty.
\end{equation}
Then, there exist $v_{0} \in \R^{d},$ $r$ and $\eta_0$ depending only on $m,E$ 
and $\|f\|_p$ and such that
\begin{equation*}
\Q_{+}\big(f,  \Q_{+}(f,f)\big) \geq \eta_0\,\mathbf{1}_{\mathds{B}(v_0,\,r)}.
\end{equation*}
\end{lem}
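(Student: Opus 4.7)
The proof follows the iterated-gain strategy of Pulvirenti and Wennberg, as adapted in \cite{mmjsp}. The plan is to extract from the integrability bounds on $f$ a set on which $f$ is pointwise bounded below by a positive constant, and then to apply the spreading estimate of Proposition \ref{spread} twice to produce a genuine ball-indicator lower bound on $\Q_+(f,\Q_+(f,f))$.

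First, I would localize the mass: by Chebyshev's inequality and the energy bound, at least half of the mass of $f$ lies in the ball $\mathds{B}(0,R_0)$ with $R_0=\sqrt{4E/m}$. Next, splitting the integral according to $\{f\geq\eta\}$ and $\{f<\eta\}$, using the $L^p$ bound and H\"older's inequality, one finds a threshold $\eta>0$ and a set $A\subset \mathds{B}(0,R_0)$ with $f\geq\eta$ on $A$ and $|A|\geq c_0>0$, with $\eta$ and $c_0$ explicit in terms of $m$, $E$ and $\|f\|_p$. A covering of $\mathds{B}(0,R_0)$ by finitely many balls of a fixed radius $\delta$ then yields some ball $\mathds{B}(\bar v,\delta)$ with $|A\cap\mathds{B}(\bar v,\delta)|\geq c_1|\mathds{B}(\bar v,\delta)|$, and the Lebesgue differentiation theorem furnishes, for any prescribed $\varepsilon>0$, a smaller ball $\mathds{B}(v_*,\delta_*)\subset\mathds{B}(\bar v,\delta)$ such that $|A\cap\mathds{B}(v_*,\delta_*)|\geq(1-\varepsilon)|\mathds{B}(v_*,\delta_*)|$.

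With such a ball at hand, I would estimate $\Q_+(f,f)\geq\eta^2\Q_+(\mathbf{1}_{A\cap\mathds{B}(v_*,\delta_*)},\mathbf{1}_{A\cap\mathds{B}(v_*,\delta_*)})$ and compare with $\Q_+(\mathbf{1}_{\mathds{B}(v_*,\delta_*)},\mathbf{1}_{\mathds{B}(v_*,\delta_*)})$ via the decomposition $\mathbf{1}_{\mathds{B}(v_*,\delta_*)}=\mathbf{1}_{A\cap\mathds{B}(v_*,\delta_*)}+\mathbf{1}_{\mathds{B}(v_*,\delta_*)\setminus A}$. Controlling the "bad" cross terms by the crude estimate $\|\Q_+(\mathbf{1}_B,\mathbf{1}_{B'})\|_\infty\leq C|B|\,|B'|$ and invoking Proposition \ref{spread} for the "good" term (with $\varepsilon$ chosen small enough relative to $\chi$) yields a first ball-indicator lower bound $\Q_+(f,f)\geq\eta_1\mathbf{1}_{\mathds{B}(v_1,r_1)}$ for explicit $\eta_1,r_1>0$. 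Iterating, one has $\Q_+(f,\Q_+(f,f))\geq \eta_1\eta\,\Q_+(\mathbf{1}_{A\cap\mathds{B}(v_*,\delta_*)},\mathbf{1}_{\mathds{B}(v_1,r_1)})$, to which a two-ball variant of the spreading lemma (whose proof is an immediate adaptation of the geometric argument in Proposition \ref{spread}, parametrising post-collisional velocities arising from the two possibly distinct input balls) delivers $\Q_+(f,\Q_+(f,f))\geq\eta_0\mathbf{1}_{\mathds{B}(v_0,r)}$ as required.

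The main obstacle is the step converting the purely measure-theoretic information $|A\cap\mathds{B}(\bar v,\delta)|\geq c_1|\mathds{B}(\bar v,\delta)|$ into a usable input for Proposition \ref{spread}, which as stated applies only to indicators of balls and not to arbitrary sets of positive measure. This is handled by the Lebesgue density theorem, which produces balls that are $(1-\varepsilon)$-full of $A$ for any prescribed $\varepsilon$, together with the flexibility encoded in the parameter $\chi$ of Proposition \ref{spread}: by taking the defect $\varepsilon$ small compared to the margin $\chi^{d+1}$ one keeps the perturbed spreading estimate strictly positive on a slightly shrunken ball, with all constants and radii remaining explicit functions of $m$, $E$ and $\|f\|_p$.
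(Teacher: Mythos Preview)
Your overall plan—extract a super-level set $A$ of $f$ of controlled measure inside a fixed ball and then feed indicator lower bounds into the spreading mechanism of $\Q_+$—is indeed the strategy of \cite{pulwen,mmjsp} to which the paper defers. There is, however, a genuine gap at the Lebesgue density step.

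The density theorem does produce, for any prescribed $\varepsilon>0$, a ball $\mathds{B}(v_*,\delta_*)$ that is $(1-\varepsilon)$-full of $A$, but it gives no lower bound on $\delta_*$: this radius depends on the fine geometry of $A$, not merely on $|A|$ and $R_0$. Since the output of Proposition~\ref{spread} scales like $\delta_*^{d+1}$ in height and like $\delta_*$ in radius, your $r_1,\eta_1$ (and hence the final $r,\eta_0$) inherit a dependence on $f$ beyond $m,E,\|f\|_p$. This is precisely the uniformity the lemma asserts and that the application in Proposition~\ref{propoR0} requires (the constants must be independent of $t_0$). The preceding pigeonhole/covering step does give a quantitative scale $\delta$, but only a fixed positive density $c_1<1$, which is not enough for your perturbation argument comparing $\Q_+(\mathbf{1}_{A\cap B},\mathbf{1}_{A\cap B})$ to $\Q_+(\mathbf{1}_B,\mathbf{1}_B)$. (Incidentally, the ``crude estimate'' $\|\Q_+(\mathbf{1}_B,\mathbf{1}_{B'})\|_\infty\le C|B|\,|B'|$ is not correct for the hard-sphere kernel—compare the scaling $\delta^{d+1}$ in Proposition~\ref{spread} with $\delta^{2d}$—so that comparison would also need reworking.)

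The route in \cite{pulwen,mmjsp} avoids reducing to ball indicators before the first application of $\Q_+$. One works directly with $\mathbf{1}_A$ and uses the Carleman representation of $\Q_+$, in which the inner integral runs over a hyperplane; a Fubini-type geometric argument then yields a quantitative lower bound for the iterated gain $\Q_+\big(\mathbf{1}_A,\Q_+(\mathbf{1}_A,\mathbf{1}_A)\big)$ on a ball whose radius and height depend only on $|A|$ and $R_0$, hence only on $m,E,\|f\|_p$. Proposition~\ref{spread} (stated only for ball indicators) enters afterwards, in the induction that enlarges this first ball.
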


From now we will assume the solution $\psi(t,\xi)$ to \eqref{rescaBE} to be given and \emph{fixed}. It is clear that there exists $C_{0} >0$ large enough so that
\begin{equation*}
\Q_{-}(\psi,\psi)(t,\xi) \leq C_0(1+|\xi|)\psi(t,\xi), \qquad \text{ and } \qquad \max\left(\mathbf{A}_{\psi}(t),\mathbf{B}_{\psi}(t)\right) \leq C_{0} \qquad \forall t \ge 0.
\end{equation*}
Introduce then 
$$\sigma(\xi)=C_{0}(1+|\xi|), \qquad \Sigma(t,\xi)=\mathbf{A}_{\psi}(t)+\sigma(\xi).$$
Then, one can write \eqref{rescaBE} as
\begin{equation} \begin{split}
\label{rescaBE1} 
\partial_t\psi(t,\xi)&+\mathbf{B}_\psi(t)\, (\xi-\bm{v}_{\psi}(t)) \cdot \nabla_\xi \psi(t,\xi) + \Sigma(t,\xi)\psi(t,\xi)\\
&\phantom{++++++} =(1-\alpha)\Q_{+}(\psi,\psi)(t,\xi)+ \left(\sigma(\xi)\psi(t,\xi) -\Q_{-}(\psi,\psi)(t,\xi)\right),
\end{split}\end{equation}
and, assuming $\psi(0,\xi) =\psi_0(\xi)\geq 0$, we get $\sigma(\xi)\psi(t,\xi) - \Q_{-}(\psi,\psi)(t,\xi) \geq 0$ and
\begin{equation}\label{gt}
\partial_t \psi(t,\xi)+ \mathbf{B}_{\psi}(t) (\xi-\bm{v}_{\psi}(t)) \cdot \nabla_\xi \psi(t,\xi) + \Sigma(t,\xi)\psi(t,\xi) \geq (1-\alpha)\Q_{+}(\psi,\psi)(t,\xi).
\end{equation}
We introduce the characteristic curves associated to the transport operator in \eqref{gt},
\begin{equation*}\dfrac{\d}{\d t}X(t;s,\xi)=  \mathbf{B}_{\psi}(t) \, (X(t;s,\xi)-\bm{v}_{\psi}(t)), \qquad X(s;s,\xi)=\xi,
\end{equation*}
which produces a unique global solution given by
\begin{equation}\label{cara}
X (t;s,\xi)=\xi\,\exp\left( \int_s^t \mathbf{B}_{\psi}(\tau)\, \d\tau\right)-\int_s^t\mathbf{B}_{\psi}(\sigma)\bm{v}_{\psi}(\sigma)\exp\left( \int_\sigma^t \mathbf{B}_{\psi}(\tau)\, \d\tau\right)\d\sigma.
\end{equation}
In order to simplify notation let us introduce the evolution family $(\mathcal{S}_s^t)_{t \geq s  \geq 0}$ defined by
\begin{equation*}
\big[\mathcal{S}_s^t\,h\big](\xi):=\exp\left(-\int_s^t \Sigma\big(\tau,X(\tau;t,\xi)\big)\, \d\tau\right)h\big(X(s;t,\xi)\big) \qquad \forall t \geq s \geq 0,\;\; \forall h=h(v).
\end{equation*}
The evolution family preserves positivity, thus according to \eqref{gt} the solution $\psi(t,\xi)$ to \eqref{rescaBE} satisfies the following \emph{Duhamel inequality}
\begin{equation}\label{sol}
\psi(t,\xi) \geq \left[\mathcal{S}_0^t \psi_0\right](\xi)
+ (1-\alpha)\int_0^t \big[\mathcal{S}_s^t\Q_{+}\left(\psi(s,\cdot),\psi(s,\cdot)\right)\big](\xi)\d s.
\end{equation}

We have the following analogue of \cite[Lemma 5.15]{AloLocmp} where however the characteristic functions are not contractions anymore. We recall here that  there is some positive constant $\bm{b} >0$ depending only on $\|\psi_{0}\|_{L^{1}_{3}}$ such that
$$\left|\mathbf{B}_{\psi}(s)\right| \leq \bm{b}\,\alpha \qquad \mbox{ and } \qquad 
 \left|\mathbf{B}_{\psi}(s)\bm{v}_{\psi}(s)\right| \leq \bm{b}\,\alpha,\qquad \forall s \geq 0.$$
\begin{lem}\phantomsection\label{minS}
For any nonnegative $h=h(\xi) \geq 0$ and any $t \geq s \geq 0,$ one has
\begin{equation}\label{boundSt}
\big[\mathcal{S}_s^t\,h\big](\xi)  \geq \left(\lambda_s^t\right)^{d}\exp\left(-\sigma(\xi)\bm{u}_{\alpha}(t-s)\right)\big[\mathcal{T}^t_s h\big](\xi)
\end{equation}
where 
$$\lambda_s^t=\exp\left(- \int_s^t \mathbf{B}_{\psi}(\tau)\d \tau\right), \qquad \bm{u}_{\alpha}(\tau)=\frac{\exp(\bm{b}\alpha \tau)-1}{\bm{b}\alpha}, \qquad \tau > 0$$ and $\big[\mathcal{T}^t_s h\big](\xi):=h(X(s;t,\xi))$ for any $\xi\in\R^d$.
\end{lem}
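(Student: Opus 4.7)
The proof is essentially a matter of carefully splitting the exponential weight in $\mathcal{S}_s^t$ into two pieces, one controlled by the scaling of $\mathbf{B}_\psi$ and the other by integration of $\sigma$ along characteristics. My plan is as follows.

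First, I would write
\begin{equation*}
\big[\mathcal{S}_s^t h\big](\xi)=\exp\!\left(-\int_s^t \mathbf{A}_\psi(\tau)\,\d\tau\right)\exp\!\left(-\int_s^t \sigma\bigl(X(\tau;t,\xi)\bigr)\,\d\tau\right)h\bigl(X(s;t,\xi)\bigr).
\end{equation*}
For the first factor, the identity $\alpha\mathbf{a}_\psi(\tau)=d\mathbf{B}_\psi(\tau)-\mathbf{A}_\psi(\tau)$ from Remark \ref{nb:1.3} together with $\mathbf{a}_\psi(\tau)\geq 0$ yields $\mathbf{A}_\psi(\tau)\leq d\,\mathbf{B}_\psi(\tau)$ pointwise in $\tau$, hence
\begin{equation*}
\exp\!\left(-\int_s^t \mathbf{A}_\psi(\tau)\,\d\tau\right)\geq \exp\!\left(-d\int_s^t \mathbf{B}_\psi(\tau)\,\d\tau\right)=\bigl(\lambda_s^t\bigr)^d.
\end{equation*}

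The work then reduces to estimating the characteristic. Using the explicit expression \eqref{cara} (with the roles of $s$ and $t$ swapped), for $\tau\in[s,t]$ one has
\begin{equation*}
X(\tau;t,\xi)=\xi\exp\!\left(-\int_\tau^t \mathbf{B}_\psi(r)\,\d r\right)-\int_\tau^t \mathbf{B}_\psi(\varsigma)\bm{v}_\psi(\varsigma)\exp\!\left(-\int_\varsigma^t \mathbf{B}_\psi(r)\,\d r\right)\d\varsigma,
\end{equation*}
from which, using the a priori bounds $|\mathbf{B}_\psi(r)|\leq \bm{b}\alpha$ and $|\mathbf{B}_\psi(\varsigma)\bm{v}_\psi(\varsigma)|\leq \bm{b}\alpha$, a direct computation gives
\begin{equation*}
|X(\tau;t,\xi)|\leq |\xi|\exp(\bm{b}\alpha(t-\tau))+\bm{b}\alpha\int_\tau^t \exp(\bm{b}\alpha(t-\varsigma))\,\d\varsigma=(1+|\xi|)\exp(\bm{b}\alpha(t-\tau))-1,
\end{equation*}
so that $1+|X(\tau;t,\xi)|\leq (1+|\xi|)\exp(\bm{b}\alpha(t-\tau))$, i.e.\ $\sigma\bigl(X(\tau;t,\xi)\bigr)\leq \sigma(\xi)\exp(\bm{b}\alpha(t-\tau))$.

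Integrating this in $\tau\in[s,t]$ produces exactly
\begin{equation*}
\int_s^t \sigma\bigl(X(\tau;t,\xi)\bigr)\,\d\tau\leq \sigma(\xi)\int_s^t \exp(\bm{b}\alpha(t-\tau))\,\d\tau=\sigma(\xi)\,\frac{\exp(\bm{b}\alpha(t-s))-1}{\bm{b}\alpha}=\sigma(\xi)\,\bm{u}_\alpha(t-s).
\end{equation*}
Combining the two estimates with the definition of $\mathcal{T}_s^t$ yields \eqref{boundSt}. None of the steps is genuinely delicate; the only point that requires some care is writing down the characteristics over the time interval $[s,t]$ with the correct orientation, and observing that the integrated drift precisely assembles into the explicit function $\bm{u}_\alpha(t-s)$. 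In particular, no sign information on $\mathbf{B}_\psi$ or $\mathbf{A}_\psi$ is used (only the nonnegativity of $\mathbf{a}_\psi$), which is consistent with the general philosophy stressed in the introduction.
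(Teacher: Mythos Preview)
Your proof is correct and follows essentially the same route as the paper's: split $\Sigma=\mathbf{A}_\psi+\sigma$, use $\mathbf{A}_\psi\leq d\,\mathbf{B}_\psi$ (equivalently $\mathbf{a}_\psi\geq 0$) to produce the factor $(\lambda_s^t)^d$, bound $|X(\tau;t,\xi)|$ via the a priori bounds on $\mathbf{B}_\psi$ and $\mathbf{B}_\psi\bm{v}_\psi$ to get $\sigma(X(\tau;t,\xi))\leq\sigma(\xi)e^{\bm{b}\alpha(t-\tau)}$, and integrate. One minor cosmetic point: the explicit formula you wrote for $X(\tau;t,\xi)$ has a sign and exponential-limit discrepancy in the drift term compared to what one obtains from \eqref{cara}, but since you only use it through absolute values this does not affect the bound, which is correct as stated.
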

\begin{proof} Notice that, for any $t \geq s\geq 0$, 
$\exp\left( \int_s^t \mathbf{B}_{\psi}(\tau)\, \d\tau\right)\leq 
\exp(\bm{b}\alpha(t-s)),$
and thus, using \eqref{cara}, we check without difficulty that 
$$|X(\tau;t,\xi)|\leq (1+|\xi|)\,\exp(\bm{b}\alpha(t-\tau)) -1, \qquad \forall \,0\leq \tau\leq t.$$
Therefore,
\begin{equation*}
\Sigma\big(\tau,X(\tau;t,\xi)\big) \leq \mathbf{A}_{\psi}(\tau) + \sigma(\xi) 
\exp\left(\bm{b}\,\alpha(t-\tau)\right)\big) \qquad \forall\, 0 \leq \tau \leq t, \quad \forall \xi \in \mathbb{R}^d.
\end{equation*}
Integrating this over $(s,t)$ we get
$$\int_{s}^{t}\Sigma\big(\tau,X(\tau;t,\xi)\big)\d\tau \leq \int_{s}^{t}\mathbf{A}_{\psi}(\tau)\d\tau  + \sigma(\xi)\bm{u}_{\alpha}(t-s),\qquad \forall t \geq s \geq 0. $$
Finally, since $\mathbf{A}_{\psi}(t)-d\mathbf{B}_{\psi}(t) \leq 0$ for all $t \geq 0$, we get that 
$$\exp\left(-\int_{s}^{t}\mathbf{A}_{\psi}(\tau)\d\tau\right) \geq \left(\lambda_{s}^{t}\right)^{d}$$
which yields the conclusion.
\end{proof}

We can then prove the analogue of \cite[Lemma 5.17]{AloLocmp}
\begin{lem}\phantomsection\label{SstQ}
For any nonnegative $f=f(\xi) \geq 0$ it holds
\begin{equation}\label{Tscal}
\T_s^t \Q_{+}\big(\T_0^s f, \T_\tau^s \Q_{+}(\T_0^\tau f,\T_0^\tau f)\big)=
(\lambda_\tau^0)^{d+1}(\lambda_s^0)^{d+1}\,\T_0^t \Q_{+}\big(\,f\,,\Q_+(f,f)\big)\,,
\end{equation}
for any $0 \leq \tau \leq s \leq t.$

In particular, when $f$ is compactly supported with support included in $\mathds{B}(0,\varrho)$ ($\varrho >0$), then for any $t >0$ there exists $C(t,\varrho) >0$ such that
\begin{equation}\label{concl}
\Ss_s^t \Q_{+}\big(\Ss_0^s f, \Ss_\tau^s \Q_{+}(\Ss_0^\tau f,\Ss_0^\tau f)\big) \geq C(t,\varrho)\T_0^t \Q_{+}\left(\,f\,,\Q_{+}(f,f)\right)\,, \qquad \forall\;\; 0 \leq \tau \leq s \leq t
\end{equation}
\end{lem}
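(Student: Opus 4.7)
The whole argument rests on one structural observation: the characteristic formula \eqref{cara} shows that $\xi \mapsto X(s;t,\xi)$ is an \emph{affine dilation},
\[
X(s;t,\xi)=\lambda_s^t\,\xi + c_s^t, \qquad c_s^t := \int_s^t \mathbf{B}_\psi(\sigma)\bm{v}_\psi(\sigma)\,\exp\Big(-\int_s^\sigma \mathbf{B}_\psi(\tau)\,\d\tau\Big)\,\d\sigma,
\]
so that $\mathcal{T}_s^t$ is composition with this dilation, and by uniqueness of characteristics one has the semigroup property $\mathcal{T}_s^t\,\mathcal{T}_r^s=\mathcal{T}_r^t$ for $r\le s\le t$. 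First I would establish the exact intertwining rule: if $T\xi=\lambda\xi+c$ and $\mathcal{T} h=h\circ T$, the change of variable $u_*=\lambda \xi_*+c$ in the defining integral of $\mathcal{Q}_+$, together with the crucial identity $\lambda\xi'+c=u'$ (and analogously for $u'_*$), produces the scaling formula
\[
\mathcal{T}_s^t\,\mathcal{Q}_+(g,h)=(\lambda_s^t)^{d+1}\,\mathcal{Q}_+\big(\mathcal{T}_s^t g,\,\mathcal{T}_s^t h\big).
\]

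Next I would prove \eqref{Tscal} by iterated application of this rule. Applying it to the outermost $\mathcal{Q}_+$ pulls a factor $(\lambda_s^t)^{d+1}$ outside and replaces $\mathcal{T}_s^t\mathcal{T}_0^s$ and $\mathcal{T}_s^t\mathcal{T}_\tau^s$ by $\mathcal{T}_0^t$ and $\mathcal{T}_\tau^t$ respectively via the semigroup property. Applying the rule a second time to $\mathcal{T}_\tau^t\,\mathcal{Q}_+(\mathcal{T}_0^\tau f,\mathcal{T}_0^\tau f)$ collapses it to $(\lambda_\tau^t)^{d+1}\mathcal{Q}_+(\mathcal{T}_0^t f,\mathcal{T}_0^t f)$. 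Finally, two further applications \emph{backwards} extract $\mathcal{T}_0^t$ from $\mathcal{Q}_+\big(\mathcal{T}_0^t f,\mathcal{Q}_+(\mathcal{T}_0^t f,\mathcal{T}_0^t f)\big)$, at the cost of $(\lambda_0^t)^{-2(d+1)}$. Combining the three factors and using $\lambda_s^t/\lambda_0^t=1/\lambda_0^s=\lambda_s^0$ and $\lambda_\tau^t/\lambda_0^t=\lambda_\tau^0$ gives the prefactor $(\lambda_s^0\lambda_\tau^0)^{d+1}$ that appears in \eqref{Tscal}.

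For \eqref{concl} I would use Lemma \ref{minS} at each occurrence of $\mathcal{S}$: it yields $\mathcal{S}_s^t h(\xi)\ge (\lambda_s^t)^d\,e^{-\sigma(\xi)\bm{u}_\alpha(t-s)}\mathcal{T}_s^t h(\xi)$. The compactness hypothesis on $f\subset\mathds{B}(0,\varrho)$ propagates through the nested structure: $\mathcal{T}_0^\tau f$ is supported in the affine preimage of $\mathds{B}(0,\varrho)$, a ball whose radius is bounded uniformly on $[0,t]$; the spreading Proposition \ref{spread} bounds the support of each $\mathcal{Q}_+$ by $\sqrt{2}$ times the radius of the input support; and applying $\mathcal{S}_\tau^s$ or $\mathcal{S}_s^t$ again maps to a uniformly bounded ball. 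Consequently on the support of each intermediate function the weight $\sigma(\xi)$ is bounded by some $\sigma^*(t,\varrho)$, so the exponential factors in Lemma \ref{minS} are bounded below by a positive constant depending only on $t$ and $\varrho$. By the monotonicity of $\mathcal{Q}_+$ on nonnegative arguments the lower bounds propagate through the nested $\mathcal{Q}_+$'s, and combining with the exact identity \eqref{Tscal} yields \eqref{concl} with some explicit $C(t,\varrho)>0$.

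The main obstacle is purely notational bookkeeping: six different $\lambda_{\cdot}^{\cdot}$ factors, four applications of the scaling rule, and nested compact-support estimates must be tracked carefully and consistently. Once the affine nature of the characteristics is identified, the intertwining with $\mathcal{Q}_+$ is exact, and the passage from $\mathcal{T}$ to $\mathcal{S}$ costs only bounded multiplicative constants because all supports involved remain compact and uniformly controlled in $(\tau,s)\in[0,t]^2$.
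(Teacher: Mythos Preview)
Your proposal is correct and follows essentially the same route as the paper. The paper's proof rests on the same intertwining identity $\mathcal{T}_\tau^s\,\mathcal{Q}_+(h,h)=(\lambda_\tau^s)^{d+1}\mathcal{Q}_+(\mathcal{T}_\tau^s h,\mathcal{T}_\tau^s h)$ (stated there as \eqref{simpleTscal}) and the semigroup property of $\mathcal{T}$; for \eqref{concl} it likewise applies Lemma \ref{minS} at each layer, tracks the compact supports through the nested $\mathcal{Q}_+$'s, and bounds the exponential weights on those supports, arriving at an explicit constant $C(t,\varrho)$ (formula \eqref{CtR}). Your account simply spells out the $\lambda$-bookkeeping for \eqref{Tscal} more explicitly and summarizes the support-tracking for \eqref{concl} more schematically, but the mechanism is identical.
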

\begin{proof} The proof is a simple adaptation of that of \cite[Lemma 5.17]{AloLocmp}. The proof of \eqref{Tscal} is exactly the same. It relies on the still valid following relation: for any $h$ and any $0\leq \tau\leq s$
\begin{equation}\label{simpleTscal}
\T_\tau^s \Q_+(h,h)= \left(\lambda_\tau^s\right)^{d+1} \Q_+(\T_\tau^s h,\T_\tau^s h).
\end{equation}
For the proof of \eqref{concl}, we just recall the main steps. For any $t \geq s\geq 0$, one has using \eqref{cara},
\begin{equation*}
|X(s;t,\xi)|
 \geq  |\xi|\,\exp\left( -\int_s^t \mathbf{B}_{\psi}(\tau)\, \d\tau\right)
-\int_s^t\bm{b}\alpha\exp(\bm{b}\alpha(\sigma-s))\d\sigma 
\geq  \lambda_s^t \, |\xi|- e^{\bm{b}\alpha(t-s)}+1.\end{equation*}
If $f(v)=0$ for any $|v| \geq \varrho$, then $\Ss_s^t f(\xi)=0$ for any $|\xi| \geq \lambda_t^s \left(\varrho+ e^{\bm{b}\alpha(t-s)} -1  \right)$ and \eqref{boundSt} shows that
\begin{equation*}\label{est1}
\Ss_s^t f  \geq \left(\lambda_s^t\right)^{d}\exp\left(-\sigma\left(\lambda_t^s\left(\varrho+ e^{\bm{b}\alpha(t-s)} -1  \right)\right)\bm{u}_{\alpha}(t-s)\right) 
\T_s^tf.
\end{equation*}
In particular,
\begin{equation*}
\Q_{+}\big(\Ss_0^\tau f,\Ss_0^\tau f \big) \geq \left(\lambda_0^\tau\right)^{2d}\exp\left(-2\sigma\left(\lambda_\tau^0\left(\varrho+e^{\bm{b}\alpha\tau}-1\right)\right)\bm{u}_{\alpha}(\tau)\right) \Q_{+}\left(\T_0^\tau f, \T_0^\tau f\right).
\end{equation*}
Now, the support of $\Q_{+}\left(\Ss_0^\tau f, \Ss_0^\tau f\right)$ is included in $\mathds{B}\left(0,\sqrt{2}\lambda_\tau^0 \left(\varrho+e^{\bm{b}\alpha\tau}-1\right)\right)$. Hence, the support of  $\Ss_\tau^s \Q_{+}\big(\Ss_0^\tau f,\Ss_0^\tau f\big)$ is included in 
$$\mathds{B}\left(0,\lambda^\tau_s\left(\sqrt{2}\lambda_\tau^0 \left(\varrho+e^{\bm{b}\alpha\tau}-1\right)+e^{\bm{b}\alpha(s-\tau)} -1\right)\right)\subset\mathds{B}\left(0,\sqrt{2}\lambda_s^0 \left(\varrho+e^{\bm{b}\alpha s}-1\right)\right) .$$
Consequently, we get thanks to \eqref{boundSt}, \eqref{simpleTscal} and the above estimates that
\begin{equation*}
\Ss_\tau^s \Q_{+}\big(\Ss_0^\tau f,\Ss_0^\tau f\big) \geq C_0(s,\tau,\varrho) \Q_{+}\left(\T_0^sf,\T_0^sf\right)
\end{equation*}
with
\begin{multline*}
C_0(s,\tau,\varrho)=\left(\lambda_0^\tau\right)^{2d}\left(\lambda_\tau^s\right)^{2d+1}\exp\left(-2\sigma\left(\lambda_\tau^0\left(\varrho+e^{\bm{b}\alpha\tau}-1\right)\right)\bm{u}_{\alpha}(\tau)\right)\\ \times\exp\left(-\sigma\left(\sqrt{2}\lambda_s^0 \left(\varrho+e^{\bm{b}\alpha s}-1\right)\right)\bm{u}_{\alpha}(s-\tau)\right).
\end{multline*}
Since the support of $\Q_{+}\big(\Ss_0^s f, \Ss_\tau^s \Q_{+}(\Ss_0^\tau f,\Ss_0^\tau f)\big)$ is included in $\mathds{B}\left(0,2\lambda_s^0 \left(\varrho+e^{\bm{b}\alpha s}-1\right)\right)$ it follows that the one of $\Ss_s^t \Q_{+} \big(\Ss_0^s f, \Ss_\tau^s \Q_{+}(\Ss_0^\tau f,\Ss_0^\tau f)$ is included in 
$$\mathds{B}\left(0,\lambda_t^s\left(2\lambda_s^0 \left(\varrho+e^{\bm{b}\alpha s}-1\right)+e^{\bm{b}\alpha (t-s)}-1\right)\right)
\subset\mathds{B}\left(0,2\lambda_t^0 \left(\varrho+e^{\bm{b}\alpha t}-1\right)\right) $$
Hence, 
\begin{equation*}
\Ss_s^t \Q_{+} \big(\Ss_0^s f, \Ss_\tau^s \Q_{+}(\Ss_0^\tau f,\Ss_0^\tau f)\big)  \geq C_1(t,s,\tau,\varrho) \, \T_0^t\Q_{+}\big(f,\Q_{+}(f,f)\big)
\end{equation*}
with
\begin{eqnarray*}
C_1(t,s,\tau,\varrho)&=& C_0(s,\tau,\varrho)\left(\lambda_s^t\right)^{d}\left(\lambda_0^s\right)^{d}\left(\lambda_s^0\right)^{2(d+1)}\exp\left(-\sigma\left(\lambda_s^0 \left(\varrho+e^{\bm{b}\alpha s}-1\right)\right)\bm{u}_{\alpha}(s)\right)\,\\
&&  \times \exp\left(-\sigma\left(2\lambda_t^0\left(\varrho+e^{\bm{b}\alpha t}-1\right)\right)\bm{u}_{\alpha}(t-s)\right)\\
&=&(\lambda_{0}^{t})^{d-2}\lambda_{\tau}^{t}\lambda_{s}^{t}\, \exp\left(-\sigma\left(\sqrt{2}\lambda_s^0 \left(\varrho+e^{\bm{b}\alpha s}-1\right)\right)\bm{u}_{\alpha}(s-\tau)\right)\\ 
& & \times  \exp\left(-2\sigma\left(\lambda_\tau^0\left(\varrho+e^{\bm{b}\alpha\tau}-1\right)\right)\bm{u}_{\alpha}(\tau)-\sigma\left(\lambda_s^0 \left(\varrho+e^{\bm{b}\alpha s}-1\right)\right)\bm{u}_{\alpha}(s)\right)\\
& & \times \exp\left(-\sigma\left(2\lambda_t^0\left(\varrho+e^{\bm{b}\alpha t}-1\right)\right)\bm{u}_{\alpha}(t-s)\right)
\end{eqnarray*}
Reminding that $\sigma(v)=C_0 + C_0|v|$, one observes that, on the one hand 
$$\bm{u}_{\alpha}(s-\tau)+ 2\bm{u}_{\alpha}(\tau)+ \bm{u}_{\alpha}(s)+\bm{u}_{\alpha}(t-s)\leq 5 \bm{u}_{\alpha}(t), \qquad 0\leq \tau\leq s \leq t,$$
and, on the other hand,  
\begin{multline*}
\sqrt{2}\lambda_s^0 \left(\varrho+e^{\bm{b}\alpha s}-1\right)\bm{u}_{\alpha}(s-\tau)
+2 \lambda_\tau^0\left(\varrho+e^{\bm{b}\alpha\tau}-1\right)\bm{u}_{\alpha}(\tau)\\
+\lambda_s^0 \left(\varrho+e^{\bm{b}\alpha s}-1\right)\bm{u}_{\alpha}(s)
+2\lambda_t^0\left(\varrho+e^{\bm{b}\alpha t}-1\right)\bm{u}_{\alpha}(t-s) \\
\leq 7\bm{u}_{\alpha}(t) e^{\bm{b}\alpha t} \left(\varrho+\bm{b}\alpha \bm{u}_{\alpha}(t)\right),
\end{multline*}
where we used that $\lambda_{s}^{0} \leq e^{\bm{b}\alpha\,s}\leq e^{\bm{b}\alpha\,t}$.  
Now, using that $\lambda_{s}^{t}\geq e^{-\bm{b}\alpha\,(t-s)}\geq e^{-\bm{b}\alpha\,t} $ 
and setting
\begin{equation}\label{CtR}
C(t,\varrho)=\exp\left(-d\bm{b}\,\alpha\,t -5C_0\bm{u}_{\alpha}(t) -7C_0\bm{u}_{\alpha}(t) e^{\bm{b}\alpha\,t} \left(\varrho+\bm{b}\alpha\bm{u}_{\alpha}(t)\right)\right)
\end{equation}
we get the result.\end{proof}

\begin{prop}\phantomsection\label{propoR0}
Assume that the initial datum $\psi_{0} \in L^{1}_{3}(\R^{d})$ has mass $1$, momentum $0$ and kinetic energy $d/2 >0$ and that 
$$\psi_{0} \in L^{p}(\R^{d})$$
for some $p >1$. Let $\psi(t,\cdot)$ be the solution to the rescaled equation \eqref{rescaBE}. For any $\tau_1 > 0$, there exist $R_1 >0$ large enough (depending only on $\psi_0$) and $\mu_1 >0$ such that
\begin{equation}\label{propR0}
\psi(t,\cdot) \geq \mu_1 \mathbf{1}_{\mathds{B}(0,R_1)}(\cdot)\,, \qquad \forall\;\; t \geq \tau_1.
\end{equation}
Moreover, for any sequence $(\chi_k)_k \in (0,1)$ and increasing sequence $(\tau_k)_k$ one has
\begin{equation}\label{gtmukRk}
\psi(t,\cdot) \geq \mu_k \mathbf{1}_{\mathds{B}(0,R_k)}\,, \qquad \forall\;\; t \geq \tau_k
\end{equation}
with
\begin{equation}\label{induction}
\left\{
\begin{split}
R_{k+1}&=(1-\chi_k)\sqrt{2}\left(R_k +1- e^{\bm{b}\alpha(\tau_{k+1}-\tau_{k})}\right)
e^{-\bm{b}\alpha(\tau_{k+1}-\tau_{k})}\\
\mu_{k+1}&=(1-\alpha)\kappa_{0}\chi_k^{d+1}\; \mu_k^2 \; \left(R_k +1- e^{\bm{b}\alpha(\tau_{k+1}-\tau_{k})}\right)^{d+1} \;\Xi_{R_k}(\tau_{k+1}-\tau_k), \quad \forall k \in \mathbb{N} 
\end{split}
\right.
\end{equation}
where $\kappa_{0}$ is the positive constant appearing in \eqref{bound} and we set for any $s \geq 0$ and $R > 0$,
\begin{equation*}
\Xi_R(s)=\int_0^s \exp\left(-d\bm{b}\alpha\tau-C_0(1+e^{\bm{b}\alpha\tau}(\sqrt{2}R+e^{\bm{b}\alpha\tau}-1))\bm{u}_{\alpha}(\tau)\right)\d \tau.
\end{equation*}
\end{prop}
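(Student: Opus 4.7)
The plan is to run a classical Carleman-type lower-bound argument, adapted to the non-autonomous transport term, in two stages: first produce an arbitrary non-trivial ball-shaped minorant at a prescribed time, then iterate the spreading of $\Q_+$ through Duhamel's formula. For the initial step, fix $\tau_1>0$ and iterate the Duhamel inequality \eqref{sol} twice: using positivity and bilinearity of $\Q_+$,
\begin{equation*}
\psi(\tau_1,\xi)\;\geq\;(1-\alpha)^2\int_0^{\tau_1}\!\!\int_0^s \mathcal{S}_s^{\tau_1}\Q_+\!\left(\mathcal{S}_0^s\psi_0,\,\mathcal{S}_\tau^s\Q_+(\mathcal{S}_0^\tau\psi_0,\mathcal{S}_0^\tau\psi_0)\right)(\xi)\,\d\tau\,\d s.
\end{equation*}
Truncating $\psi_0$ by a compactly supported $f=\psi_0\mathbf{1}_{\mathbb{B}(0,\varrho_0)}$ with $\varrho_0$ large enough so that $f$ still has positive mass and satisfies the hypotheses of Lemma~\ref{l1} (this is possible because $\psi_0\in L^1_3\cap L^p$), the inequality persists with $\psi_0$ replaced by $f$. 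Then \eqref{concl} of Lemma~\ref{SstQ} bounds each integrand from below by $C(\tau_1,\varrho_0)\,\mathcal{T}_0^{\tau_1}\Q_+(f,\Q_+(f,f))$, and Lemma~\ref{l1} gives $\Q_+(f,\Q_+(f,f))\geq \eta_0\mathbf{1}_{\mathbb{B}(v_0,r)}$ for some $v_0\in\R^d$ and $r,\eta_0>0$ depending only on $\psi_0$. Pulling back through the backward characteristic \eqref{cara} and using the explicit bound $|X(0;\tau_1,\xi)-\lambda_0^{\tau_1}\xi|\leq e^{\bm{b}\alpha\tau_1}-1$, the set $\{\xi:X(0;\tau_1,\xi)\in\mathbb{B}(v_0,r)\}$ contains a ball $\mathbb{B}(\xi_0,r')$ with $r'=r e^{-\bm{b}\alpha\tau_1}$ and some center $\xi_0$; integration in $s,\tau$ yields $\psi(\tau_1,\cdot)\geq\mu\mathbf{1}_{\mathbb{B}(\xi_0,r')}$.

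To obtain the centered ball $\mathbb{B}(0,R_1)$ announced in \eqref{propR0}, one or two additional applications of \eqref{sol} and \eqref{bound} suffice: replacing $\psi(s)$ by its minorant $\mu\mathbf{1}_{\mathbb{B}(\xi_0,r')}$ inside $\Q_+$ and spreading by $\sqrt{2}$ at each step will eventually produce a ball containing the origin, and propagating the resulting lower bound via the evolution family $\mathcal{S}_s^t$ (whose pointwise action is controlled by Lemma~\ref{minS}) keeps it centered at $0$ up to the small drift inherent to the characteristic flow. The induction step is then essentially algebraic: assuming $\psi(t,\cdot)\geq\mu_k\mathbf{1}_{\mathbb{B}(0,R_k)}$ for all $t\geq\tau_k$, insert this into the one-step Duhamel bound for $t\geq\tau_{k+1}$,
\begin{equation*}
\psi(t,\xi)\;\geq\;(1-\alpha)\mu_k^2\int_{\tau_k}^{t}\mathcal{S}_s^t\Q_+(\mathbf{1}_{\mathbb{B}(0,R_k)},\mathbf{1}_{\mathbb{B}(0,R_k)})(\xi)\,\d s,
\end{equation*}
apply Proposition~\ref{spread} to obtain $\Q_+(\mathbf{1}_{\mathbb{B}(0,R_k)},\mathbf{1}_{\mathbb{B}(0,R_k)})\geq\kappa_0\chi_k^{d+1}R_k^{d+1}\mathbf{1}_{\mathbb{B}(0,(1-\chi_k)\sqrt{2}R_k)}$, and then use Lemma~\ref{minS} to lower-bound $\mathcal{S}_s^t\mathbf{1}_{\mathbb{B}(0,\rho)}$ by $(\lambda_s^t)^d e^{-\sigma(\xi)\bm{u}_\alpha(t-s)}\mathbf{1}_{\mathbb{B}(0,\rho)}(X(s;t,\xi))$. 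Finally, using $|X(s;t,\xi)|\leq\lambda_s^t|\xi|+(e^{\bm{b}\alpha(t-s)}-1)$ and integrating from $\tau_k$ to $\tau_{k+1}$, one identifies the preimage ball with radius $(1-\chi_k)\sqrt{2}(R_k+1-e^{\bm{b}\alpha(\tau_{k+1}-\tau_k)})e^{-\bm{b}\alpha(\tau_{k+1}-\tau_k)}$ and the prefactor $\mu_{k+1}$ inherits exactly the factor $\kappa_0(1-\alpha)\chi_k^{d+1}\mu_k^2R_k^{d+1}\Xi_{R_k}(\tau_{k+1}-\tau_k)$ coming from integrating the product of $(\lambda_s^t)^d$, the Gaussian factor $e^{-\sigma(\xi)\bm{u}_\alpha(t-s)}$ and the time variable, yielding precisely \eqref{induction}.

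The main technical obstacle is controlling the non-autonomous transport term: unlike the autonomous case, the characteristics \eqref{cara} both dilate by $\lambda_s^t$ and translate by the drift integral involving $\mathbf{B}_\psi\bm{v}_\psi$, so the balls produced by \eqref{bound} are mapped to translated, rescaled balls under $\mathcal{S}_s^t$. The careful bookkeeping of these two effects—ensuring the centers remain close to $0$ and that the radius correction $R_k+1-e^{\bm{b}\alpha(\tau_{k+1}-\tau_k)}$ stays positive, which requires choosing $\tau_{k+1}-\tau_k$ small enough relative to $R_k$—is exactly what is encoded in the recursion \eqref{induction} and must be tracked uniformly in $\alpha$, using the key estimates $|\mathbf{B}_\psi(s)|+|\mathbf{B}_\psi(s)\bm{v}_\psi(s)|\leq\bm{b}\alpha$ derived from the $L^1_3$ bound on $\psi_0$. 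The rest of the computation is the bookkeeping of the exponential losses from $\mathcal{S}_s^t$, which produces the factor $\Xi_{R_k}(\tau_{k+1}-\tau_k)$ in the definition of $\mu_{k+1}$.
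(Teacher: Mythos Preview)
Your overall plan is right, and the induction step (Step~2) is close to the paper's argument, but there is a genuine gap in the initialization.

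The bound \eqref{propR0} must hold for \emph{all} $t\geq\tau_1$, not just at the single time $t=\tau_1$. Your double Duhamel iteration starting from $\psi_0$ produces a minorant only at $t=\tau_1$; the phrase ``propagating the resulting lower bound via the evolution family $\mathcal{S}_s^t$'' does not repair this, because $\mathcal{S}_s^t$ contains the absorption factor $\exp(-\int\Sigma)$ and a drifting characteristic, so $\mathcal{S}_{\tau_1}^t(\mu_1\mathbf{1}_{\mathbb{B}(0,R_1)})$ degrades exponentially in $t$. The paper's remedy is to run the initialization starting from $G_0=\psi(t_0,\cdot)$ for an \emph{arbitrary} $t_0>0$: the constants $v_0,r_0,\eta_0$ coming from Lemma~\ref{l1} depend only on the mass, energy and $L^p$-norm of $G_0\mathbf{1}_{\mathbb{B}(0,R)}$, and these are uniform in $t_0$ by the propagation estimates. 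This yields $\psi(t_0+t_\star,\cdot)\geq\eta_\star\mathbf{1}_{\mathbb{B}(0,R_1)}$ with $t_\star,\eta_\star$ independent of $t_0$, hence the bound for every $t\geq t_\star$. Relatedly, ``one or two additional applications'' to center the ball is not enough: one must iterate the spreading with a growth factor $(1-\chi)\sqrt{2}/(1+\varepsilon)>1$ (the $1+\varepsilon$ coming from the non-contractive characteristics) until $\mathbb{B}(v_0,r_k)\supset\mathbb{B}(0,R_1)$, which may take many steps.

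A second, smaller issue is in Step~2: the order in which you apply Proposition~\ref{spread} and the transport $\mathcal{T}_s^t$ matters for the exact form of the recursion. Applying $\Q_+$ first and then pulling back through $\mathcal{S}_s^t$, as you describe, would give a preimage ball of radius $\big((1-\chi_k)\sqrt{2}R_k+1-e^{\bm{b}\alpha(\tau_{k+1}-\tau_k)}\big)e^{-\bm{b}\alpha(\tau_{k+1}-\tau_k)}$ and a prefactor $R_k^{d+1}$, which is \emph{not} \eqref{induction}. To get the stated recursion one uses the scaling identity \eqref{simpleTscal} to write $\mathcal{S}_s^t\Q_+(\mathbf{1},\mathbf{1})\geq(\lambda_s^t)^{2d+1}e^{-\sigma(\bm{\omega}_{R_k})\bm{u}_\alpha}\,\Q_+(\mathcal{T}_s^t\mathbf{1},\mathcal{T}_s^t\mathbf{1})$, then bounds $\mathcal{T}_s^t\mathbf{1}_{\mathbb{B}(0,R_k)}\geq\mathbf{1}_{\mathbb{B}(0,\lambda_t^s(R_k+1-e^{\bm{b}\alpha(t-s)}))}$, and \emph{then} applies Proposition~\ref{spread}. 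This is what produces the factor $(R_k+1-e^{\bm{b}\alpha(\tau_{k+1}-\tau_k)})^{d+1}$ and the radius formula in \eqref{induction}.
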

\begin{proof} We describe briefly the main steps of the proof which follows  the one of \cite[Proposition 5.18]{AloLocmp} and \cite[Theorem 4.9]{mmjsp}. Notice only that, because $\mathbf{B}_{\psi}(\cdot)$ has no sign, the characteristic curves $X(s;t,\cdot)$ are not contractive and some additional work has to be done in the initialization step.  \medskip

\noindent $\bullet$ \textit{Step 1: Initialization}. Let $t_0 >0$ be fixed and define $\widehat{g}_0(t,\cdot)=\psi(t_0+t,\cdot)$ for $t >0$, and ${G}_0=\widehat{g}_0(0,\cdot)=\psi(t_0,\cdot)$. Using Duhamel inequality \eqref{sol} one has 
\begin{equation}\label{duha}
\widehat{g}_0(t,\cdot) \geq (1-\alpha)^2\int_0^t \d s \int_0^s  \mathcal{S}_{s+t_0}^{t+t_0}\Q_{+} \left( \mathcal{S}_{t_0}^{s+t_0} {G}_0, {\mathcal{S}_{\tau+t_0}^{s+t_0}}\Q_{+}\left( {\mathcal{S}_{t_0}^{\tau+t_0}}  {G}_0,{\mathcal{S}_{t_0}^{\tau+t_0}} {G}_0\,\right)\right)\d \tau.\end{equation}
For $R>0$ large enough, we have $G_0\geq {G}_0\,\mathbf{1}_{\mathds{B}(0,R)}=:\widehat{G}_0$ and $\int_{\R^d}\widehat{G}_0(\xi)\d\xi>0$. It then follows from \eqref{concl} that, for any  $0 \leq \tau \leq s \leq t \leq T_1$,
$$\mathcal{S}_{s+t_0}^{t+t_0}\Q_{+} \left( \mathcal{S}_{t_0}^{s+t_0} {G}_0, {\mathcal{S}_{\tau+t_0}^{s+t_0}}\Q_{+}\left( {\mathcal{S}_{t_0}^{\tau+t_0}}  {G}_0,{\mathcal{S}_{t_0}^{\tau+t_0}} {G}_0,\right)\right)
\geq C_{T_1} \T_{t_0}^{t+t_0} \Q_{+}\left(\widehat{G}_0,\,\Q_{+}(\widehat{G}_0,\widehat{G}_0)\right)\,,$$
with  $C_{T_1}=\inf_{t \in [0,T_1]}C(t,R)=C(T_{1},R)$ where we recall that $C(t,R)$ is given by \eqref{CtR}. For $T_1 >0$ small enough, one has $C_{T_1} > 1/2$ and
\begin{equation*}
\widehat{g}_0(t,\cdot) \geq  (1-\alpha)^2\;\frac{t^2}{4}\; \T_{t_0}^{t+t_0} \Q_{+}\left(\widehat{G}_0,\,\Q_{+}(\widehat{G}_0,\widehat{G}_0)\right)\,, \qquad \forall\;\; 0 \leq t \leq T_1.
\end{equation*}
It now follows from Lemma \ref{l1} that there exists $v_0\in\R^d$, $r_0$ and 
$\eta_0$ depending only on $\|\widehat{G}_0\|_{L^1}$, the energy of  $\psi_0$ 
and $\|\psi_0\|_{L^p}$ such that  
$$\Q_{+}\left(\widehat{G}_0,\,\Q_{+}(\widehat{G}_0,\widehat{G}_0)\right)
\geq \eta_0 \mathbf{1}_{\mathds{B}(v_0,r_0)}. $$
This leads to 
\begin{equation*}
\widehat{g}_0(t,\xi) \geq  (1-\alpha)^2\;\frac{t^2}{4}\; \eta_0 \;\mathbf{1}_{\mathds{B}(v_0,r_0)}(X(t_0;t+t_0,\xi)) \,, \qquad \forall\;\; 0 \leq t \leq T_1.
\end{equation*}
Let $\varepsilon>0$. For $T_1$ small enough, one has for any $t\in[0,T_1]$,  
$$e^{\bm{b}\alpha t}\leq 1+\frac{\varepsilon}{2}, \qquad \mbox{ and }\qquad 
(|v_0|+1) \left(e^{\bm{b}\alpha t}-1\right)
\leq \frac{\varepsilon \, r_0}{2(1+\varepsilon)}.$$
Consequently, as soon as $|\xi-v_0|\leq \frac{r_0}{1+\varepsilon}$, one has 
\begin{equation}\label{majX}\begin{split}
|X(t_0;t+t_0,\xi)-v_0|& \leq \lambda^{t+t_0}_{t_0} |\xi-v_0|+|v_0| \left|1-\lambda^{t+t_0}_{t_0}\right| +\bm{b}\alpha \int_{t_0}^{t+t_0}\lambda^\sigma_{t_0} \d\sigma\\
&\leq  e^{\bm{b}\alpha t} |\xi-v_0| +|v_0| \left(e^{\bm{b}\alpha t}-1\right)+ \left(e^{\bm{b}\alpha t}-1\right) \leq r_0. \end{split}
\end{equation}
This means that $\widehat{g}_0(t,\cdot) \geq  (1-\alpha)^2\;\frac{t^2}{4}\; \eta_0 \;\mathbf{1}_{\mathds{B}\left(v_0,\frac{r_0}{1+ \varepsilon}\right)}$ for all $t \in (0,T_{1})$. Hence, for any $t_1\in(0,T_1/2]$, it holds $\widehat{g}_0(t,\cdot) \geq  \eta_1 \;\mathbf{1}_{\mathds{B}(v_0,r_1)}$ for any $t_{1} \leq t \leq T_{1}$ with
\begin{equation}\label{eq:eta1}
\eta_1=(1-\alpha)^2\;\frac{t_1^2}{4}\; \eta_0 \qquad \text{ and } \qquad r_{1}=\frac{r_{0}}{1+\varepsilon}.
\end{equation}
Notice at this stage that an important difference with respect to \cite{AloLocmp} and \cite[Theorem 4.9]{mmjsp} is that $r_{1} < r_{0}.$  We set $\widehat{g}_1(t,\cdot)=\widehat{g}_0(t+t_1,\cdot)$. We have thus obtained that 
\begin{equation}\label{g1chap}
\widehat{g}_1(t,\cdot) \geq  \eta_1 \;\mathbf{1}_{\mathds{B}(v_0,r_1)} \,, \qquad \forall\;\; 0 \leq t \leq \frac{T_1}{2}. 
\end{equation}
 Using again Duhamel's inequality \eqref{sol} (recall that $\widehat{g}_{1}(t,\cdot)=\psi(t+t_{0}+t_{1},\cdot)$) one has 
\begin{equation}\label{duha}
\widehat{g}_1(t,\cdot) \geq (1-\alpha) \int_0^t \mathcal{S}_{\tau+t_0+t_1}^{t+t_0+t_1}\Q_{+} \left(\widehat{g}_1(\tau,\cdot) ,\widehat{g}_1(\tau,\cdot)\right)\d \tau.\end{equation}
Let $\chi\in(0,1)$, the value of which will be fixed later. We now deduce from \eqref{g1chap}, Proposition \ref{spread} and Lemma \ref{minS} that, for any $t\in [0,T_1/2]$ and any $\xi\in\R^d$,
\begin{multline*}
\widehat{g}_1(t,\xi) \geq (1-\alpha)\, \eta_1^2\, r_1^{d+1}\chi^{d+1} \kappa_0 \int_0^t\left(\lambda_{\tau+t_0+t_1}^{t+t_0+t_1}\right)^d \exp(-\s(\xi)\bm{u}_\alpha(t-\tau))\;\\
\mathbf{1}_{\mathds{B}(v_0,(1-\chi)\sqrt{2}r_1)}(X(\tau+t_0+t_1; t+t_0+t_1,\xi))\,  \d \tau.\end{multline*}
Arguing as in \eqref{majX}, one obtains, for any $0\leq \tau\leq t \leq T_1/2$ and $\xi\in\R^d$, 
$$ \mathbf{1}_{\mathds{B}(v_0,(1-\chi)\sqrt{2}r_1)}(X(\tau+t_0+t_1; t+t_0+t_1,\xi))\geq 
\mathbf{1}_{\mathds{B}\left(v_0,\frac{(1-\chi)\sqrt{2}r_1}{1+\varepsilon}\right)}(\xi).$$
On the other hand, for any $0\leq t \leq \tfrac{T_1}{2}$, 
$\int_0^t\left(\lambda_{\tau+t_0+t_1}^{t+t_0+t_1}\right)^d\,\d \tau \geq \int_0^t e^{-d\bm{b}\alpha (t-\tau)} \, \d\tau \geq t \, e^{-d\bm{b}\alpha T_1/2}$ so that, for any $t_2\in(0,T_1/4]$, 
$$\widehat{g}_1(t,\cdot) \geq \eta_2\, \mathbf{1}_{\mathds{B}\left(v_0,r_{2}\right)}, \qquad \forall t_2\leq t\leq T_1/4, $$
with $r_{2}:=\frac{(1-\chi)\sqrt{2}\,r_1}{1+\varepsilon}$ and 
\begin{equation}\label{eq:eta2}
\eta_2=(1-\alpha) \,\eta_1^2\,  r_1^{d+1}\chi^{d+1} \kappa_0 
\exp\left(-\s\left(|v_0|+r_{2}\right)\bm{u}_\alpha(T_1/2)\right)\;  t_2 \, e^{-d\bm{b}\alpha T_1/2}.\end{equation}
One  chooses now $\varepsilon$ and $\chi$ small enough such that $\frac{(1-\chi)\sqrt{2}}{1+\varepsilon}>1$.  
Iterating this procedure, one obtains that, for any $k\geq 1$, for any $t_i\in(0,\tfrac{T_1}{2^{i}}]$ $(i=1,\ldots,k)$, there exists $\eta_k$ such that,
$$\widehat{g}_k(t,\cdot):= \psi\left(t+\sum_{i=0}^kt_i\right)\geq \eta_k  \mathbf{1}_{\mathds{B}(v_0,r_k)},\qquad \mbox{ with }\qquad r_k=\left(\frac{(1-\chi)\sqrt{2}}{1+\varepsilon}\right)^{k-1} r_1.$$
Arguing exactly as in \cite{AloLocmp,mmjsp}, there exists some explicit $\eta_\star >0$ and some arbitrarily small $t_\star >0$, both independent of the initial choice of $t_0$, such that
\begin{equation*}
\psi(t_\star + t_0,\cdot) \geq \eta_\star \mathbf{1}_{\mathds{B}(0,R)}.
\end{equation*}
Notice that $t_{\star}=\sum_{i=1}^{k}t_{i}$ and $\eta_{\star}=\eta_{k}$ where $k$ is large enough in such a way that $\mathds{B}(0,R)\subset \mathds{B}(v_0,r_k)$. Since $t_{0} >0$ is arbitrary, this proves \eqref{propR0} with $R_{1}=R$, $\mu_{1}=\eta_{\star}.$ For the proof of Theorem \ref{a-l3}, it will be important to understand the way $\mu_{1}$ depend on $t_{1}$. We obtained in Eq. \eqref{eq:eta1} that $\eta_1=\mathcal{O}(t_1^2)$ while, from \eqref{eq:eta2}, $\eta_{2}=\mathcal{O}(t_{2}\eta_{1}^{2})$ for some $t_{2} \in (0,T_{1}/4)$ to be chosen.  Iterating this procedure one can check without difficulty that $\eta_{k}=\mathcal{O}\left(t_1^{2^k}\prod_{i=2}^{k} t^{2^{k-i}}_{i}\right)$
and, picking as in \cite{pulwen} $t_{i}=t_{1}^{i}$ $(i=1,\ldots,k)$ one obtains
\begin{equation}\label{eq:mu1}
\mu_{1}=\eta_{k}=\mathcal{O}\left(t_{1}^{N_{k}}\right)\end{equation}
with $N_{k}=2^{k}+\sum_{i=2}^{k}i2^{k-i}=5\,2^{k-1}-(k+2).$
\\

\noindent $\bullet$ \textit{Second step (Implementation of the induction scheme).} For $\tau_{1} >0$ and any $t >\tau_{1}$, we get using \eqref{propR0}
\begin{eqnarray}
\psi(t,\cdot) & \geq&  (1-\alpha)\int_{\tau_1}^t \Ss_s^t \Q_{+}\big(\psi(s,\cdot),\psi(s,\cdot)\big)\d s \nonumber\\
& \geq &  (1-\alpha)\, \mu_1^2 \int_{\tau_1}^t \Ss_s^t \Q_{+} \left( \mathbf{1}_{\mathds{B}(0,R_1)}\,,\, \mathbf{1}_{\mathds{B}(0,R_1)}\right)\d s. \label{gtDuh}
\end{eqnarray}
{Since the support of $\Q_{+}\left( \mathbf{1}_{\mathds{B}(0,R_1)}\,,\, \mathbf{1}_{\mathds{B}(0,R_1)}\right)$ is included in $\mathds{B}\big(0,\sqrt{2}R_1\big)$, the one of $\Ss_s^t \Q_{+} \left( \mathbf{1}_{\mathds{B}(0,R_1)}\,,\, \mathbf{1}_{\mathds{B}(0,R_1)}\right)$ is included in $\mathds{B}\big(0,\lambda^s_t(\sqrt{2}R_1+e^{\bm{b}\alpha(t-s)}-1) \big)\subset \mathds{B}\big(0,e^{\bm{b}\alpha(t-s)}(\sqrt{2}R_1+e^{\bm{b}\alpha(t-s)}-1)\big)$ and  we get from \eqref{boundSt} and \eqref{simpleTscal} that
\begin{equation*}\label{SstQe+}
\Ss_s^t \Q_{+} \left( \mathbf{1}_{\mathds{B}(0,R_1)}\,,\, \mathbf{1}_{\mathds{B}(0,R_1)}\right)\geq\left(\lambda_s^t\right)^{2d+1}
 \exp\big(-\sigma(\bm{\omega}_{{R_{1}}}(t-s))\bm{u}_{\alpha}(t-s)\big)\Q_{+}\left( \T_s^t\mathbf{1}_{\mathds{B}(0,R_1)}\,,\, \T_s^t\mathbf{1}_{\mathds{B}(0,R_1)}\right),
\end{equation*}
where $\bm{\omega}_{R}(\tau):=e^{\bm{b}\alpha\tau}(\sqrt{2}R+e^{\bm{b}\alpha\tau}-1) $.}  Now, since $|X(s;t,\xi)|\leq \lambda_s^t |\xi| + e^{\bm{b}\alpha (t-s)}-1,$
we deduce that 
$$\T_s^t\mathbf{1}_{\mathds{B}(0,R_1)}
\geq \mathbf{1}_{\mathds{B}\left(0,\lambda_t^s\overline{R}_1(t-s)\right)}$$
where $\overline{R}_1(\tau) :=R_1+1-e^{\bm{b}\alpha\tau}$ for any $\tau \geq0.$ 
This leads to 
\begin{align*}
\Ss_s^t \Q_{+} &\left( \mathbf{1}_{\mathds{B}(0,R_1)}\,,\, \mathbf{1}_{\mathds{B}(0,R_1)}\right) \\
&\geq\left(\lambda_s^t\right)^{2d+1}\exp\big(-\sigma(\bm{\omega}_{R_1}(t-s))\bm{u}_{\alpha}(t-s)\big)\Q_{+}\left(\mathbf{1}_{\mathds{B}(0,\lambda_t^s\overline{R}_1(t-s))}\,,\, \mathbf{1}_{\mathds{B}(0,\lambda_t^s\overline{R}_1(t-s))}\right). 
\end{align*}
Using Proposition \ref{spread},   for any $\chi_1 \in (0,1)$ this can be again bounded from below by
\begin{multline*}
\kappa_{0} \left(\lambda_s^t\right)^{2d+1}
 \exp\big(-\sigma(\bm{\omega}_{R_1}(t-s))\bm{u}_{\alpha}(t-s)\big)(\lambda_{t}^{s})^{d+1}\chi_{1}^{d+1}(\overline{R}_{1}(t-s))^{d+1}\mathbf{1}_{\mathds{B}(0,(1-\chi_{1})\sqrt{2}\lambda_{t}^{s}\overline{R}_{1}(t-s))}\\
 =\kappa_{0} \left(\lambda_{s}^{t}\right)^{d} \exp\big(-\sigma(\bm{\omega}_{R_1}(t-s))\bm{u}_{\alpha}(t-s)\big)\chi_{1}^{d+1}(\overline{R}_{1}(t-s))^{d+1}\mathbf{1}_{\mathds{B}(0,(1-\chi_{1})\sqrt{2}\lambda_{t}^{s}\overline{R}_{1}(t-s))}.\end{multline*}
 Notice that a difference with respect to \cite[Prop. 5.18]{AloLocmp} is that, here, it is not true that $\lambda_{t}^{s} \geq 1$ since $\mathbf{B}_{\psi}(\tau)$ has no sign. However, one has $\lambda_{t}^{s} \geq\exp(-\bm{b}\alpha\,(t-s))$. Using \eqref{gtDuh}, one obtains
$$\psi(t,\cdot) \geq  (1-\alpha)\,\kappa_{0}\,\mu_{1}^{2} \,\chi_{1}^{d+1} \int_{0}^{t-\tau_{1}}(\overline{R}_{1}(\tau))^{d+1}\exp\left(-d\bm{b}\alpha\,\tau-\sigma(\bm{\omega}_{R_1}(\tau))\bm{u}_{\alpha}(\tau)\right)\mathbf{1}_{\mathds{B}\left(0,(1-\chi_{1})\sqrt{2}e^{-\bm{b}\alpha\tau}\overline{R}_{1}(\tau)\right)}\d \tau.$$
Therefore,
\begin{equation*}
\psi(t,\cdot) \geq \mu_2 \,\mathbf{1}_{\mathds{B}(0,R_2) }\,, \qquad \forall\;\; t \geq \tau_2 > \tau_1
\end{equation*}
with $R_2=(1-\chi_1)\sqrt{2}e^{-\bm{b}\alpha(\tau_{2}-\tau_{1})}
\left(R_{1}+1-e^{\bm{b}\alpha (\tau_2-\tau_1)}\right)$ and
\begin{equation*}
\mu_2 = (1-\alpha)\,\kappa_{0}\,\mu_{1}^{2}\, \chi_{1}^{d+1}\,
\left(R_{1}+1-e^{\bm{b}\alpha (\tau_2-\tau_1)}\right)^{d+1}\; \Xi_{R_1}(\tau_2-\tau_1).
\end{equation*}
Iterating this procedure, we obtain the result.
\end{proof}

With this we can prove Theorem \ref{a-l3}
\begin{proof}[Proof of Theorem \ref{a-l3}] We apply Proposition \ref{propoR0} to a constant sequence $(\chi_k)_k$ and bounded sequence $(\tau_k)_{k \geq 1}$.   More precisely, let $t_1 > 0$ be fixed and write
\begin{equation*}
\tau_1=\frac{t_1}{2}, \qquad \tau_{k+1}=\tau_{k}+\frac{t_1}{2^{k+1}} \qquad \forall k \geq 1.
\end{equation*}
For any given $\varepsilon >0$, set $\chi_k=\varepsilon$ for all $k \geq 1$. One deduces from \eqref{induction} that
\begin{align*}
R_{k}= \left(\sqrt{2}(1-\varepsilon)\right)^{k-1}&R_{1}\exp\left(-\bm{b}\alpha\,t_{1}\sum_{j=2}^{k}2^{-j}\right) \\ &+ \sum_{i=1}^{k-1} \left(\sqrt{2}(1-\varepsilon)\right)^{k-i}\exp\left(-\bm{b}\alpha\,t_{1}\sum_{j=i+1}^{k}2^{-j}\right) \left(1-e^{\bm{b}\alpha \,t_1 2^{-i-1}}\right),
\end{align*}
that is
\begin{multline}\label{rk}
R_{k} =  \left(\sqrt{2}(1-\varepsilon)\right)^{k-1}R_{1}
\exp\left(-\bm{b}\alpha\,t_{1}\left(\frac{1}{2}-\frac{1}{2^{k}}\right)\right)\\
- \sum_{i=1}^{k-1} \left(\sqrt{2}(1-\varepsilon)\right)^{k-i}\exp\left(-\bm{b}\alpha\,t_{1}\left(\frac{1}{2^{i+1}}-\frac{1}{2^{k}}\right)\right)\left(1-e^{-\bm{b}\alpha \,t_1 2^{-i-1}}\right).
\end{multline}
It is clear that $R_k \leq (\sqrt{2})^{k-1} R_1$ for any $k\geq 1$. On the other hand, since $1-e^{-x}\leq x$ for any $x\geq 0$, we deduce that 
$$R_k\geq (\sqrt{2}(1-\varepsilon))^{k-1} R_1
\exp\left(-\bm{b}\alpha\frac{t_{1}}{2}\right) - \bm{b}\alpha \,t_1  
\sum_{i=1}^{k-1} \left(\sqrt{2}(1-\varepsilon)\right)^{k-i} 2^{-i-1} 
\qquad \forall\;\; k \geq 1.$$
We finally obtain for any $k \geq 1$
\begin{equation}\label{eq:rk}
\left(\sqrt{2}(1-\varepsilon)\right)^{k-1} \left( R_1 e^{-\bm{b}\alpha\frac{t_{1}}{2}} 
-\frac{(1-\varepsilon) \bm{b}\alpha \,t_1}{4(1-\varepsilon)-\sqrt{2}}\right) \leq  R_k\leq (\sqrt{2})^{k-1} R_1.
\end{equation}
Moreover, by definition of $\Xi_R(s)$, one has easily 
$$\Xi_{R_k}(\tau_{k+1}-\tau_k) \geq \frac{1}{d\bm{b}\alpha}\exp\left(-C_{0}(1+\bm{\omega}_{R_k}(\tau_{k+1}-\tau_{k}))\bm{u}_{\alpha}(\tau_{k+1}-\tau_{k})\right)\left(1-\exp(-d\bm{b}\alpha(\tau_{k+1}-\tau_{k})\right),$$
where we recall that $\bm{\omega}_R(\tau)=e^{\bm{b}\alpha\tau}(\sqrt{2}R+e^{\bm{b}\alpha\tau}-1) $. In particular, since $\tau_{k+1}-\tau_k \leq t_{1} \leq 1$ one sees that there is some positive constant $c(\alpha) >0$ (independent of $t_{1}$) such that
$$\frac{1}{d\bm{b}\alpha}\left(1-\exp(-d\bm{b}\alpha(\tau_{k+1}-\tau_{k})\right) \geq c(\alpha)(\tau_{k+1}-\tau_{k}), \qquad \forall k \geq 1.$$
Moreover, $\bm{u}_{\alpha}(\tau_{k+1}-\tau_{k}) \leq \bm{u}_{\alpha}(t_{1}) \leq \bm{u}_{\alpha}(1)$ for $t_{1} \leq 1$ and $\bm{\omega}_{R_k}(\tau_{k+1}-\tau_{k})\leq \bm{\omega}_{(\sqrt{2})^{(k-1)}R_1}(1) $ so that
\begin{equation*}
\Xi_{R_k}(\tau_{k+1}-\tau_k) \geq c(\alpha) \,\left(\tau_{k+1}-\tau_k\right)\,\exp\left(-C_{0}(1+e^{\bm{b}\alpha}(2^{k/2}R_1+e^{\bm{b}\alpha}-1)  )\bm{u}_{\alpha}({1})\right)
\end{equation*}
Using \eqref{induction}, one gets, as in \cite{AloLocmp} that for any $k\geq 1$,
$$\mu_{k+1} \geq (1-\alpha) c(\alpha)\kappa_{0}\varepsilon^{d+1}\frac{t_{1}}{2^{k+1}}\left(R_{k}+1-e^{\bm{b}\alpha \,t_1 2^{-k-1}}\right)^{d+1}\exp\left(-C_{0}(1+e^{2\bm{b}\alpha} )\bm{u}_{\alpha}({1})\right)\exp(-z(\alpha)2^{k/2}R_{1})\mu_{k}^{2},$$
for some explicit $z(\alpha) >0$. Arguing as before, we infer from  \eqref{rk} that 
$$R_{k}+1-e^{\bm{b}\alpha \,t_1 2^{-k-1}} \geq  \left(\sqrt{2}(1-\varepsilon)\right)^{k-1} \left( R_1 e^{-\bm{b}\alpha\frac{t_{1}}{2}} 
-\frac{(1-\varepsilon) \bm{b}\alpha \,t_1}{4(1-\varepsilon)-\sqrt{2}}\; e^{\bm{b}\alpha\frac{t_{1}}{2}} \right),$$
where we used that $\exp\left(-\bm{b}\alpha\,t_{1}\left(\frac{1}{2^{i+1}}-\frac{1}{2^{k}}\right)\right) \leq e^{\bm{b}\alpha\,\frac{t_{1}}{2^{k}}}\leq  e^{\bm{b}\alpha\,\frac{t_{1}}{2}}$ for any $1\leq i\leq k$.
Since $t_{1} \leq 1$ we get exactly as in \cite[Prop. 5.18]{AloLocmp} that
$$\mu_{k+1} \geq C_{\alpha}(\varepsilon)\frac{t_{1}}{2^{k+1}}\exp(-z(\alpha)R_{1}2^{\frac{k-1}{2}})\mu_{k}^{2}$$
for some positive constant $C_{\alpha}(\varepsilon)$ depending only on $\alpha$ and $\varepsilon$ (but not on $k$ or $t_{1}$). Arguing as in \cite{AloLocmp}
we get that, for $\varepsilon >0$ small enough and $a_{0} >2$ there is a positive $c_{0} >0$ depending on $\alpha$ but not $t_{1}$ so that
$$\psi(t,\xi) \geq \exp(-c_{0}(1+\log(1/t_{1}))|\xi|^{a_{0}}) \qquad \forall |\xi| \geq R_{1}\footnote{In the proof of \cite{AloLocmp}, the assumption $|\xi| \geq R_{1}$ is missing but is needed} \qquad \forall t \geq t_{1}$$  
Now, for $|\xi| < R_{1}$, we have from \eqref{propR0} that
$$\psi(t,\xi) \geq \mu_{1} \qquad \forall t \geq \tau_{1}=t_{1}/2.$$
We get the lower bound \eqref{eq:pointlower} using also the estimate on $\mu_{1}$ obtained in Proposition \ref{propoR0} (see \eqref{eq:mu1}).
\end{proof}

\section{$C_{0}$-semigroup generation properties}\label{app:gener}

We prove in this section that the operator $\B_{\alpha,\delta}$ is the generator of a $C_{0}$-semigroup in $L^{1}(\m)$ for suitable choice of $\alpha,\delta$. Recall the notations of Section \ref{sec:hypo}. One has, in the underlying $L^{1}_{q}(\m)$, 
$$\LL h=\mathcal{A}_{\delta}h+\mathscr{L}_{0}^{R,\delta}h -\Sigma_{\M}\,h - \P_{\alpha}^{0}h+ \mathcal{T}_{\alpha}h$$
with $\D(\LL)=\D(\mathcal{T}_{\alpha})=\W^{1,1}_{q+1}(\m)$ where
$$\mathcal{T}_{\alpha}h(\xi)=-\mathbf{B}_{\alpha}\mathrm{div}(\xi\,h(\xi)), \qquad \forall h \in \W^{1,1}_{q+1}(\m).$$
Introduce the (anti)-drift operator with absorption
$$\mathbf{T}_{\alpha}h(\xi)=-\Sigma_{\M}(\xi)h(\xi) + \mathcal{T}_{\alpha}h(\xi), \qquad  \forall h \in \W^{1,1}_{q+1}(\m).$$
Notice that, since there are $\sigma_{0} >0,$ $\sigma_{1} >0$ such that $0 \leq \sigma_{0}\langle \xi\rangle \leq \Sigma_{\M}(\xi) \leq \sigma_{1}\langle \xi\rangle$ for all $\xi \in \R^{d}$, the domain of $\mathbf{T}_{\alpha}$ coincides with that of $\T_{\alpha}$. One has then the following elementary result where we recall that $\m(\xi)=\exp(a|\xi|)$, $\xi \in \R^{d}$:
\begin{lem}\phantomsection\phantomsection Assuming $\alpha >0$ to be small enough so that 
$$\sigma_{0} \geq  \sqrt{2}a\mathbf{B}_{\alpha} \geq 0,$$ then the above operator $\mathbf{T}_{\alpha}\::\:\D(\mathbf{T}_{\alpha} )\subset L^{1}_{q}(\m) \to L^{1}_{q}(\m)$ with $\D(\mathbf{T}_{\alpha})=\W^{1,1}_{q+1}(\m)$ is the generator of a nonnegative $C_{0}$-semigroup $\{U_{\alpha}(t)\,;\,t \geq 0\}$ in $L^{1}_{q}(\m)$ given by
$$U_{\alpha}(t)f(\xi)=\exp\left(-\int_{0}^{t}\left[d\mathbf{B}_{\alpha} +\Sigma_{\M}\left(\xi\,e^{(\tau-t)\mathbf{B}_{\alpha}}\right)\right]\d \tau\right)f(\xi\,e^{-t\mathbf{B}_{\alpha}}), \qquad f \in L^{1}_{q}(\m), \qquad t \geq 0$$
such that
\begin{equation}\label{eq:ulp}
\left\|U_{\alpha}(t)f\right\|_{L^{1}_{q}(\m)} \leq \exp\left(-\left(\frac{\sigma_{0}}{\sqrt{2}}-q\mathbf{B}_{\alpha}\right)t\right)\|f\|_{L^{1}_{q}(\m)} \qquad \forall t \geq 0, \qquad f \in L^{1}_{q}(\m).\end{equation}
In particular, $\{U_{\alpha}(t)\,;\,t \geq 0\}$ is a nonnegative contraction semigroup in $L^{1}_{q}(\m)$ as soon as {$\sigma_{0} \geq \sqrt{2}q\mathbf{B}_{\alpha}$}.\end{lem}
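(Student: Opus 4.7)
The plan is to obtain the semigroup by the method of characteristics, then establish the decay bound by a change of variables exploiting the strong absorption coming from $\Sigma_{\M}$. First, I observe that
$$\mathbf{T}_{\alpha}h(\xi) = -\mathbf{B}_{\alpha}\,\nabla_{\xi}\cdot(\xi\,h(\xi)) - \Sigma_{\M}(\xi)h(\xi) = -d\mathbf{B}_{\alpha}h(\xi) - \mathbf{B}_{\alpha}\,\xi\cdot\nabla_{\xi} h(\xi) - \Sigma_{\M}(\xi)h(\xi),$$
so that $u(t,\xi):=U_{\alpha}(t)f(\xi)$ should satisfy the transport equation $\partial_{t}u + \mathbf{B}_{\alpha}\,\xi\cdot\nabla u = -\left(d\mathbf{B}_{\alpha}+\Sigma_{\M}\right)u$. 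The associated characteristic flow is $\xi(t)=\xi_0 e^{\mathbf{B}_{\alpha}t}$, and integrating the absorption along it yields precisely the announced formula. The semigroup identity $U_{\alpha}(t+s)=U_{\alpha}(t)U_{\alpha}(s)$ follows by direct substitution in the explicit formula (a change of variable inside the integral in the exponent), and strong continuity $\lim_{t\to 0^{+}}\|U_{\alpha}(t)f-f\|_{L^{1}_{q}(\m)}=0$ is first verified for $f\in C_{c}^{1}(\R^{d})$ by dominated convergence and then extended to all of $L^{1}_{q}(\m)$ via density together with the uniform bound \eqref{eq:ulp}.

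To identify the generator, for $f\in \W^{1,1}_{q+1}(\m)$ the formula gives
$$\lim_{t\to 0^{+}}\frac{U_{\alpha}(t)f-f}{t} = -d\mathbf{B}_{\alpha}f - \Sigma_{\M}f - \mathbf{B}_{\alpha}\,\xi\cdot\nabla f = \mathbf{T}_{\alpha}f$$
in $L^{1}_{q}(\m)$; conversely, if $f$ belongs to the domain of the generator, applying the formula shows that $\xi\cdot\nabla f\in L^{1}_{q}(\m)$ and, combined with the pointwise bound $\sigma_{0}\langle\xi\rangle\leq \Sigma_{\M}(\xi)\leq \sigma_{1}\langle\xi\rangle$, this forces $f\in\W^{1,1}_{q+1}(\m)$. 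Nonnegativity of $U_{\alpha}(t)$ is immediate from the formula.

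The core of the argument is the weighted estimate \eqref{eq:ulp}. The plan is to change variables $\eta:=\xi e^{-t\mathbf{B}_{\alpha}}$ (with Jacobian $e^{dt\mathbf{B}_{\alpha}}$, canceling the $-d\mathbf{B}_{\alpha}t$ contribution in the exponent), which yields
$$\|U_{\alpha}(t)f\|_{L^{1}_{q}(\m)} = \int_{\R^{d}}|f(\eta)|\,\exp\!\left(-\!\int_{0}^{t}\!\Sigma_{\M}(\eta\,e^{s\mathbf{B}_{\alpha}})\,\d s\right)\langle \eta\,e^{t\mathbf{B}_{\alpha}}\rangle^{q}\,\m(\eta\,e^{t\mathbf{B}_{\alpha}})\,\d\eta.$$
Using $\langle\xi\rangle\geq(1+|\xi|)/\sqrt{2}$ gives the sharper lower bound $\Sigma_{\M}(\xi)\geq \frac{\sigma_{0}}{\sqrt{2}}(1+|\xi|)$, so the exponential of the absorption is bounded above by $\exp\!\big(-\tfrac{\sigma_{0}}{\sqrt{2}}t-\tfrac{\sigma_{0}}{\sqrt{2}}|\eta|\,\tfrac{e^{t\mathbf{B}_{\alpha}}-1}{\mathbf{B}_{\alpha}}\big)$. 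On the other hand, $\m(\eta\,e^{t\mathbf{B}_{\alpha}})=\m(\eta)\exp\!\big(a|\eta|(e^{t\mathbf{B}_{\alpha}}-1)\big)$, and the exponents in $|\eta|$ combine into $-\big(\tfrac{\sigma_{0}}{\sqrt{2}}-a\mathbf{B}_{\alpha}\big)|\eta|\,\tfrac{e^{t\mathbf{B}_{\alpha}}-1}{\mathbf{B}_{\alpha}}$, which is nonpositive precisely under the hypothesis $\sigma_{0}\geq\sqrt{2}\,a\,\mathbf{B}_{\alpha}$. Finally the polynomial weight is controlled by $\langle\eta\,e^{t\mathbf{B}_{\alpha}}\rangle^{q}\leq e^{qt\mathbf{B}_{\alpha}}\langle\eta\rangle^{q}$, producing the correction $+q\mathbf{B}_{\alpha}t$ in the rate. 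The only delicate point is the interplay between the absorption and the growth of $\m$ along the characteristics: the bound $\Sigma_{\M}\geq \tfrac{\sigma_{0}}{\sqrt{2}}(1+|\xi|)$ is exactly what makes the smallness condition $\sigma_{0}\geq\sqrt{2}\,a\,\mathbf{B}_{\alpha}$ sufficient.
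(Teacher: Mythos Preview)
Your proof is correct and follows essentially the same route as the paper: solve by characteristics to obtain the explicit formula for $U_{\alpha}(t)$, then prove \eqref{eq:ulp} via the change of variables $\eta=\xi e^{-t\mathbf{B}_{\alpha}}$ combined with the lower bound $\Sigma_{\M}(\xi)\geq \tfrac{\sigma_{0}}{\sqrt{2}}(1+|\xi|)$. The only cosmetic difference is bookkeeping in the exponent: you keep the full absorption $\tfrac{\sigma_{0}}{\sqrt{2}}|\eta|\tfrac{e^{t\mathbf{B}_{\alpha}}-1}{\mathbf{B}_{\alpha}}$ and then discard the nonpositive combination with the weight growth, whereas the paper first weakens $\tfrac{\sigma_{0}}{\sqrt{2}}$ to $a\mathbf{B}_{\alpha}$ in the $|\eta|$--part so that it cancels exactly against $\m(\eta e^{t\mathbf{B}_{\alpha}})/\m(\eta)$; both lead to the same final estimate.
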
\phantomsection
\begin{nb}\phantomsection\phantomsection Notice that $\T_{\alpha}$ does not generate a $C_{0}$-semigroup in $L^{1}_{q}(\m)$. The absorption term here is exactly what allows to prove that $U_{\alpha}(t) \in \mathscr{B}(L^{1}_{q}(\m))$ for all $t \geq 0.$\end{nb}\phantomsection
\begin{proof} Using the characteristics method to solve the evolution problem
$$\partial_{t}g(t,\xi)=\mathbf{T}_{\alpha}g(t,\xi), \qquad t > 0, \xi \in \R^{d}$$
with initial datum $g(0,\xi)=f(\xi)$ shows that the only possible candidate to be the $C_{0}$-semigroup generated by $\mathbf{T}_{\alpha}$ is indeed $\{U_{\alpha}(t)\,;\,t \geq 0\}$.   Let us show \eqref{eq:ulp}. There is no loss of generality in assuming $f$ to be nonnegative. Then,
$$\left\|U_{\alpha}(t)f\right\|_{L^{1}_{q}(\m)}=\int_{\R^{d}}\exp(a|\xi|)\langle \xi \rangle^{q}\exp\left(-\int_{0}^{t}\left[d\mathbf{B}_{\alpha} +\Sigma_{\M}\left(\xi\,e^{(\tau-t)\mathbf{B}_{\alpha}}\right)\right]\d \tau\right)f(\xi\,e^{-t\mathbf{B}_{\alpha}})\d \xi$$
and, setting $y=\xi e^{-t\mathbf{B}_{\alpha}}$, we get 
$$\left\|U_{\alpha}(t)f\right\|_{L^{1}_{q}(\m)}\leq e^{qt\mathbf{B}_{\alpha}}\int_{\R^{d}}\exp\left(a\,e^{t\mathbf{B}_{\alpha}}|y|\right)\langle y\rangle^{q}\exp\left(-\int_{0}^{t} \Sigma_{\M}\left(y\,e^{\tau\mathbf{B}_{\alpha}}\right)\d \tau\right)f(y)\d y.$$

Now, since $\Sigma_{\M}(\xi) \geq \sigma_{0}\langle \xi \rangle \geq \frac{\sigma_{0}}{\sqrt{2}}\left(1+|\xi|\right)$ for all $\xi \in \R^{d}$ we get under the assumption that $\frac{\sigma_{0}}{\sqrt{2}} \geq a\mathbf{B}_{\alpha} \geq 0$:
\begin{equation*}\begin{split}
\left\|U_{\alpha}(t)f\right\|_{L^{1}_{q}(\m)}&\leq \exp\left(-\frac{\sigma_{0}}{\sqrt{2}}t +qt\mathbf{B}_{\alpha}\right)
\int_{\R^{d}}\langle y \rangle^{q}\exp\left(a\,e^{t\mathbf{B}_{\alpha}}|y|\right)\exp\left(-a\mathbf{B}_{\alpha}|y|\int_{0}^{t} e^{\tau\mathbf{B}_{\alpha}}\d \tau\right)f(y)\d y\\
&\leq \exp\left(-\left(\frac{\sigma_{0}}{\sqrt{2}}-q\mathbf{B}_{\alpha}\right)t\right)\|f\|_{L^{1}_{q}(\m)}.\end{split}\end{equation*}
This proves the claim. It is not difficult then to prove that $\{U_{\alpha}(t)\;,\;t \geq 0\}$ is indeed a $C_{0}$-semigroup in $L^{1}_{q}(\m)$.
\end{proof}

\begin{lem}\phantomsection\phantomsection\label{lem:q-q+1}
Let $\alpha > 0$ be such that  {$\sigma_{0} \geq \sqrt{2} a \mathbf{B}_{\alpha}\geq 0.$} For any $q \geq 0$, one has
$$\|{\mathcal R}(\lambda,\mathbf{T}_{\alpha})\|_{\mathscr{B}(L^{1}_{q}(\m),L^{1}_{1+q}(\m))} \leq \frac{1}{\sigma_{0}-a\mathbf{B}_{\alpha}}, \qquad \forall \lambda >q\mathbf{B}_{\alpha}.$$
\end{lem}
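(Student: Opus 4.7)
The plan is to obtain the resolvent estimate from an \emph{a priori} energy inequality on the resolvent equation, exploiting the gain of one weight provided by the absorption $\Sigma_{\M}(\xi) \geq \sigma_{0}\langle\xi\rangle$. Concretely, given $f \in L^{1}_{q}(\m)$ and $\lambda > q\mathbf{B}_{\alpha}$, set $g:=\mathcal{R}(\lambda,\mathbf{T}_{\alpha})f$; since $\lambda$ lies in the resolvent set of the generator $\mathbf{T}_{\alpha}$ (by the previous lemma, using $\lambda > q\mathbf{B}_{\alpha} \geq \sigma_{0}/\sqrt{2} - q\mathbf{B}_{\alpha}$ -- or more simply because the semigroup growth bound is at most $-(\sigma_{0}/\sqrt{2}-q\mathbf{B}_{\alpha})$), one has $g \in \D(\mathbf{T}_{\alpha})=\W^{1,1}_{q+1}(\m)$ and $g$ satisfies, a.e. in $\R^{d}$,
\begin{equation*}
\lambda\,g(\xi) + \Sigma_{\M}(\xi) g(\xi) + \mathbf{B}_{\alpha}\,\mathrm{div}_{\xi}\big(\xi g(\xi)\big) = f(\xi).
\end{equation*}

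Next, I would multiply this identity by $\mathrm{sign}(g(\xi))\,\langle\xi\rangle^{q}\m(\xi)$ and integrate over $\R^{d}$. Using Kato's inequality (equivalently, $\mathrm{sign}(g)\nabla g = \nabla|g|$ a.e.\ for $g \in W^{1,1}$), one has $\mathrm{sign}(g)\,\mathrm{div}(\xi g) = \mathrm{div}(\xi|g|)$. An integration by parts, justified by approximation of $g$ by $C_{c}^{\infty}(\R^{d})$ functions in $\W^{1,1}_{q+1}(\m)$, yields
\begin{equation*}
\int_{\R^{d}}\mathrm{div}(\xi|g|)\,\langle\xi\rangle^{q}\m(\xi)\,\d\xi
= -\int_{\R^{d}}|g(\xi)|\,\xi\cdot\nabla\bigl(\langle\xi\rangle^{q}\m(\xi)\bigr)\,\d\xi,
\end{equation*}
and a direct computation gives $\xi\cdot\nabla\bigl(\langle\xi\rangle^{q}\m(\xi)\bigr) = \bigl(q|\xi|^{2}\langle\xi\rangle^{q-2} + a|\xi|\langle\xi\rangle^{q}\bigr)\m(\xi)$.

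Combining everything and using $\Sigma_{\M}(\xi) \geq \sigma_{0}\langle\xi\rangle$, $|\xi|^{2}/\langle\xi\rangle^{2}\leq 1$, and $|\xi|\leq \langle\xi\rangle$, the fact that $\mathbf{B}_{\alpha}\geq 0$ gives
\begin{equation*}
\lambda\,\|g\|_{L^{1}_{q}(\m)} + \sigma_{0}\,\|g\|_{L^{1}_{1+q}(\m)} - q\,\mathbf{B}_{\alpha}\,\|g\|_{L^{1}_{q}(\m)} - a\,\mathbf{B}_{\alpha}\,\|g\|_{L^{1}_{1+q}(\m)} \leq \|f\|_{L^{1}_{q}(\m)},
\end{equation*}
that is,
\begin{equation*}
(\lambda - q\mathbf{B}_{\alpha})\,\|g\|_{L^{1}_{q}(\m)} + (\sigma_{0} - a\mathbf{B}_{\alpha})\,\|g\|_{L^{1}_{1+q}(\m)} \leq \|f\|_{L^{1}_{q}(\m)}.
\end{equation*}

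Both coefficients on the left are strictly positive: $\lambda - q\mathbf{B}_{\alpha} > 0$ by hypothesis, and $\sigma_{0} - a\mathbf{B}_{\alpha} > 0$ since $\sigma_{0} \geq \sqrt{2}\,a\mathbf{B}_{\alpha} > a\mathbf{B}_{\alpha}$ (the hypothesis from the previous lemma). Dropping the first term yields the desired estimate
\begin{equation*}
\|\mathcal{R}(\lambda,\mathbf{T}_{\alpha})f\|_{L^{1}_{1+q}(\m)} = \|g\|_{L^{1}_{1+q}(\m)} \leq \frac{1}{\sigma_{0} - a\mathbf{B}_{\alpha}}\,\|f\|_{L^{1}_{q}(\m)}.
\end{equation*}
The only delicate point is the rigorous justification of the signed integration-by-parts identity; this is standard and can be handled by regularizing $\mathrm{sign}(g)$ via $g/\sqrt{g^{2}+\varepsilon^{2}}$ and passing to the limit $\varepsilon \to 0^{+}$, exploiting that $g \in W^{1,1}_{q+1}(\m)$ so that $\nabla g \in L^{1}_{q+1}(\m)$ ensures the requisite integrability of every term.
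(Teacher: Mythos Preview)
Your proof is correct and follows essentially the same route as the paper's: multiply the resolvent equation by the weight $\langle\xi\rangle^{q}\m(\xi)$, integrate by parts on the drift term, and use $\Sigma_{\M}(\xi)\geq \sigma_{0}\langle\xi\rangle$ together with the explicit formula for $\xi\cdot\nabla(\langle\xi\rangle^{q}\m(\xi))$ to absorb the contributions from the drift. The only cosmetic difference is that the paper avoids the $\mathrm{sign}(g)$ multiplier (and the attendant Kato-inequality justification) by first reducing to nonnegative $f$, exploiting that $\{U_{\alpha}(t)\}$ is a positive semigroup so that $\mathcal{R}(\lambda,\mathbf{T}_{\alpha})$ preserves nonnegativity and the positive cone of $L^{1}_{q}(\m)$ is generating; your approach via $\mathrm{sign}(g)$ is equally valid and leads to the identical final inequality.
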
\phantomsection
\begin{proof} Given $f \in L^{1}_{q}(\m)$ and $\lambda >0$ large enough, we need to compute $\|{\mathcal R}(\lambda,\mathbf{T}_{\alpha})f\|_{L^{1}_{1+q}(\m)}$. First of all, since $\{U_{\alpha}(t)\,;\,t \geq 0\}$ is a nonnegative semigroup, ${\mathcal R}(\lambda,\mathbf{T}_{\alpha})$ is nonnegative and, since the positive cone of $L^{1}_{q}(\m)$ is generating, it is enough to consider $f$ nonnegative. Set then $g={\mathcal R}(\lambda,\mathbf{T}_{\alpha})f$. One has
$$(\lambda+\Sigma_{\M}(\xi))g(\xi)  + \mathbf{B}_{\alpha}\mathrm{div}(\xi\,g(\xi))=f(\xi), \qquad \xi \in \R^{d}.$$
Multiplying by $\langle \xi\rangle^{q}\m(\xi)$ and integrating over $\R^{d}$ we get
\begin{equation*}
\begin{split}
\int_{\R^{d}}(\lambda+\Sigma_{\M}(\xi))g(\xi) \langle \xi\rangle^{q}\m(\xi)\d\xi&=\|f\|_{L^{1}_{q}(\m)}-\mathbf{B}_{\alpha}\int_{\R^{d}}\mathrm{div}(\xi\,g(\xi))\langle \xi\rangle^{q}\m(\xi)\d\xi\\
&=\|f\|_{L^{1}_{q}(\m)}+\mathbf{B}_{\alpha}\int_{\R^{d}}g(\xi)\xi \cdot \nabla \left(\langle \xi\rangle^{q}\m(\xi)\right)\d\xi
\end{split}\end{equation*}
Since $\xi \cdot \nabla\left(
 \langle\xi\rangle^q \, \m(\xi)\right)=q\,|\xi|^{2}\langle \xi\rangle^{q-2}\m(\xi)+a\langle \xi\rangle^{q}|\xi|\m(\xi)$, we get
$$ \int_{\R^{d}}(\lambda+\Sigma_{\M}(\xi))g(\xi) \langle \xi\rangle^{q}\m(\xi)\d\xi \leq \|f\|_{L^{1}_{q}(\m)} + q\mathbf{B}_{\alpha}\|g\|_{L^{1}_{q}(\m)} + a\mathbf{B}_{\alpha}\|g\|_{L^{1}_{q+1}(\m)}.$$
Since $\Sigma_{\M}(\xi) \geq \sigma_{0}\langle \xi\rangle$, we get the estimate
$$\lambda \|g\|_{L^{1}_{q}(\m)} + \sigma_{0}\|g\|_{L^{1}_{q+1}(\m)}  \leq \|f\|_{L^{1}_{q}(\m)} + q\mathbf{B}_{\alpha}\|g\|_{L^{1}_{q}(\m)} + a\mathbf{B}_{\alpha}\|g\|_{L^{1}_{q+1}(\m)}.$$
Therefore, fixing $\alpha >0$ such that $\sigma_{0}\geq \sqrt{2} a\mathbf{B}_{\alpha} > a\mathbf{B}_{\alpha}$ and taking then $\lambda > q\mathbf{B}_{\alpha}$, we get
$$\|g\|_{L^{1}_{q+1}(\m)} \leq \frac{1}{\sigma_{0}-a\mathbf{B}_{\alpha}}\|f\|_{L^{1}_{q}(\m)}$$
which gives the desired estimate.
\end{proof}
\begin{nb}\phantomsection\phantomsection The above estimate directly yields $\|g\|_{L^{1}_{q}(\m)} \leq \frac{1}{\lambda-q\mathbf{B}_{\alpha}}\|f\|_{L^{1}_{q}(\m)}$, i.e.
\begin{equation}\label{eq:largelam}
\|{\mathcal R}(\lambda,\mathbf{T}_{\alpha})\|_{\mathscr{B}(L^{1}_{q}(\m) )} \leq \frac{1}{\lambda -q\mathbf{B}_{\alpha}}.\end{equation}
\end{nb}\phantomsection

Recall the definition of $\mathscr{L}_{0}^{R,\delta}:$
$$\mathscr{L}_{0}^{{R,\delta}}h(\xi)=\int_{\R^d\times \S^{d-1}} (1-\Theta_\delta )
[\M(\xi'_*)h(\xi') +\M(\xi')h(\xi'_*)-\M(\xi)h(\xi_*)] 
\:|\xi-\xi_*|\, \d\xi_*\d\sigma.$$
One can split $\mathscr{L}^{R,\delta}_{0}$ into positive and negative parts,
$$\mathscr{L}^{R,\delta}_{0}=\mathscr{L}^{R,\delta}_{0,+}-\mathscr{L}_{0,-}^{R,\delta}$$
where
$$\mathscr{L}_{0,-}^{R,\delta}h(\xi)=\int_{\R^d\times \S^{d-1}} (1-\Theta_\delta )\M(\xi)h(\xi_*)|\xi-\xi_*|\, \d\xi_*\d\sigma=\M(\xi)\int_{\R^{d}}h(\xi_{*}){\nu}_{\delta}(\xi,\xi_{*})\d \xi_{*}$$
with
$${\nu}_{\delta}(\xi,\xi_{*})=|\xi-\xi_{*}|\int_{\S^{d-1}}(1-\Theta_{\delta}(\xi,\xi_{*},\sigma))\d\sigma.$$
One has then the following whose proof is the same as that of \cite[Lemma B.1 \& Proposition B.2]{CaLo}
\begin{lem}\phantomsection\phantomsection
For any $q \geq 0$, there exists $\kappa_{q}(\delta) >0$ such that $\lim_{\delta \to 0}\kappa_{q}(\delta)=0$ and 
$$\|\mathscr{L}^{R,\delta}_{0,+}h\|_{L^{1}_{q}(\m)} \leq \kappa_{q}(\delta)\|h\|_{L^{1}_{q+1}(\m)}, \qquad \forall h \in L^{1}_{1+q}(\m),$$while 
$$\mathscr{L}^{R,\delta}_{0,-}\::\:\:L^{1}_{q}(\m) \to L^{1}_{q}(\m)$$
is bounded. 
\end{lem}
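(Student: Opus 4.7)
The loss estimate is immediate: since $\mathscr{L}_{0,-}^{R,\delta}h(\xi)=\M(\xi)\int h(\xi_*)\nu_\delta(\xi,\xi_*)\,\d\xi_*$ with $\nu_\delta(\xi,\xi_*)\leq|\S^{d-1}||\xi-\xi_*|$, the Gaussian prefactor $\M(\xi)$ absorbs the weight $\langle\xi\rangle^q\m(\xi)$ and gives $\|\mathscr{L}_{0,-}^{R,\delta}h\|_{L^1_q(\m)}\leq C\|h\|_{L^1_1(\R^d)}\leq C\|h\|_{L^1_q(\m)}$ with $C$ independent of $\delta$. This settles the boundedness claim.

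The substantive work is the gain estimate. Using the $\sigma\mapsto -\sigma$ symmetry of $\Theta_\delta$ (which depends only on $|\cos\theta|$), I would first reduce to
\begin{equation*}
|\mathscr{L}_{0,+}^{R,\delta}h(\xi)|\leq 2\int_{\R^d\times\S^{d-1}}(1-\Theta_\delta)\M(\xi'_*)|h(\xi')||\xi-\xi_*|\,\d\xi_*\d\sigma .
\end{equation*}
Taking the $L^1_q(\m)$ norm and transferring the weight through the elastic identities $|\xi|^2\leq|\xi'|^2+|\xi'_*|^2$ and $|\xi|\leq|\xi'|+|\xi'_*|$ yields $\langle\xi\rangle^q\m(\xi)\leq C_q\langle\xi'\rangle^q\m(\xi')\langle\xi'_*\rangle^q\m(\xi'_*)$, so that $\M(\xi'_*)\langle\xi'_*\rangle^q\m(\xi'_*)$ is dominated by a fixed Gaussian $\mathcal{G}(\xi'_*)$. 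I would then apply the pre-post-collisional involution $\Phi:(\xi,\xi_*,\sigma)\mapsto(\xi',\xi'_*,(\xi-\xi_*)/|\xi-\xi_*|)$, which has unit Jacobian, to rewrite the bound as
\begin{equation*}
\|\mathscr{L}_{0,+}^{R,\delta}h\|_{L^1_q(\m)}\leq C\int|h(\xi)|\langle\xi\rangle^q\m(\xi)\,\widetilde{\Psi}_\delta(\xi)\,\d\xi,\qquad \widetilde{\Psi}_\delta(\xi):=\int(1-\widetilde\Theta_\delta)\mathcal{G}(\xi_*)|\xi-\xi_*|\,\d\xi_*\d\sigma,
\end{equation*}
with $\widetilde\Theta_\delta(\xi,\xi_*,\sigma)=\Theta_\delta(\xi',\xi'_*,(\xi-\xi_*)/|\xi-\xi_*|)$. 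Because $|\xi-\xi_*|=|\xi'-\xi'_*|$ and $|\cos\theta|$ are both $\Phi$-invariant, the support of $1-\widetilde\Theta_\delta$ is contained in the union of four bad regions: (A) $|\xi'|>1/\delta$, (B) $|\xi-\xi_*|<2\delta$, (C) $|\xi-\xi_*|>1/\delta$, (D) $|\cos\theta|>1-2\delta$.

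The whole estimate then reduces to proving $\widetilde{\Psi}_\delta(\xi)\leq\kappa_q(\delta)\langle\xi\rangle$. Region (B) contributes $O(\delta^{d+1})$ by elementary volume integration; region (D) contributes $O(\delta^{(d-1)/2}\langle\xi\rangle)$ from the $(d-1)$-dimensional measure of the spherical cap $\{|\cos\theta|>1-2\delta\}$; region (C), combined with the Gaussian decay of $\mathcal{G}(\xi_*)$ when $|\xi_*|\geq 1/(2\delta)$, gives exponential smallness $O(e^{-c/\delta^2}\langle\xi\rangle)$ in the range $|\xi|\leq 1/(2\delta)$ (and is otherwise absorbed into region (A)). For region (A), the inequality $|\xi'|^2\leq|\xi|^2+|\xi_*|^2$ splits the bad set into $|\xi_*|>1/(\sqrt{2}\delta)$, where $\mathcal{G}$ again gives Gaussian smallness, and $|\xi|>1/(\sqrt{2}\delta)$.

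The hard part will be this last subcase of (A), where $|\xi|$ itself is large: a naive bound only yields $\widetilde{\Psi}_\delta(\xi)\lesssim\langle\xi\rangle$ on $\{|\xi|>1/(\sqrt{2}\delta)\}$, with no $\delta$-smallness. To close the estimate on this region I would not use the involution but return to the direct formulation and exploit the Grad-type bound $\int_{\S^{d-1}}\M(\xi'_*)\,\d\sigma\leq C(1+|\xi-\xi_*|)^{-(d-1)}$, reflecting the concentration of $\xi'_*$ on a narrow spherical cap when $|\xi-\xi_*|$ is large. Combined with a Carleman-type decomposition of $\Q_+(|h|,\M)$ and an interpolation between $L^1_q(\m)$ and $L^1_{q+1}(\m)$ weights on the tail $\{|\xi|>1/\delta\}$, this produces a polynomial factor $\delta^{\alpha}$ for some $\alpha>0$, following verbatim the scheme of \cite[Prop.~B.2]{CaLo}. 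Summing the four contributions gives $\kappa_q(\delta)$ as a positive combination of $\delta^{(d-1)/2}$, $\delta^{d+1}$, $e^{-c/\delta^2}$ and $\delta^\alpha$, all vanishing as $\delta\to 0$.
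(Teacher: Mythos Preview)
Your proposal is correct and follows the same route as the reference the paper cites: the paper gives no self-contained argument here but simply refers to \cite[Lemma~B.1 \& Proposition~B.2]{CaLo}, and your sketch is precisely a reconstruction of that scheme---dualise, transfer the weight via $|\xi|\le|\xi'|+|\xi'_*|$, apply the pre/post-collisional involution, and split the support of $1-\Theta_\delta$ into the four bad regions, each of which yields a vanishing factor except the large-$|\xi|$ tail, which is then handled by the Grad-type angular averaging as in \cite{CaLo}. Your identification of the genuinely delicate region (large $|\xi|$, where the naive bound gives no $\delta$-smallness) and the mechanism that closes it (the $(d-1)$-dimensional angular gain from integrating a Gaussian over the collision sphere) are both on the mark; this is exactly what \cite[Prop.~B.2]{CaLo} supplies.
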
\phantomsection

Introduce the sum $\mathcal{Z}_{\alpha}^{0}:=\mathscr{L}^{R,\delta}_{0,+} + \mathbf{T}_{\alpha}$
with domain $\D(\mathcal{Z}_{\alpha}^{0})=\D(\mathbf{T}_{\alpha})$. Combining both the above Lemmas, one gets that, as soon as {$\sigma_{0} \geq \sqrt{2}a\mathbf{B}_{\alpha} \geq 0$}, it holds
$$\|\mathscr{L}^{R,\delta}_{0,+}{\mathcal R}(\lambda,\mathbf{T}_{\alpha})\|_{\mathscr{B}(L^{1}_{q}(\m))} \leq \dfrac{\kappa_{q}(\delta)}{\sigma_{0}-a\mathbf{B}_{\alpha}}, \qquad \forall \lambda > q\mathbf{B}_{\alpha}.$$ As a consequence, picking $\delta >0$ small enough so that
$$\dfrac{\kappa_{q}(\delta)}{\sigma_{0}-a\mathbf{B}_{\alpha}} < 1$$
one deduces  that  $(q\mathbf{B}_{\alpha}; +\infty)\subset \varrho(\mathcal{Z}_{\alpha}^{0})$,  and
$${\mathcal R}(\lambda,\mathcal{Z}_{\alpha}^{0})={\mathcal R}(\lambda,\mathbf{T}_{\alpha})\sum_{j=0}^{\infty}\left[\mathscr{L}^{R,\delta}_{0,+}{\mathcal R}(\lambda,\mathbf{T}_{\alpha})\right]^{j}, \qquad \forall \lambda  > q\mathbf{B}_{\alpha}.$$ 
In particular, one checks from the above series representation that
\begin{equation}\begin{split}\label{eq:normZ0}
\|{\mathcal R}(\lambda,\mathcal{Z}_{\alpha}^{0})\|_{\mathscr{B}(L^{1}_{q}(\m),L^{1}_{1+q}(\m))} &\leq \|{\mathcal R}(\lambda,\mathbf{T}_{\alpha})\|_{\mathscr{B}(L^{1}_{q}(\m),L^{1}_{1+q}(\m))}\sum_{j=0}^{\infty}\left\|\left[\mathscr{L}^{R,\delta}_{0,+}{\mathcal R}(\lambda,\mathbf{T}_{\alpha})\right]^{j}\right\|_{\mathscr{B}(L^{1}_{q}(\m))}\\
&\leq \dfrac{1}{\sigma_{0}-a\mathbf{B}_{\alpha}}\sum_{j=0}^{\infty}\left(\frac{\kappa_{q}(\delta)}{\sigma_{0}-a\mathbf{B}_{\alpha}}\right)^{j}=\dfrac{1}{\sigma_{0}-a\mathbf{B}_{\alpha}-\kappa_{q}(\delta)}, \qquad \forall \lambda > q\mathbf{B}_{\alpha}.\end{split}\end{equation}
Notice also that, according to \eqref{eq:largelam}, 
\begin{equation}\label{eq:largelam1}
\lim_{\lambda \to \infty}\|{\mathcal R}(\lambda,\mathcal{Z}_{\alpha}^{0})\|_{\mathscr{B}(L^{1}_{q}(\m))}=0.\end{equation}
Introduce then $\mathcal{Z}_{\alpha}=\mathcal{Z}_{\alpha}^{0} - \P_{\alpha}^{0}$
with $\D(\mathcal{Z}_{\alpha})=\D(\mathcal{Z}_{\alpha}^{0})$. Picking $\lambda > q\mathbf{B}_{\alpha}$ one has
$$(\lambda-\mathcal{Z}_{\alpha})=(\lambda-\mathcal{Z}_{\alpha}^{0})+\P_{\alpha}^{0}$$
and,  multiplying from the right by the resolvent ${\mathcal R}(\lambda,\mathcal{Z}_{\alpha}^{0})$, one has
$$(\lambda-\mathcal{Z}_{\alpha}){\mathcal R}(\lambda,\mathcal{Z}_{\alpha}^{0})=I + \P_{\alpha}^{0}{\mathcal R}(\lambda,\mathcal{Z}_{\alpha}^{0}).$$
Notice that all the operators here are well defined since the range of ${\mathcal R}(\lambda,\mathcal{Z}_{\alpha}^{0})$ is $\D(\mathcal{Z}_{\alpha}^{0})=\D(\mathcal{Z}_{\alpha})$ (which makes first the operator on the left-hand-side well defined) and, as such,  is included in $L^{1}_{q+1}(\m)$ which is the domain of $\P_{\alpha}^{0}.$ Moreover, from \eqref{eq:normZ0} and Proposition \ref{prop:converLLL0}
$$\|\P_{\alpha}^{0}{\mathcal R}(\lambda,\mathcal{Z}_{\alpha}^{0})\|_{\mathscr{B}(L^{1}_{q}(\m))} \leq \ep_{0,q}(\alpha)\|{\mathcal R}(\lambda,\mathcal{Z}_{\alpha}^{0})\|_{\mathscr{B}(L^{1}_{q}(\m), L^{1}_{q+1}(\m))} \leq \frac{\ep_{0,q}(\alpha)}{\sigma_{0}-a\mathbf{B}_{\alpha}-\kappa_{q}(\delta)}.$$
One can therefore find $\alpha$ small enough so that
$$\|\P_{\alpha}^{0}{\mathcal R}(\lambda,\mathcal{Z}_{\alpha}^{0})\|_{\mathscr{B}(L^{1}_{q}(\m))} < 1, \qquad \forall \lambda > q\mathbf{B}_{\alpha}$$ and, as such, $I+\P_{\alpha}^{0}{\mathcal R}(\lambda,\mathcal{Z}_{\alpha})$ becomes invertible and so is $(\lambda-\mathcal{Z}_{\alpha}){\mathcal R}(\lambda,\mathcal{Z}_{\alpha}^{0})$. This proves that, for $\lambda >q\mathbf{B}_{\alpha}$, $(\lambda-\mathcal{Z}_{\alpha})$ is invertible with
$${\mathcal R}(\lambda,\mathcal{Z}_{\alpha})={\mathcal R}(\lambda,\mathcal{Z}_{\alpha}^{0})\sum_{j=0}^{\infty}\left[-\P_{\alpha}^{0}{\mathcal R}(\lambda,\mathcal{Z}_{\alpha}^{0})\right]^{j}.$$
In particular, according to \eqref{eq:largelam1}, $\lim_{\lambda \to \infty}\|{\mathcal R}(\lambda,\mathcal{Z}_{\alpha})\|_{\mathscr{B}(L^{1}_{q}(\m))}=0.$ Finally, since 
$$\mathcal{B}_{\alpha,\delta}=\mathcal{Z}_{\alpha}-\mathscr{L}_{0,-}^{R,\delta}$$
with $\mathscr{L}_{0,-}^{R,\delta}$ bounded, one deduces easily that $\lambda-\mathcal{B}_{\alpha,\delta}$ is invertible provided $\lambda$ is large enough so that 
$$\|\mathscr{L}_{0,-}^{R,\delta}\|_{\mathscr{B}(L^{1}_{q}(\m))}\|{\mathcal R}(\lambda,\mathcal{Z}_{\alpha})\|_{\mathscr{B}(L^{1}_{q}(\m))} < 1.$$ This, together with the hypo-dissipativity ensures that $\mathcal{B}_{\alpha,\delta}$ generates a $C_{0}$-semigroup.

\end{document}